\newcommand{\norm}[1]{\left\lVert #1 \right\rVert}
\newcommand{\ab}[1]{\left| #1 \right|}
\newcommand\R{{\mathbb{R}}}
\newcommand\C{{\mathbb{C}}}
\newcommand\N{{\mathbb{N}}}
\newcommand\Z{{\mathbb{Z}}}
\renewcommand\P{{\mathbf{P}}}
\newcommand\E{{\mathbf{E}}}
\newcommand\Var{\mathbf{Var}}
\newcommand\Cov{\mathbf{Cov}}
\renewcommand\Im{{\operatorname{Im}}}
\renewcommand\Re{{\operatorname{Re}}}
\newcommand\CE{{\mathcal E}}
\newcommand\CP{{\mathcal P}}
\newcommand\CQ{{\mathcal Q}}
\newcommand\CS{{\mathcal S}}
\newcommand\ep{{\varepsilon}}
\theoremstyle{plain}
 \newtheorem{theorem}{Theorem}[section]
 \newtheorem{proposition}[theorem]{Proposition}
 \newtheorem{lemma}[theorem]{Lemma}
 \newtheorem{cor}[theorem]{Corollary}
\theoremstyle{definition}
 \newtheorem{definition}[theorem]{Definition}
 \newtheorem{remark}[theorem]{Remark}
\def\Xint#1{\mathchoice
{\XXint\displaystyle\textstyle{#1}}%
{\XXint\textstyle\scriptstyle{#1}}%
{\XXint\scriptstyle\scriptscriptstyle{#1}}%
{\XXint\scriptscriptstyle\scriptscriptstyle{#1}}%
\!\int}
\def\XXint#1#2#3{{\setbox0=\hbox{$#1{#2#3}{\int}$ }
\vcenter{\hbox{$#2#3$ }}\kern-.6\wd0}}
\def\dashint{\Xint-}
\begin{document}

\title { Roots of random functions:\\ A framework for local universality}

\author{Oanh Nguyen} 
\address{Department of Mathematics, University of Illinois at Urbana-Champaign, IL 61801, USA}
\email{onguyen@illinois.edu}

\author{Van Vu}
\address{Department of Mathematics, Yale University, New Haven, CT 06520, USA}
\email{van.vu@yale.edu}

\thanks{Part of this work was done at VIASM (Hanoi), and the authors would like to thank the institute for its support and hospitality; V.~Vu is supported by NSF grant DMS-1307797 and AFORS grant FA9550-12-1-0083. Nguyen is supported by NSF grant DMS-2125031.}
\begin{abstract} 
We investigate  the local distribution of roots of random functions of the form $F_n(z)= \sum_{i=1}^n \xi_i \phi_i(z) $, where $\xi_i$ are independent random variables and $\phi_i (z) $ are arbitrary analytic functions. Starting with the fundamental works of Kac and Littlewood-Offord in the 1940s, random functions of this type have been studied extensively in many fields of mathematics.

\vskip2mm 

We develop a robust framework to solve the problem by reducing, via universality theorems, 
 the calculation of the distribution of the roots
 and the interaction between them to the case where $\xi_i$ are Gaussian. In
 this special case, one can use the Kac-Rice formula and various other tools to obtain precise answers.

 \vskip2mm 
 
 Our framework has a wide range of applications, which include the most  popular  models of random functions, such 
 as random trigonometric polynomials and all basic classes of random algebraic polynomials (Kac, Weyl, and elliptic). 
 Each of these ensembles has been studied heavily by deep and diverse methods. Our method, for the first time, provides a unified treatment for all of them.

 \vskip2mm 
 
 Among the applications, we derive the first 
 local universality result for  random trigonometric polynomials with arbitrary coefficients. When restricted to 
 the study of real roots, this result extends several  recent results, proved for less general 
 ensembles.   For random algebraic polynomials, we strengthen 
 several recent results of Tao and the second author, with significantly simpler proofs. As a corollary, we sharpen a classical result of Erd\H{o}s and Offord on real roots of Kac polynomials, providing 
 an optimal error estimate. Another application is a refinement of a recent result of Flasche and Kabluchko on 
 the roots of random Taylor series.

\end{abstract}

\maketitle

\section{Introduction} Let $n$ be a positive integer or $\infty$. Let $\phi_1, \dots, \phi_n $ be deterministic functions and $\xi_1, \dots, \xi_n $ be independent random variables. Consider the random function/series
\begin{equation}\label{F}
	F_n = \sum_{i = 1}^{n} \xi_i\phi_i.
\end{equation}
A fundamental task is to understand the distribution of and the interaction between 
 the roots (both real and complex) of $F_n$. For several decades, this task has been carried out in many different areas of mathematics such as analysis, numerical analysis, probability, mathematical physics; see \cite{BS, EK, Far, forrester1999exact, HKPV, kahane1985, sodin2005zeroes, zelditch2001random}, for example.

The most studied subcases are when $\phi_i = c_i x^i$ (in which case $F_n$ is a random algebraic polynomial) and $\phi_i =c_i \cos ix $ (in which case $F_n$ is a random trigonometric polynomial); here and later, the $c_i$ are deterministic coefficients that may depend on $i$ and $n$. In fact, these classes split further, according to the values of $c_i$. For instance, three important classes of random algebraic polynomials are: Kac polynomials $\left (c_i=1\right )$, Weyl polynomials $\left (c_i= \frac{1}{ \sqrt {i!}}\right )$ and elliptic polynomials 
$\left (c_i= \sqrt { {n \choose i }}\right )$. For random trigonometric polynomials, most papers seem to focus on the case $c_i=1$. 
A very significant part of the literature on random functions focuses on these special classes.

Even for these classical cases, the problem is already hard; see \cite{angstpoly, azais2015local, DHV, flasche, iksanov2016local, kabluchko2014asymptotic, pritsker1, soze1, soze2, TVpoly} for a partial list of recent developments. It requires a full book to 
discuss the results and methods concerning random polynomials, but one feature stands out. 
The distributions of the roots in different classes are quite different, and the methods to study them are often specialized.

In this paper, we aim to develop a robust framework to solve the general problem. The leading idea is to utilize universality theorems to reduce the problem of calculating the distribution of the roots
and the interaction between them to the case where the $\xi_i$ are Gaussian. 
In the Gaussian case, 
the answers can be (or, for most ensembles, have already been) computed in a precise form, using the Kac-Rice formula and various other tools which make use of special properties of Gaussian random variables and Gaussian processes; see, for instance \cite{EK, GKZ, HKPV, prosen1996, MP, sodin2005zeroes, TVpoly}. 
In particular, when the $\xi_i$ are complex Gaussian variables, $F_n$ is called a Gaussian analytic function, and 
we refer to Sodin's paper \cite{sodin2005zeroes} for an in-depth survey.

 Universality theorems of this type have 
recently been proved in \cite{DOV, TVpoly} by the authors, Do and Tao for many classes of random algebraic polynomials of various types, using complex machinery (see also \cite{KZ3, ledoan2012universality, pritsker1, pritsker2, starr2011universality} for related works concerning 
global universality).  The method built in these papers is sensitive. It does not apply to random trigonometric polynomials and many  other ensembles. 

In this paper, we are going to establish a new and general condition which guarantees universality for a  wide class of random functions. 
This class  contains all popular random functions. Among others,  it covers all classical random algebraic polynomials (such as those considered in \cite{DOV, TVpoly}  and many others). 
Quite remarkably, it also covers  random trigonometric polynomials with 
general coefficients, whose behavior is totally different. (For readers not familiar with the theory of random functions, let us point out that 
random trigonometric polynomials typically have 
$\Theta (n)$ real roots while Kac polynomials have only $\Theta (\log n) $.)

We would like to emphasize the simplicity and robustness of our approach.  
Proofs of local universality results have been, so far, considerably complex and long. Furthermore,  different ensembles require proofs which are different in at least a few key technical aspects.  Our proofs, based on new observations, are quite simple and robust. The proof for the general theorem is only a few pages long. 
 Next, and more importantly,   we can deduce universality results for completely different ensembles of random functions from this general theorem in an identical way using (essentially) one simply stated lemma. 
In each ensemble considered, we either  obtain completely  new results or a short,  new proof of the most current result, many times with a quantitative improvement.   The length of the paper is due 
to the number of applications.  The reader is invited to read Section \ref{mainideas} for a 
discussion of our method and a comparison with the previous ones.

Let us now briefly discuss the applications. 
 Consider two random functions 
$F_n = \sum_{i = 1}^{n} \xi_i\phi_i$ and $\tilde F_n = \sum_{i = 1}^{n} \tilde \xi_i\phi_i$, where $\xi_i$ and $\tilde \xi_i$ can have different distributions. We show (under some mild assumptions) that 
the local statistics of the roots of the two functions are asymptotically the same. In practice, we can set $\tilde \xi_i$ to be Gaussian, and thus reduce the study to this case. The local information can be used to derive 
certain global properties; for instance, the number of roots in a large region (which has been partitioned into many local cells) is simply the sum of the numbers of roots in each cell.

\begin{itemize}

\item We study random trigonometric polynomials in Section \ref{app1}. We derive (to the best of our knowledge) the first local universality of correlation 
for this class. Our setting is more flexible than most 
previous works on this topic, as we allow a large degree of freedom in choosing the deterministic coefficients $c_i$.

While we do not find comparable previous local universality results for random trigonometric polynomials, we can still make some comparisons to previous works by restricting to the popular sub-problem of estimating the density of the real roots. For this problem, our universality result yields new estimates which extend several existing results, some of which are quite recent and have been proved by totally different methods; see 
 Section \ref{app1} for details.

\item In Section \ref{app2}, we discuss Kac polynomials. We derive a short proof for a 
strengthening of a recent result of Tao and the second author \cite{TVpoly}. By almost the same argument, one could also recover the main result of Do and the authors \cite{DOV} which applies for generalized Kac polynomials. 
As a corollary, we obtain a more precise version of the classical result of Erd\H{o}s and Offord \cite{EO} on the number of real roots.

\item In Section \ref{app3}, we study Weyl series. Our universality result here provides 
an exact estimate for the expectation of the number of roots in any fixed domain $B$. Previous to our result, such 
an estimate was only known for sets of the form 
$rB$, where $r$ is a parameter tending to infinity, thanks to a very recent work of 
Kabluchko and Zaporozhets \cite{kabluchko2014asymptotic}. 

\item In Section \ref{app4}, we apply our results to random elliptic polynomials. 
We give a short proof of a recent result from \cite{TVpoly}, which generalizes an earlier result of Bleher and Di \cite{BD}.

\item The above applications already cover all traditional   classes of random functions in the literature. To illustrate the generality of our result, in Section \ref{app5}, we 
present one  more application, concerning random series with regularly varying coefficients, a class 
defined and studied by Flasche and Kabluchko very recently \cite{flasche2020expected}.

\item While revising this paper, we became aware of a recent work \cite{flasche2018real} which has some overlaps with ours. We made a brief comparison at the end of  Section \ref{app5}. 

\item  Additionally, after this work had been announced, the framework that we develop here has been applied to the following papers.
\begin{itemize}
	\item In \cite{CNNV}, Mei-Chu Chang, Hoi Nguyen and the authors study  the number of intersections between random
	eigenfunctions of general eigenvalues and a given smooth curve in flat tori.
	\item In \cite{do2020random}, Yen Do and the authors study random orthonormal polynomials.
\end{itemize} 

\end{itemize}

In most applications,  we will  work out corollaries concerning the problem of  counting real roots. While our results yield much more than just the density 
function of real roots, we focus on this subproblem since it  is,  traditionally, one of the most natural and appealing problems in the field. (Technically speaking, the study of zeros of random analytic functions started with papers of Littlewood-Offord and Kac in the 1940s, studying the number of real roots of Kac polynomials.)  Our corollaries provide  many new contributions to the existing vast literature on this subject. As a matter of fact, our results allow us to study
 any level set $L_a:= \{z\in \C: F_n (z)=a \} $ for any fixed $a$ (the roots form the level set $L_0$) at no extra cost. 

The rest of the paper is organized as follows. In the next section, we first describe our goal, namely, what we mean by universality. 
We then establish the general condition that guarantees 
universality, and comment on its strength. We next state the general universality theorems along with a discussion of the main ideas 
in the proof. 

The next 5 sections (Sections \ref{app1} - \ref{app5})  are devoted to the applications mentioned above. We state universality theorems for various classes of random functions, and derive corollaries 
concerning the density of both real and complex roots. In Section \ref{app1_proof_1}, we prove the general universality theorems stated in Section \ref{framework}. The rest of the paper is devoted to the verification of the 
applications in Sections \ref{app1} - \ref{app5}. We also include a short appendix at the end of the paper, which contains the proofs of a few  lemmas (some of which were proved elsewhere), for the sake of completeness.

\section {Universality theorems}\label{framework}

In the first subsection, we describe the traditional way to compare local statistics of the roots. 
Next, we provide the assumptions under which our theorems hold, and comment on their strength. The precise statements 
come in the final subsection.

%As customary, we assume that $n$ is sufficiently large, whenever needed. 
%All asymptotic notations are used under the 
%assumption that $n \rightarrow \infty$. 
The notation ${\bf 1}_E$ denotes the indicator of an event $E$; it takes value 1 if $E$ holds and $0$ otherwise.

\subsection{Comparing local statistics} 

For simplicity, let us first focus on the complex roots of $F_n$.
 These roots form a random point set on the plane.

The first interesting local statistics is the density. In order to understand the density 
around a point $z$, we consider the unit disk $ B(z, 1)$ centered at $z$. In practice, the radius of the disk is chosen so that 
the number of roots in it is typically of order $\Theta (1)$. 
The expected number of roots in the disk can be written as 
$$\sum_{i } \E f(\zeta_i ) $$ where $\zeta_1, \zeta_2, \dots$ are the roots of $F_n$, and $f$ is the indicator 
function of $B(z,1)$; in other words, $f(x)=1 $ if $x \in B(z,1)$ and zero otherwise. 

If one is interested in the pairwise correlation between the roots near $z$, then it is natural to look at 
$$\sum_{i, j} \E f(\zeta_i, \zeta_j ) $$ where $f(x,y)$ is the indicator 
function of $B(z,1)^2 :=B(z,1) \times B(z,1)$; in other words, $f(x,y)=1 $ if both $x, y \in B(z,1)$ and zero otherwise.

In general, the $k$-wise correlation can be computed from 
$$\sum_{ i_1, \dots, i_k } \E f(\zeta_{i_1}, \dots, \zeta_{i_k} ) $$ where $f(x_1, \dots, x_k)$ is the indicator 
function of $B(z,1)^k$. A good estimate for these quantities tells us how the nearby roots repel or attract each other. 

Even more generally, one can study the interaction of roots near different centers by looking at 
$$\sum_{ i_1, \dots, i_k } \E f(\zeta_{i_1}, \dots, \zeta_{i_k} ) $$ where $f(x_1, \dots, x_k)$ is the indicator function of $B(z_1,1) \times B(z_2,1) \dots \times B(z_k,1)$ with $B(z_i, 1)$ being the unit disk centered at $z_i$. 

Now, consider another random function
$$\tilde F_n = \sum_{i = 1}^{n} \tilde \xi_i\phi_i$$
where the $\tilde \xi_i$ are independent random variables distributed differently from the $\xi_i$. We end up with two sets of quantities $$\sum_{ i_1, \dots, i_k} \E f\left (\zeta_{i_1}, \dots, \zeta_{i_k} \right ) $$ and $$ \sum_{ i_1, \dots, i_k } \E f(\tilde\zeta_{i_1}, \dots, \tilde \zeta_{i_k} )$$
where the $\tilde \zeta_i$ are the roots of $\tilde F_n$. 

We would like to show (under certain assumptions) that these two quantities are asymptotically the same, namely
 \begin{equation} \label{asymp} \left | \sum_{ i_1, \dots, i_k} \E f(\zeta_{i_1}, \dots, \zeta_{i_k} ) - \sum_{ i_1, \dots, i_k } \E f\left (\tilde\zeta_{i_1}, \dots, \tilde \zeta_{i_k} \right ) \right | \le \delta_n \end{equation} for some $\delta_n$ tending to zero as $n$ goes to infinity.

 For technical convenience, we will replace the indicator function $f$ by a smoothed approximation. This makes no difference in applications. On the other hand, our results hold for any smoothed test function $f$, which may have nothing to do with the indicator function. 
 
 If one cares about the real roots, one  replaces the  disk $B(z,1)$ by the interval of length 1 centered at a real number $z$. In general, instead of the product $B(z_1,1) \times B(z_2,1) \dots \times B(z_k,1)$, one can consider a mixed product of disks and intervals. This enables one to understand the interaction between nearby roots of both types (complex and real).
 
 One, of course, could have made the previous discussion using the notion of correlation functions. However, we find the current format direct and intuitive. We refer to \cite{HKPV} or \cite{TVpoly} for more detailed discussions concerning local statistics using 
 correlation functions.

 \subsection{Assumptions} \label{condi} 
 
 Before stating the result, let us discuss the assumptions. There are two types of assumptions. The first is for the random variables $\xi_i$ and $\tilde \xi_i$. The second concerns the deterministic functions $\phi_i$.

For the random variables, our assumption is close to minimal. In the case that both $\xi_i$ and $\tilde \xi_i$ are real, our simplest assumption is

{\bf Condition C0.} The random variables $\xi_1, \dots, \xi_n, \tilde \xi_1, \dots, \tilde \xi_n$ are independent real random variables 
with the same mean $\E \xi_i = \E \tilde \xi_i$ for each $i$, variance one, and (uniformly) bounded $(2+\ep)$ central moments, for some constant $0<\ep <1$.

In fact, we can relax the assumption of matching means and variances, allowing a finite number of exceptions. If the $\xi_i$ and $\tilde \xi_i$ are complex, the matching mean and variance need to be adjusted to address both real and imaginary parts.

{\bf Condition C1.} Two sequences of random variables $(\xi_1, \dots, \xi_n)$ and $(\tilde \xi_1, \dots, \tilde \xi_n)$ are said to satisfy this condition if the following hold, for some constants $N_0, \tau >0$  and $0<\ep<1$. 

\begin{enumerate} [(i)]
	
	\item {\it Uniformly bounded $(2+\ep)$ central moments:} \label{cond-moment} The random variables $\xi_i$ (and similarly $\tilde \xi_i$), $1\le i \le n$,  are independent (real or complex, not necessarily identically distributed) random variables with unit 
	variance (namely, $\E|\xi_i - \E \xi_i|^{2} = 1$), and bounded $(2+\ep)$ central moments, namely $\E\left |\xi_i - \E\xi_i\right |^{2+\ep} \le \tau$.

	\item {\it Matching moments to second order with finitely many exceptions:}\label{cond-matching} For any $i\ge N_0$, for all $a, b\in \{0, 1, 2\}$ with $a+b\le 2$, $$\E \Re\left (\xi_i\right )^{a}\Im \left (\xi_i\right )^{b} = \E \Re\left (\tilde \xi_i\right )^{a}\Im \left (\tilde \xi_i\right )^{b},$$ and for $0\le i< N_0$, $\left | \E \xi_i -\E \tilde \xi_i\right |\le \tau$. 	
	
	\end{enumerate}

It is trivial that Condition {\bf C1} contains Condition {\bf C0} as a special case. 
We find it rewarding to go with the more general, but slightly technical, assumption \eqref{cond-matching}, which allows non-matching means, as it 
leads to an interesting phenomenon that changing a finite number of terms in $F_n(z)$ does not influence the asymptotic distribution of the roots. 
Among other benefits, this allows us to generalize all results to level sets $\{ z\in \C: F_n (z) =a \} $ for any fixed $a$; see Remark \ref{rmk1} for more details.

\vskip2mm 

We now turn to the assumption on the deterministic functions $\phi_i$. 
The statement of our theorems will involve two parameters, an error term $0<\delta_n<1$ (see \eqref{asymp}) 
and a region $D_n\subset \C$, from which the base points $z_1, \dots, z_k$ are chosen. 
As their subscripts indicate, both $\delta_n$ and $D_n$ can depend on $n$. In most of our applications,
$\delta_n$ tends to zero with $n$ but it is not required. When $n = \infty$ for example, $\delta_\infty$ can be any parameter in $(0, 1)$.
The assumptions below are tailored to these two parameters, $\delta_n$ and $D_n$. 

For two sets $\mathcal A, \mathcal B\subset \C$, define  $\mathcal A+\mathcal B: = \{a+b: a\in \mathcal A, b\in \mathcal B\}$.
Let $k, C_1, \alpha_1, A, c_1, C$ be positive constants. We say that $F_n$ satisfies Condition {\bf C2} with parameters 
$(k, C_1, \alpha_1, A,c_1, C) $ if the following holds.

{\bf Condition C2.}
\begin{enumerate}
	
	\item \label{cond-poly} For any $z\in D_n$, $F_n$ is analytic on the disk $B(z, 2)$ with probability 1 and 
	$$\E N^{k+2}\textbf{1}_{N\ge \delta_n^{-C_1}} \le C,$$
	where $N$ is the number of zeros of $F_n$ in the disk $B(z, 1)$. We note that throughout this paper, if $F_n$ is identically 0, we adopt the (admittedly artificial) convention that $F_n$ has no roots in $\C$.
	
	\item {\it Anti-concentration:}\label{cond-smallball} For every $z\in D_n$, with probability at least $1 - C\delta_n^{A}$, there exists $z'\in B (z, 1/100)$ for which $|F_n(z')|\ge \exp(-\delta_n^{-c_1})$.
	
	\item {\it Boundedness:} \label{cond-bddn} For any $z\in D_n$, with probability at least $1 - C\delta_n ^{A}$, $|F_n(w)|\le \exp(\delta_n^{-c_1})$ for all $w\in B (z, 2)$.
	
	\item {\it Delocalization:}\label{cond-delocal} For every $z\in D_n+B (0, 1)$, it holds that $\sum _{j = 1}^{n}|\phi_j(z)|^{2}\neq 0$ and for every $i = 1, \dots, n$, 
	$$\frac{|\phi_i(z)|}{\sqrt{\sum _{j = 1}^{n}|\phi_j(z)|^{2}}}\le C\delta_n^{\alpha_1}.$$ 
	
	\item {\it Derivative growth:}\label{cond-repulsion} For any real number $x\in D_n + B(0, 1)$,
	\begin{equation}
		\sum_{j=1}^{n} |\phi_j'(x)|^{2}\le C\delta_n ^{-c_1}\sum_{j=1}^{n} |\phi_j(x)|^{2},\nonumber
	\end{equation}
	\begin{equation}
		\sum_{j=1}^{n} \sup_{z\in B(x, 1)}|\phi_j''(z)|^{2}\le C\delta_n ^{-c_1}\sum_{j=1}^{n} |\phi_j(x)|^{2},\nonumber
	\end{equation}
	and
	\begin{equation}
		\sum_{j=1}^{n} |\E \xi_j|\sup_{z\in B(x, 1)}|\phi_j''(z)|\le C\delta_n ^{-c_1}\sqrt{\sum_{j=1}^{n} |\phi_j(x)|^{2}}.\nonumber
	\end{equation}
	
\end{enumerate}

 \begin{remark}
	While Condition {\bf C2} still involves the random variables $\xi_i$, in the verification of these conditions, we only need to use basic information about the mean of these variables. On the other hand, the type of arguments one needs to use in the verification depends strongly on the functions $\phi_i$. 
	\end{remark}

\begin{remark}

	The last Condition {\bf C2} \eqref{cond-repulsion} is important only in the study of real roots; in particular, it is used to prove the repulsion of the real roots (Lemma \ref{lmrepulsion}). It can be ignored in the study of complex roots. 
\end{remark}

Let us now comment on the verification of Condition {\bf C2} in practice. 

\begin{remark} \label{anticond} 
Typically, we assume $\delta_n$ tends to zero with $n$. We transform the functions so that the expectation of $N$ is of order $1$ where $N$ is the number of roots of $F_n$ in a disk $B(z, 1)$, $z\in D_n$. With this in mind, the first condition is a large deviation estimate on $N$ and can be proved using standard 
large deviation tools combined with classical complex analytic estimates such as Jensen's inequality. The third condition (boundedness) is also a large deviation statement and can be dealt with using standard tools, since for any fixed $w$, $F_n (w) $ is a sum of independent random variables. 

The two Conditions  {\bf C2} \eqref{cond-delocal} and  {\bf C2} \eqref{cond-repulsion} are deterministic properties of the functions $\phi_i$ and hold for many natural classes of functions. The forth condition (delocalization) simply says that 
in the vector $(\phi_i (z))_1^n $, no coordinate dominates. The fifth condition asserts that the first and second derivatives of $\phi_i$ do not exceed the value of the function itself by a large multiplicative factor, in an average sense. Checking these conditions is usually a routine task. 
Furthermore, the proof allows us to easily modify these conditions, if necessary. 

The second (anti-concentration) condition is the one that may require some work. However, this condition is trivial if (some of) the random variables $\xi_i$ have continuous distributions with bounded density. For instance, if $\phi_1=1$ (constant function) and $\xi_1$ has a continuous distribution with bounded density, then the required anti-concentration property  holds trivially by conditioning on the rest of the random variables (which can have arbitrary distributions). There is a sizable literature 
focusing on continuous ensembles, and  our results allow us to recover, in a straightforward  manner, a number of
existing  results, whose original proofs were quite technical; see Sections \ref{app2} and \ref{app4} for examples. 
	\end{remark}

\subsection{Results} 

Given the assumptions discussed in the previous section, we are now ready to state our universality theorems.

\begin{definition} \label{defnorm}  For any function $G:\R^{k}\to \R$ and any natural number $a$, we define $\norm{\triangledown^aG}_{\infty}$ to be the supremum over $x\in \R^{k}$ of the absolute value of all partial derivatives of total order $a$ of $G$ at $x$. For a function $G:\R^{k}\times \C^{l}\to \C$, we define $\norm{\triangledown^aG}_{\infty}$ to be the maximum of $\norm{\triangledown^aG_1}_{\infty}$ and $\norm{\triangledown^aG_2}_{\infty}$, where $G_1, G_2:\R^{k+2l}\to \R$ are the real and imaginary parts of $G$:
$$G_1(x_1, \dots, x_k, u_1, \dots, u_l, v_1, \dots, v_l)=\Re (G(x_1, \dots, x_k, u_1+iv_1, \dots, u_l+iv_l)),$$
$$G_2(x_1, \dots, x_k, u_1, \dots, u_l, v_1, \dots, v_l)=\Im (G(x_1, \dots, x_k, u_1+iv_1, \dots, u_l+iv_l)).$$
\end{definition}

\begin{theorem}[General Complex universality]\label{gcomplex} 
	
	Assume that the coefficients $\xi_i$ and $\tilde \xi_i$ satisfy Condition {\bf C1} for some constants $N_0, \tau, \ep$. Let $\alpha_1, C_1$ be positive constants and $k$ be a positive integer. Set  $A := 2kC_1 + \frac{\alpha_1\ep }{60}$ and 
	$c_1:=  \frac{\alpha_1 \ep }{10^{5} k^{2}}$. Assume that there exists a constant $C>0$ such that the random functions $F_n$ and $\tilde F_n$ satisfy Conditions {\bf C2} \eqref{cond-poly}-{\bf C2} \eqref{cond-delocal} with parameters $(k, C_1, \alpha_1, A,c_1, C)$.  Then there exist positive constants $C', c$ depending only on the constants in Conditions {\bf C1} and {\bf C2} (but not on $\delta_n$, $D_n$ and $n$) such that the following holds.
	
	For any complex numbers $z_1, \dots, z_k$ in $D_n$ and any function $G: \mathbb{C}^{k}\to \mathbb{C}$ supported on \newline$\prod_{i=1}^{k} B (z_i, 1/100) $ with continuous derivatives up to order $2k+4$ and $\norm{\triangledown^aG}_{\infty}\le 1$ for all $0\le a\le 2k+4$, we have 
	\begin{eqnarray}
		\left |\E\sum G\left (\zeta_{i_1}, \dots, \zeta_{i_k}\right) -\E\sum G\left (\tilde \zeta_{i_1}, \dots, \tilde \zeta_{i_k}\right) \right |\le C'\delta_n^{c},\label{gcomplexb}
	\end{eqnarray}
	where the first sum runs over all $k$-tuples $(\zeta_{i_1}, \dots, \zeta_{i_k})$ of the roots $\zeta_1, \zeta_2, \dots$ of $F_n$, and 
		the second  sum runs over all $k$-tuples $(\tilde \zeta_{i_1}, \dots, \tilde \zeta_{i_k})$ of the roots $\tilde \zeta_1, \tilde  \zeta_2, \dots$ of $ \tilde F_n$. 
	\end{theorem}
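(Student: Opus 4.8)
The plan is to pass from the roots to a comparison of expectations of smooth functionals of $F_n$ via a Green-type/Poisson-Jensen representation, and then to run a Lindeberg swapping argument over the coordinates $\xi_i$. First I would express the zero-counting functional $\E\sum G(\zeta_{i_1},\dots,\zeta_{i_k})$ in terms of $F_n$ itself. Since $G$ is supported on $\prod B(z_i,1/100)$ and is smooth, the standard device (cf.\ the Kac--Rice / Green's formula approach used for random polynomials) writes the number of zeros of $F_n$ weighted by $G$ as an integral against $\Delta \log|F_n|$, i.e.\ schematically
\begin{equation}\nonumber
\sum_i G(\zeta_i) = \frac{1}{2\pi}\int_{\C} (\Delta_z G(z))\,\log|F_n(z)|\,dz,
\end{equation}
and the $k$-fold version is an integral of $\triangledown^{2}G$ (in each block of variables) against $\prod_{j=1}^{k}\log|F_n(z^{(j)})|$. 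So the quantity to be compared is a bounded-coefficient multilinear integral of $\prod_j \log|F_n(z^{(j)})|$ over a compact region, and the whole problem reduces to estimating
\begin{equation}\nonumber
\Big|\E\, \Psi\big(\log|F_n(z^{(1)})|,\dots,\log|F_n(z^{(k)})|\big) - \E\,\Psi\big(\log|\tilde F_n(z^{(1)})|,\dots\big)\Big|
\end{equation}
for a suitable smooth $\Psi$, uniformly over the $z^{(j)}$'s, where the test function's derivative bounds give control on $\Psi$.

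The core step is a Lindeberg replacement: replace $\xi_1,\tilde\xi_1$, then $\xi_2,\tilde\xi_2$, and so on, one index at a time, and bound each single-swap error by a third-order Taylor expansion. Writing $F_n = \xi_i\phi_i + R$ where $R$ is the rest, one Taylor-expands the functional in the variable $\xi_i$ to second order; the zeroth, first and second order terms cancel between $\xi_i$ and $\tilde\xi_i$ by the matching-moments hypothesis in Condition {\bf C1}\eqref{cond-matching} (for $i\ge N_0$), and the finitely many non-matching indices $i<N_0$ are handled separately — this is exactly where the ``finite exceptions'' clause earns its keep, since changing boundedly many coefficients perturbs $\log|F_n|$ only on a small-probability set. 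The third-order remainder is controlled by $\E|\xi_i|^{2+\ep}$ (Condition {\bf C1}\eqref{cond-moment}) times the size of three derivatives of the functional in $\xi_i$; each such derivative carries a factor of $|\phi_i(z)|/(\sum_j|\phi_j(z)|^2)^{1/2}$, which is $\le C\delta_n^{\alpha_1}$ by the Delocalization hypothesis {\bf C2}\eqref{cond-delocal}. Summing over the $\le n$ swaps and using $n$-independence (the delocalization bound already absorbs the $n$ dependence, since $\sum_i |\phi_i|^2/(\sum_j|\phi_j|^2) = 1$ kills one power and leaves an extra $\delta_n^{\alpha_1\ep}$-type gain), I expect a total error of the shape $C'\delta_n^{c}$ with $c$ of order $\alpha_1\ep/k^2$, matching the stated constants.

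The remaining (and, I expect, the main) obstacle is that $\log|F_n(z)|$ is \emph{not} a bounded smooth function of the $\xi_i$: it blows up where $F_n(z)$ is small. Three of the {\bf C2} conditions are designed precisely to neutralize this. On the event of probability $\ge 1-C\delta_n^A$ furnished by the Anti-concentration hypothesis {\bf C2}\eqref{cond-smallball}, there is a point $z'$ near $z$ with $|F_n(z')|\ge \exp(-\delta_n^{-c_1})$, and combined with the upper bound $|F_n(w)|\le\exp(\delta_n^{-c_1})$ from Boundedness {\bf C2}\eqref{cond-bddn} and Jensen's inequality, one controls $\int (\Delta G)\log|F_n|$ and its $\xi_i$-derivatives by $\mathrm{poly}(\delta_n^{-c_1})$ on this good event; one can then multiply through by a smooth cutoff to the good event and regard the functional as genuinely smooth with derivative bounds that are merely $\exp$-polynomial in $\delta_n^{-c_1}$. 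The exceptional event of probability $\delta_n^A$ contributes an error $\le \delta_n^A \cdot \E N^{k+2}\mathbf 1_{N\ge\delta_n^{-C_1}}$-type tail plus a bounded-moment piece, controlled by {\bf C2}\eqref{cond-poly}; the choice $A=2kC_1+\alpha_1\ep/60$ is calibrated so that $\delta_n^A$ beats the $\exp(\delta_n^{-c_1})$ factors with $c_1 = \alpha_1\ep/(10^5 k^2)$ to spare. Balancing the Lindeberg gain $\delta_n^{\alpha_1\ep}$-type term against these good-event derivative blow-ups and the $\delta_n^A$ bad-event term yields the final $\delta_n^c$; getting this bookkeeping tight — in particular arranging that $c_1$ is small enough relative to $\alpha_1\ep$ that every $\exp(\delta_n^{-c_1})$ is swallowed — is the delicate part, but it is purely a matter of choosing the exponents as in the statement.
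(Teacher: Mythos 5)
Your high-level skeleton is the paper's: Green's formula to pass from roots to $\log|F_n|$, then a Lindeberg swap in the $\xi_i$ whose single-step error is controlled by matching moments, bounded $(2+\ep)$-moments, and delocalization~{\bf C2}\eqref{cond-delocal}, with the bad event absorbed via $\E N^{k+2}\mathbf{1}_{N\ge\delta_n^{-C_1}}$ and the choice of $A$. You also correctly identify the log singularity as the main obstacle. But your proposed resolution of that obstacle does not work. Multiplying by a smooth cutoff to the good event of {\bf C2}\eqref{cond-smallball}--\eqref{cond-bddn} does \emph{not} make the functional smooth in the $\xi_i$: that event only guarantees $|F_n(z')|\ge\exp(-\delta_n^{-c_1})$ at a \emph{single} point $z'$, while the integrand $\log|F_n(z)|$ appears at every $z$ in the disk, and its $\xi_i$-derivatives involve $\phi_i(z)/F_n(z)$, $\phi_i(z)\phi_j(z)/F_n(z)^2$, etc., which are unbounded near roots no matter what event you intersect with. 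Moreover, even if the good event did give the $\exp(\pm\delta_n^{-c_1})$ derivative bounds you describe, these cannot be absorbed: $\delta_n^{\alpha_1\ep}\cdot\exp(\delta_n^{-c_1})\to\infty$ for any $c_1>0$, so ``choosing the exponents'' cannot save the estimate. The exponents in the theorem are calibrated against \emph{polynomial} factors $\delta_n^{-O(c_1)},\delta_n^{-O(C_1)}$, never against $\exp(\delta_n^{-c_1})$.

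What the paper actually does at this point is different in two essential ways. First, it truncates the \emph{value} of $\log|F_n|$, not the event: it replaces $\log|F_n(z)|$ by $K(\log|F_n(z)|)$ for a fixed bounded smooth $K$ supported on $[-2\delta_n^{-\bar c_1},2\delta_n^{-\bar c_1}]$, so that the Lindeberg step (Lemma~\ref{logcomp}, proved via Lemma~\ref{log-com33}) is run on a genuinely bounded smooth functional with derivative bounds polynomial in $\delta_n^{-1}$ --- and in fact Lemma~\ref{logcomp} further truncates the argument of $K$ at the level $e^{-M}=\delta_n^{3\alpha_0}$ precisely so that the chain-rule factor $1/|F_n/\sqrt V|$ stays polynomially bounded. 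Second, and this is the paper's key technical novelty that your proposal omits, the truncation error $\E\int|\prod_j\log|F_n(u_j)|-\prod_j K(\log|F_n(u_j)|)|$ is controlled by Lemma~\ref{2norm}, which upgrades the \emph{single-point} lower bound from {\bf C2}\eqref{cond-smallball} and the upper bound from {\bf C2}\eqref{cond-bddn} into an $L^2$ bound $\int_{B(w,1/2)}|\log|F_n||^2\lesssim\delta_n^{-6c_2}$, via Jensen's inequality for root counts and Harnack's inequality for the harmonic function $\log|f|$ after dividing out the zeros. Without this $L^2$ estimate (or some replacement for it) you have no way to bound the contribution of the region near the roots of $F_n$ to the Green's-formula integral, and the error term $A_3$ in the paper's decomposition $A_2=A_3+A_4-A_5$ is exactly where your argument has a gap.
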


As an example for the summation in \eqref{gcomplexb}, if $k=2$ and $F_n$ only has two roots $\zeta_1$ and $\zeta_2$, then the first sum is $G(\zeta_1, \zeta_1) + G(\zeta_1, \zeta_2)+G(\zeta_2, \zeta_1)+G(\zeta_2, \zeta_2)$.

 \begin{theorem}[General Real universality]\label{greal} Assume that $\phi_i(\R)\subset \R$ and $\xi_i$ and $\tilde \xi_i$ are real random variables that satisfy Condition {\bf C1} for some constants $N_0, \tau, \ep$. Let $\alpha_1, C_1$ be positive constants and $k, l$ be nonnegative integers with $k+l\ge 1$. Set   $A = 2(k+l+2)(C_1+2) + \frac{\alpha_1\ep }{60}$ and $c_1 = \frac{\alpha_1\ep }{10^9(k+l)^{4}}$.
	Assume that there exists a constant $C>0$ such that the random functions $F_n$ and $\tilde F_n$ satisfy Condition {\bf C2} with parameters $(k+l, C_1, \alpha_1, A,c_1, C)$. Then  there exist positive constants $C', c$ depending only on $k, l$ and the constants in Conditions {\bf C1} and {\bf C2} (but not on $\delta_n$, $D_n$ and $n$) such that the following holds.
	
	For any real numbers $x_1,\dots, x_k$, complex numbers $z_1, \dots, z_l$, all of which are in $D_n$, and any function $G: \mathbb{R}^{k}\times\mathbb{C}^{l}\to \mathbb{C}$ supported on $\prod_{i=1}^{k}[x_i-1/100, x_i+1/100] \times \prod_{j=1}^{l}B (z_j, 1/100)$ with continuous derivatives up to order $2(k+l)+4$ and $\norm{\triangledown^aG}_{\infty}\le 1$ for all $0\le a\le 2(k+l)+4$, we have
	\begin{eqnarray} 
		\left |\E\sum G\left (\zeta_{i_1}, \dots, \zeta_{i_k}, \zeta_{j_1} , \dots, \zeta_{j_l}\right) 
		-\E\sum G\left (\tilde \zeta_{i_1}, \dots, \tilde \zeta_{i_k}, \tilde \zeta_{j_1}, \dots, \tilde \zeta_{j_l}\right) \right |\le C' \delta_n^{c},\nonumber
	\end{eqnarray}
	where the first sum runs over all $(k+l)$-tuples $(\zeta_{i_1}, \dots, \zeta_{i_k}, \zeta_{j_1}, \dots, \zeta_{j_l}) \in \R^{k}\times \C_{+}^{l}$ of the roots $\zeta_1, \zeta_2, \dots$ of $F_n$, and the second  sum runs over all $(k+l)$-tuples $(\tilde \zeta_{i_1}, \dots, \tilde \zeta_{i_k}, \tilde \zeta_{j_1}, \dots, \tilde \zeta_{j_l}) \in \R^{k}\times \C_{+}^{l}$ of the roots $\tilde \zeta_1, \tilde  \zeta_2, \dots$ of $ \tilde F_n$. 
\end{theorem}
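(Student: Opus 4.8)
The plan is to deduce the Real universality theorem from the Complex universality theorem (Theorem \ref{gcomplex}) together with the repulsion estimate for real roots (Lemma \ref{lmrepulsion}), via a ``smoothing in the imaginary direction'' argument. The difficulty is that the real roots are singular points on the line $\R\subset\C$, so one cannot directly plug a test function supported on a product of intervals into the complex statement, where $G$ is integrated against a genuinely $2$-dimensional distribution of complex zeros. The key observation is that each simple real root $\zeta$ of $F_n$ can be detected analytically: near a real root, $F_n(z)\approx F_n'(\zeta)(z-\zeta)$, so $\zeta$ is the unique zero of $F_n$ in a small complex neighborhood, and one can recover a test function of the real root from a complex test function that is concentrated near $\R$ but smooth on $\C$.

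First I would reduce to the case $l=0$ (pure real roots), since the complex variables $z_1,\dots,z_l$ are handled exactly as in Theorem \ref{gcomplex} and contribute no new difficulty; the mixed statement follows by running the same argument in the real coordinates while treating the $\C^l$ coordinates verbatim. Next, given a real test function $G\colon\R^k\to\C$ supported on $\prod[x_i-1/100,x_i+1/100]$, I would introduce a parameter $\rho=\rho(\delta_n)$, a small power of $\delta_n$, and construct a complex test function $\widetilde G\colon\C^k\to\C$ of the form $\widetilde G(w_1,\dots,w_k)=\prod_i \rho^{-1}\psi(\Im w_i/\rho)\cdot G(\Re w_1,\dots,\Re w_k)\cdot\chi(\Im w_i)$ where $\psi$ is a fixed bump function with $\int\psi=1$ and $\chi$ a cutoff — the point being that $\int_{\R}\rho^{-1}\psi(t/\rho)\,dt=1$, so summing $\widetilde G$ over complex-root tuples of $F_n$ that lie within $O(\rho)$ of the real axis approximates $\sum G$ over the real-root tuples, provided (a) every real root is simple and isolated at scale $\gg\rho$ from other roots — this is where Lemma \ref{lmrepulsion} and Condition {\bf C2}\eqref{cond-repulsion} enter — and (b) there are no genuinely complex roots within distance $O(\rho)$ of $\R$ near the base points with non-negligible probability. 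One then needs to control the derivatives: $\norm{\triangledown^a\widetilde G}_\infty\lesssim\rho^{-(2k+4)}$, so after rescaling $\widetilde G$ by $\rho^{2k+4}$ to meet the hypothesis of Theorem \ref{gcomplex} one loses a factor $\rho^{-(2k+4)}$ in the final bound; choosing $\rho$ a suitable small power of $\delta_n$ keeps the product $\rho^{-(2k+4)}\delta_n^{c}$ still a (smaller) power of $\delta_n$, which forces the precise relation between the exponents $A$, $c_1$ in the real theorem and those in the complex one — this is why the real theorem has $A=2(k+l+2)(C_1+2)+\alpha_1\ep/60$ rather than $2kC_1+\dots$.

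The heart of the matter is the approximation in step (b): I must show that with probability $1-O(\delta_n^{c'})$, inside each base disk $B(x_i,1/100)$ the zeros of $F_n$ near the real line are exactly the real zeros, each simple, pairwise separated by at least $\delta_n^{C'}$, and that there is no complex zero within $\rho$ of $\R$. Simplicity and separation of real roots is Lemma \ref{lmrepulsion}; the absence of near-real complex zeros follows from the same Taylor expansion argument — if $F_n(\zeta)=0$ with $\Im\zeta$ small, then $F_n(\Re\zeta)=F_n(\zeta)-F_n'(\zeta)(\Re\zeta-\zeta)+O(\sup|F_n''|\cdot|\Im\zeta|^2)$, which combined with the boundedness (Condition {\bf C2}\eqref{cond-bddn}), anti-concentration (Condition {\bf C2}\eqref{cond-smallball}) and the derivative-growth bounds (Condition {\bf C2}\eqref{cond-repulsion}) forces a real zero nearby or a contradiction. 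One must also bound the error from root-tuples where some $\zeta_{i_j}$ lies in the ``collar'' region (distance between $\rho$ and $2\rho$ from $\R$), which again uses repulsion plus the moment bound Condition {\bf C2}\eqref{cond-poly} on the number of roots. I expect the main obstacle to be bookkeeping the interaction between $k$ distinct base points simultaneously — ensuring the event of good behavior holds jointly at all $x_1,\dots,x_k$ with the stated probability, and that the product structure of $\widetilde G$ does not generate cross terms when some of the $x_i$ coincide — but this is handled by a union bound over the $k$ points (absorbing the loss into constants) and by the combinatorial footnote convention that all tuples, including repeated indices, are summed. Once the good event is established, both $\E\sum G$ over real roots of $F_n$ and of $\widetilde F_n$ differ from $\E\sum\widetilde G$ over complex roots by $O(\delta_n^{c'})$, and Theorem \ref{gcomplex} applied to $\rho^{2k+4}\widetilde G$ bounds the difference of the latter two expectations, completing the proof.
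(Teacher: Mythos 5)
You have the right high-level strategy — exploit the real-root repulsion estimate (Lemma \ref{lmrepulsion}) to control near-real complex zeros, and then invoke Theorem \ref{gcomplex} with a test function supported in a thin neighborhood of $\R^k\times\C_+^l$, paying for the derivative loss by rescaling. This is indeed the skeleton of the paper's argument in Section \ref{pgreal}. However, your construction of the complex test function contains a genuine error. You set
$\widetilde G(w_1,\dots,w_k)=\prod_i \rho^{-1}\psi(\Im w_i/\rho)\cdot G(\Re w_1,\dots,\Re w_k)$
with $\int_\R\psi=1$, and you justify this by the identity $\int_\R\rho^{-1}\psi(t/\rho)\,dt=1$. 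But that $L^1$-normalization is only meaningful when one smears a \emph{continuous} density; here you are summing over a \emph{discrete} set of roots. For a real root $\zeta$ (so $\Im\zeta=0$), $\widetilde G(\zeta)=\rho^{-1}\psi(0)\,G(\zeta)$, which is $\rho^{-1}\psi(0)$ times $G(\zeta)$, not $G(\zeta)$. Consequently $\sum_\zeta\widetilde G(\zeta)$, taken over all zeros, overshoots $\sum_{\zeta\in\R}G(\zeta)$ by an unbounded factor as $\rho\to 0$; the approximation you assert in step (b) is false as stated. The correct device — and the one the paper uses — is a cutoff $\phi$ with $\phi(0)=1$ (no $\rho^{-1}$ prefactor), so that $\mathfrak H_i(z):=H_i(\Re z)\,\phi(4\Im z/\gamma)$ restricts exactly to $H_i$ on the real line; then $\sum_\zeta\mathfrak H_i(\zeta)-\sum_{\zeta\in\R}H_i(\zeta)$ picks up only the non-real zeros in a thin strip, and one argues that this difference has small $(k+l)$-th moment.

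A secondary gap: to show there are (with high probability) no non-real zeros within distance $\gamma$ of the real axis, you propose a Taylor-expansion argument that a near-real complex zero of $F_n$ ``forces a real zero nearby or a contradiction.'' The paper's route is both simpler and sharper: since $F_n$ has real coefficients, non-real zeros come in conjugate pairs $z,\bar z$, so a single non-real zero near a real base point $x$ puts at least two zeros in $B(x,\gamma)$, and Lemma \ref{lmrepulsion} bounds that event by $O(\gamma^{3/2})$; a union bound over $O(\gamma^{-1})$ such disks covers the whole strip. You should use this conjugation symmetry rather than a perturbation argument, which would be delicate to make quantitative and is not needed. Once both corrections are in place, the remainder of your sketch (the moment bookkeeping using Condition {\bf C2}(1), the rescaling of the modified test function before applying Theorem \ref{gcomplex}, and the triangle inequality to conclude) is aligned with the paper's proof.
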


\begin{remark}\label{rmkconstants} The specific values  of  $A$ and $c_1$ in both theorems are chosen for the sake of explicitness. The theorems hold for any bigger $A$ and any smaller $c_1$. 
The constant $c$ in both theorems can be chosen to be $c_1$, namely $\frac{\alpha_1 \ep }{10^{5} k^{2}}$ and $\frac{\alpha_1\ep }{10^9(k+l)^{4}}$, respectively. We make no attempt to optimize these constants.
\end{remark}

\subsection{Main ideas and technical novelties} \label{mainideas} 
\subsubsection{Main ideas}

Let us consider the simplest setting where $k=1, l=0$ and we need to show 
\begin{equation} \nonumber
\sum_{i=1}^n \E  G ( \zeta_i )=  \sum_{i=1}^n \E  G (\tilde \zeta_i ) +O\left (\delta_n^{c}\right ) ,\end{equation} 
 where the $\zeta_i$ (and the $\tilde \zeta_i $) are the roots of $F_{n}$ (and $\tilde F_{n}$, respectively) and $G$ is a (smooth) test function supported on a disk $B(z_0, 1/100)$.

\noindent Our starting point is the Green's formula, which asserts that 

$$ G(0) = \frac{1}{2\pi}\int_{\C} \log |z| \Delta G(z) dz. $$

\noindent By change of variables, this implies that for all $i$,  

$$G(\zeta_i) = \frac{1}{2\pi}\int_{\C} \log |z -\zeta_i | \Delta G(z) dz, $$ which, in turn, yields

\begin{equation} \nonumber
\sum_i \E  G ( \zeta_i ) = \frac{1}{2\pi}\E  \int_{\C} \log | \prod_{i=1}^n (z -\zeta_i ) | \Delta G(z) dz = \frac{1}{2\pi}\E  \int_{B(z_0, 1/100)} \log | F_{n} (z)  | \Delta G(z) dz. \end{equation}

An obvious, and major, technical difficulty here is that the logarithmic function has a singularity at 0. This, naturally, leads to the anti-concentration issue that we discussed earlier, namely we need to bound the probability that 
$|F_n(z)|$ is close to zero.  Condition {\bf C2} (2)  has been introduced to address this issue.

Let us assume, for a moment, that the singularity problem has been handled properly (we will discuss the anti-concentration property shortly).  Then, by using Conditions {\bf C2} \eqref{cond-poly}-\eqref{cond-bddn}, we can show that  the function $F_n$ is nice enough that we can replace  $\log|F_n|$ by $K(F_n)$ where  $K$ is a bounded smooth function. The key argument of this part is to bound the error term, which turns out to be relatively simple. 

The task is now reduced to showing that
$$ \E \int_{B(z_0, 1/100)} K\left (F_n(z)\right )\Delta G(z)dz - \E \int_{B(z_0, 1/100)} K\left (\tilde F(z)\right )\triangle G(z)dz= O(\delta^{c}).$$

Because of the boundedness of $G$, for each $z\in B(z_0, 1/100)$, it suffices to show that
 $$ \E K\left (F_n(z)\right )  - \E K\left (\tilde F(z)\right )= O(\delta^{c}).$$
 
Since for each fixed $z$, $F_n(z)$ is a sum of independent random variables, the desired bound can  be viewed, in some sense, as a quantitative version of  the Central Limit Theorem. We will actually   prove it by the Lindeberg swapping method, which, by now, is a standard tool for proving local universality. 

Generalizing the whole scheme to the general case of $k$ and $l$  requires several additional technical steps, but the spirit of the method remains the same.

\subsubsection{Comparison with earlier papers \cite{TVpoly} and \cite{DOV}}
Our method differs from that of  \cite{TVpoly}   at  essential steps.  The first key idea in \cite{TVpoly} is to handle the integral 

$$ \frac{1}{2\pi}\E  \int_{B(z_0, 1/100)} \log | F_{n} (z)  | \Delta G(z) dz $$ by 
a random Riemann sum. One tries to  approximate  this integration by $\frac{c}{m} (f(z_1) + \dots f(z_m)) $, where $z_i$ are iid random points sampled from the disk,  $m$ is a properly chosen parameter which tends to infinity with $n$, $c$ is a normalizing constant, and $f:= \log |F_n| \Delta G $.

 With that approach, one faces two major technical tasks. The first (and harder one) is to control the error term in the approximation. This leads to the 
 problem of estimating the variance in the sampling process.  The other task  is to prove a comparison estimate for the 
random vector $(f(z_1), \dots, f(z_m))$, where we now view 
the points  $z_1, \dots, z_m$ as fixed, with  the randomness coming from $F_n (z)$. This, again, can be done using a Lindeberg type argument (applying to high dimensional setting).

Our new proof avoids this  sampling argument completely, making  the argument much shorter and more direct. For instance, the proof of Theorem \ref{gcomplex}, barring some lemmas in the appendix, is now only 3 pages. 

Let us now discuss the critical anti-concentration property. In practice, it has been  a major issue to prove  that a random function satisfies the anti-concentration phenomenon in some way. (As pointed out earlier, this is needed in order to address the singularity problem concerning the logarithmic function.)

In earlier papers  \cite{TVpoly} and \cite{DOV},  every class of random (algebraic) polynomials required a different proof. 
In \cite{TVpoly}, for Weyl and elliptic polynomials, the authors used 
Littlewood-Offord arguments for lacunary sequences. In the same paper, the  proof for Kac polynomials required a much more sophisticated argument, based
on the  Inverse Littlewood-Offord theory (see Nguyen-Vu \cite{nguyen2011optimal}) and a weak version of
the quantitative (Gromov) rigidity theorem (see Shalom-Tao \cite{shalom2010finitary}). However, this proof does not hold for 
the derivatives of Kac polynomials and random polynomials with 
slowly growing coefficients.  In order to handle these classes, in \cite{DOV}, 
the authors needed to use  a beautiful result on log-integrability by Nazarov-Nishry-Sodin \cite{NNS}, a very recent development. 
However, none of these tools works for  random trigonometric polynomials, whose roots behave quite  differently.

An important new  point in our   proof  is that we require a much weaker 
 anti-concentration property than in previous papers.
We only require that $F_n(z)$, as a random variable, satisfies the anti-concentration for only one  point $z$ in the whole neighborhood, while in \cite{TVpoly} one requires anti-concentration to hold for most points in the same neighborhood. (Notice that since we are taking an integration with respect to $z$, this earlier requirement from \cite{TVpoly} looks natural.)  The key to this observation  is our  Lemma \ref{2norm}, which asserts  that under favorable conditions, a lower bound 
on $|F_n(w)|$ guarantees a weaker, but still useful, lower bound for  $|F_n (z)|$  for any $z$ in a neighborhood of 
$w$.

Building upon this new  observation,  we have developed  a novel method (based on  old results of Tur\'an and Hal\'asz) 
to verify the anti-concentration property  in a simple and robust manner. This effort leads to  Lemma \ref{lmanti_concentration}, which we can use, in a  rather straightforward way, to prove the desired anti-concentration property for all 
ensembles of random functions discussed in this paper (including all the algebraic polynomials discussed above, random trigonometric polynomials with general coefficients, and a very recent ensemble studied by Flasche-Kabluchko \cite{flasche2020expected}).

\section{Application: Universality for random trigonometric polynomials} \label{app1}

 In this section, we apply our theorems to study 
 {\it random trigonometric polynomials} of the following form
 \begin{equation}
P_n(x) = \sum_{j=0}^{n} c_j\xi_j\cos(jx) + \sum_{j=1}^{n} d_j\eta_j\sin(jx)\nonumber
 \end{equation}
 where $c_j$ and $d_j$ are deterministic coefficients, 
 and $\xi_0, \xi_1, \dots, \xi_n$ and $\eta_1, \dots, \eta_n$ are independent random variables with unit variance. All of the $c_j, d_j, \xi_j$ and $\eta_j$ may depend on $n$.

 Most of the existing literature deals with the special case $c_i =d_i=1$ or $c_i= 1, d_i=0 $ for every $i$. 
 The generality of our study enables us to consider more general coefficients. All we need to assume about the coefficients $c_i, d_i$ is the following 
 
{\bf  Condition C3}. 
There exist positive  constants $\tau_1, c$ and an interval   $\mathcal I_{0}  \subset \{1, \dots, n\}$ of size at least $cn$ such that 
 \begin{equation} 
 |c_i|\ge \tau_1 \max_{0\le j\le n} \{|c_j|, |d_j|\} \qquad\text{for all } i\in \mathcal I_{0}. 
 \end{equation}

With regard to the random variables, we assume that they have mean 0, except for finitely many of them whose mean can be as large as $n^{1/2+o(1)}$. Specifically, we assume

{\bf Condition C4}. There is a constant $N_0 \ge 0$ such that for  $i\ge N_0$, $\E\xi_i = \E\eta_i = 0$ and for  $0\le i< N_0$, $|\E \xi_i|\le n^{\tau_0}$, and $|\E \eta_i|\le n^{\tau_0}$, where  $\tau_0 := 1/2+ 10^{-11}\ep$.

The $\ep$ in this condition is the $\ep$ in Condition
{\bf C1}. The constant $\tau_0$  is not optimal but we make no attempt to improve it. 
 We use the same notation $N_0$ in both  Condition {\bf C4} and Condition {\bf C1}, as we can always replace two different $N_0$ by their maximum.  The assumption that $\mathcal I_{0}$ is an interval is only used in the following simple lemma. 

\begin{lemma}\label{J} 
Let $\mathcal I_{0}$ be an interval in $\{1, \dots, n \} $ of length $\beta n$, for some constant $\beta >0$. Then there is a
constant $\beta' >0$ such that  for any real number $a$, 
	the set $\mathcal I_{0}$ contains a subset $J_a$  of size at least $\beta'n$, where 
$\min_{k\in \Z} \{|2aj - (2k+1)\pi| \} \ge \beta'$ for all $j\in J_{a}$.
	
\end{lemma}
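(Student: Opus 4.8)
The plan is to rephrase the statement purely in terms of fractional parts. Put $\alpha := a/\pi$ and let $\|t\|$ denote the distance from $t\in\R$ to the nearest integer. Since $2aj \equiv 2\pi\alpha j \pmod{2\pi}$, one has $\bigl|\,2aj\ (\mathrm{mod}\ 2\pi)-\pi\,\bigr| = 2\pi\,\|\alpha j - \tfrac12\|$. Hence it suffices to fix a small constant $\eps_0 = \eps_0(\beta)>0$ (I will take $\eps_0 := \beta/100$) and show that the ``bad'' set $B := \{\,j\in AP_0:\ \|\alpha j - \tfrac12\|<\eps_0\,\}$ has size at most $(1-c_0)\,|AP_0|$ for some constant $c_0=c_0(\beta)>0$; one may then take $J_a := AP_0\setminus B$ and $\beta' := \min(c_0\beta,\,2\pi\eps_0)$.

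I would split according to the size of $\|\alpha\|$. If $\|\alpha\|\ge 2\eps_0$, then $B$ contains no two consecutive integers: if $j,j+1\in B$, the triangle inequality for $\|\cdot\|$ gives $\|\alpha\| = \|(\alpha(j{+}1)-\tfrac12)-(\alpha j-\tfrac12)\| \le \|\alpha(j{+}1)-\tfrac12\| + \|\alpha j - \tfrac12\| < 2\eps_0$, a contradiction; so $|B|\le \tfrac12|AP_0|+1$. If $\|\alpha\|<2\eps_0$, let $\delta$ be the signed distance from $\alpha$ to its nearest integer, so $|\delta|<2\eps_0$ and $\alpha j \equiv \delta j\pmod 1$ for all $j$; if $\delta=0$ then $B=\emptyset$, so assume $0<|\delta|<2\eps_0$. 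Writing $AP_0=\{s,s+1,\dots,s+L-1\}$ with $L=|AP_0|\ge\beta n$, $1\le s$ and $s+L-1\le n$, the set $\{\delta j: j\in AP_0\}$ is contained in a real interval of length $|\delta|(L-1)$.

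The remaining estimate on $|B|$ I would carry out in two regimes. When $|\delta|(L-1)\ge \tfrac12$ (enough ``drift''), the range of the $\delta j$ meets only $O(|\delta|L)$ of the unit-spaced arcs $\bigl(k+\tfrac12-\eps_0,\,k+\tfrac12+\eps_0\bigr)$, and the preimage under $j\mapsto\delta j$ of each such arc contains at most $2\eps_0/|\delta|+1$ integers; multiplying these counts and using $|\delta|\ge \tfrac1{2(L-1)}$ to absorb the factors $2\eps_0/|\delta|$ yields $|B|\le C\eps_0 L + C'$, hence $|B|\le(1-c_0)L$ once $\eps_0=\beta/100$ and $n$ (so $L$) is large. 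When $|\delta|(L-1)<\tfrac12$, all the points $\delta j$ ($j\in AP_0$) lie in one interval of length $<\tfrac12$, which has at most a single half-integer $k_0+\tfrac12$ within distance $\eps_0$ of it; so every $j\in B$ satisfies $|\delta j-(k_0+\tfrac12)|<\eps_0$, confining $B$ to one $j$-interval of length $2\eps_0/|\delta|$. Finally, some $j_0\in B$ has $|\delta j_0-(k_0+\tfrac12)|<\eps_0$, and since $|k_0+\tfrac12|\ge\tfrac12$ while $j_0\le n$, this forces $|\delta|\ge(\tfrac12-\eps_0)/n$; therefore $|B|\le 2\eps_0 n/(\tfrac12-\eps_0)+1 \le (1-c_0)L$, as $L\ge\beta n$ and $\eps_0=\beta/100$.

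I expect the genuine (if modest) difficulty to be this last, low-drift regime: it is the only place where it truly matters that $AP_0\subseteq\{1,\dots,n\}$ is an interval of length proportional to $n$. The worry there is that $\alpha$ might sit so close to a half-integer that $\alpha j\pmod 1$ stays pinned near $\tfrac12$ across all of $AP_0$; the two bounds $|\delta|\ge(\tfrac12-\eps_0)/n$ (forced by $j_0\le n$) and $|\delta|<\tfrac1{2(L-1)}\le\tfrac1{\beta n-2}$ (the regime hypothesis) are incompatible once $\eps_0<\beta/4$, which is exactly what rules that scenario out and fixes the admissible constants. Everywhere else the argument is routine bookkeeping with $\|\cdot\|$, and the stray $+1$ rounding terms are harmless since $n$, and hence $L=|AP_0|$, may be taken as large as needed.
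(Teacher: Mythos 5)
The paper states Lemma \ref{J} without proof, calling it a ``simple lemma,'' so there is no in-paper argument to compare against; I will assess your proof on its own terms.

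Your proof is correct, and the strategy is natural: after the reformulation $|2aj\ (\mathrm{mod}\ 2\pi)-\pi|=2\pi\|\alpha j - \tfrac12\|$ with $\alpha=a/\pi$ (which is exact, since $\alpha j-\tfrac12$ reduced mod $1$ lies in $[-\tfrac12,\tfrac12)$), the split on $\|\alpha\|$ is the right one. When $\|\alpha\|\ge 2\eps_0$, the triangle inequality for $\|\cdot\|$ prevents two consecutive $j$'s from both being bad; when $\|\alpha\|<2\eps_0$ with signed offset $\delta$, your two-regime bookkeeping on the image $\{\delta j\}$ gives $|B|\le C\eps_0 L+O(1)$ in the drifty regime and $|B|\le 2\eps_0/|\delta|+1$ with $|\delta|\ge(\tfrac12-\eps_0)/n$ in the pinned regime. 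All of this is sound, and you correctly identify the pinned regime as the one place where both the interval structure and the range $\{1,\dots,n\}$ are genuinely used.

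One remark is off, though it does not invalidate the proof: you write that $|\delta|\ge(\tfrac12-\eps_0)/n$ and $|\delta|<\tfrac1{2(L-1)}$ are ``incompatible once $\eps_0<\beta/4$.'' They are not; the product of the first lower bound with $L-1\le n$ gives roughly $(\tfrac12-\eps_0)\beta$, which is always less than $\tfrac12$, so both can hold simultaneously. The actual mechanism that closes the pinned regime is the one you state just before that remark: the forced lower bound on $|\delta|$ turns $2\eps_0/|\delta|+1$ into $O(\eps_0 n)+1$, which is small relative to $L\ge\beta n$ for $\eps_0=\beta/100$. You should drop the ``incompatibility'' framing and keep the quantitative bound. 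Finally, your argument implicitly requires $n$ (hence $L$) to be large to absorb the $+O(1)$ rounding terms; this is consistent with the paper's standing assumption that $n$ is sufficiently large, but it is worth stating since the lemma as written can fail for very small $n$ (e.g.\ $\alpha=1/(2j)$ with $AP_0=\{j\}$).
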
 

Let 
\begin{equation}
\tilde P_n(x) = \sum_{j=0}^{n} c_j\tilde \xi_j\cos(jx) + \sum_{j=1}^{n} d_j\tilde \eta_j\sin(jx)\nonumber
 \end{equation}
where $\tilde \xi_0, \tilde \xi_1, \dots, \tilde \xi_n$ and $\tilde \eta_1, \dots, \tilde \eta_n$ are some other independent random variables.

%\begin{theorem}[Complex universality for trigonometric polynomials] \label{complex} Let $k$ be a positive integer. 
%Assume that the two sequences $(\xi_0, \dots, \xi_n, \eta_1, \dots, \eta_n)$ and $(\tilde \xi_0, \dots, \tilde \xi_n, \tilde \eta_1, \dots, \tilde \eta_n)$ satisfy Condition {\bf C1}
% and the coefficients $c_i, d_i$ satisfy Condition {\bf C3}. Then for any positive constant $C$, there exist positive constants $C', c$ depending only on $C, k$ and the constants in Conditions {\bf C1, C3} such that the following holds.
%
%For any complex numbers $z_1, \dots, z_k$ with $|\Im(z_j)|\le C/n$ for all $0\le j\le k$, and for any function $G: \mathbb{C}^{k}\to \mathbb{C}$ supported on $\prod_{i=1}^{k} B (z_i, 1/n)$ with  continuous derivatives up to order $2k+4$ and $\norm{\triangledown^aG}_\infty\le n^{a}$ for all $0\le a\le 2k+4$, we have
%\begin{eqnarray}\nonumber
%\left |\E\sum G\left (\zeta_{i_1}, \dots, \zeta_{i_k}\right) -\E\sum G\left (\tilde \zeta_{i_1}, \dots, \tilde \zeta_{i_k}\right) \right |\le C'n^{-c},
%\end{eqnarray}
%where the first sum runs over all $k$-tuples $(\zeta_{i_1}, \dots, \zeta_{i_k})$ of the roots $\zeta_1, \zeta_2, \dots$ of $P_n$, and 
%		the second  sum runs over all $k$-tuples $(\tilde \zeta_{i_1}, \dots, \tilde \zeta_{i_k})$ of the roots $\tilde \zeta_1, \tilde  \zeta_2, \dots$ of $ \tilde P_n$. 
%\end{theorem}

\begin{theorem}[Universality for trigonometric polynomials] \label{real} Let $k, l$ be nonnegative integers.
	Assume that the real coefficients $c_i$ and $d_i$ satisfy Condition {\bf C3} and the two sequences of real random variables $(\xi_0, \dots, \xi_n, \eta_1, \dots, \eta_n)$ and $(\tilde \xi_0, \dots, \tilde \xi_n, \tilde \eta_1, \dots, \tilde \eta_n)$ satisfy Conditions {\bf C1} and {\bf C4}. Then for any positive constant C, there exist positive constants $C', c$ depending only on $C, k, l$ and the constants in Conditions {\bf C1, C3, C4} such that the following holds. 

For any real numbers $x_1,\dots, x_k$, and complex numbers $z_1, \dots, z_l$ such that $|\Im(z_j)|\le C/n$ for all $1\le j\le l$, and for any function $G: \mathbb{R}^{k}\times\mathbb{C}^{l}\to \mathbb{C}$ supported on $\prod_{i=1}^{k}[x_i-1/n, x_i+1/n] \times \prod_{j=1}^{l}B (z_j, 1/n)$ with  continuous derivatives up to order $2(k+l)+4$ and $\norm{\triangledown^aG}_\infty\le n^{a}$ for all $0\le a\le 2(k+l)+4$, we have
\begin{eqnarray}\nonumber
\left |\E\sum G\left (\zeta_{i_1}, \dots, \zeta_{i_k}, \zeta_{j_1} , \dots, \zeta_{j_l}\right) 
 -\E\sum G\left (\tilde \zeta_{i_1}, \dots, \tilde \zeta_{i_k}, \tilde \zeta_{j_1}, \dots, \tilde \zeta_{j_l}\right) \right |\le C'n^{-c},
\end{eqnarray}
where the first sum runs over all $(k+l)$-tuples $(\zeta_{i_1}, \dots, \zeta_{i_k}, \zeta_{j_1}, \dots, \zeta_{j_l}) \in \R^{k}\times \C_{+}^{l}$ of the roots $\zeta_1, \zeta_2, \dots$ of $P_n$, and the second  sum runs over all $(k+l)$-tuples $(\tilde \zeta_{i_1}, \dots, \tilde \zeta_{i_k}, \tilde \zeta_{j_1}, \dots, \tilde \zeta_{j_l}) \in \R^{k}\times \C_{+}^{l}$ of the roots $\tilde \zeta_1, \tilde  \zeta_2, \dots$ of $ \tilde P_n$. 
\end{theorem}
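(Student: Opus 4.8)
The plan is to reduce Theorem~\ref{real} to the general real universality theorem, Theorem~\ref{greal}, by a rescaling. Put $Q_n(y):=P_n(y/n)=\sum_{j=0}^{n}c_j\xi_j\cos(jy/n)+\sum_{j=1}^{n}d_j\eta_j\sin(jy/n)$; this is of the form~\eqref{F} with deterministic functions taken from $\{c_j\cos(j\,\cdot/n)\}_{j=0}^{n}\cup\{d_j\sin(j\,\cdot/n)\}_{j=1}^{n}$, and its zero set is $n$ times that of $P_n$, so real zeros stay real and $\C_{+}$ is preserved. With $M:=\max_j\{|c_j|,|d_j|\}$ (positive for each fixed $n$), dividing $Q_n$ by the constant $M\sqrt n$ changes no zeros and yields $\hat Q_n=\sum_j\xi_j\hat\phi_j$ with $\hat\phi_j(y)=\tfrac{c_j}{M\sqrt n}\cos(jy/n)$ (and analogously for the sine terms). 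Under $x=y/n$, a test function supported on $\prod_i[x_i-1/n,x_i+1/n]\times\prod_j B(z_j,1/n)$ with $\norm{\triangledown^aG}_\infty\le n^{a}$ becomes one supported on boxes of radius $1$ with $\norm{\triangledown^aG}_\infty\le 1$; writing it as a sum of $O_{k,l}(1)$ pieces via a fixed smooth partition of unity, each piece is supported on a box of radius $1/100$ with all derivatives up to order $2(k+l)+4$ bounded by a constant, so it suffices to treat one piece, i.e. to invoke Theorem~\ref{greal}. I would take $\delta_n:=1/n$, $\alpha_1:=1/2$, $D_n:=\{z\in\C:|\Im z|\le C\}$ (the rescaled base points lie in $D_n$ since $|\Im z_j|\le C/n$), and $C_1:=2$; then $A$ and $c_1$ are the explicit constants of Theorem~\ref{greal}, and the task becomes the verification of Condition~{\bf C2} with these parameters for $\hat Q_n$ and $\hat{\tilde Q}_n$.

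Two parts of {\bf C2} are deterministic, and both rest on the bound $\sum_j|\hat\phi_j(z)|^2\asymp 1$ for all $z$ with $|\Im z|\le C+1$. The upper bound is immediate since $|\cos(jz/n)|,|\sin(jz/n)|=O(1)$ on that strip. For the lower bound I would discard the sine terms, restrict the sum to the progression $AP_0$ of Condition~{\bf C3} (where $|c_j|\ge\tau_1M$), and apply Lemma~\ref{J} with $a=\Re(z)/n$ to extract a subprogression $J$ of size $\gtrsim n$ on which $\cos^2(j\,\Re(z)/n)$ is bounded below; since $|\cos(jz/n)|^2\ge\cos^2(j\,\Re(z)/n)$, this gives $\sum_j|\hat\phi_j(z)|^2\ge\tfrac{1}{M^2n}\sum_{j\in J}|c_j|^2\cos^2(j\,\Re(z)/n)\gtrsim 1$. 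Granting this, delocalization~\eqref{cond-delocal} holds with $\alpha_1=1/2$ because $|\hat\phi_j(z)|=O(n^{-1/2})$, and the first two inequalities of~\eqref{cond-repulsion} hold because $\sup_{B(x,1)}|\hat\phi_j^{(m)}|=O(n^{-1/2})$ for $m=1,2$, so the relevant sums are $O(1)$, hence $O(\delta_n^{-c_1})$ times $\sum_j|\hat\phi_j(x)|^2$. For the mean-weighted inequality in~\eqref{cond-repulsion}, Condition~{\bf C4} is exactly what is used: only the finitely many indices $j<N_0$ contribute (the others have $\E\xi_j=0$), and for those $(j/n)^2=O(N_0^2/n^2)$, so after the $M\sqrt n$ normalization $\sum_{j<N_0}|\E\xi_j|\sup_{B}|\hat\phi_j''|=O(n^{\tau_0-5/2})=o(1)$, far below the right-hand side; the same observation bounds $\E\hat Q_n$ by $O(n^{\tau_0-1/2})$.

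The probabilistic parts are routine. For {\bf C2}~\eqref{cond-poly}: $\hat Q_n$ is entire, and writing $w=e^{iy/n}$ it is $w^{-n}$ times a polynomial of degree $2n$ in $w$, so it has at most $2n$ zeros in any disk $B(z,1)$; with $C_1=2$, $\delta_n=1/n$ one has $\delta_n^{-C_1}=n^{2}>2n\ge N$ for large $n$, the indicator $\textbf{1}_{N\ge\delta_n^{-C_1}}$ vanishes, and~\eqref{cond-poly} is trivial. For boundedness~\eqref{cond-bddn}: for fixed $w$, $\hat Q_n(w)-\E\hat Q_n(w)$ is a sum of independent mean-zero variables of total variance $\asymp 1$ with uniformly bounded $(2+\ep)$ central moments, so Markov on the $(2+\ep)$-moment together with a polynomial-size net on $B(z,2)$ exploiting analyticity (and the analogous bound on $\hat Q_n'$) gives $\sup_{B(z,2)}|\hat Q_n|\le n^{O(1)}\le\exp(\delta_n^{-c_1})$ off an event of probability $O(n^{-A})$, the implied powers being absorbed once $C_1$ is large. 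All of this applies verbatim to $\hat{\tilde Q}_n$, using that {\bf C1} and {\bf C4} give the $\tilde\xi_j,\tilde\eta_j$ unit variance, bounded $(2+\ep)$ moments, and $|\E\tilde\xi_j|\le 2n^{\tau_0}$.

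The one genuinely substantive step — and the main obstacle — is the anti-concentration condition {\bf C2}~\eqref{cond-smallball}: with probability $1-O(n^{-A})$ there must be $z'\in B(z,1/100)$ with $|\hat Q_n(z')|\ge\exp(-\delta_n^{-c_1})=\exp(-n^{c_1})$. This is precisely the phenomenon isolated in Lemma~\ref{lmanti_concentration} (built on the Tur\'an--Hal\'asz machinery), and I would verify its hypotheses rather than reprove it. The key input is again furnished by Condition~{\bf C3} and Lemma~\ref{J}: at $z$ (indeed at any point of $B(z,1/100)$) there are $\gtrsim n$ indices $j\in AP_0$ with $|\hat\phi_j(z)|\gtrsim n^{-1/2}$, so $\hat Q_n(z)=\sum_j\xi_j\hat\phi_j(z)$ is a normalized sum of independent variables a positive proportion of whose terms are comparably large — exactly the regime in which the Tur\'an--Hal\'asz small-ball estimate applies — while Lemma~\ref{2norm} is what converts such single-point information into the required statement about the existence of a good $z'$ in the whole disk. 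The threshold $\exp(-n^{c_1})$ is so weak (with $n^{c_1}\to\infty$) that the estimate closes with ample margin. With all of {\bf C2} in hand, Theorem~\ref{greal} applies to each partition-of-unity piece, and summing the $O_{k,l}(1)$ resulting bounds gives $C'n^{-c}$. Theorem~\ref{complex} is proved the same way through Theorem~\ref{gcomplex}, omitting~\eqref{cond-repulsion} and Condition~{\bf C4}, which are needed only to control the real roots.
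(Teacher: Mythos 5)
Your plan is correct and follows the paper's route: both reduce Theorem~\ref{real} to Theorem~\ref{greal} by rescaling the polynomial so that $\delta_n = 1/n$, and verify Condition~{\bf C2} via Lemma~\ref{lmanti_concentration} (anti-concentration), Lemma~\ref{J} and Condition~{\bf C3} (delocalization and the lower bound $\sum_j|\phi_j|^2 = \Omega(n)$), the degree-$2n$ periodicity argument (zero count {\bf C2}\eqref{cond-poly}), a moment/union bound (boundedness {\bf C2}\eqref{cond-bddn}), and the direct computation using {\bf C4} for the derivative-growth condition {\bf C2}\eqref{cond-repulsion}. Two cosmetic differences from the paper: the paper rescales by $10^{4}Cz/n$ rather than $z/n$ precisely so that balls of radius $1/n$ map into balls of radius $<1/100$, sidestepping your partition-of-unity step (both work), and it normalizes by setting $\max_j\{|c_j|,|d_j|\}=1$ rather than dividing the whole sum by $M\sqrt n$. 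One item worth correcting: you invoke Lemma~\ref{2norm} as part of verifying Condition~{\bf C2}\eqref{cond-smallball}, but that lemma belongs to the internal machinery of the proofs of Theorems~\ref{gcomplex}--\ref{greal}, not to the verification of their hypotheses. The verification of {\bf C2}\eqref{cond-smallball} needs only Lemma~\ref{lmanti_concentration}: after conditioning on the $\eta_j$ and the $\xi_j$ with $j\notin AP_0$, it produces a real point $x$ in an interval of length $\sim 1/n$ around $\Re(z_0)$ with $|P_n(x)|\ge \exp(-n^{c_1})$ with probability $1-O(n^{-A})$, and this $x$ directly supplies the required $z'\in B(z,1/100)$ once the rescaling constant is large enough.
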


To the best of our knowledge, the above theorems seem to be the first universality results concerning local statistics of the roots of 
random trigonometric polynomials. To make a comparison to existing literature, let us focus on
the distribution of real roots, which is the case $k=1, l=0$ in Theorem \ref{real}). 

The number of real roots has been a main focus of the study of random trigonometric polynomials. 
The Gaussian setting has been investigated by a number of reseachers, including Dunnage \cite{Dunnage1966number}, Sanbandham \cite{sambandham1978number}, Das \cite{das1968trig}, Wilkins \cite{wilkins1991trig}, Edelman and Kostlan
\cite{EK} and many others. One can compute an exact 
answer for the expectation using either Kac-Rice formula or Edelman-Kostlan formula \cite{EK}.

For the non-Gaussian case, little has been known until very recently. Angst and Poly \cite{angstpoly}, in a recent preprint, proved the asymptotics of the mean number of roots of $P_n$ in a fixed interval $[a, b]$ under the assumptions of finite fifth moment and a Cramer-type condition. Their approach introduced a novel way to work with the Kac-Rice formula which had been considered to be difficult in discrete settings. Using an approach originated by Erd\H{o}s-Offord \cite{EO} and later developed by Ibragimov-Maslova \cite{Ibragimov1968average} \cite{Ibragimov1971expected1}, Flasche \cite{flasche} extended the result in \cite{angstpoly} with assumptions on the first two moments only. Let $N_{P_n}(a, b)$ denote the number of real roots of $P_n$ in an interval $[a, b]$.

\begin{theorem} [Flasche \cite{flasche}]\label{flasche}
	Let $u\in \R$ and $0\le a< b\le 2\pi$ be fixed numbers. Let $P_n(x) = u\sqrt n + \sum_{j=0}^{n} \xi_j\cos(jx) + \sum_{j=1}^{n} \eta_j\sin(jx)$ where $\xi_j$ and $\eta_j$, $j\in \N$, are iid random variables with mean 0 and variance 1. Then
	$$\lim_{n\to \infty} \dfrac{\E N_{P_n}(a, b)}{n} = \frac{b-a}{\pi\sqrt 3} \exp\left (-\frac{u^{2}}{2}\right ).$$
\end{theorem}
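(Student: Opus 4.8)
The plan is to deduce Theorem~\ref{flasche} from the real universality theorem for trigonometric polynomials (Theorem~\ref{real}) with $k=1$, $l=0$, by first replacing the coefficients of $P_n$ by standard Gaussians and then evaluating the resulting Gaussian expectation via the Kac--Rice formula. Before invoking Theorem~\ref{real} one checks the hypotheses. Condition~\textbf{C3} holds trivially: all coefficients equal $1$, so $AP_0=\{1,\dots,n\}$. The deterministic term $u\sqrt n$ is the coefficient of $\cos(0\cdot x)\equiv 1$, so we absorb it by declaring the effective zeroth coefficient to be the mean-$u\sqrt n$, variance-one variable $u\sqrt n+\xi_0$; since $\tau_0=1/2+10^{-11}\ep>1/2$, this is allowed by Condition~\textbf{C4} with $N_0=1$, and the same shift is kept in the Gaussian comparison polynomial so the ``finitely many exceptions'' clause of Condition~\textbf{C1} is satisfied with a zero discrepancy of means. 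The one point that is not immediate is the bounded $(2+\ep)$ central moment requirement of Condition~\textbf{C1}, which is not assumed by Flasche: this is handled by a truncation, i.e. one replaces $\xi_j,\eta_j$ by their truncations at a level $M=M_n$, shows the expected number of real roots changes by $o(n)$, and checks \textbf{C1} for the truncated ensemble.

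Granting this, let $\tilde P_n(x)=u\sqrt n+\sum_{j=0}^{n}g_j\cos(jx)+\sum_{j=1}^{n}h_j\sin(jx)$ with $g_j,h_j$ i.i.d.\ standard Gaussians. To pass from the local statement of Theorem~\ref{real} to the global count $\E N_{P_n}(a,b)$, fix a smooth $g:\R\to[0,1]$ supported in $(0,1)$ with $g\equiv1$ on $[1/4,3/4]$, and set $G_m(x):=g(nx-m)$. Each $G_m$ is supported in an interval of length $1/n$ with $\norm{\triangledown^aG_m}_\infty\le Cn^{a}$, so Theorem~\ref{real} gives $\bigl|\E\sum_i G_m(\zeta_i)-\E\sum_i G_m(\tilde\zeta_i)\bigr|\le C'n^{-c}$ for each $m$. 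Choosing a subfamily of $\Theta(n(b-a))$ such bumps whose supports tile $[a,b]$ up to $O(1/n)$ at each endpoint, summing, and comparing $\sum_m G_m$ with $\mathbf{1}_{[a,b]}$ (the mismatch lives in $O(1)$ boundary intervals, each contributing $O(1)$ to either expected count) yields
$$\bigl|\E N_{P_n}(a,b)-\E N_{\tilde P_n}(a,b)\bigr|\le C'(b-a)\,n^{1-c}+O(1)=o(n),$$
so after dividing by $n$ it remains only to compute $\lim_n \E N_{\tilde P_n}(a,b)/n$.

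For the Gaussian ensemble, at each fixed $x$ the pair $\bigl(\tilde P_n(x),\tilde P_n'(x)\bigr)$ is jointly Gaussian with $\Cov\bigl(\tilde P_n(x),\tilde P_n'(x)\bigr)=\sum_{j=1}^{n}\bigl(-j\cos jx\sin jx+j\sin jx\cos jx\bigr)=0$, hence independent, with $\E\tilde P_n(x)=u\sqrt n$, $\Var\tilde P_n(x)=n+1$, and $\Var\tilde P_n'(x)=\sum_{j=1}^{n}j^{2}=\tfrac{n(n+1)(2n+1)}{6}$. The Kac--Rice formula gives
$$\E N_{\tilde P_n}(a,b)=\int_a^b \frac{1}{\sqrt{2\pi(n+1)}}\exp\!\Bigl(-\frac{u^{2}n}{2(n+1)}\Bigr)\,\E\bigl|\tilde P_n'(x)\bigr|\,dx,$$
and $\E|\tilde P_n'(x)|=\sqrt{2/\pi}\,\bigl(\sum_{j=1}^n j^2\bigr)^{1/2}\sim\sqrt{2/\pi}\,n^{3/2}/\sqrt3$, so the integrand tends to $\tfrac{n}{\pi\sqrt3}e^{-u^{2}/2}$ uniformly on $[a,b]$. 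Therefore $\E N_{\tilde P_n}(a,b)/n\to\tfrac{b-a}{\pi\sqrt3}e^{-u^{2}/2}$, which together with the previous estimate completes the proof.

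The main obstacle is the very first reduction: verifying Condition~\textbf{C1} from the bare variance-one hypothesis. The only cost-free deterministic input is $N_{P_n}(a,b)\le 2n$, which forces the truncation level $M_n$ to grow with $n$ in order to make the exceptional event negligible, and this in turn conflicts with the \emph{uniform} $(2+\ep)$-moment bound demanded by \textbf{C1}. Resolving this needs a genuine estimate (of Jensen / log-integrability type, or a direct second-moment computation) showing that the large-coefficient part of $P_n$ creates only $o(n)$ expected real roots, so that one may truncate at a constant (or slowly growing) level. The remaining ingredients --- the partition-of-unity passage from the local universality estimate to the global count, and the Kac--Rice evaluation --- are routine.
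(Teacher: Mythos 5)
The paper does not prove Theorem~\ref{flasche}: it is cited verbatim from Flasche's work and appears only as background for comparison with the paper's own Corollary~\ref{maincor}. Immediately after that corollary the authors state that their atom variables ``are required to have bounded $(2+\ep)$-moments'' and that whether this assumption can be removed ``is an interesting open problem.'' So the paper does not recover Flasche's result in its stated generality (mean zero, variance one, no higher-moment assumption), and does not claim to.

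Your proposal tries precisely the reduction the paper declines to make, and you have correctly located where it breaks: Condition~\textbf{C1} requires a uniform $(2+\ep)$ central moment bound, which Flasche's hypothesis does not provide. The truncation you sketch does not repair this: if you truncate at a level $M_n\to\infty$, the recentered and renormalized truncated variable has $(2+\ep)$-moment of order $M_n^{\ep}$, so the constants $C',c$ in Theorem~\ref{real} (which depend on the moment bound $\tau$) degenerate with $n$; if $M_n$ stays bounded, the variance of the discarded part is a non-vanishing fraction of the total and nothing in the paper bounds the resulting change in $\E N_{P_n}(a,b)$ by $o(n)$. This is exactly the ``genuine estimate\dots showing that the large-coefficient part of $P_n$ creates only $o(n)$ expected real roots'' that you flag at the end of your write-up, and without it the argument is not a proof of the theorem as stated.

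Under the paper's stronger moment hypothesis, the remainder of your argument is correct and coincides with the paper's own route: absorbing $u\sqrt n$ into $\E\xi_0$ is allowed by Conditions~\textbf{C1} and \textbf{C4} with $N_0=1$ (since $u\sqrt n\le n^{\tau_0}$ for large $n$); the partition-of-unity passage from the local statement of Theorem~\ref{real} to the global count $\E N_{P_n}(a,b)$ is exactly the content of the paper's Theorem~\ref{comparison}; and your Kac--Rice evaluation agrees with Corollary~\ref{maincor} specialized to $c_i=d_i=1$. What you have, once executed carefully, is a reproof of that special case of Corollary~\ref{maincor} --- a result the paper regards as an \emph{extension} of Flasche's (more general in coefficients, more restrictive in moments), not the theorem you set out to prove.
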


Notice that in this theorem, the interval $[a,b]$ contains a linear number of roots. 
For smaller intervals, a few years ago, Aza{\"\i}s and coauthors \cite{azais2015local} showed that if $\xi_i$ and $\eta_i$ are iid with a smooth density function, then in an interval of size $\Theta(1/n)$, the number of real zeros converges in distribution to that of a suitable Gaussian process (and is thus universal). In an even more recent paper \cite{iksanov2016local}, Iksanov-Kabluchko-Marynych removed the assumption of smooth density, using a different method.

\begin{theorem} [Iksanov-Kabluchko-Marynych \cite{iksanov2016local}]\label{IKK}
	Let $P_n(x) = \sum_{j=0}^{n} \xi_j\cos(jx) + \sum_{j=1}^{n} \eta_j\sin(jx)$ where $(\xi_j,\eta_j)$, $j\in \N$, are iid real random vectors with mean 0 and unit covariance matrix. Let $(s_n)$ be any sequence of real numbers and $[a, b]\subset \R$ a fixed interval. Then
	$$N_{P_n} \left (s_n + \frac{a}{n}, s_n + \frac{b}{n}\right ) \underset{n\to \infty}{\overset{d}{\longrightarrow}} N_{Z}(a, b)$$
	where $(Z(t))_{t\in R}$ is the stationary Gaussian process with mean 0 and covariance matrix 
	$$\Cov(Z(t), Z(s))=\begin{cases}
	\frac{\sin(t-s)}{t-s} \quad\mbox{if } t\neq s\\
	1 \quad\mbox{if } t = s.
	\end{cases}$$ 
	\end{theorem}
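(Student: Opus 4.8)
The plan is to deduce the theorem from the real universality theorem for trigonometric polynomials (Theorem~\ref{real}) together with the classical Kac--Rice analysis of the Gaussian limit, the glue being the method of moments for point processes. Rescale around $s_n$: let $\mu_n := \sum_i \delta_{\,n(\zeta_i-s_n)}$ be the random point measure on $\R$ of the rescaled real roots of $P_n$, let $\tilde\mu_n$ be the analogue for a Gaussian companion $\tilde P_n(x)=\sum_{j=0}^n\tilde\xi_j\cos(jx)+\sum_{j=1}^n\tilde\eta_j\sin(jx)$ with i.i.d.\ standard Gaussian coefficients, and let $\mu_Z$ be the zero point process of $Z$ on $\R$. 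Since $Z(a)$ and $Z(b)$ are non-degenerate Gaussian variables, a.s.\ $\mu_Z(\{a\})=\mu_Z(\{b\})=0$, so $\mu\mapsto\mu((a,b))$ is a.s.\ continuous at $\mu_Z$; hence it suffices to establish $\mu_n\Rightarrow\mu_Z$ in the vague topology, which gives $N_{P_n}(s_n+a/n,\,s_n+b/n)=\mu_n((a,b))\Rightarrow\mu_Z((a,b))=N_Z(a,b)$.

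\emph{Step 1: reduction to the Gaussian model.} First I would show that all correlation (factorial moment) measures of $\mu_n$ and of $\tilde\mu_n$ have the same vague limit. One may assume the pairs $(\xi_j,\eta_j)$ satisfy Condition~\textbf{C1} after the standard truncation (which perturbs $P_n$ by a lower-order term); Condition~\textbf{C3} holds here with $\tau_1=1$ and $AP_0=\{1,\dots,n\}$, and Condition~\textbf{C4} is automatic since all means vanish (I also take the two components of each pair $(\xi_j,\eta_j)$ independent, which is forced in the Gaussian model by the unit-covariance hypothesis; the general uncorrelated case needs only a minor modification of the swap). For a fixed smooth compactly supported $g:\R^k\to\R$, the pairing of $g$ against the $k$-point factorial moment measure of $\mu_n$ is $\E\sum g\big(n(\zeta_{i_1}-s_n),\dots,n(\zeta_{i_k}-s_n)\big)$ over $k$-tuples of pairwise distinct roots of $P_n$. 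Writing $G(x_1,\dots,x_k):=g(n(x_1-s_n),\dots,n(x_k-s_n))$, cutting $G$ into $O_g(1)$ pieces each supported on a product of radius-$1/n$ windows and normalizing so that $\norm{\triangledown^aG}_\infty\le n^a$, Theorem~\ref{real} with $l=0$ gives $\big|\E\sum G(\zeta_{i_1},\dots,\zeta_{i_k})-\E\sum G(\tilde\zeta_{i_1},\dots,\tilde\zeta_{i_k})\big|\le C'n^{-c}$; passing from these sums (which run over all tuples) to the distinct-tuple sums costs only lower-order correlations in $O(1/n)$-windows, which are $O(1)$ by Condition~\textbf{C2}\eqref{cond-poly} and match term by term. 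Letting $n\to\infty$ gives the claim.

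\emph{Step 2: the Gaussian limit, and conclusion.} Using $\cos(jx)\cos(jy)+\sin(jx)\sin(jy)=\cos(j(x-y))$, for $x=s_n+t/n$ and $y=s_n+s/n$ one has, \emph{independently of $s_n$},
\begin{equation}\nonumber
\Cov\!\big(\tilde P_n(x),\tilde P_n(y)\big)=\sum_{j=0}^{n}\cos\!\Big(\tfrac{j}{n}(t-s)\Big)=(n+1)\cdot\frac{1}{n+1}\sum_{j=0}^{n}\cos\!\Big(\tfrac{j}{n}(t-s)\Big),
\end{equation}
and the Riemann sum converges, with all its $t$-derivatives, uniformly on compacts to $\int_0^1\cos(u(t-s))\,du=\frac{\sin(t-s)}{t-s}$. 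Hence $(n+1)^{-1/2}\tilde P_n(s_n+\cdot/n)$ converges in distribution, as a $C^\infty$ Gaussian process on compact intervals, to $Z$, and the limit is non-degenerate: $\Cov(Z(t))=1$, $\Var(Z'(t))=\tfrac13$, $\Cov(Z(t),Z'(t))=0$, with $\tilde P_n$ matching these uniformly for large $n$. Scaling does not move zeros, so $\tilde\mu_n$ is the zero process of $(n+1)^{-1/2}\tilde P_n(s_n+\cdot/n)$, and by the Kac--Rice formula its correlation measures converge to those of $\mu_Z$ (the $k$-th being $\int_{(a,b)^k}$ of the density at $0$ of $(Z(t_1),\dots,Z(t_k))$ times $\E[\prod_i|Z'(t_i)|\mid Z(t_i)=0\ \forall i]$). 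Combining with Step~1, all correlation measures of $\mu_n$ converge vaguely to those of $\mu_Z$; since $\mu_Z$, the zero set of a smooth stationary Gaussian process, has on every bounded interval a count with finite exponential moments and is therefore determined by its correlation measures, and since $\sup_n\E\,\mu_n(K)<\infty$ on compacts by Condition~\textbf{C2}\eqref{cond-poly} supplies tightness, this upgrades to $\mu_n\Rightarrow\mu_Z$, completing the proof.

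\emph{Main obstacle.} The substantive point is Step~1 --- turning the smooth-test-function comparison that Theorem~\ref{real} delivers into control of the honest point process at the critical scale $1/n$: this forces test functions whose derivatives saturate $\norm{\triangledown^aG}_\infty\le n^a$, it requires the a priori root-count bounds for trigonometric polynomials that underlie Condition~\textbf{C2}\eqref{cond-poly} (so that both the lower-order correlations and the tightness bound are controlled), and, at an elementary level, it needs the truncation bringing the bare second-moment hypothesis into the form of Condition~\textbf{C1}. By contrast, Step~2 is a one-line Riemann-sum asymptotic plus a standard Kac--Rice continuity argument, and the final point-process upgrade is classical once moment-determinacy and tightness are in hand.
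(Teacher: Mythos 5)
This statement is quoted by the paper from the literature (\cite{iksanov2016local}); the paper does not prove it, and indeed the authors say explicitly that they \emph{cannot} with their machinery: ``On the other hand, our atom variables are required to have bounded $(2+\ep)$-moments. It is an interesting open problem to see to what extent this assumption is necessary.'' Your strategy of reducing to the Gaussian model via Theorem~\ref{real} and then upgrading convergence of factorial-moment measures to point-process convergence is a plausible route, and modulo the issue below it would prove a \emph{weaker} statement (convergence in distribution for coefficients with bounded $(2+\ep)$-moments) that is strictly stronger than Corollary~\ref{maincor} (which only gives the expectation) and strictly weaker in hypotheses than Theorem~\ref{IKK}.

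The genuine gap is the parenthetical ``(which perturbs $P_n$ by a lower-order term)'' in Step~1. Theorem~\ref{IKK} assumes only mean zero and unit covariance, i.e.\ finite second moments, whereas Theorem~\ref{real} (via Condition~\textbf{C1}) requires bounded $(2+\ep)$ central moments. Truncating $\xi_j$ at level $M$ and renormalizing replaces $P_n$ by $P_n^{(M)}$ plus a random error polynomial whose variance at each point is $\Theta(\eps(M)\,n)$ with $\eps(M)=\E[\xi^2\mathbf{1}_{|\xi|>M}]\to 0$, versus $\Theta(n)$ for the main term; this is ``small'' only in $L^2$, not deterministically, and in a window of length $1/n$ (where one is counting an $O(1)$ number of zeros) there is no soft argument showing that an $L^2$-small random perturbation does not change the root count. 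Controlling exactly this is the substance of the second-moment results in the Erd\H{o}s--Offord line and of \cite{iksanov2016local} itself, which proceeds by a Donsker-type invariance principle for the rescaled process $(n+1)^{-1/2}P_n(s_n+\cdot/n)$ plus continuity of the zero-count functional --- a route that bypasses correlation-function universality entirely and is why only second moments are needed. So ``standard truncation'' here is not standard; it is the whole point. Two smaller, acknowledged-but-unaddressed issues: Condition~\textbf{C1} requires all $2n+1$ coefficients to be \emph{independent}, whereas the theorem allows $\xi_j$ and $\eta_j$ to be dependent (only uncorrelated) within a pair, so the Lindeberg swap needs to swap pairs rather than single coordinates and one must recheck the third-order error terms; and the claim that $N_Z(a,b)$ has finite exponential moments (needed for moment-determinacy) is used without a reference or proof.
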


In all of these previous works, the coefficients $c_i, d_i$ are: $c_i=d_i=1$ or $c_i=1, d_i=0$. 
Our setting is more general, as we only require a linear fraction of the  $c_i$ to be sufficiently large and 
allow the rest of the (smaller) 
coefficients to be arbitrary.

Our result implies the following corollary concerning the number of real roots. 

\begin{theorem}\label{comparison}
	Under the assumptions of Theorem \ref{real}, there exist positive constants $C$ and $c$ such that for any $n$ and for any numbers $a_n< b_n$, we have
	$$\frac{|\E N_{P_n}(a_n, b_n) - \E N_{\tilde P_n}(a_n, b_n)|}{(b_n-a_n)n}\le Cn^{-c}\left (1 + \frac{1}{(b_n-a_n)n}\right ).$$
\end{theorem}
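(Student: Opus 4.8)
The plan is to deduce Theorem~\ref{comparison} from Theorem~\ref{real} (used with $k=1$, $l=0$) together with one a priori estimate on the local root count. Since Theorem~\ref{real} only controls test functions supported on intervals of length $O(1/n)$, I would first reduce to the \emph{local} statement $(\star)$: for every interval $J\subseteq\R$ with $|J|\le 1/n$, $\bigl|\E N_{P_n}(J)-\E N_{\tilde P_n}(J)\bigr|\le Cn^{-c}$. Granting $(\star)$, write $M:=\lceil(b_n-a_n)n\rceil$, split $(a_n,b_n]$ into $M$ consecutive half-open intervals $I_1,\dots,I_M$ of length $(b_n-a_n)/M\le 1/n$ (the division points chosen outside an at-most-countable exceptional set, so that the root count is a.s.\ additive over the partition), and sum $(\star)$ over the $M\le(b_n-a_n)n+1$ pieces using $\E N_{P_n}(a_n,b_n]=\sum_m\E N_{P_n}(I_m)$: this gives $|\E N_{P_n}(a_n,b_n)-\E N_{\tilde P_n}(a_n,b_n)|\le Cn^{-c}\bigl((b_n-a_n)n+1\bigr)$, which is the claim after dividing by $(b_n-a_n)n$. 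A cruder tiling of $[a_n,b_n]$ by smooth bumps of width exactly $1/n$ would not suffice: it leaves a boundary discrepancy of order $\Theta(1)$ (namely $\E N_{P_n}$ of an interval of length of order $1/n$), whereas the statement demands $O(n^{-c})$; this is what forces the sub-$(1/n)$ scale below.

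To prove $(\star)$ I would sandwich $\mathbf 1_J$ between smooth functions. Fix a small constant $\gamma>0$ (to be chosen in terms of the universality exponent $c_0$ of Theorem~\ref{real} and $A_0:=2(k+l)+4$) and put $w:=n^{-1-\gamma}$. Choose smooth $G^-,G^+:\R\to[0,1]$ with $G^-\le\mathbf 1_J\le G^+$, with $\operatorname{supp}(G^+-G^-)$ contained in the two $w$-neighbourhoods of $\partial J$, with $\operatorname{supp}G^{\pm}$ contained in an interval of length $|J|+2w\le 2/n$, and with $\norm{\triangledown^aG^{\pm}}_\infty\le Cw^{-a}$. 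Applying Theorem~\ref{real} to $G^{\pm}$ (after rescaling by a factor of order $(nw)^{A_0}$ to meet the normalization $\norm{\triangledown^aG}_\infty\le n^a$, at the cost of a factor $(nw)^{-A_0}=n^{\gamma A_0}$ in the bound) gives $|\E\sum G^{\pm}(\zeta_i)-\E\sum G^{\pm}(\tilde\zeta_i)|\le C'n^{\gamma A_0-c_0}$. Combining this with $\E\sum G^-(\zeta_i)\le\E N_{P_n}(J)\le\E\sum G^+(\zeta_i)$ (and likewise for $\tilde P_n$) and with the smoothing bound $\E\sum(G^+-G^-)(\zeta_i)\le\E N_{P_n}(\text{two intervals of length }2w)$ yields $(\star)$ with $c=\min(\gamma,\,c_0-\gamma A_0,\,c_1)$, provided $\gamma<c_0/A_0$; here $c_1>0$ is the exponent appearing in the a priori estimate below.

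The one genuinely analytic ingredient, and the step I expect to be the main obstacle, is the a priori bound $\E N_{P_n}(K)\le C\bigl(n|K|+n^{-c_1}\bigr)$ for every interval $K\subseteq D_n$ (and likewise for $\tilde P_n$), which is what makes the smoothing term $\E\sum(G^+-G^-)(\zeta_i)\le C(nw+n^{-c_1})=C(n^{-\gamma}+n^{-c_1})$ negligible. For $|K|\ge w$ I would obtain it by bounding $\E N_{P_n}(K)\le\E\sum\psi(\zeta_i)$ for a smooth $\psi\ge\mathbf 1_K$ of width $|K|(1+o(1))$, comparing $P_n$ with a Gaussian reference via Theorem~\ref{real} (the reference keeping the coefficients $c_j,d_j$ and, if necessary, the first $N_0$ of the random variables, so that Conditions \textbf{C1} and \textbf{C4} persist), and invoking that a Gaussian trigonometric polynomial satisfying Condition \textbf{C3} has root density $O(n)$ uniformly on $D_n$; for $|K|<w$ one enlarges $K$ to width $w$. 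This uniform density bound is the crux: via the Kac--Rice / Edelman--Kostlan formula \cite{EK} it reduces to the pointwise lower bound $\sum_j|c_j|^2\cos^2(jx)+|d_j|^2\sin^2(jx)\ge cn\max_j\{|c_j|^2,|d_j|^2\}$ valid for \emph{all} real $x$, and establishing this near the degenerate arguments $x\equiv 0,\pi\pmod{2\pi}$ is precisely where one uses that $AP_0$ is an \emph{interval} of linear length (cf.\ Lemma~\ref{J}). The remaining points---fixing $\gamma$, passing between half-open and closed intervals, and bookkeeping the rescaling constants into $C'$---are routine.
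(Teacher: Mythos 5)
Your argument is essentially the paper's own proof: reduce to the gaussian via the triangle inequality, tile $(a_n,b_n)$ into at most $(b_n-a_n)n+1$ cells of length at most $1/n$, smooth-approximate the indicator at the finer scale $n^{-1-\gamma}$ (paying the rescaling penalty $n^{\gamma A_0}$ when invoking Theorem~\ref{real}), and control the resulting boundary discrepancy by a Kac--Rice density estimate for the gaussian polynomial --- the paper's quantity $\mathcal{I}_{\tilde P_n}:=\sup_x\E N_{\tilde P_n}(x-n^{-1-\alpha},x)$ is exactly your a priori estimate, established there directly for $\tilde P_n$ rather than transferred from $P_n$. The paper uses a single one-sided smooth majorant (together with a trivial case split $\E N_{P_n}\ge 0$ for very short intervals) in place of your two-sided sandwich, but that is cosmetic and saves one invocation of Theorem~\ref{real}.

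One point you would need to fix when carrying out the Kac--Rice computation: under Condition \textbf{C4} the gaussian root density on $D_n$ is not $O(n)$ but $O(n^{1+\ep'})$ with $\ep':=\tau_0-\tfrac12=10^{-11}\ep$, because the drift contribution $\bigl(|m'|\mathcal{P}+|m|\mathcal{R}\bigr)/\mathcal{P}^{3/2}$ dominates $\sqrt{\mathcal{Q}/\mathcal{P}}$ by a factor $n^{\ep'}$ when the first $N_0$ means are as large as $n^{\tau_0}$. This is harmless provided your smoothing exponent $\gamma$ exceeds $\ep'$, which is compatible with your constraint $\gamma<c_0/A_0$ since $\ep'=10^{-11}\ep$ is much smaller than the universality exponent $c_0=\Theta(\ep)$ --- this is precisely the verification $\tau_0-\tfrac12\le\alpha/2$ the paper performs at the end of its Kac--Rice estimate.
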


By using the Kac-Rice formula (Proposition \ref{KacRice}) for the Gaussian case, we obtain the following
precise estimate.

\begin{cor}\label{maincor} Let $C, \ep$ and $\tau_1$ be positive constants. Let 
	$-C\le u_n\le C$ be a deterministic number. Let
	$$P_n(x) = u_n\sqrt{\sum_{i=0}^{n}c_i ^{2}} + \sum_{j=0}^{n}c_j \xi_j\cos(jx) + \sum_{j=1}^{n} c_j\eta_j\sin(jx)$$ 
	where $\xi_j$ and $\eta_j$, $j\le n$, are independent (not necessarily identically distributed) real random variables with mean 0, variance 1 and $(2+\ep)$-moments bounded by $C$, and the real coefficients $c_j$ satisfy condition {\bf C3}. 
 Then for any numbers $a_n< b_n$, we have
	$$\E N_{P_n}(a_n, b_n) = \frac{b_n-a_n}{\pi} \sqrt{\frac{\sum_{j=0}^{n} c_j^{2}j^{2}}{\sum_{j=0}^{n} c_j^{2}}}\exp\left (-\frac{u_n^{2}}{2}\right ) + O\left (n^{-c} \right ) ((b_n-a_n)n + 1) $$
where the positive constant $c$ and the implicit constant depend only on $C,  \ep$ and $\tau_1$.
\end{cor}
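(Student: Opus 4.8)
The plan is to combine the universality result Theorem~\ref{comparison} with an exact Kac--Rice computation in the Gaussian case. Concretely, let $\tilde P_n$ be the analogue of $P_n$ in which the coefficients $\xi_j,\eta_j$ are replaced by i.i.d.\ standard real Gaussians $g_j, h_j$ (keeping the same deterministic $c_j$ and the same shift $u_n\sqrt{\sum_i c_i^2}$). Since the $\xi_j,\eta_j$ have mean $0$, variance $1$ and uniformly bounded $(2+\ep)$-moments, and the $c_j$ satisfy Condition~\textbf{C3}, the pair of sequences satisfies Conditions~\textbf{C1} and~\textbf{C4} (the constant shift $u_n\sqrt{\sum_i c_i^2}$ is absorbed into the $i=0$ term as a mean that is $O(\sqrt n)$, which is permitted by~\textbf{C4} since $\tau_0>1/2$). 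Hence Theorem~\ref{comparison} applies and gives
$$\E N_{P_n}(a_n,b_n) = \E N_{\tilde P_n}(a_n,b_n) + O\left(n^{-c}\right)\bigl((b_n-a_n)n + 1\bigr),$$
so it remains only to evaluate $\E N_{\tilde P_n}(a_n,b_n)$ exactly.

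For the Gaussian function $\tilde P_n$, I would apply the Kac--Rice formula (Proposition~\ref{KacRice}). Write $\tilde P_n(x) = m(x) + X(x)$ where $m(x) = u_n\sqrt{\sum_i c_i^2}$ is the deterministic mean and $X(x) = \sum_{j=0}^n c_j g_j\cos(jx) + \sum_{j=1}^n c_j h_j\sin(jx)$ is a centered Gaussian process. A direct computation gives the stationary-type covariance structure: $\Var X(x) = \sum_{j=0}^n c_j^2 =: S_0$ (using $\cos^2 + \sin^2 = 1$, with the $j=0$ term contributing $c_0^2\cos^2(0) = c_0^2$), $\Var X'(x) = \sum_{j=0}^n c_j^2 j^2 =: S_2$, and crucially $\E[X(x)X'(x)] = \sum_j c_j^2 j(-\cos(jx)\sin(jx) + \sin(jx)\cos(jx)) = 0$, so $X(x)$ and $X'(x)$ are independent at each fixed $x$. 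The Kac--Rice density at $x$ is then
$$\rho(x) = \frac{1}{\pi}\sqrt{\frac{S_2}{S_0}}\,\exp\left(-\frac{m(x)^2}{2S_0}\right) = \frac{1}{\pi}\sqrt{\frac{\sum_j c_j^2 j^2}{\sum_j c_j^2}}\,\exp\left(-\frac{u_n^2}{2}\right),$$
which is constant in $x$; integrating over $[a_n,b_n]$ yields $\E N_{\tilde P_n}(a_n,b_n) = \frac{b_n-a_n}{\pi}\sqrt{\sum_j c_j^2 j^2 / \sum_j c_j^2}\,\exp(-u_n^2/2)$, matching the claimed main term.

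The only real subtlety is justifying the naive application of Kac--Rice: one must check the regularity hypotheses of Proposition~\ref{KacRice} (e.g.\ nondegeneracy of the joint law of $(X(x),X'(x))$, which holds as long as $S_0, S_2>0$; Condition~\textbf{C3} forces $S_0 \ge \tau_1^2 (cn)\max_j\{\dots\}^2 > 0$ and similarly $S_2$ is comparable up to $n^2$ factors), and that the exceptional finitely-many non-matching means in Condition~\textbf{C4} do not affect the Gaussian covariance computation (they do not, since we took $\tilde P_n$ to have exactly the same deterministic shift). I expect this bookkeeping to be routine; the substantive content is entirely carried by Theorem~\ref{comparison}, whose proof is where the work lies. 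A final remark: the error term $O(n^{-c})((b_n-a_n)n+1)$ is stated exactly as in Theorem~\ref{comparison}, so no further manipulation is needed, and the dependence of $c$ and the implicit constant only on $C,\ep,\tau_1$ follows by tracking these through Conditions~\textbf{C1}, \textbf{C3}, \textbf{C4} and the constants produced by Theorems~\ref{real} and~\ref{comparison}.
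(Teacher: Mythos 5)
Your proposal is correct and follows essentially the same route as the paper: reduce to the Gaussian case via Theorem~\ref{comparison}, then evaluate by Kac--Rice. The paper actually proves the more general statement of Remark~\ref{rmk1}, where the deterministic drift $m(x)$ is a genuine low-degree trigonometric polynomial rather than a constant; this makes $q(x)=m'(x)/\sqrt{\sum c_j^2 j^2}$ nonzero and forces the authors to expand $\phi(m(x)/\sqrt{\sum c_j^2})$ and $2\phi(q(x))+q(x)(2\Phi(q(x))-1)$ asymptotically, introducing error terms $O(n^{-\alpha})$ that are then folded into the final bound. Your version, by restricting to the corollary as stated (constant mean, $c_j=d_j$), gets $\rho\equiv 0$, $\eta\equiv 0$, and a density that is \emph{exactly} constant, so no asymptotic approximation is needed and the Gaussian expectation is literally the main term. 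Both computations agree because in the corollary the extra Remark-level drift terms are absent. One small point worth flagging: you assert the absorbed mean $u_n\sqrt{\sum_i c_i^2}/c_0$ is $O(\sqrt{n})$; strictly this requires $c_0$ to be bounded below (Condition~\textbf{C3} constrains only $c_i$ for $i\geq 1$), so the magnitude is really $O(\sqrt{n}/|c_0|)$, which must still be $\leq n^{\tau_0}$ for~\textbf{C4} to apply. The paper glosses over this in the same way, so it is not a defect peculiar to your write-up, but it is the one place where the ``routine bookkeeping'' is not completely automatic.
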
 
 
This corollary extends both Theorems \ref{flasche} and \ref{IKK} in the sense that it holds for general coefficients $c_i, d_i$ and intervals of all scales. It does not seem that the methods used in these papers can cover 
the same range. On the other hand, our random coefficients are required to have bounded $(2+ \ep)$-moments. It is an interesting open problem to see 
to what extent this assumption is necessary.

\begin{remark} \label{rmk1}
In the proof, we will show that Corollary \ref{maincor} holds for a more general case in which 
\begin{eqnarray}
P_n(x) &=& \sqrt{\sum_{i=0}^{n}c_i ^{2}} \left (u_n+ \sum _{j=0}^{N_0} u_j n^{-\alpha}   \cos(jx)+ \sum_{j=1}^{N_0} v_j n^{ -\alpha} \sin(jx) \right )\nonumber\\ 
&&+ \sum_{j=0}^{n}c_j \xi_j\cos(jx) + \sum_{j=1}^{n} c_j\eta_j\sin(jx)\label{newP}
\end{eqnarray}
where $N_0, \alpha>0$ are any constants and $-C\le u_j, v_j\le C$ are deterministic numbers that can depend on $n$. This means that the result is applicable to not only the number of zeros of $P_n$ but also the number of intersections between $P_n$ and a deterministic trigonometric polynomial 
$$Q(x) := \sqrt{\sum_{i=0}^{n}c_i ^{2}}  \left (u_n'+ \sum _{j=0}^{N_0} u_j n^{-\alpha}  \cos(jx)+ \sum_{j=1}^{N_0} v_j n^{ -\alpha} \sin(jx) \right )$$ where $u_n'$, $u_j$ and $v_j$ are bounded deterministic numbers. To see this, one only needs to apply the result to the random polynomial $P_n - Q$. 
\end{remark}

Now let us go back to the special case with $c_i=d_i =1$
\begin{equation}
P_n(x) = \sum_{i=0}^{n}\xi_i \cos(ix) + \sum_{i=1}^{n}\eta_i \sin(ix). \nonumber
\end{equation}

By applying Corollary \ref{maincor} directly to the derivatives of $P_n$, we get the following result.
\begin{cor}
Let $k$ be a nonnegative integer and $C$ be a positive constant. Assume that the random variables $\xi_i$ and $\eta_i$, $i\le n$, are independent (not necessarily identically distributed) real random variables with mean 0, variance 1 and $(2+\ep)$-moments bounded by $C$. For any numbers $a_n< b_n$, the expected number of real zeros of the $k$-th derivative of $P_n$ in an interval $[a_n, b_n]$ is
$$\E N_{P_n^{(k)}}(a_n, b_n) = \sqrt{\frac{2k+1}{2k+3}} \frac{(b_n-a_n)n}{\pi}+ O\left (n^{-c}  \right ) ((b_n-a_n)n + 1)$$
where the positive constant $c$ and the implicit constant depend only on $k, C$ and $\ep$.
\end{cor}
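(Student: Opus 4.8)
The plan is to deduce this corollary from Corollary \ref{maincor} by applying it, not to $P_n$ itself, but to a suitably renormalized version of its $k$-th derivative. Write out
$$P_n^{(k)}(x) = \sum_{i=0}^{n} i^k \xi_i \cos\Big(ix + \tfrac{k\pi}{2}\Big) + \sum_{i=1}^{n} i^k \eta_i \sin\Big(ix + \tfrac{k\pi}{2}\Big).$$
Expanding the shifted cosine and sine via the angle-addition formulas, one sees that $P_n^{(k)}$ is again a random trigonometric polynomial of the form $\sum_j c_j \xi_j' \cos(jx) + \sum_j c_j \eta_j' \sin(jx)$ with $c_j = j^k$ (up to the harmless $j=0$ term, which contributes nothing for $k\ge 1$ and is a constant for $k=0$), where $(\xi_j', \eta_j')$ is an orthogonal rotation of $(\xi_j,\eta_j)$: precisely $\xi_j' = \cos(k\pi/2)\xi_j - \sin(k\pi/2)\eta_j$ (and symmetrically for $\eta_j'$) when $k$ is even, with the roles swapped when $k$ is odd. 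In all cases the new atom variables are independent across $j$, have mean $0$, variance $1$, and $(2+\ep)$-moments bounded in terms of $C$ and $\ep$; and the coefficients $c_j = j^k$ satisfy Condition {\bf C3} with $\tau_1$ depending only on $k$, since $\max_{0\le j\le n} j^k = n^k$ and $j^k \ge (n/2)^k = 2^{-k} n^k$ for all $j$ in the interval $AP_0 = \{\lceil n/2\rceil, \dots, n\}$, which has size $\ge n/2$.

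Next I would invoke Corollary \ref{maincor} with $u_n = 0$ for this renormalized polynomial. Since a common nonzero scalar multiple of a function has the same zero set, $\E N_{P_n^{(k)}}(a_n,b_n)$ equals the expected number of real zeros of $\sum_j j^k \xi_j' \cos(jx) + \sum_j j^k \eta_j' \sin(jx)$ in $[a_n,b_n]$, to which Corollary \ref{maincor} applies directly (with $c_j = j^k$, $u_n=0$), giving
$$\E N_{P_n^{(k)}}(a_n,b_n) = \frac{b_n - a_n}{\pi}\sqrt{\frac{\sum_{j=0}^n j^{2k}\cdot j^2}{\sum_{j=0}^n j^{2k}}} + O(n^{-c})\big((b_n-a_n)n + 1\big).$$

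It then remains to evaluate the ratio $\sum_{j=0}^n j^{2k+2} \big/ \sum_{j=0}^n j^{2k}$. By comparison with the integral $\int_0^n t^m\,dt = n^{m+1}/(m+1)$, or by the standard Faulhaber asymptotics, $\sum_{j=0}^n j^m = \frac{n^{m+1}}{m+1}(1 + O(1/n))$, so the ratio is $\frac{n^{2k+3}/(2k+3)}{n^{2k+1}/(2k+1)}(1 + O(1/n)) = n^2\cdot\frac{2k+1}{2k+3}(1 + O(1/n))$. Taking the square root gives $\sqrt{\frac{2k+1}{2k+3}}\, n\,(1 + O(1/n))$, and multiplying by $(b_n-a_n)/\pi$ converts the $O(1/n)$ relative error into an absolute error of size $O\big((b_n-a_n)\big) = O(n^{-c})\big((b_n-a_n)n+1\big)$ after possibly shrinking $c$ (the term $(b_n-a_n)$ is absorbed since $(b_n-a_n) \le n^{-c}\cdot(b_n-a_n)n$ when $(b_n-a_n)n \ge 1$, and is $O(n^{-c})$ otherwise — more simply, $O(b_n-a_n) = O\big(n^{-1}(b_n-a_n)n\big)$, which is dominated by the stated error). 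Combining the two displays and folding all the error terms together yields the claimed formula.

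The main obstacle is bookkeeping rather than conceptual: one must check carefully that the angle shift $k\pi/2$ really does produce \emph{independent} new atom variables with the correct first two moments and bounded higher moments (the four cases $k \bmod 4$ should be handled uniformly by noting that an orthogonal linear combination of an independent mean-zero, unit-variance pair is again mean-zero, unit-variance, with $(2+\ep)$-moment controlled by a constant times the original ones), and that Condition {\bf C3} survives with constants independent of $n$. One should also be slightly careful with the $j=0$ term: for $k\ge 1$ it simply vanishes from $P_n^{(k)}$, while for $k=0$ the corollary is just a special case of Corollary \ref{maincor} with $u_n=0$ and $c_j\equiv 1$, so no separate argument is needed.
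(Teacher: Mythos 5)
Your proposal is correct and is essentially the paper's own argument; the paper's entire proof is the single sentence ``By applying Corollary \ref{maincor} directly to the derivatives of $P_n$,'' and you have supplied the verification it leaves implicit.

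One small point worth making explicit rather than leaving to bookkeeping: your remark that ``an orthogonal linear combination of an independent mean-zero, unit-variance pair is again mean-zero, unit-variance'' is accurate for the first two moments and for the $(2+\ep)$-moment bound, but a \emph{general} orthogonal rotation of a non-Gaussian independent pair does \emph{not} produce an independent pair, and joint independence of all the $\xi_j', \eta_j'$ is required to invoke Corollary \ref{maincor}. Independence survives here only because the angle is a multiple of $\pi/2$: $(\cos(k\pi/2),\sin(k\pi/2))\in\{(\pm1,0),(0,\pm1)\}$, so the map is a signed permutation and $(\xi_j',\eta_j')$ is literally one of $(\pm\xi_j,\pm\eta_j)$ or $(\pm\eta_j,\mp\xi_j)$. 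With that noted, the rest of the verification — that $c_j=j^k$ satisfies Condition {\bf C3} with $\tau_1=2^{-k}$ on $AP_0=\{\lceil n/2\rceil,\dots,n\}$, that the $j=0$ term drops out for $k\ge1$, and the evaluation $\sum j^{2k+2}\big/\sum j^{2k}=\tfrac{2k+1}{2k+3}n^2(1+O(1/n))$ with the $O(b_n-a_n)$ correction absorbed into the stated error — is routine and correct.
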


The key to our proof is the new technique to verify anti-concentration, which we discussed at the end of the Introduction (see also Remark \ref{anticond}) and  at the end of the previous section. For details, see Section \ref{proof-main}.

\section{Application: Universality for Kac polynomials}\label{app2}

In this section, we apply our result to Kac polynomials, 
$$P_n(x) = \sum_{i=0}^{n} \xi_i x^{i}$$
where $\xi_0, \xi_1, \dots, \xi_n$ are iid copies of a real random variable $\xi$ with mean zero and unit variance.
This is perhaps the most studied model of random polynomials. Indeed, the starting point of the theory of random functions was a series of papers in the early 1900s examining the number of real roots of the Kac polynomials.

The first rigorous work on random polynomials was due to Bloch and Polya in 1932 \cite{BP}, 
who considered the Kac polynomial with $\xi$ being 
Rademacher, namely $\P(\xi=1)=\P(\xi=-1)=1/2$. In what follows, we denote by $N_{n, \xi}$ the number of real roots of $P_n (x)$.
Next came the ground-breaking series of papers by Littlewood and Offord \cite{LO2, LO3, LO1} in the early 1940s, 
which, to the surprise of many mathematicians at the time, showed that $N_{n, \xi}$ is typically poly-logarithmic in $n$.

\begin{theorem} [Littlewood-Offord]
	For $\xi$ being Rademacher, Gaussian, or uniform on $[-1,1]$, 
	$$ \frac{\log n} {\log \log n} \le N_{n, \xi} \le \log^2 n$$ with probability $1-o(1)$. 
\end{theorem}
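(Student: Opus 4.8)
\emph{Proof proposal.} The plan is to treat the two bounds separately after one reduction. Since the three distributions are symmetric, replacing $(\xi_i)$ by $((-1)^i\xi_i)$ and by $(\xi_{n-i})$ leaves the law unchanged; the first reflection sends the real zeros of $P_n$ in $(-\infty,0)$ to those in $(0,\infty)$, and the second sends those in $(1,\infty)$ to the real zeros in $(0,1)$ of a Kac polynomial of the same type. As $P_n(0)=\xi_0\ne 0$ a.s., it suffices to show that the number $\nu_n$ of real zeros in $[0,1]$ of a Kac polynomial of the same type is $O(\log^2 n)$ and $\Omega(\log n/\log\log n)$, each with probability $1-o(1)$. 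I will aim only for the correct orders; pinning down the exact constants in the statement would require the more delicate bookkeeping of Littlewood and Offord.

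\emph{Upper bound.} On the event $\mathcal E=\{\max_i|\xi_i|\le\log^2 n\}$ (probability $1-o(1)$, automatic for bounded coefficients, a tail bound for Gaussians) one has the deterministic estimates $|P_n(z)|\le\log^2 n/(1-|z|)$ for $|z|<1$ and $|P_n(z)|\le e^4(n+1)\log^2 n$ for $|z|\le1+4/n$. Cover $[0,1]$ by the $O(\log n)$ disks $B(x_j,2^{-j}/2)\subset B(x_j,3\cdot 2^{-j}/4)$, $x_j=1-2^{-j}$, $0\le j\le\lceil\log_2(2n)\rceil$, together with one pair $B(1,2/n)\subset B(1,4/n)$ for the last stretch; the real traces cover $[0,1]$. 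On the larger disks $|P_n|$ is bounded by $4\cdot 2^j\log^2 n$, resp.\ $e^4(n+1)\log^2 n$, so the Jensen bound (an analytic $f$ has at most $\log(\sup_{B(z_0,R)}|f|/|f(z_0)|)/\log(R/r)$ zeros in $B(z_0,r)$ when $f(z_0)\ne 0$) gives that $P_n$ has $O(j+\log\log n+\log|P_n(x_j)|^{-1})$ real zeros in the $j$-th small disk and $O(\log n+\log|P_n(1)|^{-1})$ in $B(1,2/n)$. Summing, $\nu_n=O(\log^2 n)$ as soon as $|P_n(x_j)|\ge n^{-100}$ for every $j$ and $|P_n(1)|\ge n^{-100}$, so it remains to prove this with probability $1-o(1)$. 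For Gaussian and uniform coefficients it is immediate: conditioning on $(\xi_i)_{i\ge1}$ makes $P_n(x_j)$ equal $\xi_0$ plus a constant, $\xi_0$ has bounded density, and a union bound over $O(\log n)$ points finishes. For Rademacher coefficients the same lower bound holds at each $x_j$ with probability $1-o(1/\log n)$, by the Erd\H{o}s--Littlewood--Offord inequality applied to a suitably chosen subset of $\lceil\log^2 n\rceil$ coefficients --- chosen, according to the size of $1-x_j$, so that the relevant powers form (after rescaling) a uniformly spaced signed sum, resp.\ so that $\gtrsim n/\log^2 n$ of them have comparable size near the top scale; and $P_n(1)=\sum_i\xi_i$ has fixed parity, hence equals $0$ with probability $O(n^{-1/2})$ and is otherwise $\ge1$ in modulus. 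This gives $\nu_n=O(\log^2 n)$ w.h.p.

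\emph{Lower bound.} Fix $\beta\in(0,\tfrac12)$ and set $m=\lfloor\beta\log n/\log\log n\rfloor$. For $1\le k\le m$ let $N_k=\lceil n^{k/m}\rceil$ ($N_0=1$), $t_k=1-n^{-(k-1/2)/m}$, and $I_k=\{N_{k-1}+1,\dots,N_k\}$ (disjoint index blocks). Write $P_n(t_k)=S_k+c_k$ with $S_k=\sum_{i\in I_k}\xi_i t_k^{\,i}$, $c_k=\sum_{i\notin I_k}\xi_i t_k^{\,i}$. Geometric-sum estimates give $\sigma_k^2:=\Var(S_k)\asymp n^{(k-1/2)/m}$, while $\Var(c_k)=O(n^{-1/(2m)})\sigma_k^2=o(\sigma_k^2/m)$ --- the high-index tail is super-exponentially small and the low-index part has only $N_{k-1}=o(\sigma_k^2)$ terms; this is where $\beta<\tfrac12$ enters. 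Set $Y_k=\sgn(S_k)$. The $Y_k$ are independent (disjoint blocks); by symmetry $\Pr(Y_k=1)=\Pr(Y_k=-1)$, and $\Pr(S_k=0)=O(|I_k|^{-1/2})=o(1)$ uniformly (trivial in the continuous cases, Erd\H{o}s--Littlewood--Offord for Rademacher), so each $Y_k$ is $\pm1$ with probability $\tfrac12-o(1)$. A Chernoff bound for the independent indicators $\mathbf 1_{Y_1Y_2=-1},\mathbf 1_{Y_3Y_4=-1},\dots$ shows $(Y_1,\dots,Y_m)$ has at least $m/8$ sign changes with probability $1-e^{-\Omega(m)}=1-o(1)$. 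Separately, with $W=\{k:|c_k|\ge|S_k|\}$ and using $S_k\perp c_k$, an Esseen-type bound $\Pr(|S_k-a|\le\ell)\le C(\ell+1)/\sigma_k$ (valid for all three laws, since $\max_{i\in I_k}t_k^{\,i}\le1$) together with Chebyshev for $c_k$ yields $\E|W|\le\sum_k(Cm^{-1/2}+C/\sigma_k+m\Var(c_k)/\sigma_k^2)=o(m)$, so $|W|=o(m)$ w.h.p.\ by Markov. Off $W$ we have $\sgn P_n(t_k)=Y_k$, so on the intersection of the two good events $(\sgn P_n(t_k))_{k\le m}$ still has $\ge m/8-2|W|\ge m/16$ sign changes, each producing a real zero of $P_n$ in the corresponding (disjoint) interval $(t_k,t_{k+1})\subset(0,1)$. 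Hence $\nu_n\ge m/16=\Omega(\log n/\log\log n)$ w.h.p.

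\emph{Main obstacle.} The covering, the Jensen bounds, and the Chernoff/Markov steps are routine; the heart of the matter in both directions is anti-concentration. For the upper bound the only real work is lower-bounding $|P_n(x_j)|$ at the $O(\log n)$ sample points when the coefficients are atomic (Rademacher): this is exactly the Littlewood--Offord phenomenon, and making it quantitative enough ($\Pr<o(1/\log n)$ at each point) requires evaluating at points where the relevant powers form an arithmetic-to-geometric progression, forcing a uniformly spaced conditional law, and invoking Erd\H{o}s' inequality near the top scale. For the lower bound, the delicate estimate is $\Var(c_k)=o(\sigma_k^2/m)$ together with a uniform anti-concentration bound for the block sum $S_k$: one must choose the scales $t_k$ and blocks $I_k$ so precisely that the coefficients outside a given block are negligible for $P_n(t_k)$ on all but a vanishing fraction of scales --- it is this, rather than any independence of the sign-change events themselves, that carries the argument. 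Sharpening the constants to the exact form $\frac{\log n}{\log\log n}\le N_{n,\xi}\le\log^2 n$ would require revisiting Littlewood and Offord's finer analysis.
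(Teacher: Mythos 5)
The paper does not prove this statement; it is quoted as a classical result of Littlewood and Offord with references \cite{LO1,LO2,LO3}, so there is no proof in the paper to compare against. Judged on its own merits, your proposal has the right skeleton: a dyadic covering of $[0,1]$ together with Jensen's inequality for the upper bound, and sign changes of $P_n$ along a geometrically spaced deterministic mesh for the lower bound. The latter is exactly the ``sign change'' strategy the paper later attributes to Erd\H{o}s and Offord, and your block decomposition with $m\asymp \log n/\log\log n$ scales, the requirement $\beta<1/2$ to make $\Var(c_k)=o(\sigma_k^2/m)$, and the Berry--Esseen/Markov bookkeeping all hold up (modulo small slips, e.g.\ Erd\H{o}s--Littlewood--Offord really gives $\Pr(S_k=0)=O(\sigma_k^{-1})$ rather than $O(|I_k|^{-1/2})$, which is still $o(1)$).

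The upper bound, however, has a genuine gap in the Rademacher case. You claim $|P_n(x_j)|\ge n^{-100}$ with probability $1-o(1/\log n)$ at each sample point, by Erd\H{o}s--Littlewood--Offord on $\lceil\log^2 n\rceil$ comparable coefficients; but ELO on $m$ comparable coefficients yields anti-concentration $O(m^{-1/2})=O(1/\log n)$, not $o(1/\log n)$, so the union bound over the $\Theta(\log n)$ sample points $x_j$ returns $O(1)$ rather than $o(1)$. Near $|z|\approx 1$ there are $\gg\log^2 n$ comparable powers and the bound improves sharply, but for the $\Theta(\log\log n)$ indices $j$ with $2^j\lesssim\log^2 n$ only $\lesssim\log^2 n$ powers are comparable and this estimate is too weak to survive the union bound. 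To close the argument you would need either a sharper anti-concentration estimate at those small scales (exploiting the near-lacunary structure of $(x_j^{\,i})_i$ there, or a Hal\'asz-type inequality of the kind the present paper develops in Lemma \ref{lmanti_concentration} and Lemma \ref{halasz-inequality}) or a different aggregation (for instance an expectation bound $\E N_{n,\xi}=O(\log^2 n)$ followed by Markov, if you are willing to lose the constant). You correctly identify anti-concentration as ``the heart of the matter'', but the quantitative claim as written does not yet deliver the $o(1)$ failure probability.
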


During more or less the same time, Kac \cite{Kac1943average} discovered his famous formula for the density function $\rho(t)$ of $N_{n, \xi}$
\begin{equation} \nonumber \rho(t) = \int_{- \infty} ^{\infty} |y| p(t,0,y) dy, \end{equation} where 
$p(t,x,y)$ is the joint probability density of $P_{n} (t) =x$ and the derivative $P'_{n} (t) =y$.

Consequently, 
\begin{equation} \label{Kacformula} \E N_{n ,\xi} = \int_{-\infty}^{\infty} dt \int_{- \infty} ^{\infty} |y| p(t,0,y) dy.
\end{equation}

In the Gaussian case ($\xi$ is Gaussian), one can compute the joint distribution of $P_{n} (t)$ and $P'_{n}(t)$ rather easily. 
Kac showed in \cite{Kac1943average} that
\begin{equation} \nonumber \E N_{n, Gauss} = \frac{1}{\pi} \int_{-\infty} ^{\infty} \sqrt { \frac{1}{(t^2-1) ^2} + \frac{(n+1)^2 t^{2n}}{ (t^{2n+2} -1)^2} } dt = \left (\frac{2}{\pi} +o(1)\right ) \log n. \end{equation}

In his original paper \cite{Kac1943average}, Kac thought that his formula would lead to the same estimate for $\E N_{n, \xi}$ for all other random variables $\xi$. It has turned out not to be the case, as the right-hand side of 
\eqref{Kacformula} is often hard to compute, especially when $\xi$ is discrete (Rademacher for instance).
Technically, the computation of the joint distribution of $P_{n} (t)$ and $P'_{n}(t)$ is easy in the Gaussian case, thanks to special properties of the Gaussian distribution, 
but can pose a great challenge 
in general. Kac admitted this in a later paper \cite{Kac2} in which he managed to push his method to 
treat the case $\xi$ being uniform in $[-1,1]$, using analytic tools. A further extension was made by Stevens \cite{Stev}, who evaluated Kac's formula for a large class of $\xi$ having continuous and smooth distributions
with certain regularity properties (see \cite[page 457]{Stev} for details). Since the distributions are smooth, the two later results 
follow rather easily from our universality results; see the discussion at the end of the last section and Remark
\ref{anticond}; we leave the routine verification as an exercise for the interested reader.

The computation of $\E N_{n, \xi}$ for discrete random variables $\xi$ required a considerable effort. It took more than 10 years until Erd\H{o}s and Offord \cite{EO} found a completely new approach to handle the Rademacher case, proving the following.
\begin{theorem} \cite{EO} \label{e.erdos-offord} Let $\xi_i$ be iid Rademacher random variables. Then
	\begin{equation}
	N_{n, \xi} = \frac{2}{\pi} \log n + o\left ((\log n)^{2/3} \log \log n\right )\nonumber
	\end{equation} 
	with probability at least $1 - o\left (\frac{1}{\sqrt{\log \log n}}\right )$.
\end{theorem}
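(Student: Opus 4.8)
The plan is to derive the theorem from the general real universality theorem (Theorem \ref{greal}) by comparing the Rademacher Kac polynomial $P_n=\sum_{i=0}^n\xi_ix^i$ with its Gaussian analogue $\tilde P_n=\sum_{i=0}^n\tilde\xi_ix^i$, and then to invoke the classical Gaussian computations. Condition {\bf C1} is immediate for the pair (Rademacher, Gaussian): both families have mean $0$, variance $1$ and all moments finite, so {\bf C1} holds with $N_0=0$. The work is in Condition {\bf C2}, which cannot hold on all of $\R$ at once, because the real roots of a Kac polynomial sit at a continuum of scales. After the symmetries $x\mapsto -x$ and $x\mapsto 1/x$ reduce matters to $x\in[\tfrac12,1-\tfrac1n]$, I would cover this interval by the $\Theta(\log n)$ dyadic windows $W_m:=[1-2^{-m},\,1-2^{-m-1}]$, $1\le m\le \log_2 n$, each containing $\Theta(1)$ real roots in expectation. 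Rescaling $W_m$ to unit size, one has $\max_i|\phi_i(z)|/(\sum_j|\phi_j(z)|^2)^{1/2}\asymp 2^{-m/2}$, so the natural parameter is $\delta_n=\delta_n(m)\asymp 2^{-\Theta(m)}$, bounded below by a fixed negative power of $n$ on every window; the anti-concentration bound {\bf C2}\eqref{cond-smallball} is then furnished by Lemma \ref{lmanti_concentration}, the moment/large-deviation bounds {\bf C2}\eqref{cond-poly} and {\bf C2}\eqref{cond-bddn} by Jensen's inequality plus standard concentration, and the derivative-growth bounds {\bf C2}\eqref{cond-repulsion} by direct estimation of $\sum_i|\phi_i^{(a)}(x)|^2$ for $x$ near $W_m$. (These verifications are the routine content of Section \ref{app2}.)

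With {\bf C2} in hand on each window, Theorem \ref{greal} with $k=1$ gives $\E N_{n,\xi}(W_m)=\E N_{n,\mathrm{Gauss}}(W_m)+O(\delta_n(m)^c)$; since these per-window errors decay geometrically in $m$ (and the $O(1)$ coarsest windows, together with the bounded region $[-\tfrac12,\tfrac12]$ and its reciprocal image, contain $O(1)$ roots and are controlled trivially), summing over all windows and the finitely many symmetric copies yields $\E N_{n,\xi}=\E N_{n,\mathrm{Gauss}}+O(1)$. By Kac's evaluation of \eqref{Kacformula} in the Gaussian case quoted above, $\E N_{n,\mathrm{Gauss}}=\tfrac2\pi\log n+O(1)$, hence $\E N_{n,\xi}=\tfrac2\pi\log n+O(1)$, comfortably inside the claimed error. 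For the concentration I would apply Theorem \ref{greal} with $k=2$ to compare $\E\,[N_{n,\xi}(W_m)N_{n,\xi}(W_{m'})]$ with its Gaussian counterpart, reducing $\Var N_{n,\xi}$ to $\Var N_{n,\mathrm{Gauss}}$ up to errors that again sum to $O(\log n)$; since $\Var N_{n,\mathrm{Gauss}}=O(\log n)$ (the classical variance bound of Maslova for Kac polynomials, or a direct two-point Kac--Rice computation using $\E[P_n(s)P_n(t)]=\frac{(st)^{n+1}-1}{st-1}$, which decorrelates windows at well-separated scales), we get $\Var N_{n,\xi}=O(\log n)$. Chebyshev's inequality then gives, for every $\eps>0$,
\begin{equation}\nonumber
\Pr\Big(\big|N_{n,\xi}-\tfrac{2}{\pi}\log n\big|\ge \eps(\log n)^{2/3}\log\log n\Big)\;\le\;\frac{C}{\eps^{2}\,(\log n)^{1/3}(\log\log n)^{2}}\;=\;o\!\Big(\frac{1}{\sqrt{\log\log n}}\Big),
\end{equation}
which is exactly the assertion. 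In fact this already yields the far stronger $N_{n,\xi}=\tfrac2\pi\log n+O(\sqrt{\log n}\,\omega(n))$ with probability $1-o(1)$ for any $\omega(n)\to\infty$ --- the optimal error scale advertised in the introduction.

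The main obstacle is not the universality input, which is a black box, but the multiscale bookkeeping: one must choose $\delta_n$ and $D_n$ window-by-window so that {\bf C2} genuinely holds and the accumulated comparison errors stay $O(\log n)$, and, more delicately, one must control the cross-covariances $\Cov(N_{n,\xi}(W_m),N_{n,\xi}(W_{m'}))$ for $|m-m'|$ large, where a single application of Theorem \ref{greal} after one rescaling is awkward because the two windows live at incompatible scales. I would handle this by observing that the Lindeberg swap underlying Theorem \ref{greal} is insensitive to the geometric relation between the base points --- it needs only delocalization and anti-concentration at each of them --- so the two-point comparison of $\E[K(F_n(z))K(F_n(w))]$ goes through with error governed by the coarser of the two scales, and these errors still sum to $O(\log n)$; the Gaussian decorrelation visible from the covariance kernel then supplies the genuine decay. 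Alternatively, and more cheaply, one may simply invoke Maslova's variance estimate and central limit theorem, which already cover the Rademacher ensemble, and combine them with the sharp expectation $\tfrac2\pi\log n+O(1)$ obtained above.
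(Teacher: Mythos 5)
This theorem is a classical result of Erd\H{o}s and Offord, cited in the paper as \cite{EO} but not re-proved there: the authors merely recall that the original argument is combinatorial, built around counting sign changes of $P_n$ along a carefully chosen deterministic mesh of $(\tfrac{2}{\pi}+o(1))\log n$ points. Your route is genuinely different. It routes everything through the paper's universality machinery, and its expectation half is in fact essentially Corollary~\ref{kacmean}: dyadic windows $W_m$, per-window comparison with the Gaussian Kac polynomial via Theorem~\ref{kacreal}, geometric summation of the errors, and the Kac--Rice evaluation on the Gaussian side. That part is sound and matches the paper. Note also that the statement you are asked to prove asserts not an expectation but a high-probability estimate; the paper itself only extracts $\E N_{P_n}(\R)=\tfrac{2}{\pi}\log n+O(1)$ from its framework and does not use it to reprove the Erd\H{o}s--Offord concentration.

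The gap is exactly where you flag it, and I think it is larger than ``multiscale bookkeeping''. Theorems~\ref{greal} and~\ref{kacreal} compare $k$-point correlations only for base points that all lie in a single domain $D_n$ at a single scale $\theta_n$; after the normalization $F_n(z)=P_n(z\theta_n/10)$ there is no value of $\theta_n$ for which $x\in W_m$ and $x'\in W_{m'}$ are simultaneously at unit distance from the origin when $|m-m'|>1$. So the cross-term $\E[N(W_m)N(W_{m'})]$ simply is not within the scope of the stated theorems, and the claim that ``the Lindeberg swap \dots\ is insensitive to the geometric relation between the base points'' is a heuristic for a theorem you would first have to formulate and prove; in particular, in Lemma~\ref{logcomp}/\ref{log-com33} the quantity $V(z_j)$ and the delocalization bound are tied to a single $\delta_n$, and a mixed-scale version would require redoing the Taylor/Lindeberg estimates with two different normalizations, together with a two-scale version of Lemma~\ref{2norm}. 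Absent that, the variance comparison is not established. Your fallback --- invoke Maslova's variance estimate (and CLT) for Kac polynomials directly for the Rademacher ensemble and combine with the expectation from Corollary~\ref{kacmean}, then Chebyshev --- is logically fine and does close the argument, but it imports the entire concentration input from outside the paper rather than deriving it from the universality framework; it is a proof of the statement, not a derivation from Theorem~\ref{greal}. Finally, a small accounting check you should make explicit: when summing cross-scale error terms one must quantify ``error governed by the coarser scale'' and verify the resulting double sum is $O(\log n)$; with a hypothetical per-pair error $O(2^{-c\min(m,m')})$ and $O(\log n)$ windows this does come out to $O(\log n)$, so the arithmetic is consistent, but it is contingent on the unproved two-scale comparison lemma.
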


The argument of Erd\H{o}s and Offord is combinatorial and very delicate, even by today's standards. Their main idea is to approximate the number of roots by the number of sign changes 
in $P_{n} (x_1) , \dots, P_{n}(x_k)$ where $(x_1, \dots, x_k)$ is a carefully chosen deterministic sequence of points of length $k = (\frac{2}{\pi} +o(1)) \log n$. The authors showed that with high probability, almost every interval 
$(x_i, x_{i+1} )$ contains exactly one root, and used this fact to prove Theorem \ref{e.erdos-offord}.

Our main result in this section is the following universality statement.

\begin{theorem}[Universality for Kac polynomials]\label{kacreal}
Let $k, l$ be nonnegative integers with $k+l\ge 1$.
	Assume that $\xi_0, \dots, \xi_n$ and $\tilde \xi_0, \dots, \tilde \xi_n$ are real random variables with mean 0, satisfying Condition {\bf C1} and the polynomials $P_n$, $\tilde P_n$ 
	are Kac polynomials with respect to these variables. Then there exist positive constants $C', c$ depending only on $k, l$ and the constants in Condition {\bf C1} such that the following holds. 

For every $0 < \theta_n < 1$, for any real numbers $x_1,\dots, x_k$, and complex numbers $z_1, \dots, z_l$ with $1-2\theta_n \le |x_i|, |z_j|\le 1-\theta_n +1/n$ for all $i, j$, and for any function $G: \mathbb{R}^{k}\times\mathbb{C}^{l}\to \mathbb{C}$ supported on $\prod_{i=1}^{k}[x_i-10^{-3}\theta_n, x_i+10^{-3}\theta_n] \times \prod_{j=1}^{l}B (z_j, 10^{-3}\theta_n)$ with  continuous derivatives up to order $2(k+l)+4$ and $\norm{\triangledown^aG}_\infty\le (\theta_n+1/n)^{-a}$ for all $0\le a\le 2(k+l)+4$, we have
	\begin{eqnarray}\nonumber
	\left |\E\sum G\left (\zeta_{i_1}, \dots, \zeta_{i_k}, \zeta_{j_1} , \dots, \zeta_{j_l}\right) 
	-\E\sum G\left (\tilde \zeta_{i_1}, \dots, \tilde \zeta_{i_k}, \tilde \zeta_{j_1}, \dots, \tilde \zeta_{j_l}\right) \right |\le C'\theta_n^{c} + C'n^{-c},
	\end{eqnarray}
	where the first sum runs over all $(k+l)$-tuples $(\zeta_{i_1}, \dots, \zeta_{i_k}, \zeta_{j_1}, \dots, \zeta_{j_l}) \in \R^{k}\times \C_{+}^{l}$ of the roots $\zeta_1, \zeta_2, \dots$ of $P_n$, and the second  sum runs over all $(k+l)$-tuples $(\tilde \zeta_{i_1}, \dots, \tilde \zeta_{i_k}, \tilde \zeta_{j_1}, \dots, \tilde \zeta_{j_l}) \in \R^{k}\times \C_{+}^{l}$ of the roots $\tilde \zeta_1, \tilde  \zeta_2, \dots$ of $ \tilde P_n$. 
\end{theorem}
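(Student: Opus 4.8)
The plan is to deduce Theorem~\ref{kacreal} from the general real universality theorem (Theorem~\ref{greal}), which means the entire task reduces to verifying Conditions~\textbf{C1} and \textbf{C2} for Kac polynomials after an appropriate rescaling. The natural rescaling is dictated by the geometry of Kac polynomials: the roots concentrate near the unit circle, and the local scale at radius $1-\theta_n$ is of order $\theta_n$ (or $1/n$, whichever is larger). So first I would set $\delta_n := \theta_n + 1/n$ and, for a fixed base point $z_0$ with $1-2\theta_n \le |z_0| \le 1-\theta_n + 1/n$, introduce the change of variables $z \mapsto z_0 + \delta_n w$ (more precisely, a biholomorphism sending a unit neighborhood of the origin to a $\delta_n$-neighborhood of $z_0$), and let $G_n(w) := \delta_n^{\text{(power)}} P_n(z_0 + \delta_n w)$ be the rescaled random function. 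Then $G_n = \sum_i \xi_i \psi_i$ where $\psi_i(w) = \delta_n^{\text{(power)}}(z_0 + \delta_n w)^i$, and $D_n$ becomes a set of base points of diameter $O(1)$ in the rescaled picture. Condition~\textbf{C1} transfers verbatim since it concerns only the $\xi_i$. The bulk of the work is Condition~\textbf{C2}.

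Second, I would check the deterministic parts of \textbf{C2}, namely \eqref{cond-delocal} and \eqref{cond-repulsion}. For delocalization, one computes $\sum_{j=0}^n |\psi_j(w)|^2 \asymp \sum_{j=0}^n |z_0 + \delta_n w|^{2j}$; since $|z_0 + \delta_n w| = 1 - \Theta(\theta_n)$ on the relevant neighborhood, this is a geometric-type sum of size $\asymp 1/\theta_n \asymp \delta_n^{-1}$ (the $1/n$ case being handled by the truncation at $n$), while the individual term $|\psi_j(w)|^2 \le 1$; this gives the ratio bound $\delta_n^{\alpha_1}$ for a suitable $\alpha_1$ (essentially $\alpha_1 = 1/2$). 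The derivative-growth estimates \eqref{cond-repulsion} follow similarly: differentiating $(z_0 + \delta_n w)^i$ produces a factor $i\delta_n \lesssim n\delta_n$, and since $n\theta_n$ can be as large as $n$, one allows the polynomial loss $\delta_n^{-c_1}$. The mean-dependent third inequality in \eqref{cond-repulsion} is harmless here because the $\xi_i$ have mean zero (given in the hypothesis), so that sum simply vanishes. I would also need \eqref{cond-poly}: the number $N$ of roots in a unit disk in the rescaled coordinates is controlled by Jensen's formula together with the boundedness estimates, giving the required moment bound $\E N^{k+2}\mathbf{1}_{N \ge \delta_n^{-C_1}} \le C$ — this is standard once boundedness is in hand.

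Third — and this is where the real content lies — I would verify the boundedness condition \eqref{cond-bddn} and, more importantly, the anti-concentration condition \eqref{cond-smallball}. Boundedness is a large-deviation estimate: for fixed $w$, $G_n(w)$ is a sum of independent mean-zero variables with $\sum |\psi_j(w)|^2 \asymp \delta_n^{-1}$, so $|G_n(w)| \le \exp(\delta_n^{-c_1})$ fails with probability exponentially small in $\delta_n^{-c_1}$ by a Bernstein/moment bound (this is where the bounded $(2+\ep)$ moments are used, via a truncation argument), and then a net argument plus the analyticity of $G_n$ upgrades this to a uniform bound on $B(z,2)$. For anti-concentration, I would invoke the paper's key Lemma~\ref{lmanti_concentration} (the Turán–Halász-based tool advertised in Section~\ref{mainideas}): one only needs $|G_n(w_0)| \ge \exp(-\delta_n^{-c_1})$ for a \emph{single} point $w_0$ in the neighborhood, and the Turán-type lemma reduces this to a lower bound on, say, the largest few coefficients or on a suitable trigonometric/exponential sum built from the $\psi_i$, which holds because a linear-sized block of the $\psi_i$ have comparable magnitude (the analogue of Condition~\textbf{C3}, automatic here since all Kac coefficients equal $1$). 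The main obstacle I anticipate is precisely this anti-concentration step: making the Turán–Halász machinery output the required $\exp(-\delta_n^{-c_1})$ lower bound with the needed probability $1 - C\delta_n^A$ while only assuming bounded $(2+\ep)$ moments (no density, no Cramér condition) — this is the technical heart of the paper and is where Lemma~\ref{2norm} (transferring a lower bound at one point to a whole neighborhood) and Lemma~\ref{lmanti_concentration} do the heavy lifting. Once all parts of \textbf{C2} are in place with the stated parameters, Theorem~\ref{greal} applies on each base-point neighborhood and, after undoing the rescaling (which converts $\delta_n^c$ into $\theta_n^c + n^{-c}$ and matches the stated smoothness normalization $\norm{\triangledown^a G}_\infty \le (\theta_n + 1/n)^{-a}$), yields exactly the claimed bound; a final remark is that the hypothesis $\E\xi_i = 0$ could be relaxed to the finite-exception form of \textbf{C1}, recovering the \cite{DOV} generalized-Kac result with the same argument.
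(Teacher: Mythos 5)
Your proposal has the right architecture — it is the same reduction the paper performs: rescale so that the local scale becomes order $1$, verify Condition \textbf{C2} for the rescaled Kac polynomial, and invoke Theorem~\ref{greal}, with the anti-concentration condition handled by Lemma~\ref{lmanti_concentration} (applied to a sub-block of coefficients of comparable size, with $\bar e\asymp e^{-2M}$). The paper uses a multiplicative rescaling $F_n(z):=P_n(z\theta_n/10)$ with $D_n$ an annulus of radius $\asymp 1/\theta_n$, rather than your affine $z\mapsto z_0+\delta_n w$ centered at each base point; both work, the paper's choice just makes the verification uniform over $D_n$ in one shot.

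Two small points to flag. First, for boundedness (\textbf{C2}~(3)) the paper does not use a Bernstein/net argument at all: it simply conditions on the event $\Omega'=\{|\xi_i|\le M(1+\delta_n/2)^i\text{ for all }i\}$, sums the geometric series to get $|F_n|=O(M\delta_n^{-1})$ on $\Omega'$, and bounds $\P(\Omega'^{c})=O(1/(M\delta_n))$ by Markov applied term by term; taking $M=\delta_n^{-A-1}$ gives the polynomial failure probability $O(\delta_n^A)$. Your claim that the failure probability is ``exponentially small in $\delta_n^{-c_1}$'' cannot be right under only a bounded $(2+\ep)$ moment — a single coefficient $\xi_i$ can already exceed any fixed level with polynomial (not exponential) probability, so the best one can expect is a polynomial tail; fortunately \textbf{C2}~(3) asks only for $1-C\delta_n^A$, so the conclusion you need is still reachable, but the route and the quantitative claim should be adjusted. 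Second, you do not address the regime where $\theta_n$ is bounded away from $0$ (say $\theta_n\ge 1/K$). In that case $\delta_n^{c}$ does not vanish and the bound is a constant, so the theorem reduces to showing that the expected number of roots of $P_n$ (and $\tilde P_n$) in $B(0,1-1/2K)$ is $O(1)$; the paper handles this separately via Jensen's inequality together with the boundedness and anti-concentration estimates at a fixed radius. This case split is easy but needs to be said, since for $\theta_n$ close to $1$ the base point $z_0$ can be near the origin and the geometry you describe (e.g.\ ``a linear-sized block of $\psi_i$ of comparable magnitude'') degenerates.
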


\begin{remark} \label{Qrmk}
Theorem \ref{kacreal} provides universality result for the polynomial $P_n$ on the disk $B(0, 1+1/n)$. For the complement of this disk, consider $Q_n(z): = z^{n} P_n(z^{-1})$ which is another Kac polynomial. Since the roots of $Q_n$ are just the reciprocal of the roots of $P_n$, the universality of $Q_n$ in $B(0, 1)$ implies the universality of $P_n$ outside the disk $B(0, 1)$.
\end{remark}

As a corollary, we get the following result on the number of real roots of these polynomials which recovers the main result of Do and the authors in \cite{DOV}.

\begin{cor}\label{kacmean} 
	Let $C$ be a positive constant. Assume that the random variables $\xi_i$ are independent (not necessarily identically distributed) real random variables with mean 0, variance 1 and $(2+\ep)$-moments bounded by $C$. Then
	$$\E N_{P_n}(\R) =\frac{2}{\pi} \log n +O(1)$$
	where the implicit constant depends only on $C$ and $\ep$.
\end{cor}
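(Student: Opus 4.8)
The plan is to derive the estimate $\E N_{P_n}(\R) = \frac{2}{\pi}\log n + O(1)$ by splitting the real line into three regions and handling each separately. Write $\R = I_1 \cup I_2 \cup I_3$ where $I_1 = \{|x| \le 1 - (\log n)/n\}$, $I_2 = \{1 - (\log n)/n < |x| \le 1 + (\log n)/n\}$, and $I_3 = \{|x| > 1 + (\log n)/n\}$. By the symmetry trick of Remark \ref{Qrmk} (replacing $P_n(z)$ by $Q_n(z) = z^n P_n(z^{-1})$, which is again a Kac polynomial with the same coefficient distribution, and whose roots are reciprocals of those of $P_n$), the contribution of $I_3$ to $\E N_{P_n}$ equals the contribution of an interval inside the unit disk for $Q_n$; more precisely the reciprocal map sends $I_3$ into $\{|x| < 1 - c(\log n)/n\}$ up to lower-order terms, so it suffices to handle $I_1$ and $I_2$.

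For the bulk region $I_1$, the strategy is to cover it by $O(\log n)$ dyadic-type sub-intervals on the scale $\theta_n \sim 1 - |x|$ and apply Theorem \ref{kacreal} with $k=1$, $l=0$ on each, comparing to the Gaussian case $\tilde\xi_i$ i.i.d.\ standard Gaussian. On a sub-interval where $1 - |x| \asymp \theta$, one takes test functions $G$ that approximate the indicator of an interval of length $\asymp \theta$; the derivative bound $\norm{\triangledown^a G}_\infty \le \theta^{-a}$ is exactly what Theorem \ref{kacreal} permits, and a standard smoothing argument (inflating/deflating the interval by a factor $1 + n^{-c/2}$ and using the crude a priori bound on the expected number of roots of a Kac polynomial in any short interval, which follows from Jensen's inequality as in Condition \textbf{C2}\eqref{cond-poly}) shows the smoothing error is $O(n^{-c})$ per unit interval, hence $O(n^{-c}\log n) = o(1)$ in total. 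Summing the universality error $\theta_n^c + n^{-c}$ over the $O(\log n)$ scales, with $\theta_n$ ranging geometrically from $\Theta(1)$ down to $\Theta((\log n)/n)$, gives a convergent geometric series in the $\theta_n^c$ term plus $O(n^{-c}\log n)$, so the total error from $I_1$ is $O(1)$. This reduces the computation of $\E N_{P_n}(I_1)$ to $\E N_{\tilde P_n}(I_1)$ for the Gaussian Kac polynomial, which is given by the classical Kac formula: by Kac's computation quoted in the excerpt, $\E N_{n,\mathrm{Gauss}}(\R) = (\frac{2}{\pi}+o(1))\log n$, and by the same exact integral one checks that the portion of this coming from $I_1$ is $\frac{2}{\pi}\log n + O(1)$ while the portions from $I_2 \cup I_3$ contribute only $O(1)$ (the Kac density integrand $\frac{1}{\pi}\sqrt{\frac{1}{(t^2-1)^2} + \frac{(n+1)^2 t^{2n}}{(t^{2n+2}-1)^2}}$ integrates to $O(1)$ over $I_2$ near $|t|=1$ because the apparent singularities cancel, and its tail over $I_3$ is $O(1)$ by a direct estimate).

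For the transitional region $I_2$ of width $O((\log n)/n)$ around $|x| = 1$, one cannot use universality down to that scale directly, so here the plan is a purely deterministic upper bound independent of the distribution of $\xi$: by Jensen's formula applied to $P_n$ on the disk $B(x, r)$ with $r$ a small constant, the number of zeros of $P_n$ in $B(x, r/2)$ is at most $\log \frac{\max_{|z-x|=r}|P_n(z)|}{|P_n(x)|}$ (after an averaging over a sub-annulus to control the denominator), and taking expectations together with the anti-concentration Lemma \ref{lmanti_concentration} (which applies to Kac polynomials near the unit circle as verified in this section) and the trivial moment bound on $\max |P_n|$ shows $\E N_{P_n}(B(x,r/2)) = O(\log n)$ for a single such disk; covering $I_2$ by $O(1)$ such disks gives $\E N_{P_n}(I_2) = O(\log n)$. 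This is too weak by itself, but combined with the Gaussian computation — which shows the \emph{true} value over $I_2$ is $O(1)$ — and a matching lower bound $\E N_{P_n}(I_2) \ge 0$, we only need the Gaussian value to be $O(1)$ there together with a universality comparison on $I_2$ at the coarser scale $\theta_n = \Theta(1)$ applied to the slightly larger interval $\{1 - c \le |x| \le 1 + c\}$; the net contribution is absorbed into the $O(1)$ error.

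The main obstacle I expect is the handling of region $I_2$: universality as stated in Theorem \ref{kacreal} degrades as $\theta_n \to 0$ and is only useful down to $\theta_n$ a small constant, so one genuinely needs the distribution-free Jensen-type bound there, and one must check that this crude bound, while $O(\log n)$ rather than $O(1)$, does not spoil the final answer — which works only because the \emph{Gaussian} reference computation shows the transitional region carries $O(1)$ roots in expectation, so that the $O(\log n)$ bound is never actually attained and the universality comparison at constant scale suffices to pin down the $I_2$ contribution to $O(1)$. A secondary technical point is ensuring the smoothing errors accumulated over the $O(\log n)$ scales in $I_1$ remain summable; this is where the precise form $\norm{\triangledown^a G}_\infty \le (\theta_n + 1/n)^{-a}$ in Theorem \ref{kacreal}, which is scale-adapted, is essential.
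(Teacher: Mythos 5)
There is a genuine gap, rooted in a misreading of Theorem \ref{kacreal}. You claim that ``universality as stated in Theorem \ref{kacreal} degrades as $\theta_n \to 0$ and is only useful down to $\theta_n$ a small constant,'' but this is the opposite of what the theorem says: the error bound is $C'\theta_n^c + C'n^{-c}$, which \emph{improves} as $\theta_n$ shrinks, and the theorem is valid for every $\theta_n\in(0,1)$. Taking $\theta_n=1/n$ covers $1-2/n\le |x|\le 1$, so the universality statement reaches all the way to the unit circle. There is therefore no ``transitional region'' in which universality is unavailable, and your $I_2$ is an artifact. Worse, the device you substitute for universality on $I_2$ cannot give the required precision: the Gaussian Kac density does \emph{not} integrate to $O(1)$ over $I_2=\{1-(\log n)/n<|x|\le 1+(\log n)/n\}$. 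After the cancellation of the apparent singularities at $|t|=1$, the density is still $\asymp\frac{1}{|1-|t||}$ in the range $1/n\ll 1-|t|\ll 1$, and $\int_{1/n}^{(\log n)/n}\frac{ds}{s}=\log\log n$; so the Gaussian contribution from $I_2$ is $\Theta(\log\log n)$. Since your $I_1$ then accounts for only $\frac{2}{\pi}\log n-\frac{2}{\pi}\log\log n+O(1)$, an upper bound of $O(\log n)$ and a lower bound of $0$ on $\E N_{P_n}(I_2)$ leave a $\Theta(\log\log n)$ uncertainty that cannot be absorbed into $O(1)$, and the ``universality at scale $\theta_n=\Theta(1)$ on the larger interval $\{1-c\le|x|\le 1+c\}$'' you invoke is not even covered by Theorem \ref{kacreal}, whose base points with $\theta_n=c$ are constrained to $1-2c\le|x|\le 1-c+1/n$, not all of $[1-c,1+c]$.

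The fix is exactly the paper's argument: after reducing to $[-1,1]$ via the $Q_n(z)=z^nP_n(1/z)$ reciprocal trick, split into the bulk $\{|x|\le 1-1/C\}$ (where $\E N=O(1)$ for both $P_n$ and $\tilde P_n$, using Jensen's inequality plus the boundedness and anti-concentration already established in the proof of Theorem \ref{kacreal}) and the edge $\{1-1/C<|x|\le 1\}$, which is decomposed into dyadic sub-intervals $\pm[1-2^{-j}/C,\,1-2^{-j-1}/C)$ down to $\pm[1-2/n,1-1/n)$ together with $\pm[1-1/n,1]$. On each sub-interval one applies Theorem \ref{kacreal} with $\theta_n$ equal to the width, obtaining a comparison error $O((1-y+1/n)^c)$; the geometric part sums to $O(1)$ and the $n^{-c}$ part contributes $O(n^{-c}\log n)=o(1)$. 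This yields $\E N_{P_n}(\R)=\E N_{\tilde P_n}(\R)+O(1)$, and the Gaussian value is $\frac{2}{\pi}\log n+O(1)$. Your handling of $I_1$ is essentially this dyadic scheme, so the adjustment needed is simply to extend it to $\theta_n$ as small as $1/n$ rather than stopping at $(\log n)/n$.
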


Theorem \ref{kacreal} 
 strengthens an earlier result of Tao and the second author \cite{TVpoly}. The result in \cite{TVpoly} only covers 
the bulk of the spectrum, namely the region $1-n^{-\ep}\le |x|\le 1+n^{-\ep}$. Restricting to the number of real roots, it yields 	
$$\E N_{P_n}(\R) =  O\left (\log n\right )$$ instead of the more precise (and optimal) estimate in Corollary \ref{kacmean}. 
Another new feature is that our result also yields sharp estimates for the size of level sets $\{ z\in \C: P_n (z) =a \}$, for any fixed $a$, since we can
allow that in Theorem \ref{kacreal} and Corollary \ref{kacmean}, $\xi_0 $ (and in fact any finite number of $\xi_i$) has non-zero, bounded mean. Our proofs work automatically under this extension. A version of Corollary \ref{kacmean} is obtained earlier in \cite{NNV} using a different approach that combines the local universality in the bulk and a comparison of the number of real roots of $P_n$ with that of $P_{n'}$ for $n'$ much larger than $n$.

The proof in \cite{TVpoly} made use of a deep anti-concentration lemma \cite[Lemma 14.1]{TVpoly} whose proof 
relies on the Inverse Littlewood-Offord theory and a weak quantitative version of Gromov's theorem. The proof we will provide here
is simple and almost identical to the one used to treat random trigonometric polynomials in the last section. For random variables having continuous distributions (such as the cases treated by Kac and Stevens mentioned above), the anti-concentration 
property (see Remark \ref{anticond}) is immediate.

\begin{remark}
One can routinely modify the proofs of Theorem \ref{kacreal} and Corollary \ref{kacmean} to show that these results hold for more general settings. For example, the proofs can be used to show that these results apply for 
$$P_n(x) = \sum_{i=0}^{n} c_i \xi_i x^{i}$$
where $\xi_i$ are independent (not necessarily identically distributed) random variables satisfying Condition {\bf{C1}} with zero mean and the deterministic coefficients $c_i$ grow polynomially. Specifically, these results hold for derivatives of the Kac polynomials of any given order. We leave the details to the interested reader. The aforementioned results for this general version were proven in the previous work \cite{DOV} using much more involved tools and arguments. 
\end{remark}

We defer the proofs of Theorem \ref{kacreal} and Corollary \ref{kacmean} to Section \ref{kacproof}.

\section{Application: Universality for Weyl series}\label{app3}
In this section, we discuss an application of our main theorems to Weyl series
$$P(z) = \sum_{j=0}^{\infty} \frac{\xi_j z^{j}}{\sqrt{j!}}$$
where $\xi_j$ are independent complex random variables satisfying the matching condition {\bf C1} with the $\tilde \xi_j$ being standard complex Gaussian random variables with density $\frac{1}{\pi} e^{-|z|^{2}}$. In the literature, Weyl series are also referred to as 
flat series. 

The flat series $\tilde P(z) = \sum_{j=0}^{\infty} \frac{\tilde \xi_j z^{j}}{\sqrt{j!}}$ is also known as the flat Gaussian analytic function and has been studied intensively over the past few decades. See, for example, \cite{HKPV}, \cite{sodin2005zeroes}, \cite{sodin2004random}, and the references therein. Using the Edelman-Kostlan formula \cite{EK}, one can show  that for any Borel set $B\subset \C$, the expected number of roots of $\tilde P$ in $B$ is
\begin{equation}\label{flat1} \E N_{\tilde P}(B) = \frac{1}{\pi} m(B) \end{equation}
where $m(B)$ is the Lebesgue measure of $B$. 

For general random variables, to compare the distribution of the roots of $P$ with that of $\tilde P$, Kabluchko and Zaporozhets (2014) \cite{kabluchko2014asymptotic} showed that with probability $1$, the rescaled empirical measure $\mu_{r}$ defined by 
$$\mu_r(A) = \frac{1}{r}\sum_{\zeta: P(\zeta) = 0} \textbf{1}_{\zeta\in \sqrt r A}$$ 
converges vaguely as $r\to\infty$ to the measure $\frac{1}{\pi}m(\cdot)$, which is, as mentioned above, the corresponding measure for $\tilde P$. We recall that a sequence of measures $(\mu_r)$ is said to converge vaguely to a measure $\mu$ if $\lim _{r\to\infty}\int fd\mu_r = \int fd\mu$ for every continuous, compactly supported function $f$.

The aforementioned result of \cite{kabluchko2014asymptotic} is about the rescaled measures $\mu_r$. Thus, it provides an asymptotically sharp   estimate on  the number of roots of $P$ in large domains of the form $\sqrt{r} B$ where $r\to \infty$ and $B$ is a fixed ``nice" measurable domain, but does not give estimates for the number of roots in 
 domains with fixed area, as in \eqref{flat1}.

Using our framework, we obtain the following result at the local scale.

\begin{theorem}\label{uni_flat}(Universality for random flat series)
Assume that the complex random variables $\xi_j$ satisfy the matching condition {\bf C1} with the $\tilde \xi_j$ being standard complex Gaussian random variables and the random variables $\Re(\xi_0), \Im(\xi_0), \Re(\xi_1), \Im(\xi_1), \dots$ are independent. Then there exist positive constants $C, c$ depending only on the constants in Condition {\bf C1} such that the following holds.

For any complex number $z_0$ and for any function $G: \mathbb{C} \to \mathbb{C}$ supported on $ B (z_0, 1)$ with continuous derivatives up to order $6$ and $\norm{\triangledown^aG}_\infty\le 1$ for all $0\le a\le 6$, we have
\begin{eqnarray} 
\left |\E\sum G\left (\zeta \right) -\E\sum G\left (\tilde \zeta\right) \right |\le C |z_0|^{-c},\nonumber
\end{eqnarray}
where the first sum runs over all the roots $\zeta_1, \zeta_2, \dots$ of $P$, and the second  sum runs over all the roots $\tilde \zeta_1, \tilde  \zeta_2, \dots$ of $ \tilde P$. 
\end{theorem}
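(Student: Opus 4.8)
The plan is to deduce Theorem \ref{uni_flat} from the General Complex universality theorem (Theorem \ref{gcomplex}) by verifying Conditions {\bf C2} \eqref{cond-poly}--{\bf C2} \eqref{cond-delocal} for the Weyl series $P$ and $\tilde P$ with $k=1$, with an appropriate choice of the error parameter $\delta_n$ and region $D_n$. Since the Weyl series is an infinite series ($n=\infty$), the natural move is to localize: for a base point $z_0$ with $|z_0|$ large, the sum $\sum_{j}\frac{\xi_j z^j}{\sqrt{j!}}$ is, near $z_0$, effectively governed by the terms $j$ with $j \approx |z_0|^2$ (the ``band'' of width $O(|z_0|)$ around $|z_0|^2$ where $|z|^j/\sqrt{j!}$ is not super-exponentially small). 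The role of the small parameter is played by $\delta := |z_0|^{-1}$ (or a suitable power), and $D_n$ should be taken to be an annulus $\{z : |z| \asymp |z_0|\}$, or rather one first applies the scaling $z \mapsto z$ fixed and reads off the statement for a single point; the clean way is to take $D_n$ to be (a neighborhood of) $z_0$ itself and let $\delta_n = |z_0|^{-1}$.

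\textbf{Key steps.} First I would record the Gaussian computation: by the Edelman--Kostlan formula \eqref{flat1}, $\E N_{\tilde P}(B) = \frac{1}{\pi}m(B)$, and more to the point the intensity is uniformly $\Theta(1)$ per unit disk, so the normalization required in Remark \ref{anticond} holds automatically around any $z_0$; this also makes {\bf C2}\eqref{cond-poly} (the large-deviation bound on $N = N_P(B(z_0,1))$) routine via Jensen's inequality applied to $\log|P|$ on $B(z_0,2)$ together with a standard moment bound on $\log|P(w)|$ for a fixed good $w$. Second, {\bf C2}\eqref{cond-bddn} (boundedness): for fixed $w \in B(z_0,2)$, $P(w)$ is a sum of independent variables with $\E|P(w)|^2 = \sum_j |w|^{2j}/j! = e^{|w|^2}$, so $|P(w)| \le \exp(|z_0|^2 \cdot \text{const})$ with overwhelming probability, and a union bound / chaining over a fine net in $B(z_0,2)$ (using analyticity to upgrade the net bound to a sup bound) gives the claim with $c_1$ a suitable constant times the exponent — here one must be slightly careful that $\exp(\delta_n^{-c_1})$ with $\delta_n = |z_0|^{-1}$ accommodates $\exp(O(|z_0|^2))$, which forces $c_1 \ge 2$; this is where the precise relation between $\delta_n$ and $|z_0|$ must be pinned down, possibly taking $\delta_n = |z_0|^{-\kappa}$ for a small $\kappa$ so that $A$, $c_1$ from Theorem \ref{gcomplex} come out with room to spare. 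Third, {\bf C2}\eqref{cond-delocal} (delocalization): one needs $|z^j|/\sqrt{j!} \le C\delta_n^{\alpha_1}\big(\sum_k |z|^{2k}/k!\big)^{1/2} = C\delta_n^{\alpha_1} e^{|z|^2/2}$ for $z$ near $z_0$; since the denominator is $e^{|z|^2/2}$ and each numerator term is at most $\max_j |z|^j/\sqrt{j!} \sim (2\pi |z|^2)^{-1/4} e^{|z|^2/2}$ by Stirling, the ratio is $O(|z|^{-1/2}) = O(\delta_n^{\alpha_1})$ with $\alpha_1 = \kappa/2$ — so this is the step that fixes $\alpha_1$. Fourth, the anti-concentration condition {\bf C2}\eqref{cond-smallball}: this is where I would invoke the paper's general anti-concentration machinery (Lemma \ref{lmanti_concentration} together with Lemma \ref{2norm}), feeding in Condition {\bf C3}-type input, namely that a linear-sized band of indices $j \in [|z_0|^2 - c|z_0|, |z_0|^2 + c|z_0|]$ have comparable coefficient sizes $|z_0|^j/\sqrt{j!}$ (true by Stirling: within such a band these differ by bounded factors) — this supplies the ``many comparable coefficients'' hypothesis that the Turán/Halász-based lemma needs; the independence of $\Re\xi_j,\Im\xi_j$ is used exactly here. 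Fifth, since {\bf C2}\eqref{cond-repulsion} is only needed for real roots (and we are in the complex case, $l$-component only), it can be skipped per the Remark following Condition {\bf C2}. Finally, with all conditions verified, Theorem \ref{gcomplex} gives $|\E\sum G(\zeta) - \E\sum G(\tilde\zeta)| \le C'\delta_n^{c} = C'|z_0|^{-\kappa c}$, which is the claimed bound after relabeling the constant $c$.

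\textbf{Main obstacle.} The technical heart is the anti-concentration step {\bf C2}\eqref{cond-smallball}: one must show that, with probability $1 - O(|z_0|^{-A})$, there is some $z' \in B(z_0,1/100)$ with $|P(z')| \ge \exp(-|z_0|^{O(1)})$. The difficulty is that the ``active'' coefficients are the $O(|z_0|)$ indices in the band around $|z_0|^2$; one isolates these, treats the far tails as a bounded perturbation (they contribute $o(1)$ relative to $e^{|z_0|^2/2}$ in $L^2$, with Gaussian-type concentration), and applies Lemma \ref{lmanti_concentration} to the band, whose coefficients are comparable by Stirling so that Condition {\bf C3} is met with a genuinely linear (in the band size $\sim |z_0|$) subset. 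One then uses Lemma \ref{2norm} to pass from a lower bound on $|P|$ at the single favorable point produced by the anti-concentration lemma to the form demanded by {\bf C2}\eqref{cond-smallball}. Bookkeeping the three coupled parameters $\kappa$ (defining $\delta_n = |z_0|^{-\kappa}$), $\alpha_1$, and $c_1$ so that they simultaneously satisfy the constraints $A = 2C_1 + \alpha_1\eps/60$ and $c_1 = \alpha_1\eps/10^5$ from Theorem \ref{gcomplex} while leaving $\exp(O(|z_0|^2))$ inside $\exp(\delta_n^{-c_1})$ is the other point requiring care, but it is purely a matter of choosing $\kappa$ small enough (e.g. $c_1 \ge 2/\kappa$) and is not conceptually hard.
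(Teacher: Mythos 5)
Your plan is headed in the right direction — the anti-concentration via Lemma \ref{lmanti_concentration} on the band of indices around $|z_0|^2$, the Stirling estimate $\max_j |z|^j/\sqrt{j!} \sim |z|^{-1/2}e^{|z|^2/2}$ giving $\alpha_1=1/2$, the choice $\delta_n \asymp |z_0|^{-1}$, $D_n$ a neighborhood of $z_0$, and the observation that {\bf C2}\eqref{cond-repulsion} is not needed — all of this matches the paper. But there is a genuine gap in the step you flag as ``bookkeeping'': your proposal applies Theorem \ref{gcomplex} to the raw Weyl series $F_n=P$, and then the boundedness condition {\bf C2}\eqref{cond-bddn} and the delocalization condition {\bf C2}\eqref{cond-delocal} become mutually incompatible, and no choice of $\kappa$ in $\delta_n=|z_0|^{-\kappa}$ can fix this. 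Boundedness demands $\exp(\delta_n^{-c_1}) \gtrsim \exp(c|z_0|^2)$, i.e.\ $\kappa c_1 \ge 2$, while the prescribed value $c_1 = \alpha_1\eps/10^5$ (Remark \ref{rmkconstants} allows only decreasing $c_1$, never increasing it) combined with the delocalization constraint $\kappa\alpha_1 \le 1/2$ forces $\kappa c_1 \le \eps/(2\cdot 10^5) < 1$. Your proposed fix ``choose $\kappa$ small enough, e.g.\ $c_1 \ge 2/\kappa$'' has the inequality in the wrong direction: shrinking $\kappa$ makes the boundedness requirement harder, not easier, and $c_1$ cannot be chosen to be $\ge 2/\kappa$.

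The missing idea is the normalization that the paper introduces precisely to sidestep this clash: one applies Theorem \ref{gcomplex} not to $P$ but to
\[
F(z) := \frac{P(z)}{e^{|z_0|^2/2}\, e^{(z-z_0)\bar z_0}},
\]
a ratio by a non-vanishing entire function, so $F$ is entire and has the same zeros as $P$. The point of the specific denominator is that $\Var F(z) = e^{|z|^2 - |z_0|^2 - 2\Re((z-z_0)\bar z_0)} = e^{|z-z_0|^2} = \Theta(1)$ for $z\in B(z_0,2)$, killing the $e^{|z_0|^2/2}$ growth entirely. With $\Var F = \Theta(1)$, the boundedness condition only needs to handle polynomial-size fluctuations, which is routine (the paper proves $\P(\max_{B(z_0,2)}|F| \ge K M^A\delta^{-A-2}) \le K\delta^A/M^A$), and the tension with delocalization disappears. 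Without this step, the argument does not close, so I would count the proposal as having a real gap rather than a mere gap in exposition — the rest of your outline, however, faithfully reproduces the structure of the paper's verification of {\bf C2}\eqref{cond-poly}--\eqref{cond-delocal}.
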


As a corollary, we obtain a sharp estimate on the number of roots in regions with a fixed area.

\begin{cor}\label{mean_flat} For any constant $C>0$, let $B$ be an angular square $B = \{Re^{i\theta}: R\in [r, r+1], \theta\in [\theta_0, \theta_0 + C/r]$ for some numbers $r>0$ and $\theta_0$. Under the assumption of Theorem \ref{uni_flat}, we have
$$\E N_{P}(B) = \frac{1}{\pi} m (B)+ O(r^{-c})\quad\text{as $r\to \infty$,}$$
 where $c$ and the implicit constant only depend on $C$ and the constants in Condition {\bf C1}.
\end{cor}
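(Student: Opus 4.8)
The plan is to derive Corollary \ref{mean_flat} from Theorem \ref{uni_flat} by the standard smoothing argument, combined with the exact Gaussian formula \eqref{flat1}. First I would center and rescale: by rotating coordinates we may assume $\theta_0$ is chosen so that $B$ lies in a sector around the positive real axis, and we note that the angular square $B$ is contained in a disk $B(z_0,O(1))$ where $|z_0|\asymp r$ (here we use $r\ge 1$; the case $r=O(1)$ is trivial since then the bound is vacuous). The difficulty is that the indicator $\mathbf 1_B$ is not smooth, so I would sandwich it between two smooth test functions $G_-\le \mathbf 1_B\le G_+$, each supported on a disk of radius $O(1)$ near $z_0$, with $G_\pm\equiv 1$ on $B$ respectively on a slight shrinking of $B$, and with $\norm{\triangledown^aG_\pm}_\infty\le 1$ for $0\le a\le 6$. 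The geometric cost of this smoothing is controlled by the area of an $O(1/r)$-neighborhood of $\partial B$ intersected with the relevant region; since $B$ has dimensions $1\times (C/r)\cdot r = 1\times C$ (arc length), its boundary has length $O(1)$, so the symmetric difference $m(G_+\ne G_-)$ contributes $O(1/r)$ in area, hence $O(1/r)$ to the count in the Gaussian model by \eqref{flat1}.

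Concretely, I would write
\begin{equation}\nonumber
\E\sum G_-(\zeta)\le \E N_P(B)\le \E\sum G_+(\zeta),
\end{equation}
and the same for $\tilde P$. Applying Theorem \ref{uni_flat} to $G_-$ and to $G_+$ (with base point $z_0$, so the error is $O(|z_0|^{-c})=O(r^{-c})$) gives
\begin{equation}\nonumber
\E N_P(B)=\E N_{\tilde P}(B)+O(r^{-c}).
\end{equation}
Finally, for the Gaussian flat series $\tilde P$, the Edelman–Kostlan identity \eqref{flat1} gives $\E N_{\tilde P}(B)=\frac1\pi m(B)$ exactly, and combining yields $\E N_P(B)=\frac1\pi m(B)+O(r^{-c})$, which is the claim. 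One should double-check that Theorem \ref{uni_flat} applies: its hypotheses only require Condition {\bf C1} matching to complex Gaussians and independence of the real/imaginary parts of the $\xi_j$, exactly what Corollary \ref{mean_flat} assumes, and $G_\pm$ can be arranged to have the required $2\cdot1+4=6$ continuous derivatives with the stated sup-norm bounds by a fixed mollifier (the radius-$O(1)$ support is compatible with the "supported on $B(z_0,1)$" requirement after a harmless constant rescaling, or one covers $B$ by $O(1)$ such test functions and sums).

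The main obstacle is the smoothing/geometry bookkeeping: one must verify that a smooth $G_\pm$ with all derivatives up to order $6$ bounded by $1$ can indeed be fit to the angular square $B$ whose short side has length $\asymp C/r$ in angle (i.e., $\asymp C$ in arc length near radius $r$) — so that the "thin" direction is actually $\Theta(1)$ in Euclidean terms, not $\Theta(1/r)$ — and that the neighborhood of $\partial B$ where $G_+$ and $G_-$ differ has area $O(1/r)$. This is where one uses that $B$, up to an $O(1)$-bi-Lipschitz change of variables (the map $Re^{i\theta}\mapsto (R, r\theta)$ near radius $r$), is comparable to a fixed $1\times C$ rectangle, so its boundary length is $O(1)$ and an inner/outer $\eta$-collar has area $O(\eta)$; taking $\eta$ a small constant (independent of $r$) makes the smoothing error $O(1)$ in area but $O(1)$ only — to get $O(r^{-c})$ one instead balances: the collar contributes $O(\eta)$ to the Gaussian count and we may take $\eta\to 0$ as $r\to\infty$, e.g. $\eta = r^{-c'}$, at the cost of $\norm{\triangledown^aG_\pm}_\infty\le \eta^{-a}$, which forces a rescaling argument (partition $B$ into $O(\eta^{-1})$ sub-rectangles of size $\eta$ and apply Theorem \ref{uni_flat} to each, as in the trigonometric case). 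This rescaling is routine given the machinery already developed, so I expect the proof to be short; the only real care is matching the support-radius normalization of Theorem \ref{uni_flat} to the scale of $B$.
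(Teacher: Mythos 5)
Your plan follows the paper's: approximate $\mathbf{1}_B$ above and below by smooth test functions, transfer to the Gaussian ensemble via Theorem \ref{uni_flat}, and invoke the Edelman--Kostlan formula \eqref{flat1}; and your remark that $B$ may need to be covered by $O(1)$ unit disks to meet the support hypothesis of Theorem \ref{uni_flat} is the right fix. The one over-complication is in your final balancing step. Once you set the collar width to $\eta = r^{-c'}$, so that $\norm{\triangledown^a G_\pm}_\infty \le \eta^{-a} \le r^{6c'}$ for $0\le a\le 6$, there is no need to partition $B$ into $O(\eta^{-1})$ sub-rectangles and apply the theorem to each. The conclusion of Theorem \ref{uni_flat} is linear in $G$, so one simply applies it to $G_\pm / r^{6c'}$ (which satisfies the $\norm{\triangledown^a \cdot}_\infty \le 1$ hypothesis and keeps the same support) and multiplies the resulting bound back by $r^{6c'}$, giving an error of $O(r^{6c'-c})$. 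Adding the collar's Gaussian contribution $O(r^{-c'})$ and choosing $c' = c/12$ yields $O(r^{-c/12})$; this is exactly how the paper proceeds, with no partition. Your first paragraph also briefly asserts that an $O(1/r)$-wide collar is compatible with $\norm{\triangledown^a G_\pm}_\infty \le 1$, which cannot be (such a collar forces first derivatives of size $\sim r$), but you correct this in the third paragraph, so the plan itself is sound.
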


The angular square $B$ can be replaced by a disk, square, or any other nice domains whose indicator functions can be well approximated by smooth functions, with only a nominal modification of the proof. Thus, we have a generalization of \eqref{flat1} for flat series with general 
random coefficients. 

To the best of our knowledge, Theorem \ref{uni_flat} and Corollary \ref{mean_flat} are new. We present a short proof of these results in Section \ref{proof_flat}.

\section{Application: Universality for elliptic polynomials}\label{app4}
In this section, we briefly illustrate how to apply our framework to elliptic polynomials
$$P_n(z) = \sum_{i=0}^{n}\sqrt{n\choose i} \xi_i z^{i}.$$
where $\xi_j$ are independent real random variables satisfying the matching condition {\bf C1} with the $\tilde \xi_j$ being standard real Gaussian random variables.

For the Gaussian case, the polynomial $\tilde P_n(z) = \sum_{i=0}^{n}\sqrt{n\choose i} \tilde \xi_i z^{i}$ has exactly $\sqrt n$ real roots in expectation (see, for example, \cite{bleher1997correlations}, \cite{EK}). 
In their paper \cite{BD}, among other results, Bleher and Di extended this result to the non-Gaussian setting.
\begin{theorem}\cite[Theorem 5.3]{BD}\label{BDthm}
Let $\xi_j$ be iid random variables with mean 0 and variance 1. Assume furthermore that they are continuously distributed with sufficiently smooth density. Then
\begin{equation}
\lim _{n\to \infty }\frac{\E N_{P_n}(\R)}{\sqrt{n}}=1\nonumber.
\end{equation}
\end{theorem}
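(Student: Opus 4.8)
The plan is to reduce Theorem~\ref{BDthm} to the Gaussian case by the general real universality theorem (Theorem~\ref{greal}), and then to evaluate the Gaussian case exactly via the Kac--Rice/Edelman--Kostlan formula (Proposition~\ref{KacRice}). For the Gaussian elliptic polynomial $\tilde P_n$ the reproducing kernel is $K(s,t)=\sum_i\binom ni s^it^i=(1+st)^n$, so $\partial_s\partial_t\log K(s,t)\big|_{s=t=x}=\frac{n}{(1+x^2)^2}$, and hence $\E N_{\tilde P_n}(\R)=\frac1\pi\int_{\R}\frac{\sqrt n}{1+x^2}\,dx=\sqrt n$ exactly. It therefore suffices to prove that $\E N_{P_n}(\R)=\E N_{\tilde P_n}(\R)+o(\sqrt n)$.

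To pass from the local comparison in Theorem~\ref{greal} to the whole line, I would first use the reciprocal polynomial $z^nP_n(1/z)$, which is again an elliptic polynomial (with the $\xi_i$ relabelled), to reduce to the region $|x|\le 1$; since the root density is $\asymp\sqrt n$ on $[-1,1]$ with $\Theta(\sqrt n)$ roots in total, the $O(n^{-\kappa}\sqrt n)=o(\sqrt n)$ roots lying in $\{|x|\le n^{-\kappa}\}$ (and, via the reciprocal trick, in $\{|x|\ge n^{\kappa}\}$) may be discarded after bounding the count crudely by Jensen's inequality. On the remaining compact annular region I would cover $[-1,1]\setminus(-n^{-\kappa},n^{-\kappa})$ by $O(\sqrt n)$ intervals of length $\asymp n^{-1/2}$, apply Theorem~\ref{greal} with $k=1,\ l=0$ on each, using smooth minorants and majorants of the indicator function, and sum: the accumulated error is $O(\sqrt n)\cdot n^{-c}=o(\sqrt n)$.

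It remains to verify the hypotheses of Theorem~\ref{greal}. Condition~{\bf C1} is immediate since the $\xi_i$ are i.i.d.\ with mean $0$ and variance $1$ (matching the standard Gaussian $\tilde\xi_i$) and the regularity hypothesis supplies a bounded $(2+\ep)$ central moment. For Condition~{\bf C2} the key point is to work in the stereographic coordinate $x=\tan\theta$: the real zeros of $P_n$ become the zeros of $R_n(\theta)=\sum_i\sqrt{\binom ni}\,\xi_i\sin^i\theta\cos^{n-i}\theta$, and setting $\psi_i(\theta)=\sqrt{\binom ni}\sin^i\theta\cos^{n-i}\theta$ one has the identities $\sum_i|\psi_i(\theta)|^2\equiv1$ and $\sum_i|\psi_i'(\theta)|^2=n$ (the latter because $\psi_i'=\frac{i-n\sin^2\theta}{\sin\theta\cos\theta}\psi_i$, so $\sum_i|\psi_i'|^2=\mathrm{Var}(\mathrm{Bin}(n,\sin^2\theta))/(\sin^2\theta\cos^2\theta)=n$), with $\sum_i\sup|\psi_i''|^2=\Theta(n^2)$ similarly. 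After rescaling $\theta$ near any base point by $\sqrt n$, the ratios in {\bf C2}\eqref{cond-repulsion} become $\Theta(1)$, and the third line there is vacuous since $\E\xi_i=0$; the delocalization bound {\bf C2}\eqref{cond-delocal} follows from $|\psi_i(\theta)|^2=\binom ni(\sin^2\theta)^i(\cos^2\theta)^{n-i}=O(n^{-1/2})$, valid precisely because we removed neighborhoods of $x=0$ and $x=\infty$, so that $\sin^2\theta$ stays bounded away from $0$ and $1$; this permits $\delta_n\asymp n^{-1/2}$ and $\alpha_1=1/2$. The large-deviation bound {\bf C2}\eqref{cond-poly} and the boundedness {\bf C2}\eqref{cond-bddn} are routine, since for fixed $w$ the value $R_n(w)$ is a sum of independent variables with $\sum_i|\psi_i(w)|^2\le1$, and the number of zeros in a disk is controlled by Jensen's inequality; the same (easier) verifications apply to $\tilde P_n$.

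The one genuinely nontrivial ingredient is the anti-concentration bound {\bf C2}\eqref{cond-smallball}. Given the smooth-density hypothesis of Theorem~\ref{BDthm} this is actually easy: at a single point the rescaled $R_n$ equals $\sum_i a_i\xi_i$ with $\sum_i a_i^2=1$ and $\max_i|a_i|=O(n^{-1/4})$, whose density is uniformly bounded by a smoothing argument, which already gives far more than is required. For general coefficients satisfying only {\bf C1} one instead invokes Lemma~\ref{lmanti_concentration} together with Lemma~\ref{2norm} (which propagates a lower bound at one point to its neighborhood); this is where all the work of the framework is concentrated, and once it is in place the rest of the argument is bookkeeping.
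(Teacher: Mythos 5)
Your overall strategy coincides with the paper's: reduce to the Gaussian case by the real universality theorem and evaluate the Gaussian count by Kac--Rice/Edelman--Kostlan, handling the tails and the scale change by the reciprocal trick, a Jensen bound near the origin, and a covering by $\Theta(\sqrt n)$ windows of width $\asymp n^{-1/2}$. The Gaussian computation $\E N_{\tilde P_n}(\R)=\sqrt n$ is correct. The substantive difference is your choice of normalization. You pass to the stereographic variable $\theta$ with $x=\tan\theta$, so the coefficient functions become $\psi_j(\theta)=\sqrt{\binom nj}\sin^j\theta\cos^{n-j}\theta$, giving the exact identities $\sum_j\psi_j^2\equiv1$ and $\sum_j(\psi_j')^2\equiv n$ on the real line; after the $\sqrt n$-rescaling the derivative ratios in {\bf C2}\eqref{cond-repulsion} are then $O(1)$, so you can invoke Theorem~\ref{greal} directly. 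The paper instead normalizes $F_n(z)=P_n(z/\sqrt n)\big/\bigl[(|x_0|^2+1)^{n/2}\exp\bigl(\tfrac{n(z/\sqrt n-x_0)\bar x_0}{|x_0|^2+1}\bigr)\bigr]$, and --- precisely because of the exponential tilting in the denominator --- explicitly declares that {\bf C2}\eqref{cond-repulsion} fails for its $F_n$; it therefore applies only the complex universality Theorem~\ref{gcomplex} (which does not need {\bf C2}\eqref{cond-repulsion}) and proves the one estimate that condition would have supplied, namely the repulsion bound \eqref{repulsion_gau2}, directly in the Gaussian case via the correlation-function bounds of \cite[Prop.~13.3]{TVpoly}. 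Your coordinate choice thus buys a cleaner and more self-contained verification of {\bf C2}, at the modest cost of actually carrying out the complex-disk suprema in {\bf C2}\eqref{cond-repulsion} carefully (the linear exponent $\exp\bigl(\tfrac{j-n\sin^2\theta_0}{\sin\theta_0\cos\theta_0}\cdot\tfrac{w}{\sqrt n}\bigr)$ contributes $e^{O(K)}$ against a Gaussian weight $e^{-cK^2}$, which you should spell out). A second, smaller, difference: you exploit the smooth-density hypothesis of Theorem~\ref{BDthm} to get anti-concentration essentially for free, which is fine for the statement at hand, whereas the paper (aiming at the more general Theorem~\ref{uni_elliptic}, Corollary~\ref{mean_elliptic}) must run the Tur\'an/Hal\'asz machinery of Lemma~\ref{lmanti_concentration}, as you note in your last paragraph. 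One word of caution: the union of the rescaled windows, together with the reciprocal trick, covers $\{n^{-\kappa}\le|x|\le n^{\kappa}\}$, and you must make the crude Jensen bound on the discarded region $\{|x|\le n^{-\kappa}\}\cup\{|x|\ge n^{\kappa}\}$ explicit --- the paper does this on $|x|\le n^{-1/2+\ep}$ using \eqref{ell1} and \eqref{ell2} --- but both versions give $o(\sqrt n)$, so this is a matter of bookkeeping rather than substance.
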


We refer the reader to the original paper \cite{BD} for the  precise description of ``sufficiently smooth". The same result with this assumption being removed is obtained in a recent work of Flasche-Kabluchko \cite{flasche2018real}.

Later, Tao and the second author in \cite[Theorem 5.6]{TVpoly} showed that the same result holds when the random variables $\xi_j$ are only required to be independent with mean 0, variance 1, and finite $(2+\ep)$-moments. Here we apply our framework to recover these results assuming the more flexible  Condition {\bf C1}, which allows a constant number of $\xi_j$ to have non-zero  means. Let us first start with a local universality result.

\begin{theorem}\label{uni_elliptic}(Universality for random elliptic polynomials)
Assume that the real random variables $\xi_j$ are independent and satisfy the matching condition {\bf C1} with the $\tilde \xi_j$ being standard real Gaussian random variables. Then there exist positive constants $C, c$ depending only on the constants in Condition {\bf C1} such that the following holds.

For any real number $x_0$ with $n^{-1/2+\ep} \le |x_0| \le 1$ and for any function $G: \mathbb{C} \to \mathbb{C}$ supported on $[x_0- 1/\sqrt{n}, x_0+1/\sqrt{n}]$ with  continuous derivatives up to order $6$ and $\norm{\triangledown^aG}_\infty\le n^{a/2}$ for all $0\le a\le 6$, we have
\begin{eqnarray} 
\left |\E\sum G\left (\zeta \right) -\E\sum G\left (\tilde \zeta\right) \right |\le C n^{-c},\nonumber
\end{eqnarray}
where the first sum runs over all roots $\zeta_1, \zeta_2, \dots$ of $P_n$, and the second  sum runs over all the roots $\tilde \zeta_1, \tilde  \zeta_2, \dots$ of $ \tilde P_n$. 
\end{theorem}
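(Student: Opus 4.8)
The plan is to derive Theorem \ref{uni_elliptic} as a routine consequence of the General Real universality theorem (Theorem \ref{greal}), applied with $k=1$, $l=0$, and a suitable choice of the region $D_n$ and the scale parameter $\delta_n$. The key point is that elliptic polynomials are not "flat" at the natural unit scale; near a point $x_0$ with $|x_0| \asymp 1$ the roots live at scale $1/\sqrt n$, so one must first rescale. Concretely, I would set $\delta_n := n^{-1/2}$ (or a small power thereof) and work with the rescaled function $\widehat F_n(w) := P_n(x_0 + w/\sqrt n)$, whose coefficients are $\widehat\phi_i(w) = \sqrt{\binom ni}(x_0 + w/\sqrt n)^i$; then on a disk of radius $O(1)$ in the $w$-variable the zeros of $\widehat F_n$ have density $\Theta(1)$, which is the normalization required by Remark \ref{anticond}. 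The test function $G$ supported on $[x_0-1/\sqrt n, x_0+1/\sqrt n]$ with $\|\triangledown^a G\|_\infty \le n^{a/2}$ pulls back to a test function on an interval of length $O(1)$ with all derivatives $O(1)$, exactly matching the hypotheses of Theorem \ref{greal}.

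The main body of the argument is then the verification of Conditions \textbf{C1} and \textbf{C2} for $\widehat F_n$ on the relevant region. Condition \textbf{C1} is assumed directly (the $\xi_j$ match the Gaussian $\tilde\xi_j$ to second order with finitely many exceptions, which is the hypothesis of Theorem \ref{uni_elliptic}). For \textbf{C2}, the pieces are: (i) the polynomial degree bound and (iii) boundedness are standard large deviation estimates, using that for fixed $w$, $\widehat F_n(w)$ is a sum of independent variables with variance $\sum_i \binom ni |x_0 + w/\sqrt n|^{2i} = (1 + |x_0+w/\sqrt n|^2)^n$, together with Jensen's inequality to control zero counts; (iv) delocalization asks that no single $\sqrt{\binom ni}|x_0+w/\sqrt n|^i$ dominates $\sqrt{\sum_j \binom nj|x_0+w/\sqrt n|^{2j}}$, which follows because the summands $\binom ni t^{2i}$ (with $t \asymp 1$) form a binomial-type profile peaked around $i \approx \frac{t^2}{1+t^2} n$ with width $\Theta(\sqrt n)$, so each individual term is $O(n^{-1/2})$ of the total --- giving $\alpha_1$ essentially $1/2$ in the rescaled variable; (v) the derivative-growth bounds: differentiating $\widehat\phi_i$ in $w$ brings down a factor $i/\sqrt n = O(\sqrt n)$, and since we measure in units of $\delta_n = n^{-1/2}$, $\sum |\widehat\phi_i'|^2 \le C\delta_n^{-c_1}\sum|\widehat\phi_i|^2$ with plenty of room. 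The one condition requiring genuine work is (ii), anti-concentration: one needs, with high probability, a point $w'$ near $w$ with $|\widehat F_n(w')|$ not super-exponentially small. This is handled exactly as advertised in the Introduction and Remark \ref{anticond} via Lemma \ref{lmanti_concentration} (the Turán--Halász-based lemma) together with Lemma \ref{2norm} (propagating a lower bound at one point to a neighborhood); the input is that a positive fraction of the coefficients $\sqrt{\binom ni}$ at indices $i$ in an arithmetic-progression-like window near the binomial peak are comparably large, which is Condition \textbf{C3}-type information and holds for the binomial profile.

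I expect the anti-concentration verification (Condition \textbf{C2}(\ref{cond-smallball})) to be the main obstacle, since it is the only step that is not a soft estimate: one must exhibit an explicit long sub-collection of indices whose coefficients $\sqrt{\binom ni}(x_0)^i$ are all within a constant factor of the maximum, so that the Turán-type lemma applies to the corresponding sub-sum. For $n^{-1/2+\ep}\le|x_0|\le 1$ the binomial weights $\binom ni |x_0|^{2i}$ are concentrated around $i_\ast = \frac{x_0^2}{1+x_0^2}n$, and on an interval of length $c\sqrt n$ around $i_\ast$ all weights are comparable; taking an arithmetic progression inside this interval supplies the required structured set. A secondary technical point is the restriction $|x_0|\ge n^{-1/2+\ep}$: for smaller $|x_0|$ the peak $i_\ast$ is no longer at a macroscopic location and the local picture degenerates (indeed near $x_0=0$ the behavior is different), which is why the hypothesis is stated this way. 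Once \textbf{C2} is checked, Theorem \ref{greal} gives the comparison bound $C'\delta_n^c = C' n^{-c/2}$ between $\widehat F_n$ and $\widehat{\tilde F}_n$; undoing the rescaling yields the claimed estimate for $P_n$ versus $\tilde P_n$, completing the proof.
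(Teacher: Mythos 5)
Your high-level plan (rescale so that roots have $\Theta(1)$ density, then feed Theorem~\ref{greal} with $k=1$, $l=0$, and verify {\bf C2} via Lemma~\ref{lmanti_concentration}) is the right framework, but there are two genuine gaps, and the second one is the reason the paper's proof has extra structure you did not anticipate.

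First, the rescaling $\widehat F_n(w) := P_n(x_0 + w/\sqrt n)$ as you wrote it does not satisfy {\bf C2}. The variance $\Var\widehat F_n(w) = \bigl(1+|x_0+w/\sqrt n|^2\bigr)^n$ changes by a factor of order $e^{\Theta(\sqrt n)}$ as $w$ moves across a disk of radius $2$, so {\bf C2}~\eqref{cond-bddn} (which demands $|F_n(w)|\le\exp(\delta_n^{-c_1})$ with $\delta_n^{-c_1}$ only a tiny power of $n$) fails wildly, and {\bf C2}~\eqref{cond-smallball} fails for the same reason. Your claim that {\bf C2}~\eqref{cond-repulsion} holds ``with plenty of room'' is also wrong for this rescaling: the weighted ratio $\sum_i|\widehat\phi_i'|^2/\sum_i|\widehat\phi_i|^2$ is $\E[i^2]/(n|x_0+w/\sqrt n|^2) \asymp n$, not $O(n^{c_1/2})$. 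The paper instead applies the framework to
\[
F_n(z) \;=\; \frac{P_n(z/\sqrt n)}{(|x_0|^2+1)^{n/2}\exp\!\left(\tfrac{n(z/\sqrt n - x_0)\bar x_0}{|x_0|^2+1}\right)},
\]
with $D_n=\{\sqrt n x_0\}$ and $\delta_n=n^{-1}$; dividing by this analytic nowhere-zero function makes $\Var F_n=\Theta(1)$, keeps the roots intact, and ``centers'' the logarithmic derivative so that the conditions you cite become checkable. This normalization is not optional --- the paper makes exactly the same move in the proof of Theorem~\ref{uni_flat} for the Weyl case and explicitly flags the reason.

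Second, even with the correct normalization, the paper points out that {\bf C2}~\eqref{cond-repulsion} \emph{still} fails for the elliptic model, so Theorem~\ref{greal} does not apply verbatim. Instead the paper observes that {\bf C2}~\eqref{cond-repulsion} is only used in the proof of Theorem~\ref{greal} to establish the Gaussian repulsion bound~\eqref{repulsion_gau}, and it proves the needed substitute~\eqref{repulsion_gau2} directly, via the bounds $\rho^{(0,1)}(x,y)=O(n^{3/2})|x-y|$ and $\rho^{(2,0)}=O(n)$ for the Gaussian elliptic polynomial from \cite[Proposition 13.3]{TVpoly}. This is the step your proposal is missing: you asserted {\bf C2}~\eqref{cond-repulsion} is satisfied, whereas the actual proof has to route around its failure by importing an external correlation estimate. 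The rest of your outline (the anti-concentration via Lemma~\ref{lmanti_concentration} applied to a short arithmetic progression of indices near the binomial peak $i_0=\lfloor 1+\tfrac{(n-1)x_0^2}{1+x_0^2}\rfloor$, delocalization via comparability of the $\sqrt{\tbinom ni}|x_0|^i$ on a window of width $\Theta(n^{\ep/2})$ around $i_0$, and {\bf C2}~\eqref{cond-poly} via Jensen) matches the paper once the exponential normalization is in place.
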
 

\begin{remark}\label{remark_elliptic}
If $P_n$ satisfies the assumptions of Theorem \ref{uni_elliptic}, so does the polynomial $Q_n(z) = z^{n}P_n\left (\frac{1}{z}\right ) = \sum_{i=0}^{n} \sqrt{n\choose i} \xi_{n-i} z^{i}$. And since the roots of $Q_n$ are just the reciprocals of the roots of $P_n$, from the conclusion of Theorem \ref{uni_elliptic} for $Q_n$, one can obtain the corresponding universality result of $P_n$ on the domain $1\le |x_0| \le n^{1/2-\ep}$. 
\end{remark} 

Thanks to this remark, our result proves  universality  on the domain $ n^{-1/2+\ep}\le |x_0| \le n^{1/2-\ep}$. By showing that the contribution outside of this domain is negligible, we obtain the following more quantitative version of Theorem \ref{BDthm}.
\begin{cor}\label{mean_elliptic} Under the assumption of Theorem \ref{uni_elliptic}, we have
$$\E N_{P_n}(\R)=\sqrt{n} + O(n^{1/2-c})$$
where $c$ and the implicit constant only depend on the constants in Condition {\bf C1}.
\end{cor}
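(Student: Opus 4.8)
The plan is to combine the local universality result (Theorem \ref{uni_elliptic}, together with Remark \ref{remark_elliptic}) with the known Gaussian answer and a soft tail estimate for the contribution of roots near $0$, near $\infty$, and the very few complex roots near $|x_0|=1$. First I would recall that for the Gaussian elliptic polynomial $\tilde P_n$ one has the exact Kac-Rice formula giving $\E N_{\tilde P_n}(\R) = \sqrt n$ exactly, and moreover a pointwise density: $\E N_{\tilde P_n}([a,b]) = \int_a^b \rho_n(t)\,dt$ with $\rho_n(t) = \frac{\sqrt n}{\pi(1+t^2)}$ (the pushforward of the uniform measure on the sphere under stereographic projection). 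In particular, the Gaussian mass of real roots in the region $|x|\le n^{-1/2+\eps}$ and in $|x|\ge n^{1/2-\eps}$ is $O(n^{\eps})\cdot O(n^{-1/2}\cdot\sqrt n)$... more carefully, $\int_{|t|\le n^{-1/2+\eps}}\rho_n\,dt = O(n^{\eps})$ and $\int_{|t|\ge n^{1/2-\eps}}\rho_n\,dt = O(n^{\eps})$, so the Gaussian contribution from outside the good annulus $n^{-1/2+\eps}\le|x|\le n^{1/2-\eps}$ is $O(n^{\eps})$, which is $O(n^{1/2-c})$ once $\eps<1/2-c$; so we may pick $\eps$ small.

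Next I would transfer this to the general polynomial $P_n$. Cover the good real annulus $I_\eps := \{n^{-1/2+\eps}\le |x|\le n^{1/2-\eps}\}$ by $O(n)$ overlapping intervals of length $\sim 1/\sqrt n$ (on the part $|x|\le 1$ this is $O(\sqrt n)$ intervals of length $1/\sqrt n$; on the part $|x|\ge 1$, after the inversion $z\mapsto 1/z$ of Remark \ref{remark_elliptic}, similarly $O(\sqrt n)$ intervals), choose a smooth partition of unity $\{G_m\}$ subordinate to this cover with $\|\triangledown^a G_m\|_\infty \le n^{a/2}$, and apply Theorem \ref{uni_elliptic} to each $G_m$ (and its inverted counterpart). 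Summing over the $O(\sqrt n)$ intervals and using $N^{-c}$ with $N=n$ gives
$$\Big|\E N_{P_n}(I_\eps) - \E N_{\tilde P_n}(I_\eps)\Big| \le O(\sqrt n)\cdot C n^{-c} = O(n^{1/2-c}).$$
Here one should be a little careful: the sum $\sum G_m$ counts each root with a bounded multiplicity (depending on the overlap of the cover) and is $\ge \mathbf 1_{I_\eps}$, so I would rather take a genuine partition of unity $\sum_m G_m \equiv 1$ on $I_\eps$, each $G_m$ supported in an interval of length $10^{-3}/\sqrt n$... in fact it is cleanest to use the $\theta_n$-free form: apply Theorem \ref{uni_elliptic} directly with test functions of the allowed support size $1/\sqrt n$ and normalization $n^{a/2}$, which is exactly what the statement permits. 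The number of such functions needed is $O(\sqrt n)$, and the total error is $O(\sqrt n \cdot n^{-c}) = O(n^{1/2-c})$ after renaming $c$.

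Finally I would handle the two ``bad" regions for $P_n$ itself: the small-$|x|$ region $|x|\le n^{-1/2+\eps}$, the large-$|x|$ region $|x|\ge n^{1/2-\eps}$, and the complex roots near $|z|=1$ are irrelevant since we only count real roots. For the small-$|x|$ region, I would bound $\E N_{P_n}(\{|x|\le n^{-1/2+\eps}\})$ crudely: the leading behavior of $P_n$ near $0$ is governed by the low-degree terms $\sqrt{n\choose i}\xi_i x^i$ with $\sqrt{n\choose i}\approx n^{i/2}/\sqrt{i!}$, so $P_n(n^{-1/2}w)$ behaves like a Weyl-type series in $w$, whose expected number of real zeros in $|w|\le n^{\eps}$ is $O(n^{\eps})$; this can be made rigorous via Jensen's inequality / a crude Kac-Rice bound using only the second moments, exactly the type of estimate already needed to verify Condition \textbf{C2}(1) in the proof of Theorem \ref{uni_elliptic}. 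The large-$|x|$ region is handled identically after the inversion of Remark \ref{remark_elliptic}. Collecting: $\E N_{P_n}(\R) = \E N_{P_n}(I_\eps) + O(n^{1/2-c}) = \E N_{\tilde P_n}(I_\eps) + O(n^{1/2-c}) = \sqrt n + O(n^{1/2-c})$, where the last step uses the exact Gaussian count $\E N_{\tilde P_n}(\R)=\sqrt n$ minus its $O(n^{\eps})$ tails.

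The main obstacle I anticipate is the bookkeeping of the summation step: one must choose the partition of unity so that (a) each piece genuinely satisfies the support-size and derivative bounds of Theorem \ref{uni_elliptic} (support length $\le 10^{-3}\cdot(\text{scale})$ and $\|\triangledown^a G_m\|_\infty\le (\text{scale})^{-a}$), (b) the pieces tile $I_\eps$ without over-counting roots, and (c) the number of pieces times the per-piece error $n^{-c}$ stays $O(n^{1/2-c'})$ with a genuinely positive $c'$ — which forces $\eps$ to be taken smaller than $c$, a constraint that is harmless since $c$ depends only on the constants in Condition \textbf{C1}. The crude tail bound for the bad regions is standard but needs the elliptic-specific input that the rescaled low-degree part is Weyl-like; this is the only place where a genuine (if routine) computation is required.
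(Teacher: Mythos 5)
Your proposal is correct and follows essentially the same route as the paper: reduce to the good annulus $n^{-1/2+\ep}\le|x|\le n^{1/2-\ep}$ via Remark \ref{remark_elliptic}, tile it by $O(\sqrt n)$ intervals of length $\sim n^{-1/2}$ and apply Theorem \ref{uni_elliptic} to each to accumulate error $O(n^{1/2-c})$, and bound the contribution of $|x|\le n^{-1/2+\ep}$ by a Jensen/anti-concentration argument giving $O(n^{O(\ep)})$. The paper carries out the small-$|x|$ tail estimate exactly as you sketch (Jensen applied to a disk $B(x,3x)$ with $x=n^{-1/2+\ep}$, using the already-verified boundedness and anti-concentration bounds), so the only difference is presentational bookkeeping in the summation step.
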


We give a short proof of these results in Section \ref{proof_elliptic}. We note that a corresponding statement can be made concerning the expected number of real roots on a fixed interval $[a, b]\subset \R$, using the same proof. 

\section{Application: Universality for Random Taylor series} \label{app5} 
Let $\Gamma$ denote the Gamma function. In a recent paper \cite{flasche2020expected}, Flasche and Kabluchko considered the following random series 
$$P(x) = \sum_{k=0}^{\infty} \xi_k c_k x^{k}$$
where the $c_k$ are real deterministic coefficients such that 
$$c_k^{2}=\frac{k^{\gamma-1}}{\Gamma(\gamma)}L(k)$$
for some constant $\gamma>0$ and some function $L: (0, \infty)\to \R$ satisfying $L(t)>0$ for sufficiently large $t$ and $\lim _{t\to\infty} \frac{L(\lambda t)}{L(t)}=1$ for all $\lambda>0$. For example, $L(x)$ is some power of $\log x$.

We follow the terminology in \cite{flasche2020expected} and call such a  function $L$ a {\it slowly varying} function and the function $P$ a  {\it random series with regularly varying coefficients}. The following is the main result of \cite{flasche2020expected}.

\begin{theorem}\cite[Theorem 1.1]{flasche2020expected}\label{kacseries_thm}
Assume that the random variables $\xi_k$ are iid real random variables with zero mean and unit variance. Then
$$\lim _{r\uparrow 1} \frac{\E N_{P}[0, r]}{-\log(1-r)}=\frac{\sqrt\gamma}{2\pi}.$$
\end{theorem}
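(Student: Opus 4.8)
\textbf{Proof proposal for Theorem \ref{kacseries_thm}.}

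The plan is to deduce this asymptotic from our general machinery by comparing $P$ with the corresponding Gaussian series $\tilde P(x) = \sum_k \tilde\xi_k c_k x^k$, where $\tilde\xi_k$ are i.i.d.\ standard real Gaussians, and then computing the Gaussian side exactly via the Kac--Rice formula. The first step is to recognize the correct local scale: near a point $r \in (0,1)$, the natural window has width of order $(1-r)$, and on such a window the expected number of roots is $\Theta(1)$. Concretely, one normalizes $P$ at the point $r$ by dividing by $\sqrt{\sum_k c_k^2 r^{2k}}$ and rescales the variable so that the window $[r - c(1-r), r + c(1-r)]$ becomes an interval of unit size; after this change of variables we are exactly in the setting of Theorem \ref{greal} with $k=1$, $l=0$, and $\delta_n$ playing the role of a power of $(1-r)$. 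So I would set $\delta = (1-r)^{\Theta(1)}$ and verify Condition {\bf C2} on the region $D$ consisting of the rescaled window around $r$.

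The technical heart is then the verification of {\bf C2} for this ensemble, and the anti-concentration item {\bf C2}\eqref{cond-smallball} is where I expect the main obstacle. For this I would use Lemma \ref{lmanti_concentration} (the Tur\'an--Hal\'asz-based criterion advertised in Section \ref{mainideas}): one needs a subinterval of indices $k$ on which $c_k^2 r^{2k}$ is comparably large, and since $c_k^2 = \frac{k^{\gamma-1}}{\Gamma(\gamma)}L(k)$ varies regularly, the dominant indices are those with $k \asymp 1/(1-r)$, and on a constant fraction of such $k$ the quantities $c_k^2 r^{2k}$ are all within a constant factor of the maximum --- exactly the kind of "spread-out mass on a linear-length block" hypothesis that Lemma \ref{lmanti_concentration} requires (after rescaling, this block has length $\Theta(\delta^{-\alpha_1})$ for a suitable $\alpha_1$). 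The delocalization bound {\bf C2}\eqref{cond-delocal}, i.e.\ $|c_k r^k| \ll \delta^{\alpha_1}(\sum_j c_j^2 r^{2j})^{1/2}$, follows from the same regular-variation estimate: no single term of the Gaussian-weight vector $(c_k r^k)_k$ carries more than a vanishing fraction of the $\ell^2$ mass because the mass is spread over $\Theta(1/(1-r))$ coordinates. The derivative-growth bounds {\bf C2}\eqref{cond-repulsion} and the boundedness/large-deviation items {\bf C2}\eqref{cond-poly}, {\bf C2}\eqref{cond-bddn} are routine: differentiating $x^k$ multiplies by $k \asymp 1/(1-r) = \delta^{-\Theta(1)}$, and Jensen's inequality together with standard moment bounds controls the number of zeros in a disk and the sup-norm of $P$ there, just as in Remark \ref{anticond}.

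Once Theorems \ref{greal} (applied with $k=1,l=0$) and a summation of the local estimate over a partition of $[0,r]$ into $\Theta(-\log(1-r))$ windows give $\E N_P[0,r] = \E N_{\tilde P}[0,r] + o(-\log(1-r))$, it remains to compute the Gaussian side. For $\tilde P$ one writes the Kac--Rice/Edelman--Kostlan density $\rho(x) = \frac{1}{\pi}\sqrt{\partial_x\partial_y \log S(x,y)}\big|_{y=x}$, where $S(x,y) = \sum_k c_k^2 (xy)^k$; using $c_k^2 = \frac{k^{\gamma-1}}{\Gamma(\gamma)}L(k)$ and the slow variation of $L$, a Tauberian/Abelian computation gives $S(t,t) = \sum_k c_k^2 t^{2k} \sim \Gamma(\gamma)^{-1}\Gamma(\gamma) L\!\left(\tfrac{1}{1-t^2}\right)(1-t^2)^{-\gamma} = L\!\left(\tfrac{1}{1-t^2}\right)(1-t^2)^{-\gamma}$ as $t\uparrow 1$, and hence $\rho(x) \sim \frac{\sqrt\gamma}{2\pi}\cdot\frac{1}{1-x}$ near $x=1$ (the slowly varying factor contributes a lower-order term to $\partial_x\partial_y\log S$ and drops out). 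Integrating, $\E N_{\tilde P}[0,r] = \int_0^r \rho(x)\,dx \sim \frac{\sqrt\gamma}{2\pi}(-\log(1-r))$, which combined with the universality comparison yields the claimed limit $\frac{\sqrt\gamma}{2\pi}$. The only place needing care is making the Abelian estimate for $S(t,t)$ uniform enough (in its logarithmic derivative, not just leading order) to extract the constant $\sqrt\gamma$ cleanly; this is a standard regular-variation argument and is presumably where \cite{FK} itself did the work, so I would cite that computation.
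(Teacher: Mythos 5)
Your proposal follows essentially the same two-step route as the paper's own treatment (Theorem~\ref{kacseries_uni} and Corollary~\ref{kacseries_cor}): reduce to the Gaussian series $\tilde P$ via the general universality machinery with $\delta$ of order $1-r$, then evaluate $\E N_{\tilde P}[0,r]$ by Kac--Rice and regular variation of the coefficients. The verification of {\bf C2} you sketch (anti-concentration from Lemma~\ref{lmanti_concentration} applied to the block of indices $k\asymp 1/(1-r)$ where $c_k^2 r^{2k}$ is near-maximal, delocalization and derivative-growth from the fact that the $\ell^2$-mass spreads over $\Theta(1/(1-r))$ coordinates) is the content of the paper's proof of Theorem~\ref{kacseries_uni}, and summing the local comparison over a partition of $[0,r]$ into logarithmically many dyadic windows matches part (1) of Corollary~\ref{kacseries_cor}.

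The one spot you correctly flag as delicate --- passing from the Abelian estimate $S(t,t)\sim L\bigl(\tfrac1{1-t^2}\bigr)(1-t^2)^{-\gamma}$ to the density $\rho(x)\sim \tfrac{\sqrt\gamma}{2\pi(1-x)}$ via $\partial_x\partial_y\log S$ --- does need an extra argument, since an Abelian asymptotic for $S$ cannot simply be differentiated. The paper sidesteps this cleanly: rather than differentiating $\log S$, it writes the Kac--Rice integrand $f(x)$ in terms of the three series $\sum_k c_k^2 x^{2k}$, $\sum_k c_k^2 k\,x^{2k-1}$, $\sum_k c_k^2 k^2 x^{2k-2}$ and applies Lemma~\ref{lmmregular_varying} (Feller's abelian theorem) to each separately --- their coefficients are regularly varying with indices $\gamma$, $\gamma+1$, $\gamma+2$, so the theorem gives uniform asymptotics on rescaled windows for all three at once, from which $f(1-at)\sim \gamma(at)^{-2}/4$ follows by the functional equation $\Gamma(\gamma+2)=\gamma(\gamma+1)\Gamma(\gamma)$. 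So you do not need to defer to \cite{FK}: the same Lemma~\ref{lmmregular_varying} you already invoked for {\bf C2} closes the Gaussian computation, applied three times instead of once.
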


We reprove  Theorem \ref{kacseries_thm} under the (slightly different) assumption that the random variables $\xi_k$ are independent (not necessarily identically distributed) real random variables with zero mean, unit variance, and uniformly bounded $(2+\ep)$-moments. As usual, we  allow that a few random variables have nonzero bounded mean, and so our result also applies to level sets. Our method also yields a polynomial rate  of convergence.

As before, we obtain this as a corollary of a stronger theorem establishing the 
 local universality of the roots. Let 
$$\tilde P(x) = \sum_{k=0}^{\infty} \tilde \xi_k c_k x^{k}$$
where the $\tilde \xi_k$ are independent standard Gaussian.

\begin{theorem}[Universality for random series with regularly varying coefficients]\label{kacseries_uni}
Let $k, l$ be nonnegative integers with $k+l\ge 1$. Assume that the real random variables $\xi_j$ are independent and satisfy the matching condition {\bf C1} with the $\tilde \xi_j$ being standard real Gaussian random variables. There exist positive constants $C', c$  depending only on the constants in Condition {\bf C1} such that the following holds.

Let $0 < \delta < 1$, and  let $x_1,\dots, x_k$ be real numbers  and $z_1, \dots, z_l$ be complex numbers satisfying  $1-2\delta \le |x_i|, |z_j|\le 1-\delta$ for all relevant $i,j$.  Let $G: \mathbb{R}^{k}\times\mathbb{C}^{l}\to \mathbb{C}$ by a function  supported on $\prod_{i=1}^{k}[x_i-10^{-3}\delta, x_i+10^{-3}\delta] \times \prod_{j=1}^{l}B (z_j, 10^{-3}\delta)$ with  continuous derivatives up to order $2(k+l)+4$ and $\norm{\triangledown^aG}_\infty\le \delta^{-a}$ for all $0\le a\le 2(k+l)+4$. Then 
	\begin{eqnarray} 
	\left |\E\sum G\left (\zeta_{i_1}, \dots, \zeta_{i_k}, \zeta_{j_1} , \dots, \zeta_{j_l}\right) 
	-\E\sum G\left (\tilde \zeta_{i_1}, \dots, \tilde \zeta_{i_k}, \tilde \zeta_{j_1}, \dots, \tilde \zeta_{j_l}\right) \right |\le C'\delta^{c},\nonumber
	\end{eqnarray}
	where the first sum runs over all $(k+l)$-tuples $(\zeta_{i_1}, \dots, \zeta_{i_k}, \zeta_{j_1}, \dots, \zeta_{j_l}) \in \R^{k}\times \C_{+}^{l}$ of the roots $\zeta_1, \zeta_2, \dots$ of $P$, and the second  sum runs over all $(k+l)$-tuples $(\tilde \zeta_{i_1}, \dots, \tilde \zeta_{i_k}, \tilde \zeta_{j_1}, \dots, \tilde \zeta_{j_l}) \in \R^{k}\times \C_{+}^{l}$ of the roots $\tilde \zeta_1, \tilde  \zeta_2, \dots$ of $ \tilde P$. 
\end{theorem}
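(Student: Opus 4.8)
The plan is to derive Theorem~\ref{kacseries_uni} from the General Real universality theorem (Theorem~\ref{greal}) by verifying Conditions \textbf{C2}~\eqref{cond-poly}--\eqref{cond-repulsion} for the random Taylor series $P$ and its gaussian analogue $\tilde P$ on the region $D_n = \{z : 1-2\delta \le |z| \le 1-\delta\}$, with the small parameter $\delta_n$ playing the role of $\delta$. Since $P$ is defined on the open unit disk, the natural change of variables is to rescale by the ``local length scale'' $\delta$: near a point $z$ with $|z| \approx 1-\delta$, the function $P$ effectively has $\Theta(\gamma \log(1/\delta))$ terms contributing, and the typical magnitude $\sqrt{\sum_j |c_j z^j|^2}$ can be estimated using the regular-variation asymptotics $c_k^2 = \frac{k^{\gamma-1}}{\Gamma(\gamma)}L(k)$ together with a Laplace/saddle-point analysis of $\sum_k \frac{k^{\gamma-1}}{\Gamma(\gamma)} L(k) |z|^{2k}$, which behaves like $L(1/(1-|z|^2)) (1-|z|^2)^{-\gamma}$ up to constants. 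Before invoking Theorem~\ref{greal} one must rescale $G$ and the base points by $\delta$ so that the supports become disks of radius $1/100$; this is exactly the normalization under which the hypotheses of Theorem~\ref{greal} are stated, and it converts the claimed bound $C'\delta^c$ into the $\delta_n$-power bound of that theorem.

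The key steps, in order: (1) Establish the two-sided asymptotic $\sum_{k} |c_k|^2 |z|^{2k} \asymp L(\tfrac{1}{1-|z|^2})(1-|z|^2)^{-\gamma}$ uniformly for $z \in D_n$, and similarly control $\sum_k |c_k|^2 k^a |z|^{2k}$ for $a = 1, 2$ and the corresponding sums with derivatives of monomials; these give the Derivative growth condition \textbf{C2}~\eqref{cond-repulsion} with $c_1$ a small power of $\delta$, since differentiating $z^k$ costs a factor $k \approx 1/(1-|z|) \approx \delta^{-1}$. (2) Verify Delocalization \textbf{C2}~\eqref{cond-delocal}: the ratio $|c_i z^i|^2 / \sum_j |c_j z^j|^2$ is maximized near $i \approx 1/(1-|z|^2) \approx \delta^{-1}$, where the numerator is $\approx c_i^2 (1-|z|^2)^{\cdot}$ and the denominator is a factor $\approx \log(1/\delta)$ larger, so the ratio is $O(1/\log(1/\delta))$, which is $O(\delta^{\alpha_1})$ for $\alpha_1$ small enough (or one takes $\delta_n$ as a suitable power to absorb the logarithm). (3) Verify the large-deviation conditions \textbf{C2}~\eqref{cond-poly} (number of roots in a disk), \textbf{C2}~\eqref{cond-smallball} (anti-concentration: $|P(z')| \ge \exp(-\delta^{-c_1})$ somewhere in a small disk), and \textbf{C2}~\eqref{cond-bddn} (boundedness): the first and third are routine from Jensen's inequality, moment bounds on the $\xi_j$, and the growth estimate on $\sum |c_j z^j|^2$; the anti-concentration is handled by the paper's Lemma~\ref{lmanti_concentration} (the Turán--Halász based machinery), using that the coefficient sequence $(c_j)$ over the relevant range of indices behaves like a lacunary-friendly sequence, combined with Lemma~\ref{2norm} to propagate a lower bound from one point to a neighborhood. (4) Apply Theorem~\ref{greal} with $k+l$ as given, with the rescaled data, and unwind the scaling to obtain the stated estimate $C'\delta^c$; the dependence of the exponent $c$ on $\alpha_1, \ep$ is as in Remark~\ref{rmkconstants}.

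The main obstacle I expect is step (3), specifically verifying the anti-concentration condition \textbf{C2}~\eqref{cond-smallball} uniformly over the annulus $D_n$ while keeping track of the correct power of $\delta$. Unlike the trigonometric or Kac cases, here the relevant ``active'' block of indices grows only like $\log(1/\delta)$, and the coefficients $c_j = \sqrt{k^{\gamma-1}L(k)/\Gamma(\gamma)}$ vary slowly rather than being exactly lacunary, so one has to check carefully that Lemma~\ref{lmanti_concentration} still applies with the slowly varying weights and that the resulting small-ball exponent, after the $\delta$-rescaling, dominates the error terms coming from the other conditions. A secondary technical point is making the regular-variation asymptotics for $\sum_k \frac{k^{\gamma-1}}{\Gamma(\gamma)} L(k)|z|^{2k}$ genuinely \emph{uniform} over $z \in D_n$ (not just pointwise as $|z|\to 1$), which requires invoking a uniform version of Karamata's theorem or a direct comparison argument; this is where most of the routine-but-delicate calculation will live, and it is what forces the final bound to be a fixed power of $\delta$ rather than $o(1)$.
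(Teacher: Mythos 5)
Your overall roadmap matches the paper's: rescale to $F(z):=P(z\delta/10)$, verify Condition \textbf{C2} on the annulus where $1-2\delta\le|z\delta/10|\le 1-\delta$, prove anti-concentration via Lemma~\ref{lmanti_concentration}, estimate $\sum_k c_k^2|z|^{2k}$ via a Karamata/Abelian result (the paper uses Lemma~\ref{lmmregular_varying}), and then apply Theorem~\ref{greal}. However, your step (2) contains a concrete quantitative error which would be fatal if true, and it propagates into your step (3). You assert that the ``active'' block of Taylor indices near $|z|\approx 1-\delta$ has size $\Theta(\gamma\log(1/\delta))$, and you conclude that the delocalization ratio $|c_i z^i|^2/\sum_j|c_j z^j|^2$ is $O(1/\log(1/\delta))$, which you then claim ``is $O(\delta^{\alpha_1})$ for $\alpha_1$ small enough.'' This implication is false: $1/\log(1/\delta)$ is not $O(\delta^{\alpha_1})$ for any $\alpha_1>0$, so if your estimate were correct, Condition \textbf{C2}~\eqref{cond-delocal} would simply fail and Theorem~\ref{greal} could not be invoked. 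The same misconception infects step (3): running Lemma~\ref{lmanti_concentration} over a block of only $\log(1/\delta)$ indices would give only logarithmic (not polynomial) decay of the small-ball probability, which is not enough to meet Condition \textbf{C2}~\eqref{cond-smallball} with $A$ the required power.

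The correct picture is that the active window has width $\Theta(1/\delta)$, not $\Theta(\log(1/\delta))$: the terms $k^{\gamma-1}L(k)|z|^{2k}$ peak near $k^*\approx (\gamma-1)/(1-|z|^2)\approx 1/\delta$ and are comparable over an interval of length $\asymp 1/\delta$. (You may be conflating the number of contributing Taylor coefficients, which is $\Theta(1/\delta)$, with the expected number of real zeros in $[0,1-\delta]$, which is indeed $\Theta(\log(1/\delta))$.) Using $L(t)=O(t^{c'})$ and $L(t)=\Omega(t^{-c'})$ for any $c'>0$, one obtains $\sum_k c_k^2|z|^{2k}=\Omega(\delta^{-\gamma+c'})$ while $\max_k c_k^2|z|^{2k}=O(\delta^{-\gamma+1-2c'}+1)$, so the squared delocalization ratio is $O(\delta^{1-3c'}+\delta^{\gamma-c'})$ and one may take $\alpha_1=\min\{1/4,\gamma/2\}$, a genuine power of $\delta$. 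Likewise, the paper applies Lemma~\ref{lmanti_concentration} with $N=\Theta(M\delta^{-1})$, yielding a small-ball bound of the form $B\delta^{A}/M^{A}$. Your instinct that this should parallel the Kac case is right --- in both, the active block has size $\Theta(1/\delta)$ --- but your stated block size contradicts it. Finally, you do not address the regime $\delta\ge 1/K$; the paper handles it separately by Jensen's inequality, showing $\E N_{P}B(0,1-1/2K)=O(1)$.
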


\begin{cor}\label{kacseries_cor}
Under the assumption of Theorem \ref{kacseries_uni}, there exist positive constants $C'$ and $c$ such that the following hold.
\begin{enumerate}
\item For any $r\in (0, 1)$, 
$$\left|\E N_{P}[0, r] - \E N_{\tilde P}[0, r] \right |\le C$$
where $N_{P}[0, r]$ and $N_{\tilde P}[0, r]$ are the number of real roots of $P$ and $\tilde P$ in $[0, r]$, respectively. 
\item We have 
$$\lim _{r\uparrow 1} \frac{\E N_{P}[0, r]}{-\log(1-r)}=\frac{\sqrt\gamma}{2\pi}.$$
\end{enumerate}
\end{cor}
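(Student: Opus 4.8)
\textbf{Proof proposal for Corollary \ref{kacseries_cor}.}

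The plan is to deduce both parts of the corollary from the local universality statement of Theorem \ref{kacseries_uni} (applied with $k=1$, $l=0$) combined with the known Gaussian computation from \cite{FK}. First I would set up the reduction from the number of real roots in an interval to the correlation functional appearing in Theorem \ref{kacseries_uni}. The standard device (used, e.g., in the Kac-polynomial application in Section \ref{app2}) is to cover the interval $[0,r]$ by $O((1-r)^{-1}\log\frac{1}{1-r})$ dyadic-type subintervals $I_m$ on which the relevant scale parameter $\delta$ is comparable to $1-|x|$, and on each such subinterval to write $\E N_P(I_m)$ as $\E\sum_{i} G_m(\zeta_i)$ for a smooth bump $G_m$ approximating the indicator of $I_m$ at scale proportional to $\delta$, so that the derivative bounds $\norm{\triangledown^a G_m}_\infty \le \delta^{-a}$ required by Theorem \ref{kacseries_uni} are met. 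The error incurred in replacing the indicator of $I_m$ by a smooth bump (roots landing in the boundary layer) is controlled by the fact that the expected number of roots per unit interval at scale $\delta$ is $O(1)$ — this uniform bound comes from the Gaussian Kac-Rice computation together with universality itself, or more cheaply from a Jensen-type argument as in Remark \ref{anticond}.

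Next, for each subinterval, Theorem \ref{kacseries_uni} gives $|\E\sum G_m(\zeta_i) - \E\sum G_m(\tilde\zeta_i)| \le C'\delta^c$ where $\delta \asymp 1-|x|$ on $I_m$. Summing over all $O((1-r)^{-1}\log\frac{1}{1-r})$ subintervals, the total error is a geometric-type sum $\sum_m \delta_m^c$ where $\delta_m$ ranges over a geometric sequence down to roughly $1-r$; the key point is that $\sum_m \delta_m^c$ converges to a constant independent of $r$ (the sum is dominated by a convergent geometric series once $c>0$), which yields part (1): $|\E N_P[0,r] - \E N_{\tilde P}[0,r]| \le C$. Here I would be careful that near $x=0$ the scale $\delta$ is bounded below by a constant, so only finitely many "large-scale" pieces occur there and contribute $O(1)$; the infinitely many pieces accumulate only as $|x|\uparrow 1$, and there the summability of $\sum \delta_m^c$ saves us. For part (2), I combine part (1) with the Gaussian case: Theorem \ref{kacseries_thm} of \cite{FK} (or a direct Kac-Rice evaluation for $\tilde P$, which is a Gaussian analytic function) gives $\lim_{r\uparrow 1}\frac{\E N_{\tilde P}[0,r]}{-\log(1-r)} = \frac{\sqrt\gamma}{2\pi}$, and since $\E N_P[0,r] = \E N_{\tilde P}[0,r] + O(1)$ while $-\log(1-r)\to\infty$, the $O(1)$ term is negligible after dividing, so the same limit holds for $P$.

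The main obstacle I anticipate is the uniform-in-$x$ control of the expected root count at each scale — i.e. verifying that $\E N_{\tilde P}(I)$ for an interval $I$ of length comparable to $\delta \asymp 1-|x|$ centered at $x$ is $O(1)$ uniformly, since this is what makes both the smoothing error on each piece and the resulting sum over pieces summable. For the Gaussian flat/Taylor series this should follow from an explicit Edelman–Kostlan / Kac–Rice formula and the regular variation of the $c_k$ (the covariance kernel of $\tilde P$ has a self-similar local structure as $|x|\uparrow 1$ dictated by $\gamma$), but one must track the slowly varying factor $L$ carefully and confirm it does not spoil uniformity; the hypothesis $\lim_{t\to\infty}L(\lambda t)/L(t)=1$ is exactly what guarantees this. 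A secondary technical point is checking that Theorem \ref{kacseries_uni}'s hypotheses are genuinely met on each $I_m$: the constraint $1-2\delta \le |x| \le 1-\delta$ forces the choice $\delta = \delta_m \asymp 1-|x_m|$ where $x_m$ is the center, and one must confirm the supports $[x_i - 10^{-3}\delta, x_i+10^{-3}\delta]$ tile $[0,r]$ with bounded overlap — a routine but necessary bookkeeping step. Everything else (smooth bump construction, geometric summation, passing to the limit) is standard.
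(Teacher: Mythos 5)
Your proposal matches the paper's argument: a dyadic decomposition of $[0,r]$ with an $O(1)$-piece cover at each scale $\delta_m\asymp 2^{-m}$, error $O(\delta_m^c)$ on each piece from Theorem \ref{kacseries_uni}, summed geometrically to get part (1), and then reduction to the Gaussian Kac--Rice/regular-variation computation (adapting the strategy of \cite{FK}) for part (2), exactly as the paper does. The only slip is your count of $O((1-r)^{-1}\log\frac{1}{1-r})$ subintervals --- the correct count is $O(\log\frac{1}{1-r})$, since each dyadic annulus $\{1-2\delta_m\le |x|\le 1-\delta_m\}$ has length $\asymp\delta_m$ and is covered by $O(1)$ bumps of width $\asymp 10^{-3}\delta_m$; this is in fact what your geometric summation $\sum_m\delta_m^c$ implicitly assumes, so the substance of the argument is unaffected.
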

We prove Theorem \ref{kacseries_uni} and Corollary \ref{kacseries_cor} in Section \ref{kacseries_proof}.

After this paper has been finished,  the authors become aware of a very recent and interesting result of Flasche-Kabluchko \cite{flasche2018real} in which a completely different method is developed to study systematically the elliptic polynomial, Weyl polynomial, flat random analytic function, and hyperbolic random analytic function. As Flasche and Kabluchko mentioned in their paper, a similar approach has been applied to random trigonometric polynomials \cite{flasche} and random Taylor series \cite{flasche2020expected}.  Here we draw a quick comparison of the results.

 \begin{itemize} 
 	
 \item The results in \cite{flasche2018real}  prove the universality of 
the  density functions, while our results prove universality of all correlation functions. 
The authors of \cite{flasche2018real} do not seem to be aware of our paper  (which was put on the arxiv several months earlier)  and made a comparison with 
\cite{TVpoly}. However, the main result of \cite{TVpoly} is also about  universality of all correlation functions, 
but this critical point has been ignored. 
 
\item   \cite{flasche2018real} and related papers require that the random variables are identically distributed 
with finite second moment; our  method requires $(2+\ep)$-moment, but the variables do not need to be iid. 

\item The results in \cite{flasche}, \cite{flasche2020expected}, \cite{flasche2018real}  provide the limits as $n\to \infty$. Our  results  prove the limits with quantitative error terms. 
 
 \item Our  method  allows the coefficients to fluctuate. Specifically, in most of the applications in the above sections, a result stated for a random function
 $$F(x) = \sum_{k} \xi_k \phi_k(z)$$
 can readily be generalized 
 (with no significant changes in the proofs) to a random function
 $$G(x) = \sum_{k} c_k\xi_k \phi_k(z),$$
 where $c_k$ are deterministic coefficients that can take any values in the interval $[1/2, 2]$ (say). In this respect, the method in \cite{flasche2018real} which relies on assumptions such as \cite[Equation (6)]{flasche2018real} may be more susceptible to coefficients' fluctuations.

\end{itemize}

\section{Proof of Theorems \ref{gcomplex} and \ref{greal}}\label{app1_proof_1}

Before starting the proofs, let us mention two Jensen's inequalities that we use several times in this manuscript. It will be clear in the context which Jensen's inequality is used.
The first, and perhaps more popular, Jensen's inequality relates the value of a convex function of an integral to the integral of that convex function. In particular, for any convex function $\phi$ on the real line and any real integrable random variable $X$, we have
$$\phi\left (\E(X)\right )\le \E \phi(X).$$

The second Jensen's inequality provides an upper bound on the number of roots of an analytic function. Assume that $f$ is an analytic function on an open domain that contains the closed disk $\bar B(z, R)$. Then for any $r<R$, we have
\begin{equation}
N(B(z, r))\le \frac{\log \frac{M}{m}}{\log\frac{R^{2}+r^{2}}{2Rr}}\label{jensenbound}
\end{equation}
where $N(B(z, r))$ is the number of roots (including multiplicities) of $f$ in the open disk $B(z, r)$ and $M = \max_{w\in \bar B(z, R)} |f(w)|$, $m = \max_{w\in \bar B(z, r)} |f(w)|$. For completeness, we include a short proof of this inequality in Appendix \ref{proof_jensen}.

\subsection{Proof of Theorem \ref{gcomplex}}\label{pgcomplex}
We first state a few lemmas.  The first lemma reduces the theorem  to the case when the function $G$ {\it splits}, namely $G$ 
 is a product of functions of a single variable. In many applications, $G$ automatically takes this form. 
 This lemma was proved in \cite{TVpoly}. We include a short proof in Appendix \ref{fourier_proof}.
 
\begin{lemma}\label{fourier}
If Theorem \ref{gcomplex} holds for every function $G$ of the form 
\begin{equation}\label{h2}
 G(w_1,\dots, w_m) = G_1(w_1)\dots G_k(w_k)
 \end{equation} where for each $1\le i\le k$, $G_i:\mathbb{C}\to \mathbb{C}$ is a function supported in $B(z_i, 1/50)$ with continuous derivatives up to order $3$ and $\norm{\triangledown^aG_{i}}_\infty\le 1$ for all $0\le a\le 3$, then it holds for any function $G$ satisfying the hypothesis of Theorem \ref{gcomplex}. Similarly for Theorem \ref{greal}.
\end{lemma}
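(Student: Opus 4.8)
The plan is to reduce a general test function $G$ supported on $\prod_{i=1}^k B(z_i,1/100)$ to a superposition of product (``splitting'') functions by means of a Fourier expansion, and then invoke the already-established product case together with the triangle inequality. First I would rescale and translate so that each $B(z_i,1/100)$ sits inside a fixed box, say $[-1/50,1/50]^2$ in each complex coordinate (viewed as $\R^2$); choose a smooth cutoff $\chi$ that equals $1$ on $B(z_i,1/100)$ and is supported in $B(z_i,1/50)$, with all derivatives up to order $2k+4$ bounded by an absolute constant. Writing each complex variable $w_j = z_j + u_j + iv_j$, the function $G$ becomes a function of the $2k$ real variables $(u_1,v_1,\dots,u_k,v_k)$, compactly supported in a fixed box, so it admits a Fourier series
\begin{equation}\nonumber
G = \sum_{\mathbf{m}\in\Z^{2k}} \hat G(\mathbf{m})\, e_{\mathbf{m}},
\end{equation}
where $e_{\mathbf{m}}$ is the corresponding exponential (periodized on the box), which manifestly splits as a product of one-variable functions $e_{\mathbf m} = \prod_{j=1}^k g_j^{(\mathbf m)}(w_j)$. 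Multiplying each factor by the cutoff $\chi$ (centered at $z_j$) does not change $G$ on the support where it matters, and keeps each factor supported in $B(z_j,1/50)$ with controlled derivatives.

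Next I would control the Fourier coefficients. Since $G$ has continuous derivatives up to order $2k+4 \ge 2k+2$ and $\norm{\triangledown^a G}_\infty \le 1$, repeated integration by parts gives $|\hat G(\mathbf m)| \le C_k (1+|\mathbf m|)^{-(2k+2)}$, so that $\sum_{\mathbf m} |\hat G(\mathbf m)| (1+|\mathbf m|)^{2k+3}$ converges. For each fixed $\mathbf m$, the summand $\hat G(\mathbf m) \chi\, e_{\mathbf m}$ is (a constant multiple of) a product function; but its derivatives up to order $3$ are bounded only by $C_k(1+|\mathbf m|)^{3}$ rather than by $1$. This is handled by the usual homogeneity of the conclusion: applying the product case of Theorem \ref{gcomplex} to the normalized function $g^{(\mathbf m)}/\big(C_k(1+|\mathbf m|)^{3}\big)$ yields, for each $\mathbf m$,
\begin{equation}\nonumber
\left| \E\sum \hat G(\mathbf m) e_{\mathbf m}(\zeta_{i_1},\dots,\zeta_{i_k}) - \E\sum \hat G(\mathbf m) e_{\mathbf m}(\tilde\zeta_{i_1},\dots,\tilde\zeta_{i_k}) \right| \le C_k (1+|\mathbf m|)^{3}\, |\hat G(\mathbf m)|\, C'\delta_n^{c}.
\end{equation}
Summing over $\mathbf m \in \Z^{2k}$ and using the decay $|\hat G(\mathbf m)| \le C_k(1+|\mathbf m|)^{-(2k+4)}$ (here I use the full order $2k+4$, leaving room for the $(1+|\mathbf m|)^3$ factor and an extra $(1+|\mathbf m|)^{-(2k+1)}$ for summability in $\Z^{2k}$), the bound is $\le C'' \delta_n^{c}$ with a new constant $C''$ depending only on $k$ and the constants in the product case. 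One must also deal with tail truncation: replacing $G$ by the partial Fourier sum introduces a uniform error $o(1)$ in $G$, hence (using Condition {\bf C2}\eqref{cond-poly} to bound $\E N^k$, so the number of $k$-tuples contributing is under control) an error of the same order $\delta_n^{c}$ in the sums of $G$ over roots; I would make this quantitative by keeping $|\mathbf m| \le \delta_n^{-C_2}$ for a suitable $C_2$ and absorbing the truncation tail into $C'\delta_n^c$. The real case (Theorem \ref{greal}) is identical, treating the $k$ real variables with one-dimensional Fourier series and the $l$ complex ones as above, and using $\C_+^l$ in place of $\C^l$ changes nothing.

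The main obstacle, and the only place requiring genuine care, is bookkeeping the derivative bounds: the product case requires $\norm{\triangledown^a G_i}_\infty \le 1$ up to order $3$, but a single Fourier mode $e_{\mathbf m}$ has derivatives growing polynomially in $|\mathbf m|$. The resolution — rescaling each mode by its derivative norm, applying the theorem, then rescaling back — works precisely because the conclusion \eqref{gcomplexb} is linear (in fact positively homogeneous) in $G$, and because the Sobolev regularity assumed on $G$ (order $2k+4$, comfortably more than the $2k+1$ needed for absolute summability in $2k$ dimensions) leaves enough decay to absorb the cubic loss per mode. A secondary point is ensuring that after multiplying $e_{\mathbf m}$ by the fixed cutoff $\chi$, the modified factors are still genuinely supported in $B(z_i,1/50)$ and that $G = \sum_{\mathbf m}\hat G(\mathbf m)\chi e_{\mathbf m}$ exactly on $\prod B(z_i,1/100)$ — both are immediate from $\chi \equiv 1$ there. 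I expect the whole argument to be about a page, and since it is essentially the argument of \cite{TVpoly}, I would present it compactly in the appendix as promised.
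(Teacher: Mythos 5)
Your decomposition follows, step for step, the argument in the paper's Appendix \ref{fourier_proof}: translate to the origin, periodize, expand in a Fourier series, multiply each mode by a fixed cutoff to keep each factor supported in $B(z_i,1/50)$, obtain decay of the Fourier coefficients from the $2k+4$ bounded derivatives, apply the product case mode-by-mode, and sum. The cosmetic differences — a single multi-index $\mathbf m \in \Z^{2k}$ versus the paper's $(b,c)\in\Z^k\times\Z^k$, and truncating at a $\delta_n$-dependent frequency versus the paper's truncation at an arbitrary $m$ followed by dominated convergence (both relying on Condition \textbf{C2}\eqref{cond-poly} to control $\E N^k$) — do not change anything substantive.

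There is, however, one step that you inherit verbatim from the paper and that I would urge you to double-check: the normalization of a single Fourier mode. You (and the paper) normalize $\hat G(\mathbf m)\chi e_{\mathbf m}$ by $C_k(1+|\mathbf m|)^3$, on the ground that the \emph{multivariable} derivatives of this function of order $\le 3$ are bounded by $C_k(1+|\mathbf m|)^3|\hat G(\mathbf m)|$. But the hypothesis of the product case you are invoking is stated \emph{per factor}: each one-variable factor $G_i$ must separately satisfy $\norm{\triangledown^a G_i}_\infty\le 1$ for $a\le 3$. Writing the mode as $\prod_{j=1}^k\psi_j(w_j)$ where $\psi_j$ has frequency $m_j := |b_j|+|c_j|$, one has $\norm{\triangledown^a\psi_j}_\infty\asymp(1+m_j)^a$ for $a\le 3$, and because the conclusion \eqref{gcomplexb} is \emph{multiplicative} under separate rescaling of each factor, the smallest normalization that puts every factor within the hypothesis is $\prod_{j}(1+m_j)^3$, which is as large as $(1+|\mathbf m|)^{3k}$ on the near-diagonal $m_1\approx\dots\approx m_k$ — not $(1+|\mathbf m|)^3$. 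With only $2k+4$ derivatives of $G$ at one's disposal, $|\hat G(\mathbf m)|\lesssim(1+|\mathbf m|)^{-(2k+4)}$, and the resulting series $\sum_{\mathbf m}|\hat G(\mathbf m)|\prod_j(1+m_j)^3$ converges for $k=1$ but diverges for $k\ge2$ (already the diagonal $m_1=\dots=m_k$ contributes a nonsummable amount). The argument, as you and the paper present it, thus appears to need $G$ to have on the order of $5k$ rather than $2k+4$ bounded derivatives. This is a harmless bookkeeping fix — every application of the lemma uses test functions as smooth as one likes, so only the stated regularity index in Theorems \ref{gcomplex}--\ref{greal} would need to be raised — but it is a genuine gap that your proposal reproduces rather than catches.
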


The next lemma plays a critical role in our approach, as it shows that the singularity problem at 0 (see the discussion in the last subsection of Section \ref{framework}) can be dealt with assuming anti-concentration at a single point.

\begin{lemma}\label{2norm}
Let $0<\delta_n, c_2<1$ and let $F_n$ be an entire function with $|F_n(w)|\ge \exp(-\delta_n^{-c_2})$ for some complex number $w$ and $|F_n(z)|\le \exp(\delta_n^{-c_2})$ for all $z\in B(w, 3/2)$. Then 
\begin{equation}
\int_{B(w, 1/2)} \left |\log\left |F_n(z)\right |\right |^{2} dz \le 720^2 \times\delta_n^{-6c_2}.\nonumber
\end{equation}

\end{lemma}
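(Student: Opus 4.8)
The plan is to control $\int_{B(w,1/2)} |\log|F_n(z)||^2\,dz$ by splitting $\log|F_n|$ into its positive and negative parts and handling each separately, since the upper and lower bounds on $|F_n|$ are of different natures. The positive part is easy: by hypothesis $|F_n(z)| \le \exp(\delta_n^{-c_2})$ for $z \in B(w,3/2)$, so $\log^+|F_n(z)| \le \delta_n^{-c_2}$ pointwise on $B(w,1/2)$, and integrating over a disk of area $\pi/4$ gives a contribution of at most $(\pi/4)\delta_n^{-2c_2}$, which is comfortably within the claimed bound. So the real content is bounding $\int_{B(w,1/2)} (\log^-|F_n(z)|)^2\,dz$, i.e. the places where $F_n$ is small.

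The key point is that the zeros of $F_n$ in $B(w,3/2)$ are few: by Jensen's inequality \eqref{jensenbound} (with, say, $R = 3/2$ centered at $w$ and a suitable $r$), the lower bound $|F_n(w)| \ge \exp(-\delta_n^{-c_2})$ together with the upper bound $M \le \exp(\delta_n^{-c_2})$ forces the number of zeros $N$ of $F_n$ in, say, $B(w,1)$ to satisfy $N \le O(\delta_n^{-c_2})$ (the constant from the $\log$ of the ratio of radii being absolute). Next I would use a Poisson–Jensen type representation: writing $B(w,1)$ and the zeros $\rho_1,\dots,\rho_N$ inside it, we have for $z \in B(w,1/2)$
$$\log|F_n(z)| = \sum_{j=1}^{N} \log\left|\frac{z-\rho_j}{1-\overline{\rho_j}(z-w)/\cdots}\right| + (\text{harmonic part}),$$
where the Blaschke factors have modulus at most $1$ and the harmonic part is bounded in absolute value on $B(w,1/2)$ by roughly $\max_{\partial B(w,1)}\log|F_n| \le \delta_n^{-c_2}$ (via the Harnack/Poisson kernel, using that $B(w,1/2)$ is well inside $B(w,1)$, so the Poisson kernel is bounded by an absolute constant). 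Thus $\log^-|F_n(z)| \le \delta_n^{-c_2} + \sum_j \log^-|z-\rho_j| + C N$, and it remains to bound $\int_{B(w,1/2)} \big(\sum_{j=1}^N \log^-|z-\rho_j|\big)^2 dz$.

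For that last integral, by Cauchy–Schwarz (or convexity), $\big(\sum_{j=1}^N a_j\big)^2 \le N \sum_{j=1}^N a_j^2$, so it suffices to bound each $\int_{B(w,1/2)} (\log^-|z-\rho_j|)^2\,dz \le \int_{|u|\le 2}(\log^-|u|)^2\,du$, which is an absolute constant (the singularity $\log^2$ is integrable in two dimensions). This yields $\int (\sum_j \log^-|z-\rho_j|)^2 \le C N^2 \le C\delta_n^{-2c_2}$. Collecting the three pieces — the $\log^+$ part $O(\delta_n^{-2c_2})$, the harmonic/boundary part $O(\delta_n^{-2c_2})$ after squaring and integrating, and the Blaschke/zero part $O(\delta_n^{-2c_2} + N^2) = O(\delta_n^{-2c_2})$ — and using $(a+b+c)^2 \le 3(a^2+b^2+c^2)$ and $\delta_n^{-2c_2} \le \delta_n^{-6c_2}$ (since $0<\delta_n<1$), gives the bound $\le 720^2\,\delta_n^{-6c_2}$ after tracking constants generously; the wasteful exponent $6c_2$ and constant $720^2$ in the statement suggest the authors are not being tight, so I would not optimize. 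The main obstacle is purely bookkeeping: setting up the Poisson–Jensen formula on the right radii so that (i) the number of zeros is genuinely $O(\delta_n^{-c_2})$ with an explicit absolute constant, and (ii) the Poisson kernel bounding the harmonic part on $B(w,1/2)$ from boundary values on a slightly larger circle is uniformly bounded — both are standard but need the radii $1/2 < 1 < 3/2$ to be used consistently.
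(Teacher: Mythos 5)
Your overall structure --- Jensen's formula to count zeros, a Poisson--Jensen decomposition of $\log|F_n|$ into a harmonic part plus $\log$-Blaschke singularities, and the local square-integrability of $\log^2|\cdot|$ for the singular part --- is a valid route, and once correct it would even yield the stronger exponent $\delta_n^{-2c_2}$: your radii $1/2 < 1$ give an absolute Harnack constant, whereas the paper has to locate a zero-free annulus of width $\Theta(\delta_n^{c_2})$ and pays for that with a Harnack constant of order $\delta_n^{-c_2}$. The other cosmetic difference is that you divide by Blaschke factors while the paper divides by raw monomials $(z-\zeta_i)$; the two are interchangeable here.

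There is, however, a genuine gap in your bound on the harmonic part $h$. You assert that $h$ is ``bounded in absolute value on $B(w,1/2)$ by roughly $\max_{\partial B(w,1)}\log|F_n| \le \delta_n^{-c_2}$,'' citing the boundedness of the Poisson kernel. This is not a valid deduction: a bounded Poisson kernel together with an upper bound on the boundary data gives only the one-sided bound $h\le\delta_n^{-c_2}$ (the maximum principle). It gives no lower bound, because $\log|F_n|$ on $\partial B(w,1)$ is only bounded above; its negative part could a priori have large $L^1$ mass, which would make the Poisson integral $h$ very negative on $B(w,1/2)$. The lower bound on $h$ cannot come from the boundary data alone --- it must use the hypothesis $|F_n(w)|\ge \exp(-\delta_n^{-c_2})$ at the center, which your sketch never invokes in this step (you use it only for the zero count). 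This is exactly where the paper's proof brings in Harnack's inequality. The fix is short: since the Blaschke contributions at $z=w$ are nonpositive, $h(w)\ge\log|F_n(w)|\ge -\delta_n^{-c_2}$, so $u:=\delta_n^{-c_2}-h$ is a nonnegative harmonic function on $B(w,1)$ with $u(w)\le 2\delta_n^{-c_2}$; Harnack then gives $u(z)\le 3u(w)\le 6\delta_n^{-c_2}$ for $z\in B(w,1/2)$, i.e.\ $h\ge -5\delta_n^{-c_2}$ there. (Equivalently, Jensen's formula at $w$ shows $\frac{1}{2\pi}\int_{\partial B(w,1)}\log^-|F_n|\le 2\delta_n^{-c_2}$, after which the bounded Poisson kernel does suffice.) With that repair, the rest of your argument --- Cauchy--Schwarz over the $N=O(\delta_n^{-c_2})$ Blaschke terms and the integrability of $(\log|\cdot|)^2$ --- goes through and gives the claimed bound with room to spare.
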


The constant $720^2= 518400$ is for explicitness and plays no specific role. Both this and the constant $6$ in the exponent can be reduced but we make no attempt to optimize these constants. The proof follows from ideas in \cite{DOV} and is included in Appendix \ref{2norm_proof}.

The following lemma shows that the logarithm function satisfies a universality property. It is a variant 
of a lemma in \cite{TVpoly} and we include the proof in Appendix \ref{logcomp_proof}.

\begin{lemma}\textbf{(Log-comparability)}\label{logcomp}
Assume that the coefficients $\xi_i$ and $\tilde \xi_i$ satisfy Condition {\bf C1} for some constants $N_0, \ep, \tau$. Let $\alpha_1$ be a positive constant and $k$ be a positive integer. Assume that there exists a constant $C>0$ such that the random functions $F_n$ and $\tilde F_n$ satisfy Condition {\bf C2} \eqref{cond-delocal} with parameters $\alpha_1$ and $C$. There exist positive constants $\alpha_0$ and $C'$ such that for any ${z_1}, \dots, {z_k}\in D_n + B(0, 1/10)$, and function $K:\mathbb{C}^k\to \mathbb{C}$ with continuous derivatives up to order $3$ and $\norm{\triangledown^a K}_\infty\le \delta_n^{-\alpha_0}$ for all $0\le a\le 3$, we have
\[\big|\E K\big(\log|F_n(z_1)|, \dots, \log|F_n(z_{k})|\big)-\E K\big(\log|\tilde {F_n}(z_1)|, \dots, \log|\tilde {F_n}(z_{k})|\big) \big|\le {C'}\delta_n^{\alpha_0}.
 \]
\end{lemma}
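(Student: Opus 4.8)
The plan is to reduce the statement to a Lindeberg-type swapping argument applied coordinate-by-coordinate among the $\xi_i$, after replacing $\log|F_n(z_j)|$ by a smooth truncated function of the real and imaginary parts of $F_n(z_j)$. First I would record that for each fixed $z = z_j$, the value $F_n(z) = \sum_{i=1}^n \xi_i \phi_i(z)$ is a sum of independent (scalar or vector-valued) random variables, and that the delocalization hypothesis \eqref{cond-delocal} says each summand $\xi_i \phi_i(z)$ has magnitude at most $C\delta_n^{\alpha_1}$ relative to the total standard deviation $\sigma(z) := (\sum_j |\phi_j(z)|^2)^{1/2}$. After normalizing by $\sigma(z)$ we are comparing $K$ evaluated at $\log(\sigma(z_j) |G_n(z_j)|)$ where $G_n = F_n/\sigma$, and the $\log\sigma(z_j)$ shift can be absorbed into a redefinition of $K$ (its derivative bounds only degrade polynomially in $\delta_n$, which we can afford by shrinking $\alpha_0$). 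The genuine obstacle, as flagged in the paper's discussion of the pole problem, is that $\log|\cdot|$ is unbounded near $0$; so I would introduce a smooth cutoff $\psi$ that agrees with $\log|\cdot|$ on an annulus $\{\exp(-\delta_n^{-c})\le |w|\le \exp(\delta_n^{-c})\}$ (with $c$ a small power to be chosen in terms of $\alpha_1,\ep$), replacing $\log|F_n(z_j)|$ by $\psi(F_n(z_j))$. The error incurred is controlled by the probability that $|F_n(z_j)|$ falls outside this annulus; for the upper tail this is a standard large-deviation estimate since $F_n(z_j)$ is a sum of independent variables with bounded $(2+\ep)$ moments, and for the lower tail one uses anti-concentration of a sum of independent variables with a delocalized profile (e.g. a Berry–Esseen / Esseen-type bound), which gives $\Pr(|F_n(z_j)|\le t) \lesssim t/\sigma(z_j) + (\text{moment error})$ — note this only needs anti-concentration as a sum of independents, not the stronger structural conditions of {\bf C2}(2).

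Once the test function is $\tilde K(w_1,\dots,w_k) := K(\psi(w_1),\dots,\psi(w_k))$, which is smooth, compactly supported away from the origin in each variable, and has derivatives bounded by $\delta_n^{-O(\alpha_0)-O(c)}$, the task becomes
$$\bigl|\E \tilde K(F_n(z_1),\dots,F_n(z_k)) - \E \tilde K(\tilde F_n(z_1),\dots,\tilde F_n(z_k))\bigr| \le C'\delta_n^{\alpha_0}.$$
I would prove this by the Lindeberg replacement method: writing $F_n = \sum_i \xi_i \Phi_i$ and $\tilde F_n = \sum_i \tilde\xi_i \Phi_i$ with $\Phi_i := (\phi_i(z_1),\dots,\phi_i(z_k))$, swap $\xi_i$ for $\tilde\xi_i$ one index at a time. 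For each swap, Taylor-expand $\tilde K$ to third order in the variable $\xi_i \Phi_i$ (treating the partial sum over the other indices as the "base point"); the zeroth, first, and second order terms cancel between $\xi_i$ and $\tilde\xi_i$ by the matching-moments condition \eqref{cond-matching} applied to real and imaginary parts (valid for $i \ge N_0$), and the third-order remainder is bounded by $\|\triangledown^3 \tilde K\|_\infty \cdot \E|\xi_i \Phi_i|^{2+\ep} \cdot (\text{something})$ — more precisely, by interpolating between the $(2+\ep)$-moment bound and the uniform smallness $|\xi_i\Phi_i|/\sigma \le C\delta_n^{\alpha_1}$ coming from delocalization, each remainder is $O(\|\triangledown^3\tilde K\|_\infty \, \delta_n^{\ep \alpha_1} \, |\Phi_i|^2/\sigma^2)$ after normalization. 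Summing over $i$ the factors $|\Phi_i|^2/\sigma^2$ add to a bounded quantity (at most $k$), so the total swapping error is $O(\|\triangledown^3\tilde K\|_\infty\, \delta_n^{\ep\alpha_1})$; choosing $\alpha_0$ a sufficiently small multiple of $\ep\alpha_1$ (and $c$ smaller still) makes $\|\triangledown^3\tilde K\|_\infty\,\delta_n^{\ep\alpha_1} = O(\delta_n^{\alpha_0})$. The finitely many indices $i < N_0$ contribute a bounded error in the swap because there $|\E\xi_i - \E\tilde\xi_i|\le\tau$ and we still have the delocalization smallness, so their total contribution is also $O(\delta_n^{\alpha_1})$ up to the constants.

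I expect the main obstacle to be the bookkeeping in the lower-tail anti-concentration step: one must verify that the failure probability $\Pr(|F_n(z_j)| \le \exp(-\delta_n^{-c}))$ is genuinely small — i.e. at most a power of $\delta_n$ — using only that $F_n(z_j)$ is a delocalized sum of independents with bounded $(2+\ep)$ moments. The cleanest route is an Esseen concentration-function bound combined with the delocalization condition \eqref{cond-delocal}, which controls the characteristic function of $F_n(z_j)/\sigma(z_j)$ away from $0$; since we only need the probability of being within an \emph{exponentially small} ball, any polynomial-in-$\delta_n$ anti-concentration estimate (even a crude one) suffices, so the precise exponent is unimportant and can be folded into the choice of $c$. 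A secondary technical point is that $K$ and hence $\tilde K$ are complex-valued; here I would simply apply the whole argument to the real and imaginary parts $\tilde K_1, \tilde K_2$ separately, exactly as in Definition \ref{defnorm}, and the swapping expansions go through verbatim for each. All other steps — the large-deviation upper bound, the construction of the cutoff $\psi$ and the resulting derivative bounds, the Taylor expansions — are routine, so the proof can be organized to push these into short paragraphs, with the anti-concentration lemma either cited or proved in the appendix.
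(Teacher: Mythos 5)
Your outline correctly identifies the two pillars (truncate $\log$, then Lindeberg swap using delocalization and $(2+\ep)$-moments with an interpolation to get the $\delta_n^{\alpha_1\ep}$ gain), but there is a concrete error in the truncation scale that breaks the argument as written. You propose a cutoff $\psi$ that agrees with $\log|\cdot|$ on the annulus $\{\exp(-\delta_n^{-c})\le |w|\le \exp(\delta_n^{-c})\}$ and then assert that the composed test function $\tilde K(w)=K(\psi(w_1),\dots,\psi(w_k))$ has derivatives bounded by $\delta_n^{-O(\alpha_0)-O(c)}$. That is false: since $\psi(w)=\log|w|$ on the annulus, $|\psi'(w)|\sim 1/|w|$, which at the inner edge $|w|=\exp(-\delta_n^{-c})$ is $\exp(\delta_n^{-c})$ --- exponentially, not polynomially, large in $1/\delta_n$ (and the transition band makes it worse). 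Consequently $\|\triangledown^3\tilde K\|_\infty$ is exponentially large and the Lindeberg error $O(\|\triangledown^3\tilde K\|_\infty\,\delta_n^{\alpha_1\ep})$ does not go to zero; no choice of ``$\alpha_0$ a sufficiently small multiple of $\ep\alpha_1$'' rescues this. The truncation of $\log$ must instead be at height $\pm\Theta(\log(1/\delta_n))$, i.e.\ the inner radius must be a fixed \emph{power} of $\delta_n$ (the paper effectively uses $e^{-M}=\delta_n^{3\alpha_0}$), so that $1/|w|$ on the relevant region is $\delta_n^{-O(\alpha_0)}$; with that choice the derivative bookkeeping and the exponent-juggling you describe do go through.

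It is also worth pointing out that, even after fixing the scale, your route differs from the paper's in how the small-$|F_n|$ region is disposed of. You want to bound $\P(|F_n(z_j)|\le\text{cutoff})$ for the \emph{general} $F_n$ by a Berry--Esseen/Ess\'een argument; this can be made to work (delocalization gives Lyapunov ratio $O(\delta_n^{\alpha_1\ep})$), but it requires a two-dimensional treatment when $\phi_i(z_j)$ is complex and the covariance of $(\Re F_n,\Im F_n)$ may be degenerate, so one has to argue by projecting onto a direction carrying a constant fraction of the variance. The paper avoids this entirely: it first proves the Lindeberg comparison (Lemma~\ref{log-com33}) for arbitrary smooth bounded functions of the \emph{normalized values} $F_n(z_j)/\sqrt{V(z_j)}$ (not of their logarithms), then splits $K$ by a partition of unity into a part supported where all logs exceed $-M$ (where $\log$ composes smoothly and Lemma~\ref{log-com33} applies directly) and a part near the pole which is \emph{dominated} by a smooth nonnegative bump; applying Lemma~\ref{log-com33} to that bump transfers the estimate to the Gaussian model, where the small-ball probability is trivial. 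This way anti-concentration is only ever invoked for $\tilde F_n$, not for $F_n$ --- a cleaner and more robust step than the Berry--Esseen route you sketch.
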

\begin{remark} \label{alpha0} 
Following the proof, one can set $\alpha_0 = \frac{3\alpha_1\ep }{10^{3}}$.
\end{remark}

\begin{proof}[Proof of Theorem \ref{gcomplex}] 
By Lemma \ref{fourier}, we can assume that the function $G$ has the form \eqref{h2}. We need to show that 
\begin{eqnarray}
\ab{\E \prod_{j=1}^{k} \left (\sum_{i} G_j (\zeta_i)\right )-\E \prod_{j=1}^{k} \left (\sum_{i} G_j (\tilde \zeta_i)\right )}\le C'\delta_n^{c},\label{du5}
\end{eqnarray}
for some constant $c >0$. By Green's formula, we have
\begin{equation}
\sum_{i} G_j({\zeta}_i)= \int_{\mathbb C}\log |F_n(z)|H_j(z)dz = \int_{B( z_j, 1/10)}\log |F_n(u_j)|H_j(u_j)du_j,\label{sat1}
\end{equation}
where $H_j(z) = \frac{1}{2\pi}\triangle G_j(z)$. Note that $supp(H_j)\subset B( z_j, 1/10)$ and $\norm{H_j}_\infty\le 1$ for all $z\in \C$, thanks to the assumption on $G$ in Theorem \ref{gcomplex}. (As usual, $\| f\|_\infty = \sup_{z \in \C } | f(z)| $.) When $F_n$ is identically 0, we assume by convention that the left-hand side and the right-hand side are 0.

%--------------
%
%(Justification of the above formula) By translation, we reduce the problem to showing that for any smooth, compactly supported function F on $\C$, one has 
%\begin{equation}
%F(0) = \int_{\C}\log|z|\frac{1}{2\pi}\triangle F(z)dz. 
%\end{equation}
%
%Recal the second Green's identity on the plane: For smooth functions $u, v$ on a closed region $U\subset \C$, one has 
%\begin{equation}
% \int_U (u\triangle v - v\triangle u)dz = \int_{\partial U} (u\triangledown v - v\triangledown u)nds, 
% \end{equation} 
% where $n$ is the unit outward normal vector of $\partial U$. 
% 
% Applying this to the domain $U = A(0, R, \ep)$ for large $R$ and small $\ep$, and $ u = \log |z|$, $v = \frac{1}{2\pi} F$, one uses the compact support of $F$ and harmonicity of $\log|z|$ on $U$ to get that
%  \begin{equation}
%\int_{\C}\log|z|\frac{1}{2\pi}\triangle F(z)dz = \int_{\partial B(0, \ep)}(u\triangledown v - v\triangledown u)nds,
%\end{equation}
%where $n$ is the unit inward normal vector. Parametrize the circle by $(\ep \cos \theta, \ep\sin\theta)$, then $nds  = -(\cos \theta, \sin\theta)\ep d\theta = -(x,y)d\theta$. The first term on the right tends to $0$ as $\ep\to 0$ because $\ep\log \ep\to 0$. The second term on the right becomes $\frac{1}{2\pi}\int_0^{2\pi}F(\ep \cos \theta, \ep\sin\theta)d\theta$. This tends to $F(0)$ as $\ep\to 0$.
%Numerical computation with $F(z) = exp(-1./(1-x.^2)).* exp(-1./(1-y.^2))$ is consistent with this result. 
%-----------

Let $A$ be a sufficiently large constant and $c_1$ be a sufficiently small positive constant. For this proof,
it suffices to set  $c_1 := \frac{\alpha_0}{300 k^{2}}$ and $A := 2kC_1 + \frac{\alpha_1\ep}{60}$. This choice, together with the value of  $\alpha_0$ in Remark \ref{alpha0},  yields  the explicit values of $A$ and $c_1$ in the theorem. 
 
Let $\bar c_1 := 100k c_1$. The power $c$ in \eqref{du5}  can be chosen  (quite generously) to be $c_1$.
 
Let $K:\R \to \R$ be a smooth function with the following properties 

\begin{itemize}
	\item  $K$ is  supported on the interval $[-2\delta_n^{-\bar c_1}, 2\delta_n^{-\bar c_1}]$
	
	\item  $K(x) = x$ for all $x\in [-\delta_n^{-\bar c_1}, \delta_n^{-\bar c_1}]$ 
	
	\item  $||K^{(a)}||_\infty = O\left (\delta_n^{-\bar c_1}\right )$ for all $0\le a\le 3$ (where $K^{(l)}$ is the $l$-th derivative of $K$). 
	
	\item $|K(x)|\le |x|$ for all $x\in \R$.  \end{itemize} 
	
%\begin{remark} \label{K} 	It is not hard to show that such a function $K$ exists. In fact, one can construct  $K$ explicitly, but we do not need an explicit formula for our proof; the same applies for 
%	many later, similar, arguments. \end{remark} 

		Let $\Gamma := \prod_{j=1}^{k} B (z_j, 1/10)$ and $H(u) := \prod_{j=1}^{k} H_j(u_j)$ for $u :=(u_1, \dots, u_k)$.

By \eqref{sat1}, we have 

\begin{eqnarray}
\E \prod_{j=1}^{k} \left (\sum_{i} G_j (\zeta_i)\right ) = \E \int_{\Gamma} H(u)\prod_{j=1}^{k}\log |F_n(u_j)| du &=& A_1+A_2\nonumber
\end{eqnarray}
where 
$$A_1 := \E\int_{\Gamma} H(u)\prod_{j=1}^{k}K(\log|F_n(u_j)|) du,$$
$$A_2 := \E\int_{\Gamma} H(u)\left [\prod_{j=1}^{k}\log |F_n(u_j)| -\prod_{j=1}^{k} K(\log|F_n(u_j)|) \right ] du.$$

Let $\tilde A_1$ and $\tilde A_2$ be the corresponding terms  for $\tilde F_n$. Our goal is to show that 
\begin{equation} \label{goal1} A_1 + A_2 - \tilde A_1 - \tilde A_2 = O\left (\delta_n^{c}\right ).\end{equation}

By Lemma \ref{logcomp}, we have $A_1 - \tilde A_1 = O\left (\delta_n^{\bar c_1}\right )$.
 We next show that both $A_2$ and $\tilde A_2$ are of order $O\left (\delta_n^{ c_1}\right )$. It suffices to consider $A_2$, as the treatment of $\tilde A_2$ is similar.

 Let $\mathcal A_0$ be the event on which the following two properties hold
 
 \begin{itemize}
 	\item For all $1\le j\le k$, $|F_n(z'_j)|\ge \exp(-\delta_n^{-c_1})$ for some $z_j'\in B(z_j, 1/100)$
 	
 	\item  $|F_n(z)|\le \exp(\delta_n^{-c_1})$ for all $z\in B(z_j, 2)$. \end{itemize}

 	 By Conditions {\bf C2} \eqref{cond-smallball} and {\bf C2} \eqref{cond-bddn}, $\P(\mathcal A_0^{c}) \le C\delta_n^{A}$, where $\mathcal A_0^{c}$ is the complement of $\mathcal A_0$. 
We next  break up $A_2$ as follows
\begin{eqnarray}
 A_2 &=& \E\int_{\Gamma} H(u)\left [\prod_{j=1}^{k}\log |F_n(u_j)| -\prod_{j=1}^{k} K(\log|F_n(u_j)|) \right ] du \textbf{1}_{\mathcal A_0}+\E\int_{\Gamma} H(u) \prod_{j=1}^{k}\log |F_n(u_j)| du\textbf{1}_{\mathcal A_0^{c}} \nonumber\\
&& - \E\int_{\Gamma} H(u) \prod_{j=1}^{k} K(\log|F_n(u_j)|) du\textbf{1}_{\mathcal A_0^{c}} =: A_3 + A_4 - A_5\nonumber.
 \end{eqnarray}
For $A_5$, since $\| K\|_\infty  \le 2 \delta_n^{-\bar c_1}$ by construction and $A\ge 2k\bar c_1$, we have
$$|A_5|\le 2\delta_n^{-k\bar c_1}\P(\mathcal A_0^{c})\le 2 C\delta_n^{A -k\bar c_1} =O(\delta_n^{\bar c_1})
= O(\delta_n^{c_1}).$$ 

To bound $A_4$, from \eqref{sat1} and the boundedness of $H_j$, we have
$$\left |\int_{B(z_j, 1/100)} \log|F_n(u_j)|H_j(u_j)du_j\right |\le N_{F_n}(B(z_j, 1/100))=: N_j.$$ 
By H{\"o}lder's inequality for products, 
$$|A_4|\le \prod_{j=1}^{k} \left (\E N_j^{k}\textbf{1}_{\mathcal A_0^{c}}\right )^{1/k}.$$

We bound each term on the  right  using  H{\"o}lder's inequality as follows 
$$\E N_j^{k}\textbf{1}_{\mathcal A_0^{c}} \le \delta_n^{- kC_1}\P(\mathcal A_0^{c}) + \left (\E N_j^{k+1}\textbf{1}_{N_j\ge \delta_n^{-C_1}}\right )^{k/(k+1)}\left (\P\left (\mathcal A_0^{c}\right )\right )^{1/(k+1)} . $$

In our setting,  $A\ge kC_1 + (k+1)\bar c_1$,  the first term on the right-hand side is $O(\delta_n^{c_1})$. 
Moreover, Condition {\bf C2} \eqref{cond-poly} implies that the second term is $O(\P\left (\mathcal A_0^{c}\right )^{1/(k+1)} ) = O( \delta_n ^{c_1}) $. Thus, $A_4 =O (\delta_n^{c_1})$.

Finally, to bound $A_3$, we let $B$ be the (random) set of all $u\in \Gamma$ on which $\left |\log|F_n(u_j)|\right |\ge \delta_n^{-\bar c_1}$ for some $j$. Notice that if $u = (u_1, \dots, u_k ) \notin B$, then 
$K (\log |F_n (u_j)| ) = \log |F_n (u_j)|$ by the properties of $K$ and the definition of $B$. 
Moreover, for $u \in B$, $| K (\log |F_n (u_j)| ) | \le| \log |F_n (u_j)||$ as $|K(x)| \le |x| $ for all $x$.
It follows that 
\begin{eqnarray}
|A_3|&\le& 2 \E\int_{\Gamma} \left |\prod _{j=1}^{k}\log |F_n(u_j)| \right | \textbf{1}_{B}(u)du \textbf{1}_{\mathcal A_0}.
\end{eqnarray}

By H\"older's inequality, the right-hand side is at most 
$$ 2 \left [\E\int_{\Gamma} \left |\prod _{j=1}^{k}\log |F_n(u_j)| \right | ^{2} du \textbf{1}_{\mathcal A_0} \right ]^{1/2} \left [\E\int_{\Gamma} \textbf{1}_{B}(u) du \textbf{1}_{\mathcal A_0} \right ]^{1/2}. $$ 

By Lemma \ref{2norm}, on the event $\mathcal A_0$, we have 
\begin{equation} \label{onA0} \int_{B(z_j, 1/100)} \left |\log |F_n(u_j)| \right | ^{2} du_j =O(\delta_n^{-6c_1}). \end{equation}  It follows that 
$$\int_{\Gamma} \left |\prod_{j=1}^{k}\log |F_n(u_j)| \right | ^{2} du =O(\delta_n^{-6kc_1}).$$

On the other hand, by the definition of $B$,
 $$\int_{\Gamma} \textbf{1}_{B}(u) du \textbf{1}_{\mathcal A_0}= O \left (\textbf{1}_{\mathcal A_0}\sum_{j=1}^{k}\int_{B(z_j, 1/100)} \textbf{1}_{|\log|F_n(u_j)||\ge \delta_n^{-\bar c_1} } du_j \right ). $$

Furthermore, 
$$ \int_{B(z_j, 1/100)} \textbf{1}_{|\log|F_n(u_j)||\ge \delta_n^{-\bar c_1} } du_j \le \delta_n^{2\bar c_1}\int_{B(z_j, 1/100)}\left |\log |F_n(z)| \right |^{2} dz. $$

Using \eqref{onA0}, we obtain 
$$\E \int_{\Gamma} \textbf{1}_{B}(u) du \textbf{1}_{\mathcal A_0} =O( \delta_n^{2\bar c_1 } \delta_n^{-6k c_1}). $$
 
It follows that 
$$|A_3| = O\left ( \left ( \delta_n^{-6k c_1} \times  \delta_n^{2\bar c_1 } \delta_n^{-6k c_1} \right )^{1/2}\right  )
=O(\delta_n^{\bar c_1 - 6kc_1}) = O(\delta_n^{c_1}) $$ as we set $\bar c_1 > 7k c_1 $. The bounds on 
$|A_3|, |A_4|$ and $ |A_5| $ together imply $|A_2|= O(\delta_n ^{c_1}) $, concluding the proof. 
\end{proof}

\subsection{Proof of Theorem \ref{greal}} \label{pgreal}
By Lemma \ref{fourier}, it suffices to assume that $G$ can be decomposed into functions of single variables, namely 
$$G(x_1, \dots, x_k, z_1, \dots, z_l) = H_1(x_1)\dots H_k(x_k) G_1(z_1)\dots G_l(z_l)$$
where the  $H_i:\mathbb{R}\to\mathbb{C}$ and $G_j:\mathbb{C}\to \mathbb{C}$ are smooth functions supported on $[x_i-1/50, x_i+1/50]$ and $B(z_j, 1/50)$ (respectively) 
and satisfying 
\[|{\triangledown^{a}H_i}(x)|, |{\triangledown^{a}G_j}(z)|\le 1
\]
for any $x\in \R$, $z\in \C$ and $0\le a\le 3$.

In other words, one needs to show that
\begin{eqnarray}
\ab{\E \left(\prod_{i=1}^{k}X_{i}\right)\left(\prod_{j=1}^{l}Y_{j}\right)-\E \left(\prod_{i=1}^{k}\tilde X_{i}\right)\left(\prod_{j=1}^{l}\tilde Y_{j}\right)}\le C'\delta_n^{\bar c},\label{du6}
\end{eqnarray} for some constants $C', \bar c >0$, where 
$$X_{i} = \sum_{\zeta_s\in\mathbb{R}}H_i(\zeta_s), \quad \tilde X_{i} = \sum_{\tilde \zeta_s\in\mathbb{R}}H_i(\tilde \zeta_s), \quad Y_{j}= \sum_{\zeta_s\in\mathbb{C}_+}G_j(\zeta_s), \quad \tilde Y_{j}= \sum_{\tilde \zeta_s\in\mathbb{C}_+}G_j(\tilde \zeta_s).$$ (We use  $\bar c$ instead of $c$  to denote the exponent 
on the right hand side, since we  reserve $c$ for the exponent in Theorem \ref{gcomplex}, which we will use in the proof.)

The proof follows the ideas in \cite{TVpoly}. 
The first step is to show that the number of complex zeros near the real axis is small with high probability. Let  $c$ be the constant exponent  in Theorem \ref{gcomplex} corresponding to $k+l$. Following Remark \ref{rmkconstants}, we can set $c= \frac{\alpha_1\ep }{10^{5}(k+l)^{2}}$. 
 
With this choice of $c$, we set  $c_2 := \frac{c}{100}= \frac{\alpha_1\ep }{10^{7}(k+l)^{2}}$ and  $\gamma := \delta_n^{c_2}$. Let us also recall that in the statement of this theorem (Theorem \ref{greal}), $c_1= \frac{\alpha_1 \ep}{10^9 (k+l)^4}$, which is much smaller than $c_2$: $c_1= \frac{c_2}{100(k+l)^{2}}$.

\begin{lemma}\label{lmrepulsion} Under the assumptions of Theorem \ref{greal}, we have 
$$\P \left(N_{ F_n}{B( x,\gamma)}\ge 2\right) = O(\gamma^{3/2}), 
\qquad\text{for all } x\in \R \cap \left (D_n + B(0, 1/50)\right )$$
where the implicit constant depends only on the constants in Conditions {\bf C1} and {\bf C2} (but not on $n, \delta_n, D_n$ and $x$.
\end{lemma}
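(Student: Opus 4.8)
The plan is to prove this "repulsion of real roots" estimate by reducing to the Gaussian case via universality, and then to exploit the fact that for Gaussian $F_n$ the event $\{N_{F_n}(B(x,\gamma))\ge 2\}$ forces the random vector $(F_n(x), F_n'(x))$ — appropriately normalized by $\sqrt{\sum_j |\phi_j(x)|^2}$ — to be atypically small, which a Gaussian anti-concentration bound controls at the rate $\gamma^{3/2}$. First I would set $S(x)^2 := \sum_{j=1}^n |\phi_j(x)|^2$ and introduce the normalized function $\hat F_n(z) := F_n(z)/S(x)$ on the disk $B(x,1)$ (here $x$ is fixed). Using Condition {\bf C2}\eqref{cond-repulsion} (the derivative-growth bounds), the Taylor expansion of $\hat F_n$ around $x$ shows that if $\hat F_n$ has two zeros $\zeta_1,\zeta_2 \in B(x,\gamma)$, then both $|\hat F_n(x)|$ and $|\hat F_n'(x)|$ must be small — roughly $O(\gamma^2 \delta_n^{-c_1})$ and $O(\gamma \delta_n^{-c_1})$ respectively — because the quadratic and higher terms are negligible on the scale $\gamma$ thanks to the second and third inequalities in {\bf C2}\eqref{cond-repulsion} controlling $\sup_{B(x,1)}|\phi_j''|$. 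Thus the event in question is contained (up to a low-probability exceptional event from {\bf C2}\eqref{cond-bddn}) in an event of the form $E_x := \{|\hat F_n(x)| \le \gamma^2 \delta_n^{-O(c_1)}\} \cap \{|\hat F_n'(x)| \le \gamma \delta_n^{-O(c_1)}\}$, whose probability is what we must bound by $O(\gamma^{3/2})$.

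The second step is to estimate $\P(E_x)$. I would do this by the same universality philosophy used throughout the paper: the indicator (suitably smoothed) of $E_x$ is a test function of the two linear statistics $\Re F_n(x), \Im F_n(x), \Re F_n'(x), \Im F_n'(x)$, each of which is a sum of independent random variables in the coefficients $\xi_i$; a Lindeberg swap (as packaged by Lemma \ref{logcomp}'s underlying swapping argument, or a direct CLT-comparison) transfers the probability to the corresponding Gaussian model $\tilde F_n$ up to an error that is a small power of $\delta_n$, which is absorbed since $\gamma = \delta_n^{c_2}$ with $c_2 \gg c_1$. In the Gaussian case, $(\tilde F_n(x), \tilde F_n'(x))$ (real and imaginary parts) is a Gaussian vector; the delocalization condition {\bf C2}\eqref{cond-delocal} guarantees no coefficient dominates, so the relevant $2\times 2$ (or $4\times 4$ in the complex case) covariance matrix — after the normalization by $S(x)$ and an appropriate rescaling of the derivative — is non-degenerate with bounded entries, giving a bounded density. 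The probability that such a Gaussian vector lands in a box of dimensions $\gamma^2 \times \gamma$ (times $\delta_n^{-O(c_1)}$ factors, which are $\le \gamma^{-o(1)}$) is then $O(\gamma^{3/2})$, as the two-dimensional Lebesgue measure of that box is $O(\gamma^3 \delta_n^{-O(c_1)}) = O(\gamma^{3 - o(1)}) = o(\gamma^{3/2})$ once one checks the exponents — in fact one can afford the crude bound $\gamma^2 \cdot \gamma = \gamma^3$, comfortably better than $\gamma^{3/2}$.

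The technical care — and the main obstacle — is in showing that the normalized derivative $F_n'(x)/S(x)$ genuinely has non-degenerate (lower-bounded) variance in the Gaussian model, so that the box estimate is not vacuous; this is exactly where one must use more than just the upper bounds in {\bf C2}\eqref{cond-repulsion}. If $\sum_j |\phi_j'(x)|^2$ were much smaller than $\sum_j |\phi_j(x)|^2$, the derivative would be essentially deterministic and the argument would still go through (one just needs an upper bound on the box, and smallness of $|\hat F_n'(x)|$ then follows automatically, so the constraint reduces to the one-dimensional event $|\hat F_n(x)| \le \gamma^2\delta_n^{-O(c_1)}$, whose Gaussian probability is $O(\gamma^2 \delta_n^{-O(c_1)}) = o(\gamma^{3/2})$ anyway). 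Conversely if it is comparable, the two-dimensional bound applies. So in fact the clean way to organize the proof is to bound $\P(E_x) \le \P(|\hat F_n(x)| \le \gamma^2\delta_n^{-O(c_1)})$ directly by one-dimensional Gaussian anti-concentration of $\hat F_n(x) = \sum_i \xi_i \phi_i(x)/S(x)$, a unit-variance sum, after the Lindeberg transfer — this already yields $O(\gamma^{2-o(1)}) \subset O(\gamma^{3/2})$ and sidesteps the degeneracy issue entirely. The remaining routine points are: (i) verifying the Taylor-remainder bound that produces the $\gamma^2$ scale for $|\hat F_n(x)|$ from the existence of two nearby roots, using Cauchy estimates together with {\bf C2}\eqref{cond-bddn} and the second/third parts of {\bf C2}\eqref{cond-repulsion}; and (ii) checking that the exceptional events (where boundedness fails, probability $\le C\delta_n^A$, and where $S(x)$-normalization misbehaves) contribute $o(\gamma^{3/2})$, which holds because $A$ is chosen large relative to $c_2$.
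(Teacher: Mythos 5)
Your high-level plan --- reduce to the Gaussian model, then argue that two roots in $B(x,\gamma)$ force the normalized value $F_n(x)/S(x)$ into a small box, and close with Gaussian anti-concentration --- is in the right spirit, and is morally what the paper does. The paper uses Rouch\'e's theorem on the linearization $g(z)=\tilde F_n(x)+\tilde F_n'(x)(z-x)$; your divided-difference estimate $|\hat F_n(x)|\lesssim\gamma^2\sup_{B(x,\gamma)}|\hat F_n''|$ is an interchangeable geometric step (with the small caveat that the naive Taylor subtraction yields $\hat F_n'(x)(\zeta_1-\zeta_2)=O(\gamma^2\sup|\hat F_n''|)$, which is useless when the two roots are nearly coincident; one must invoke the Hermite--Genocchi form of the second divided difference, or equivalently the factorization $F_n(z)=(z-\zeta_1)(z-\zeta_2)h(z)$, to get the clean $\gamma^2$ scaling).

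The genuine gap is in the order of operations. You propose to establish the inclusion $\{N_{F_n}B(x,\gamma)\ge 2\}\subset E_x$ in the general (pre-Gaussian) model and only afterwards Lindeberg-transfer $\P(E_x)$. That inclusion requires a high-probability pointwise bound $\sup_{B(x,\gamma)}|F_n''|\le\delta_n^{-O(c_1)}S(x)$ with exceptional probability $o(\gamma^{3/2})$, and the conditions you cite do not deliver this in the non-Gaussian model. Condition {\bf C2}\eqref{cond-bddn} gives only $|F_n|\le\exp(\delta_n^{-c_1})$ on $B(x,2)$, hence by Cauchy $|F_n''|\lesssim\exp(\delta_n^{-c_1})$ --- an exponentially large bound, so $\gamma^2\sup|F_n''|$ is not small. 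Condition {\bf C2}\eqref{cond-repulsion} controls only $\Var F_n''$; with only $(2+\ep)$-moments on the $\xi_j$, the best available tail is of Markov type, $\P\bigl(|F_n''|>M\delta_n^{-c_1/2}S\bigr)\lesssim M^{-(2+\ep)}$, and pushing $M$ high enough that the exception is $o(\gamma^{3/2})$ forces $M\gtrsim\gamma^{-3/(4+2\ep)}$, which inflates the box size to $\gamma^{2-3/(4+2\ep)-o(1)}$ --- an exponent strictly below $3/2$ for small $\ep$, so the loss in the exceptional set is not compensated by the anti-concentration gain. The paper avoids this: it first applies Theorem \ref{gcomplex} to the smooth pair-correlation sum $\E\sum_{i\ne j}H(\zeta_i)H(\zeta_j)$ (which dominates $\P(N\ge2)$; note $\P(N\ge2)$ is not itself a smooth statistic of finitely many values of $F_n$, so a direct Lindeberg swap cannot transfer it), truncates the Gaussian correlation sum in $N$, and only then runs Rouch\'e in the purely Gaussian setting, where the required sup-bound on the remainder $p:=\tilde F_n-g$ is obtained by an exponential-moment concentration argument that is genuinely Gaussian-specific. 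In short, the geometry must come after the Gaussianization; as written, your step establishing $\{N\ge2\}\subset E_x$ in the general setting does not close.
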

The power $3/2$ in the above lemma is not critical, we only need something strictly greater than 1. 

Assuming this lemma, the rest of the proof is relatively simple. For every $1\le i\le k$, consider the strip $S_i := [ x_i - 1/50, x_i + 1/50]\times [-\gamma/4, \gamma/4]$. We can cover $S_i$ by $O(\gamma^{-1})$ disks of the form $B( x, \gamma)$, where $x \in [ x_i-1/50, x_i+1/50]$.  Since $F_n$ has real coefficients, if $z$ is a root of $F_n$ in $S_i\backslash \R$, so is it conjugate $\bar z$. Using Lemma \ref{lmrepulsion} and the union bound,  we obtain
\begin{eqnarray}
\P (\text{there is at least 1 (or equivalently 2) root(s) in } S_i\backslash \mathbb R )
&=& O(\gamma^{-1}\gamma^{3/2}) = O(\gamma^{1/2})\label{du4}.
\end{eqnarray}

Define
$\mathfrak H_i(z) := H_i(\Re (z))\phi \left (\frac{4\Im (z)}{\gamma}\right )$, 
where $\phi: \R \to [0, 1]$ is a smooth function that is supported on $[-1,1]$, with $\phi(0)=1$ and $\norm{\phi^{(a)}}_{\infty}=O(1)$ for all $0\le a\le 3$. It is easy to see that $\mathfrak H_i$ is a smooth function supported on $S_i$ with  $\| \mathfrak H_i \| _{\infty}\le 1$, and $\big\| \triangledown ^a \mathfrak H_i\big\| _{\infty} = O(\gamma^{-a})$ for $0\le a\le 3$.

 Set 
 $\mathfrak X_i := \sum_{s} \mathfrak H_i(\zeta_s)$ and $D_i := \mathfrak X_{i} - X_{i}$. By the definitions of $\mathfrak X_{i}$ and $X_{i}$, $D_i = \sum_{\zeta_s\notin \R} \mathfrak H_i(\zeta_s)$. Our general strategy is to use $\mathfrak X_i$ to approximate $X_i$, then apply Theorem \ref{gcomplex} to $\mathfrak X_i$ and  finish the proof using a triangle inequality.

From \eqref{du4}, $D_{i} = 0$ with probability at least $1 - O(\gamma^{1/2})$. Notice that by the definition of $D_i$ and the fact that  $\| \mathfrak H_i \| _{\infty}\le 1$, 
 \begin{equation} \label{Di} |D_{i}| \le N_{ F_n}{B( x_i, 1/5)}. \end{equation}  
 
 By  \eqref{Di} and  Jensen's inequality \eqref{jensenbound}, 
 $$|D_{i}|\le N_{ F_n}{B( x_i, 1/5)} =O\left (\log \max_{w\in B(x_i, 2)} |F_n(w)|-\log\max_{z\in B(x_i, 1/5)} |F_n(z)|\right ) .$$

 By Conditions {\bf C2} \eqref{cond-smallball} and {\bf C2} \eqref{cond-bddn}, with probability at least $1-O(\delta_n^{A})$, there exists $z\in B(x_i, 1/100)$ such that both terms on the right-hand side are of order  $O\left (\delta_n^{-c_1}\right )$. Therefore, with probability at least $1-O(\delta_n^{A})$, we have $|D_i|\le N_{ F_n}{B( x_i, 1/5)}\le C'\delta_n^{-c_1}$ for some constant $C'$. For the rest of this proof, we denote $N_i:= N_{ F_n}{B( x_i, 1/5)}$.

Our next step is to bound  $ \E \ab{D_{i}}^{k+l}$. To start, we have 
\begin{equation} \label{real1} 
\E \ab{D_{i}}^{k+l}\le \E \left (|D_{i}|^{k+l}\textbf{1}_{N_i\le C'\delta_n^{-c_1}}\right ) + \E \left (N_i^{k+l}\textbf{1}_{ N_i > C'\delta_n^{-c_1}}\right ). \end{equation}

Since  $D_{i} = 0$ with probability at least $1 - O(\gamma^{1/2})$, 
\begin{equation}  \E \left (|D_{i}|^{k+l}\textbf{1}_{N_i\le C'\delta_n^{-c_1}}\right ) =O\left (\delta_n^{-c_1(k+l)}\gamma^{1/2}\right )=O\left (\delta_n^{-c_1(k+l)+c_2/2}\right ) =O\left (\delta_n^{c_1(k+l)^{2}}\right ) \nonumber
\end{equation} 
because $c_2\ge 4c_1(k+l)^{2}$.

For the second term in \eqref{real1}, we further break up the event $ N_i > C'\delta_n^{-c_1}$ into two events 
$$\Omega_1:= \delta_n^{-C_1}\ge N_i > C'\delta_n^{-c_1} \,\, {\rm and}\,\, \Omega_2:= \delta_n^{-C_1}\le N_i$$ where $C_1$ is the constant in the statement of Theorem \ref{greal}. We have
$$\E N_i^{k+l}\textbf{1}_{\Omega_1}\le \delta_n^{-C_1(k+l)}\P(\Omega_1) = O\left (\delta_n^{A-C_1(k+l)}\right )=O\left (\delta_n^{c_1(k+l)^{2}}\right ).$$
Moreover, by  H{\"o}lder's inequality,
$$\E N_i^{k+l}\textbf{1}_{\Omega_2}\le \P\left(\Omega_2\right )^{\frac{2}{k+l+2}} \left (\E N_i^{k+l+2}\textbf{1}_{\Omega_2}\right )^{\frac{k+l}{k+l+2}} =O\left (\delta_n^{A/(k+l+2)}\right )\left (\E N_i^{k+l+2}\textbf{1}_{\Omega_2}\right )^{\frac{k+l}{k+l+2}}.$$
Under the assumption of Theorem \ref{greal}, Condition {\bf C2} \eqref{cond-poly} holds for the parameter $k+l$, which provides  $\E N_i^{k+l+2}\textbf{1}_{\Omega_2} = O(1)$. As we set  $A$ to be much larger than $c_1$, it is easy to check that 
$$\E N_i^{k+l}\textbf{1}_{\Omega_2} =O\left (\delta_n^{A/(k+l+2)}\right ) = O\left (\delta_n^{c_1(k+l)^{2}}\right ).$$

Thus, 
\begin{equation}
 \E \left (N_{ F_n}{B( x_i, 1/5)}\right )^{k+l}\textbf{1}_{N_{ F_n}{B( x_i, 1/5)}\ge C'\delta_n^{-c_1}}=O\left (\delta_n^{A/(k+l+2)}\right )=O\left (\delta_n^{c_1(k+l)^{2}}\right ). \label{boundN}
\end{equation}
Combining all these bounds with \eqref{real1}, we obtain 
$$\E |D_i|^{k+l}= O\left (\delta_n^{c_1(k+l)^{2}}\right ).$$
 
Moreover, from the above bounds, we get
\begin{equation}
\E |{\mathfrak X}_{i}|^{k+l} \le \E N_i^{k+l}= \E N_i^{k+l}\textbf{1}_{N_i\le C'\delta_n^{-c_1}}+\E N_i^{k+l}\textbf{1}_{\Omega_1} + \E N_i^{k+l}\textbf{1}_{\Omega_2} =O\left (\delta_n^{-c_1(k+l)}\right ),\nonumber
\end{equation} where the main contribution comes from the first term. Similarly, $\E |{X}_{i}|^{k+l}=O\left (\delta_n^{-c_1(k+l)}\right )$.

Next, for each $1\le j\le l$, let $\mathfrak G_j(z) := G_j(z)\varphi(\text{Im}(z)/\gamma)$ where $\varphi$ is a smooth function on $\R$ supported on $[1/2, \infty)$ with $\varphi=1$ on $[1, \infty)$ and $\norm{\varphi^{(a)}_{\infty}}=O(1)$ for all $0\le a\le 3$.

Set  $\mathfrak Y_j := \sum_{s}\mathfrak G_j(\zeta_s)$. By similar reasoning, we have  $\E|\mathfrak Y_j - Y_j|^{k+l}=O\left (\delta_n^{c_1(k+l)^{2}}\right )$ and 
$$\max\left \{\E |\mathfrak Y_j|^{k+l}, \E |Y_j|^{k+l}\right \} = O\left (\delta_n^{-c_1(k+l)}\right ).$$

Now, we show  that the difference $\E\left |(\prod_{i=1}^{k}X_{i})(\prod_{j=1}^{l}Y_{j}) - (\prod_{i=1}^{k}\mathfrak X_{i})(\prod_{j=1}^{l}\mathfrak Y_{j} )\right |$ is small. Using the ``telescopic sum" argument,  we decompose the difference inside the abolute value sign into the sum of $k+l$ differences, in each of which exactly one of the $X_1, \dots, X_k, Y_1, \dots, Y_j$ is replaced by its counterpart, and then use the triangle inequality to finish. 
Let us bound the first difference; the argument for the  rest is the same. By H{\"o}lder's inequality and the previous bounds on $D_i, X_i, Y_i$ etc, we have 
\begin{eqnarray}
\E\left |X_1(\prod_{i=2}^{k}X_{i})(\prod_{j=1}^{l}Y_{j}) - \mathfrak X_{1}(\prod_{i=2}^{k} X_{i})(\prod_{j=1}^{l} Y_{j} )\right |&\le& \left (\E|D_1|^{k+l}\right )^{\frac{1}{k+l}}\prod_{i=2}^{k} \left (\E|X_i|^{k+l}\right )^{\frac{1}{k+l}}\prod_{j=1}^{l}\left (\E|Y_{j}|^{k+l}\right )^{\frac{1}{k+l}}\nonumber\\
&=&O\left (\delta_n^{c_1(k+l)}\prod_{k+l-1 \mbox{ terms}} \delta_n^{-c_1}\right )= O\left (\delta_n^{c_1}\right ).\nonumber
\end{eqnarray}
Thus,
\begin{equation}
\E\left |(\prod_{i=1}^{k}X_{i})(\prod_{j=1}^{l}Y_{j}) - (\prod_{i=1}^{k}\mathfrak X_{i})(\prod_{j=1}^{l}\mathfrak Y_{j} )\right |=O\left (\delta_n^{c_1}\right ).\nonumber 
\end{equation}

We can obtain the same bound for the corresponding terms  of $\tilde F_n$. 
Finally, from Theorem \ref{gcomplex}, we have
\begin{equation}
\left |\E(\prod_{i=1}^{k}\mathfrak X_{i})(\prod_{j=1}^{l}\mathfrak Y_{j} )-\E(\prod_{i=1}^{k}\tilde {\mathfrak X_{i}})(\prod_{j=1}^{l}\tilde{ \mathfrak Y_{j}} )\right |= O\left (\delta_n^{c_1}\right )\nonumber. 
\end{equation}

The desired estimate now follows from the triangle inequality. 

\begin{proof}[Proof of Lemma \ref{lmrepulsion}]
	The first step is to use 
	 Theorem \ref{gcomplex} to reduce to the Gaussian case.  Borrowing ideas from \cite[Chapter 2]{HKPV}, we handle the Gaussian case using Rouch\'e's theorem and various probabilistic 
	 estimates based on some properties of the Gaussian distribution. 
	 
	 For this proof, we let $\tilde \xi_1, \dots, \tilde \xi_n$ be Gaussian random variables with unit variance and satisfying $\E \tilde \xi_i = \E \xi_i$ for each $1\le i\le n$. 
	 
	 Let $H:\C\to [0, 1]$ be a non-negative smooth function supported on $B(x, 2\gamma)$, such that $H=1$  on $B(x, \gamma)$ and $|\triangledown ^a H|\le C\gamma^{-a}$ for all $0\le a\le 8$.

	 Applying Theorem \ref{gcomplex} to $H$, we obtain 
	\begin{equation} \label{roots1} 
	\P (N_{ F_n}{B( x, \gamma)}\ge 2)\le
	\E \sum_{i\neq j} H( \zeta_i)H( \zeta_j) \le\E \sum_{i\neq j} H( {\tilde \zeta_i})H( \tilde {\zeta}_j)+ O(\delta_n^c\gamma^{-8}). \end{equation}

	The definition of $\gamma $ guarantees (via a trivial calculation)  that $O( \delta_n^c\gamma^{-8})
	=O(\gamma^{3/2})$, with room to spare. Thus, it remains to show 
	\begin{equation} \label{roots2}  \E \sum_{i\neq j} H( {\tilde \zeta_i})H( \tilde {\zeta}_j) =O(\gamma^{3/2}). \end{equation}

	Set  $N := N_{ {\tilde F_n}}{B( x, 2\gamma)}$; 
	 we bound the LHS of \eqref{roots2} from above by
	\begin{equation} \label{roots3}  \E N^{2}\textbf{1}_{ N\ge C'\delta_n^{-c_1}} + \E N(N-1) \textbf{1}_{ N < C'\delta_n^{-c_1}}. \end{equation}

	Using the same  argument as in the proof of  \eqref{boundN}, we can show that  $$\E N^{2}\textbf{1}_{ N\ge C'\delta_n^{-c_1}}=O\left (\delta_n^{A/(k+l+2)}\right)=O(\gamma^{3/2}).$$

	Thus, it remains to show that $\E N(N-1) \textbf{1}_{ N < C'\delta_n^{-c_1}} = O(\gamma^{3/2}) $. Since 
	 $$ \E N(N-1) \textbf{1}_{ N < C'\delta_n^{-c_1}}  \le C'^{2}\delta_n^{-2 c_1} \P( N \ge 2), $$ it suffices to 
	 prove 
	\begin{equation} 
	\P (N \ge 2) = \P(N_{\tilde{F_n}}{B(x,2\gamma)}\ge 2)  =O(\delta_n^{2c_1}\gamma^{3/2}). \label{repulsion_gau}
	\end{equation}
	
 	Thus, we have reduced the problem to the Gaussian setting. 
	Let $g(z) := \tilde F_n(x) + \tilde F_n'(x)(z-x)$ and $p(z) := \tilde F_n(z) - g(z)$.
	By Condition {\bf C2} \eqref{cond-delocal}, for any fixed $x$, we have $\tilde F_n(x) \tilde F_n '(x) \neq  0$ with probability 1. So, $g(z)$ has exactly one 
	root. Thus, by  Rouch\'{e}'s theorem, 
	$$\P(N_{\tilde{F_n}}{ B(x,2\gamma)}\ge 2) \le \P\left (\min_{z\in \partial B(x, 2\gamma)}|g(z)|\le \max_{z\in\partial B(x, 2\gamma)}|p(z)|\right ).$$
	
	In the rest of the proof, we bound the right-hand side. We are going to show that with (appropriately) high probability,  $\min_{z\in \partial B(x, 2\gamma)}|g(z)|$ is not too small and $ \max_{z\in\partial B(x, 2\gamma)}|p(z)|$ is not too large.

 For every $z\in B(x, 4\gamma)$, we have $p(z) = \sum_{j=1}^{n} \tilde \xi_j v_j(z)$ where $v_j(z) = \phi_j(z) - \phi_j(x) + (z-x) \phi_j'(z)$. Thus
	\begin{eqnarray}
	|v_j(z)| &\le& |z-x|^{2}\sup _{w\in B(x, 2\gamma)} |\phi_j''(w)| = O\left (\gamma^{2} \sup _{w\in B(x, 2\gamma)} |\phi_j''(w)|\right ).\nonumber 
	\end{eqnarray}
	
	By Condition {\bf C2} \eqref{cond-repulsion}, 
	\begin{equation} \label{exp1} |\E p(z)|=O\left (\gamma^{2} \sum_{j=1}^{n} |\E \tilde \xi_j|\sup_{w\in B(x, 1)}|\phi_j''(z)|\right )=O\left (\delta_n ^{2c_2-c_1}\sqrt{\sum_{j=1}^{n} |\phi_j(x)|^{2}}\right ), \end{equation} and 
	\begin{equation}
	\Var (p(z))=O\left ( \gamma^{4} \sum_{i=1}^{n}\sup _{w\in B(x, 2\gamma)} |\phi_j''(w)|^{2}\right) = O\left (\delta_n^{4c_2-c_1} \sum_{j=1}^{n} |\phi_j(x)|^{2}\right) = O\left (\delta_n^{4c_2-c_1} \Var (\tilde F_n (x))\right)\label{varp}.
	\end{equation}

	Set $t  := \delta_n^{2c_2-c_1} \sqrt{\Var (\tilde F_n (x))}$.  The previous estimates show that  $|\E p(z)| = O(t)$ and $\Var(p(z)) = O(t^{2}\delta_n^{c_1})$ for all $z\in B(x, 4\gamma)$. We will show the following concentration inequality
	\begin{equation}
	\P\left (\max_{z\in\partial B(x, 2\gamma)} |p(z)-\E p(z)|\ge \frac{1}{2}t\right )=O(1)\exp\left (-\frac{t^{2}}{100\max_{z\in B(x, 4\gamma)}\Var (p(z))}\right )=O\left (\gamma^{16/10}\delta_n^{2c_1}\right).\label{concenp1}
	\end{equation}

	Set $\bar p(z) := p(z)-\E p(z)$. For  any $z\in \partial B(x, 2\gamma)$, by Cauchy's integral formula, 
	\begin{eqnarray}
	|\bar p(z)|&\le& \int_0^{2\pi}\frac{|\bar p(x + 4\gamma e^{i\theta})|}{|z - x - 4\gamma e^{i\theta}|}4\gamma\frac{d\theta}{2\pi} \le 2\int_0^{2\pi}|\bar p(x + 4\gamma e^{i\theta})|\frac{d\theta}{2\pi}\nonumber\\
	&\le&\max_{w\in B(x, 4\gamma)}\sqrt{\Var (p(w))}\int_0^{2\pi}\frac{|\bar p(x + 4\gamma e^{i\theta})|}{\sqrt{\Var (\bar p(x + 4\gamma e^{i\theta}))}}\frac{d\theta}{2\pi}\nonumber.
	\end{eqnarray}
	
	Hence, by Markov's inequality,
	\begin{equation}
	\P(\max_{z\in \partial B(x, 2\gamma)} |\bar p(z)|\ge t)\le \E \exp\left( \left (\int_0^{2\pi}\frac{|\bar p(x + 4\gamma e^{i\theta})|}{10\sqrt{\Var (\bar p(x + 4\gamma e^{i\theta}))}}\frac{d\theta}{2\pi}\right )^{2}\right ) e^{-t^{2}/100\max_{z\in B(x, 4\gamma)}\Var (p(z))}.\nonumber
	\end{equation}
	
	Using  Jensen's inequality for convex functions and Fubini's theorem, we obtain 
	\begin{eqnarray}
	&&\E\exp\left (\left (\int_0^{2\pi}\frac{|\bar p(x + 4\gamma e^{i\theta})|}{10\sqrt{\Var (\bar p(x + 4\gamma e^{i\theta}))}}\frac{d\theta}{2\pi}\right )^{2}\right )\le \int_0^{2\pi}\E\exp\left (\frac{|\bar p(x + 4\gamma e^{i\theta})|^{2}}{100 {\Var (\bar p(x + 4\gamma e^{i\theta}))}}\right )\frac{d\theta}{2\pi}.\nonumber
	\end{eqnarray}
	The right-hand side is $O(1)$  by basic properties of the Gaussian distribution. (Notice that 
	$p(z)$, for any fixed $z$ is a Gaussian random variable.)  This proves \eqref{concenp1}. Using the bound $|\E p(z)| = O(t)$ for all $z\in B(x, 2\gamma)$, one concludes that with probability at least $1 - O\left (\gamma^{16/10}\delta_n^{2c_1}\right)$, 
	\begin{equation}  \label{maxx1} \max_{z\in \partial B(x, 2\gamma)} |p(z)| \le Kt, \end{equation}
	for some constant $K>0$.

	Now, we address $g(z)$; since  $g$ is a linear function with real coefficients, we have 
	$$\min_{z\in \partial B(x, 2\gamma)} |g(z)| = \min \{|g(x -  2\gamma)|, |g(x +  2\gamma)| \}, $$ which reduces the task to obtaining lower bounds for  the two end points only.
	
	Note that $g(x+  2\gamma)$ is normally distributed with standard deviation
	\begin{equation}
	\sqrt{\Var(g(x + 2\gamma))} = \sqrt{\sum_{j=1}^{n} (\phi_j (x) + 2\gamma \phi_j'(x))^{2}} \ge \sqrt{\sum_{j=1}^{n} \phi_j ^{2}(x) } - 2\gamma \sqrt{\sum_{j=1}^{n} \phi_j'^{2}(x)}\ge 1/2 \sqrt{\sum_{j=1}^{n} \phi_j ^{2}(x) }\nonumber
	\end{equation}
	where in the last two inequalities, we used the triangle inequality and then Condition {\bf C2} \eqref{cond-repulsion}. Note that by the definition of $t$,	
	 $$ \sqrt{\sum_{j=1}^{n} \phi_j ^{2}(x)} = \sqrt{ \Var (\tilde F_n(x))} = t\delta_n^{-2c_2+c_1}.$$

Since $g(x+2\gamma)$, as a random variable,  is a real Gaussian with density bounded by $\frac{1}{2\sqrt{\Var g(x+2\gamma)}}\le\frac{\delta_n^{2c_2-c_1}}{t}$, we have for any constant $K>0$, 	 	 
	\begin{equation}
	\P(|g(x+  2\gamma)| \le K t)=O\left ( \delta_n^{2c_2-c_1}\right ) =O\left ( \delta_n^{2c_1}\gamma ^{3/2}\right )\nonumber. 
	\end{equation}
	
	In the last inequality we used the fact that $c_2$ is set to be much larger than $c_1$; see the paragraph following \eqref{du6}. 
	
	We can prove a similar statement for $g(x-2\gamma) $. Thus we can conclude that for any constant $K>0$, 
	\begin{equation} \label{minn1} 
	\P\left (\min_{z\in \partial B(x, 2\gamma)} |g(z)| \le Kt \right )= O\left (\delta_n^{2c_1}\gamma ^{3/2}\right ).\nonumber
	\end{equation}
	Combining \eqref{minn1}  and \eqref{maxx1}, we conclude   the proof of Lemma \ref{lmrepulsion}.
\end{proof}

\section{ Proof of Theorem \ref{real}}\label{proof-main}

In this section, we prove Theorem \ref{real} by applying Theorem \ref{greal}. By dividing  the coefficients $c_i$ and $d_i$ by their maximum modulus, it suffices to assume that $\max_{0\le j\le n}\{|c_j|, |d_j|\}= 1$. For the sake of simplicity, we assume all random variables have mean 0; the more general setting in Condition {\bf C4} can be dealt with via a routine modification. 

Our crucial new ingredient is the following lemma, which is a generalization of 
a classical result of Tur\'an \cite{turan1953}.

\begin{lemma}\cite[Chapter I]{Nazarov94}  \label{turan111}
	For $i = \sqrt{-1}$, let
	$$p(t) = \sum_{k=0}^{h}a_k e^{i\lambda_k t}, \quad a_k \in \C, \quad\lambda_0<\lambda_1<\dots< \lambda_h \in \R.$$
	Then for any interval $J\subset \R$ and any measurable subset $E\subset J$ of positive measure, we have
	$$\max_{t\in J} |p(t) |\le \left (\frac{C|J|}{|E|}\right )^{h}\sup _{t\in E} |p(t) |$$
	where $C$ is an absolute constant.
\end{lemma}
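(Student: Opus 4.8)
The statement above is the Tur\'an--Nazarov inequality for exponential sums, and the plan is to prove it in two stages: first the special case where $E$ is itself a subinterval of $J$ (the continuous form of Tur\'an's classical power-sum lemma), and then the upgrade from intervals to arbitrary measurable sets. One normalizes at the outset: an affine substitution $t\mapsto\alpha t+\beta$ replaces each $\lambda_k$ by $\alpha\lambda_k$, leaving $h$, the ordering of the frequencies, and the ratio $|J|/|E|$ unchanged, so we may take $J=[0,1]$; by inner regularity of Lebesgue measure we may also take $E$ compact.

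For Stage~1 I would prove that there is an absolute constant $C_0$ with
\[
\sup_{t\in J}|p(t)|\le\Big(\frac{C_0|J|}{|I|}\Big)^{h}\sup_{t\in I}|p(t)|
\quad\text{for every subinterval }I\subseteq J,
\]
by induction on $h$. The case $h=0$ is trivial, as $|p|=|a_0|$ is constant. For the step, factor $p(t)=e^{i\lambda_0 t}q(t)$ with $q(t)=\sum_{k=0}^{h}a_k e^{i\mu_k t}$, $0=\mu_0<\dots<\mu_h$ and $|q|=|p|$; then $q'(t)=\sum_{k=1}^{h}(i\mu_k)a_k e^{i\mu_k t}$ is an exponential sum with only $h$ terms, to which the inductive hypothesis applies, and one recovers $q$ from $q'$ via $q(t)=q(t_0)+\int_{t_0}^{t}q'(s)\,ds$ with a carefully chosen base point $t_0$. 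The delicate point --- the actual content of Tur\'an's argument --- is to arrange this recursion so that the constant it produces depends only on the number of terms and not on $\max_k|\mu_k|$; a naive version loses a factor growing with the frequencies, as does any attempt to set up Lagrange interpolation in the system $\{e^{i\mu_k t}\}$, whose Vandermonde-type determinant $\det\big(e^{i\mu_k t_j}\big)$ can vanish and admits no frequency-uniform lower bound.

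For Stage~2, given the interval estimate for all subintervals of $J$, I would upgrade it to an arbitrary compact $E\subseteq J$ with $|E|>0$ by a Remez-type rearrangement, following Nazarov. Since $E$ need not contain any interval (it could be a fat Cantor set), one cannot reduce directly; instead one uses that a positive-measure set is spread across many scales: partition $J$ into $m\asymp|J|/|E|$ consecutive subintervals, each containing a fixed positive fraction of the measure of $E$, and propagate the interval estimates along this chain, at each step bounding $\sup|p|$ on a full subinterval by its values on the positive-measure piece of $E$ it contains. Making this last comparison precise --- with frequency-uniform constants --- is itself a miniature of the whole problem and is the technical heart of the upgrade; assembling the two stages and undoing the normalization yields the claimed bound with an absolute $C$.

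Throughout, the one real obstacle is that every intermediate estimate must have a constant depending only on $h$, never on $\max_k|\lambda_k|$ or on how closely the $\lambda_k$ are spaced --- this is precisely what makes the lemma both useful and nontrivial. The obvious tools all fail here: Taylor expansion, interpolation, and Gr\"onwall applied to the order-$(h+1)$ constant-coefficient equation $\prod_{k=0}^{h}(\partial_t-i\lambda_k)p=0$ each produce constants that degrade polynomially in $\max_k|\lambda_k|$, and circumventing this --- via Tur\'an's power-sum identity in Stage~1 and the Remez rearrangement in Stage~2 --- is the substance of the proof.
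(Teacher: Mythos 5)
This lemma is not proved in the paper at all: it is imported as a black box from Nazarov (the citation \cite{Nazarov94} in the lemma header), and the appendix proves Lemmas \ref{fourier}, \ref{2norm}, \ref{logcomp}, \ref{halasz-inequality}, and inequality \eqref{jensenbound}, but not Lemma \ref{turan111}. So there is no in-paper proof to compare your attempt against; I can only judge the attempt on its own terms.

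On its own terms, what you have written is not a proof but an annotated plan, and you say so yourself: in Stage~1 you flag that ``the delicate point \dots is to arrange this recursion so that the constant it produces depends only on the number of terms'' without doing it, and in Stage~2 you say that making the Remez-type comparison precise ``is itself a miniature of the whole problem'' and leave it there. Both of those flagged points are exactly where the theorem lives, so the gaps are not cosmetic. Worse, the specific mechanism you propose for Stage~1 --- apply the inductive hypothesis to $q'$ and then recover $q$ by integrating from a base point --- runs into a concrete obstruction when you try to close the loop. After the inductive step you are left needing to bound $\sup_I|q'|$ in terms of $\sup_I|q|$ with a constant depending only on $h$ and $|I|$, i.e.\ a frequency-uniform Markov--Bernstein inequality $\sup_I|q'|\le C(h)|I|^{-1}\sup_I|q|$. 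That inequality is false: take $q(t)=1-e^{iMt}$ ($h=1$) on $I=[0,1]$; then $\sup_I|q|=2$ is bounded while $\sup_I|q'|=M\to\infty$. The phrase ``a carefully chosen base point $t_0$'' is a placeholder for the idea that would circumvent this, and it is precisely the missing idea; nothing in the outline indicates what that choice is or why it restores frequency-uniform constants. You do diagnose correctly that naive interpolation, Lagrange at the nodes $e^{i\mu_k t_j}$, Taylor, and Gr\"onwall on the associated ODE all lose a factor growing with $\max_k|\mu_k|$ --- that is genuine insight into what makes the lemma hard --- but the actual device that avoids it (in Nazarov's treatment this is not differentiation-and-integration but a different reduction, and the passage to measurable sets is handled with its own machinery rather than a generic ``rearrangement'') is exactly what is absent here. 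In short: good map of the terrain, no route across it; since the paper cites rather than reproves this result, you would need to either supply Nazarov's actual argument or find a genuinely complete alternative.
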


We shall apply Theorem \ref{greal} to the function $F_{2n+1}(z) := P_n(10^{4}Cz/n)$ and the number of summands is $2n+1$ in place of $n$ (so we only care about $F_{k}$ where $k$ is odd). The corresponding parameters are $\delta_{2n+1} := 1/n$, and $D_{2n+1} := \{z: |\Im(z)| \le 1/10^{4}\}$.  The functions $\phi_i$ in \eqref{F} are 
$$\phi_1(z) = c_0, \phi_2(z) = c_1 \cos(z), \dots, \phi_{n+1}(z) = c_n \cos(n z),$$
$$ \phi_{n+2}(z) = d_1\sin(z), \dots, \phi_{2n+1}(z) = d_n \sin(nz)$$
and the random variables $\xi_1, \dots, \xi_{{2n+1}}$ in \eqref{F} will be $\xi_0, \dots, \xi_n, \eta_1, \dots, \eta_n$, respectively. The constant $10^{4}$ is chosen rather arbitrarily, any  sufficiently large constant would work.

To deduce Theorem \ref{real}  from Theorem \ref{greal}, for this model, we set $\alpha_1=1/2$ and $C_1$ to be any constant larger than $1$. We only need to show that for any positive constants $A, c_1$, there exists a constant $C$ for which Condition \textbf{C2} holds with parameters $(k, C_1, \alpha_1, A, c_1, C)$. 

For Condition {\bf C2} \eqref{cond-poly}, notice that the periodic function $P_n$ has at most $2n$ complex zeros in the region $[a, a+2\pi)\times \R \subset \C$ for any $a\in \R$. Indeed, let $w = e^{iz}$ then 
$$w^{n}P(z) = \frac{1}{2} \left (\sum_{k = 0}^{n} \xi_k (w^{n+k}+w^{n-k}) -i \sum_{k = 1}^{n} \eta_k (w^{n+k}-w^{n-k})\right )$$
which is a polynomial of degree $2n$ in $w$ and has at most $2n$ zeros. For each $w$ there is only one $z$ in the above region that corresponds to $w$. Thus this condition holds trivially for any constant 
$C_1 >1$, as the left hand side of  Condition {\bf C2} \eqref{cond-poly} becomes zero.

Now we address (the critical) Condition {\bf C2} \eqref{cond-smallball}. 
We will prove the following   stronger statement that for every positive constants $c_1, A$, there exists a constant $C'$ such that the following holds. For every complex number $z_0$, there exists a real number $x$ such that $|x-z_0|\le |\Im (z_0)| + \frac{1}{n}$ and 
$$\P\left ( |P(x)|\le \exp(-n^{c_1})\right )\le C'n^{-A}.$$

Let $x_ 0 = \Re(z_0)$ and $I = [x_0-\frac{1}{n}, x_0+\frac{1}{n}]$. By conditioning on the random variables $\eta_i$  and replacing $A$ by $2A$, it suffices to show that there exists $x\in I$ for which 
\begin{equation}
\sup_{Z\in \R}\P\left ( \left |\sum_{j=0}^{n} c_j\xi_j \cos(jx)-Z \right |\le e^{-n^{c_1}}\right )\le C'n^{-A/2}.\label{anti_trig}
\end{equation}
Now let us recall the definition of $\mathcal I_{0}$ in Condition {\bf C3}. We would like to point out  that in this part of the proof, we only use the fact that 
the size of $\mathcal I_{0}$ is of order $\Theta (n)$. 

We shall prove a more general version which will be useful for all of the remaining models in this manuscript.

\begin{lemma}\label{lmanti_concentration}
	Let $\CE$ be an index set of size $N\in \N$, and let $(\xi_j)_{j\in \CE}$ be independent random variables satisfying the moment Condition {\bf C1} \eqref{cond-moment}. Let $(e_j)_{j\in \CE}$ be deterministic (real or complex) coefficients with $|e_j|\ge \bar e$ for all $j$ and for some number $\bar e\in \R_+$. Then for any $A\ge 1$, any interval $I\subset\R$ of length at least $N^{-A}$, there exists an $x\in I$ such that
	$$\sup_{Z\in \R}\P\left (\left| \sum_{j\in \CE} e_j \xi_j \cos(jx)-Z\right |\le \bar e N^{-16A^{2}}\right )= O_A\left (N^{-A/2}\right )$$  
	where the implicit constant depends only on $A$ and the constants in Condition {\bf C1} \eqref{cond-moment}. 
\end{lemma}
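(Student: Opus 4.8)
The plan is to choose the evaluation point $x\in I$ so that the \emph{deterministic} coefficient vector $\big(e_j\cos(jx)\big)_{j\in\CE}$ simultaneously (a) has a linear number of entries of modulus $\ge \bar e N^{-16A^2}$, and (b) carries essentially no additive structure at scale $\bar e N^{-16A^2}$; once such an $x$ is fixed, a Hal\'asz-type small-ball estimate finishes the job. By scaling we may assume $\bar e=1$; we may assume $N\ge N_0(A)$ (below that threshold the asserted bound is vacuous), and that $|I|=N^{-A}$ (pass to a subinterval). Assume, as in all our applications, that the frequencies $j$ are positive integers, so that everything below is $2\pi$-periodic (the general case is analogous). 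Put $r:=N^{-16A^2}$. From $\Var(\xi_j)=1$ together with the uniform bound on the $(2+\ep)$ central moments, a routine truncation yields a uniform non-degeneracy $\sup_z\P(|\xi_j-z|\le\tfrac1{100})\le 1-\kappa$ with $\kappa=\kappa(\tau,\ep)>0$; this is the only input about the $\xi_j$.

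\textbf{Choice of $x$ (the heart of the argument).} Fix an integer $m$ with $\tfrac{A+1}{2}\le m\le\tfrac{8A^2}{3A+1}$; such an $m$ exists because for $A\ge1$ this closed interval has length $\ge1$, which is precisely where the constant $16$ in $16A^2$ is used. Call $x\in I$ \emph{good} if (i) $\#\{j\in\CE:|\cos(jx)|\ge r\}\ge N/2$, and (ii) $\Phi(x)\le 4\overline\Phi$, where $\Phi(x)$ counts the \emph{off-diagonal} $2m$-tuples $(i_1,\dots,i_m,j_1,\dots,j_m)\in\CE^{2m}$ --- those for which $(i_\bullet)$ and $(j_\bullet)$ differ as multisets --- with $\big|\sum_k e_{i_k}\cos(i_kx)-\sum_k e_{j_k}\cos(j_kx)\big|\le r$, and $\overline\Phi:=|I|^{-1}\int_I\Phi$. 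For (i): each $\{t\in I:|\cos(jt)|<r\}$ has measure $\le 5r$ (a zero-counting estimate using $|I|\le1$, $j\ge1$), so $|I|^{-1}\int_I\#\{j:|\cos(jt)|<r\}\,dt\le 5rN\,|I|^{-1}=5N^{1+A-16A^2}\to0$, and (i) fails only on a subset of $I$ of measure $o(|I|)$. For (ii) we invoke Tur\'an's lemma: for an off-diagonal tuple the function $D(t):=\sum_k e_{i_k}\cos(i_kt)-\sum_k e_{j_k}\cos(j_kt)$ is a \emph{nonzero} exponential sum with at most $4m$ integer frequencies, and since $D\not\equiv0$ some Fourier coefficient of $D$ (which equals $\tfrac12 m_v e_v$ or $m_v e_v$ with $m_v$ a nonzero integer and $|e_v|\ge1$) has modulus $\ge\tfrac12$, so $\max_{[0,2\pi]}|D|\ge\tfrac12$. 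If $E_D:=\{t\in I:|D(t)|\le r\}$ has positive measure, applying Lemma~\ref{turan111} with $E=E_D$ and a length-$2\pi$ interval $J\supseteq I$ gives $\tfrac12\le\max_J|D|\le(2\pi C/|E_D|)^{4m}r$, hence $|E_D|\le C'N^{-4A^2/m}$. Summing over the $\le N^{2m}$ off-diagonal tuples,
\[\overline\Phi\ \le\ N^{2m}\cdot\frac{\max_D|E_D|}{|I|}\ \le\ C'N^{2m+A-4A^2/m}\ \le\ C'N^{2m-\frac12-\frac A2},\]
the last step by the choice of $m$; by Markov, (ii) fails only on a subset of $I$ of measure $\le|I|/4$. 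Hence a good $x^\ast\in I$ exists.

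\textbf{Conclusion via Hal\'asz.} Let $\CE':=\{j\in\CE:|e_j\cos(jx^\ast)|\ge r\}$, so $|\CE'|\ge N/2$ (as $|e_j|\ge1$), and set $a_j:=e_j\cos(jx^\ast)$. Conditioning on $(\xi_j)_{j\in\CE\setminus\CE'}$ only decreases the concentration function, so it suffices to bound $\sup_Z\P\big(\big|\sum_{j\in\CE'}a_j\xi_j-Z\big|\le r\big)$. Rescaling the $a_j$ by $r$ and applying Hal\'asz's inequality --- valid under the non-degeneracy above, and in any case deducible from Esseen's inequality --- with parameter $m$,
\[\sup_Z\P\Big(\Big|\sum_{j\in\CE'}a_j\xi_j-Z\Big|\le r\Big)\ \ll_m\ \frac{R_m(x^\ast)}{|\CE'|^{2m-\frac12}}+e^{-|\CE'|/C},\]
where $R_m(x^\ast)$ is the number of $2m$-tuples in $(\CE')^{2m}$ with $\big|\sum_k a_{i_k}-\sum_k a_{j_k}\big|\le r$. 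The diagonal tuples contribute $\le C_m|\CE'|^m\le C_mN^m$, and the off-diagonal ones contribute at most $\Phi(x^\ast)\le 4\overline\Phi\le 4C'N^{2m-\frac12-\frac A2}$ by goodness of $x^\ast$; since $m\ge\tfrac{A+1}{2}$, both bounds are $\ll_A N^{2m-\frac12-\frac A2}$, so $R_m(x^\ast)\ll_A N^{2m-\frac12-\frac A2}$ and hence the displayed quantity is $\ll_A N^{-A/2}+e^{-cN}\ll_A N^{-A/2}$. This proves the lemma for real $e_j$; for complex $e_j$ one first passes to whichever of $\Re(e_j)$, $\Im(e_j)$ exceeds $1/\sqrt2$ in modulus for a positive fraction of the indices $j$, bounds $\sup_{Z\in\R}\P(\cdots)$ by the corresponding quantity for $\sum_j\Re(e_j)\xi_j\cos(jx)$ (resp. the imaginary part), a genuine real small-ball problem, and runs the same argument.

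\textbf{Main obstacle.} Everything above is routine except step (ii): the non-degeneracy of the $\xi_j$ is a moment computation, the small-ball estimate is classical, and the freezing and averaging are standard. The essential point --- the only place a non-soft tool enters --- is the uniform control, via Tur\'an's Lemma~\ref{turan111}, of the measure of $\{t\in I:|D(t)|\le r\}$ for an \emph{arbitrary} short exponential sum $D$ with integer frequencies and at least one coefficient of size $\ge\bar e$: Tur\'an's inequality upgrades the trivial global lower bound $\max|D|\ge\bar e/2$ to the polynomial bound $|E_D|\le C'N^{-4A^2/m}$, and the exponent $16A^2$ in the statement is tuned exactly so that $-4A^2/m$ outweighs the $2m$-fold union bound over collision tuples for an admissible $m\asymp A$.
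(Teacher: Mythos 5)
Your Tur\'an/level-set step is essentially a variant of the paper's Step~1, and the arithmetic relating $m$, $A$, and the exponent $16A^2$ checks out. The gap is at the final step, where you write
\[
\sup_Z\P\Big(\Big|\sum_{j\in\CE'}a_j\xi_j-Z\Big|\le r\Big)\ \ll_m\ \frac{R_m(x^\ast)}{|\CE'|^{2m-\tfrac12}}+e^{-|\CE'|/C},
\]
for general (non-Rademacher) $\xi_j$, with $R_m(x^\ast)$ the \emph{deterministic} collision count of the coefficients $a_j=e_j\cos(jx^\ast)$. That inequality, as stated, is not a black box: the collision-count form of Hal\'asz is established for Rademacher (and more generally $\{-1,0,1\}$-valued) atoms, and its proof uses the periodicity $|\cos\theta|\le e^{-2\|\theta/\pi\|^2}$ of the Rademacher characteristic function in an essential way. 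For general $\xi_j$ satisfying only the non-degeneracy $\sup_z\P(|\xi_j-z|\le 1/100)\le 1-\kappa$, one must first symmetrize, $\xi_j''=\xi_j-\xi_j'$, condition on the (positive-probability) event $|\xi_j''|\ge d$, and write $\xi_j''=|\xi_j''|\ep_j$ with $\ep_j$ Rademacher. After that reduction, the relevant coefficients in the Rademacher Hal\'asz step are the \emph{random} quantities $a_j|\xi_j''|$, and the tuple-counting you need is for $\bigl|\sum_k a_{i_k}|\xi_{i_k}''|-\sum_k a_{j_k}|\xi_{j_k}''|\bigr|\le r$, which is not controlled by $R_m(x^\ast)$: a large gap $|\sum a_{i_k}-\sum a_{j_k}|$ can easily be annihilated by a suitable choice of the $|\xi_j''|\ge d$ (e.g.\ $a_{i_1}=2$, $a_{j_1}=1$, $|\xi_{i_1}''|=d$, $|\xi_{j_1}''|=2d$), and conversely. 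So a ``good'' $x$ chosen to kill collisions for $(a_j)$ need not kill collisions for $(a_j|\xi_j''|)$.

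The paper's proof of this lemma (Steps 2--3) is devoted exactly to this issue. Instead of fixing a single good $x^\ast$ in advance, it proves the \emph{averaged} estimate
\[
\dashint_{I}\sup_{Z}\P\Bigl(\Bigl|\sum_j e_j\xi_j\ep_j\cos(jx)-Z\Bigr|\le \bar e N^{-16A^2}\Bigr)\,dx=O_A(N^{-A/2}),
\]
and obtains the desired pointwise $x$ only afterwards, by Markov. The point of averaging is that after conditioning on the realization of the $\xi_j$ (equivalently, on the $|\xi_j''|$), the Tur\'an argument produces a small-measure exceptional set of $x\in I$ that \emph{depends on that realization}; Fubini lets you integrate the realization out and still win. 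Your argument freezes $x^\ast$ before looking at the random moduli and therefore skips this step. To repair it you would either have to (a) run the Tur\'an selection with the coefficients $e_j\cos(jx)|\xi_j''|$ and average over $x$ as the paper does, or (b) substitute a Hal\'asz-type small-ball inequality for general atoms that is genuinely expressed through the deterministic $R_m$ --- but no such statement is known without a further structural assumption, and the Esseen route you gesture at reproduces only the Kolmogorov--Rogozin $O(n^{-1/2})$ bound, not the $n^{-2m+1/2}$ gain from collision counting.

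Two minor remarks. First, your derived non-degeneracy $\sup_z\P(|\xi_j-z|\le 1/100)\le 1-\kappa$ is correct but is stated for the un-symmetrized variable, whereas the Rademacher reduction actually needs a lower bound on $\P(|\xi_j-\xi_j'|\ge d)$; these are equivalent modulo constants, but the distinction matters for the exposition. Second, your off-diagonal count $\Phi$ uses multisets while the paper's hypothesis in Lemma~\ref{halasz-inequality} is phrased in terms of sets; that particular wrinkle is harmless, but worth noting when aligning the two arguments.
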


Assuming this Lemma, we condition on the random variables $(\xi_j)_{j\notin \mathcal I_{0}}$ and apply the Lemma with $\CE := \mathcal I_{0}, e_j :=c_j, N:=|\mathcal I_{0}|=\Theta(n)$ to obtain \eqref{anti_trig} directly with $\bar e = \Theta(1)$.

\begin{proof}[Proof of Lemma \ref{lmanti_concentration}]
	We will prove Lemma \ref{lmanti_concentration} in three steps. In the first (and most important) step, we handle the case where $\xi_i$ are iid 
	Rademacher. In the second step, we handle the case where the $\xi_i$ have symmetric distributions. In the final step, we 
	address the most general setting.

	{\it Step 1.}  $\xi_i$ are iid Rademacher (that is, $\P(\xi_i=1)=\P(\xi_i=-1)=1/2$). The key 
	ingredient in this step is  the following inequality, which is a variant of a result of Hal\'asz \cite{halasz1977estimates}; see also \cite[Cor 7.16]{taovubook}, \cite[Cor 6.3]{nguyenvusurvey}) for relevant estimates. Before stating the result, we recall a definition of multi-sets: a multi-set is a collection of unordered elements in which each element can appear more than once.

	\begin{lemma}\label{halasz-inequality} 
		Let $\ep_1, \dots, \ep_n$ be independent Rademacher random variables. Let $a_1, \dots, a_n$ be real numbers and $l$ be a fixed integer. Assume that there is a constant $a>0$ such that  for any 
		two different  multi-sets $\{i_1, \dots, i_{l'}\}$ and $\{j_1, \dots, j_{l''}\}$ where $l'+ l''\le 2l$,  $|a_{i_1}+\dots + a_{i_{l'}} - a_{j_1}-\dots - a_{j_{l''}}|\ge a$. Then  
		$$\sup_{Z\in \R} \P\left (\left |\sum_{j=1}^{n}a_j\ep_j- Z\right |\le a n^{-l} \right ) = O_{l}(n^{-l}).$$
	\end{lemma}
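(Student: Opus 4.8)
The plan is to move to the Fourier side and run a Hal\'asz-type argument. First, dividing every $a_j$ by $a$ turns the hypothesis into $1$-separation of the (at most $2l$-term) signed sub-sums of the $a_j$ and the claim into $\sup_Z\P(|S-Z|\le n^{-l})=O_l(n^{-l})$, where $S=\sum_{j=1}^n a_j\ep_j$ and the $a_j$ now denote the rescaled coefficients. By Esseen's concentration inequality, $\sup_Z\P(|S-Z|\le\lambda)\le C\lambda\int_{|t|\le 1/\lambda}|\phi(t)|\,dt$ for an absolute constant $C$, where $\phi$ is the characteristic function of $S$; since the $\ep_j$ are Rademacher, $\phi(t)=\prod_{j=1}^n\cos(a_j t)$. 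Taking $\lambda=n^{-l}$, it therefore suffices to prove
$$\int_{-n^l}^{n^l}\prod_{j=1}^n|\cos(a_jt)|\,dt=O_l(1).$$

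For this I would use the elementary bound $\prod_j|\cos(a_jt)|\le\exp\bigl(-c\sum_j\sin^2(a_jt)\bigr)$ (from $\cos^2\theta\le e^{-\sin^2\theta}$) and split the range of integration at a small absolute threshold $\eta>0$. On the ``good'' set $\{t:\sum_j\sin^2(a_jt)\ge\eta n\}$ the integrand is at most $e^{-c\eta n}$, so that part contributes at most $2n^l e^{-c\eta n}=o(1)$. On the ``bad'' set $E:=\{t\in[-n^l,n^l]:\sum_j\sin^2(a_jt)<\eta n\}$ I only use $|\phi|\le1$, so the whole reduction comes down to the measure estimate $|E|=O_l(1)$.

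The bound $|E|=O_l(1)$ is the core of the proof and the one place the $l$-fold separation hypothesis is genuinely used; here I would follow Hal\'asz's method as in \cite{halasz1977estimates}, \cite[Cor.~7.16]{taovubook}, \cite[Cor.~6.3]{nguyenvusurvey}. Since $\sum_j\sin^2(a_jt)=\tfrac n2-\tfrac12\sum_j\cos(2a_jt)$, membership in $E$ forces $h(t):=\sum_{j=1}^n\cos(2a_jt)$ to exceed $(1-2\eta)n$, so $|E|\le\bigl((1-2\eta)n\bigr)^{-2l}\int_{-n^l}^{n^l}h(t)^{2l}\,dt$. Writing $\cos(2a_jt)=\tfrac12(e^{2ia_jt}+e^{-2ia_jt})$ and expanding, $h(t)^{2l}$ is a sum over tuples $(j_1,\dots,j_{2l})$ and signs $(\sigma_1,\dots,\sigma_{2l})$ of $2^{-2l}e^{2it\sum_m\sigma_m a_{j_m}}$; integrating over $[-n^l,n^l]$, such a tuple contributes $\Theta(n^l)$ if $\sum_m\sigma_m a_{j_m}=0$ and $O(1)$ otherwise, the latter because the ($B_l$-type) separation hypothesis forces $|\sum_m\sigma_m a_{j_m}|\ge1$ whenever it is nonzero, while the same hypothesis forces the vanishing tuples to be the ``trivial'' ones (those whose $2l$ slots pair off into $l$ index-matched pairs with opposite signs), of which there are only $O_l(n^l)$. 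Hence $\int_{-n^l}^{n^l}h^{2l}=O_l(n^{2l})$ and $|E|=O_l(1)$, as needed. The main obstacle is exactly this last combinatorial bookkeeping --- organizing the expansion of $h^{2l}$ and, in particular, controlling the contributions of tuples with repeated indices, which is where the full strength of the separation hypothesis is invoked.
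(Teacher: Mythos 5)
Your proposal is correct and follows essentially the same route as the paper: reduce via Esseen's inequality to a bound on $\int|\phi|$, use $|\cos\theta|\le e^{-c\sin^2\theta}$ (the paper uses the equivalent $\|\cdot\|_{\R/\Z}$-form) to reduce to the measure of a sublevel set of $\sum_j\sin^2(a_jt)$, and then bound that measure by Markov applied to the $2l$-th moment of $\sum_j\cos(2a_jt)$, where the separation hypothesis kills the oscillatory terms and only the $O_l(n^l)$ pairing tuples survive. The only differences are cosmetic — you rescale $a$ to $1$ (integral over $[-n^l,n^l]$) rather than to $n^l$ (integral over $[-1/2,1/2]$), and you split the integration domain by a direct threshold where the paper uses a layer-cake (Fubini) representation — and you honestly flag the one genuinely delicate point (bookkeeping for repeated indices), which the paper also elides.
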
 
	
	For the sake of completeness, we present a short proof of this lemma in Appendix \ref{proof_Halasz}.
	
	\vskip2mm  
	There exists a subset $\CE'\subset \CE$ of size at least half the size of $\CE$ such that either for all $i\in \CE'$, $|\Re(e_i)|\ge \bar e/2$ or for all $i\in \CE'$, $|\Im(e_i)|\ge \bar e/2$. Since 
	\begin{equation}
	\P\left ( \left |\sum_{j\in \CE} e_j\xi_j \cos(jx)-Z \right |\le \bar e N^{-16A^{2}}\right )\le \P\left ( \left |\sum_{j\in \CE} \Re(e_j) \xi_j\cos(jx)-\Re(Z) \right |\le \bar e N^{-16A^{2}}\right )\nonumber
	\end{equation}
	and 
	\begin{equation}
	\P\left ( \left |\sum_{j\in \CE} e_j\xi_j \cos(jx)-Z \right |\le \bar e N^{-16A^{2}}\right )\le \P\left ( \left |\sum_{j\in \CE} \Im(e_j) \xi_j\cos(jx)-\Im(Z) \right |\le \bar e N^{-16A^{2}}\right )\nonumber,
	\end{equation}
	we can, by conditioning on the $(\xi_j)_{j\notin \CE'}$ and replacing $\CE$ by $\CE'$,  assume that the $e_i$ are real and $Z$ is real. This allows us to apply Lemma \ref{halasz-inequality}. 
	
	In order to apply Lemma \ref{halasz-inequality}, 
	we  first show that there exists an $x\in I$ such that for every 2 distinct multi-sets $\{i_1, \dots, i_{A'}\}$ and $\{j_1, \dots, j_{A''}\}$ in $\CE$ with $A' + A''\le 2A$, we have 
	\begin{equation} \label{H1} \left |\sum_{t =1}^{A'} e_{i_t}\cos(i_t x) - \sum_{t =1}^{A''} e_{j_t}\cos(j_t x)\right |> \bar e N^{-16A^{2}}N^{A}. 
	\end{equation}

	Let us fix  such two multi-sets and let $$h(x) := \sum_{t =1}^{A'} e_{i_t}\cos(i_t x) - \sum_{t =1}^{A''} e_{j_t}\cos(j_t x). $$ Let  $E := \{x\in I: |h(x)|\le \bar e N^{-16A^{2}}N^{A}\}$. Since $h$ can be written in terms of exponential polynomials with $4A$ frequencies, we can apply Lemma \ref{turan111} to obtain
	\begin{equation}
	\max_{[0, 2\pi]} |h| \le \left (\frac{C'}{|E|}\right )^{4A}\sup _{E}|h|.\label{turan}
	\end{equation}
	By the definition of $E$, the right-hand side is bounded from above by $\left (\frac{C'}{|E|}\right )^{4A} \bar e N^{-16A^{2}}N^{A}$. To bound the left-hand side from below, observe from orthogonality of the functions $\cos kx$ that
	\begin{eqnarray}
	2\pi \max_{[0, 2\pi]}|h|^{2}&\ge& \int_{0}^{2\pi} |h|^{2}dx \ge \pi  \bar e^{2} ,\label{H3} 
	\end{eqnarray} as all $|e_i|$ with $i\in \CE$ is at least $\bar e$.

	Therefore, from \eqref{turan}, we get
	$|E|= O_A(N^{-4A+1/4})$. Since there are only $O(N^{2A})$ choices for the sets $A'$ and $A^{''}$,  we conclude that every $x$ in $I$, except for a set of Lebesgue measure at most $O_A(N^{-2A+1/4}) =o_A(|I|)$, satisfies \eqref{H1}.
	
	To conclude the proof, we use \eqref{H1} with Lemma \ref{halasz-inequality}. By setting $a :=\bar e N^{-16A^{2}}N^{A}$ and $l:=A$, Lemma \ref{halasz-inequality} gives
	\begin{equation} \label{H2} 
	\sup_{Z\in \C}\P\left ( \left |\sum_{j\in \CE} e_j\xi_j \cos(jx)-Z \right |\le \bar e N^{-16A^{2}}\right )
	%=  		\sup_{Z\in \C}\P\left ( \left |\sum_{j\in \CE} e_j^{\ast }\xi_j \cos(jx)-Z \right |\le1 \right )
	=O_A (N^{-A}) .
	\end{equation}

	This proves Lemma \ref{lmanti_concentration} for the Rademacher case. 
	
	\vskip2mm
	
	{\it Step 2. } In this step, we consider the case where random variables $\xi_j$ have symmetric distributions. In this case, 
	$(\xi_j)_j$ and $(\xi_j\ep_j)_j$ have the same distribution where $\ep_j$ are independent Rademacher random variables that are independent of the $\xi_j$. Thus,  the claimed statement is equivalent to 
	\begin{equation}
	\sup _{Z\in \R} \P\left ( \left |\sum_{j\in \CE} e_j\xi_j \ep_j\cos(jx)-Z \right |\le \bar e N^{-16A^{2}}\right ) = O_A\left (N^{-A/2}\right )\label{reduce_rademacher1}
	\end{equation}
	for some $x\in I$.

	The natural way to  prove this is to use the (standard) conditioning argument, one fixes  all $\xi_j$ and uses 
	the Rademacher variables as the only random source, going  back to Step 1. 
	However, the situation here is more delicate, as 
	$x$ may not be the same in each evaluation of $\xi_j$. We handle this extra complication by 
	proving the stronger statement that 	
	\begin{equation} \label{H5} \dashint_{I}\sup _{Z\in \R} \P\left ( \left |\sum_{j\in \CE} e_j\xi_j \ep_j\cos(jx)-Z \right |\le \bar e N^{-16A^{2}}\right )dx= O_A(N^{-A/2})\end{equation} 
	where $\dashint_I fdx:= \frac{1}{|I|}\int_{I}fdx$.
	
	The left-hand side is at most $ \dashint_{I}\E_{(\xi_j)}\sup _{Z\in \R} \P_{(\ep_j)}\left ( \left |\sum_{j\in \CE} e_j\xi_j \ep_j\cos(jx)-Z \right |\le \bar e N^{-16A^{2}}\right )dx$.
	
	By  Fubini's theorem, it suffices to show that
	\begin{equation} \label{H6} 
	\E_{(\xi_j)}\dashint_{I}\sup _{Z\in \R} \P_{(\ep_j)}\left ( \left |\sum_{j\in \CE} e_j\xi_j \ep_j\cos(jx) -Z\right |\le \bar e N^{-16A^{2}}\right )dx= O _A(N^{-A/2}). 
	\end{equation}

	We first  show that 
	with high probability, there are $\Theta (N)$ indices $j \in \CE$ such that $|\xi_j | = \Theta (1) $, which is needed 
	to guarantee \eqref{H3}. 	Assume, for a moment, that $\P (| \xi_j| < d )\ge 1-d$ for some small positive constant $d$.
	Since the random variables $\xi_j$ are symmetric, they have mean 0. Using the boundedness of the $(2+\ep)$ central moment of $\xi_j$ (Condition {\bf C1}), 
	and the fact that $\xi_j$ has variance 1,  we have 
	\begin{equation} 
	\E |\xi_j|^2=  1 = \E|\xi_j|^{2} \textbf{1}_{|\xi_j|< d} + \E|\xi_j|^{2} \textbf{1}_{|\xi_j|\ge d}\le d^{2} + d^{\ep/(2+\ep)}(\E|\xi_j|^{2+\ep})^{2/(2+\ep)}\le d^{2} + d^{\ep/(2+\ep)}\tau^{2/(2+\ep)}.\nonumber
	\end{equation}
	
	Thus, if  $d$ is small enough (depending on $\tau$ and $\ep$), we have a contradiction. 
	Hence, there is a constant $d>0$ such that  $\P(|\xi_j|< d)\le 1-d$. Now, by Chernoff's inequality, with probability at least $1 - e^{-\Theta(N)}$, there are at least $\Theta(N)$ indices $j\in \CE$ for which $|\xi_j|\ge d$.
	On the event that this happens, we condition on the $\ep_j$ where $|\xi_j|<d$ and use Step 1 to conclude that outside a subset of $I$ of measure at most $O_A(N^{-2A+1/4})$, we have
	$$\sup_{Z\in \C}\P_{(\ep_j)}\left ( \left |\sum_{j\in \CE} e_j\xi_j \ep_j\cos(jx)-Z \right |\le \bar e N^{-16A^{2}}\right ) =O_A(N^{-A}).$$

	Therefore, the left-hand side of \eqref{H6} is at most 
	$$ e^{-\Theta(N)}\dashint_{I} 1dx +O_A(N^{-2A+1/4}/|I|)+ O_A\left (\dashint_{I} N^{-A}dx\right ) = O_A(N^{-A+1/4}) = O_A(N^{-A/2}),$$ completing the proof for this case.

	\vskip2mm {\it Step 3.} Finally, we address  the general case.  Let $\xi_j'$ be independent copies of $\xi_j$, $j\in \CE$. Then the variables  $\xi_j'' := \xi_j - \xi_j'$  are symmetric and have uniformly bounded $(2+\ep)$-moments. 
	By Step 2, we have 
	\begin{eqnarray}
	&&\left [\P\left ( \left |\sum_{j\in \CE} e_j\xi_j \cos(jx)-Z\right |\le \bar e N^{-16A^{2}}\right )\right ]^{2}
	\le \P\left ( \left |\sum_{j\in \CE} e_j\xi_j'' \cos(jx) \right |\le 2 \bar e N^{-16A^{2}}\right )\le O_A(n^{-A})\nonumber
	\end{eqnarray}
	where in the last inequality, we decompose the disk $B\left (0, 2 \bar e N^{-16A^{2}}\right )$ into $O(1)$ disks of radius $\bar e N^{-16A^{2}}$ (not necessarily centered at 0) before applying Step 2. Taking square root of both sides, we obtain Lemma \ref{lmanti_concentration}. 
\end{proof}

The remaining conditions are easy to check. 
Condition {\bf C2} \eqref{cond-bddn} follows from the following lemma.
\begin{lemma}
	For any positive constants $A$, $c_1$ and $C$,  we have,  with probability at least $1 - O(n^{-A})$,  $\log M \le n^{c_1}$, where $M := \max\{|P(z)|: |\Im(z)|\le C/n\}$.
\end{lemma}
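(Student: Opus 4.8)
The plan is to dominate $M$ deterministically by a plain sum of the $|\xi_j|$ and $|\eta_j|$, and then apply Markov's inequality: since $e^{n^{c_1}}$ grows faster than any power of $n$, even a first-moment bound is far more than enough. This is the ``easy'' large-deviation condition mentioned in Remark \ref{anticond}; no anti-concentration, net, or swapping argument enters here.

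First I would reduce to a compact set. Each $\cos(jz)$ and $\sin(jz)$ with $j\in\Z$ is $2\pi$-periodic in $\Re z$, so $P$ (extended to an entire function by the same series) is $2\pi$-periodic in $\Re z$; hence $M=\max\{|P(z)|:\Re z\in[0,2\pi],\ |\Im z|\le C/n\}$, which is the supremum of a continuous random function over a compact set and so is a genuine random variable. Next I would record a uniform pointwise bound: for $z=x+iy$ with $|y|\le C/n$ and any integer $0\le j\le n$ we have $j|y|\le C$, and from $|\cos(x+iy)|^2=\cos^2 x+\sinh^2 y\le\cosh^2 y$ (and the same for $\sin$) it follows that $|\cos(jz)|,|\sin(jz)|\le\cosh C$. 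Since we have normalized $\max_j\{|c_j|,|d_j|\}=1$, this gives, for every such $z$,
\[
|P(z)|\le\cosh C\Big(\sum_{j=0}^n|\xi_j|+\sum_{j=1}^n|\eta_j|\Big)=:(\cosh C)\,S,
\]
and taking the supremum over $z$ yields $M\le(\cosh C)\,S$.

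Finally I would invoke Markov. Because $\xi_j,\eta_j$ have unit variance, $\E|\xi_j|\le 1$ and $\E|\eta_j|\le 1$, so $\E S\le 2n+1$, and therefore
\[
\P\big(\log M>n^{c_1}\big)\le\P\Big(S>\frac{e^{n^{c_1}}}{\cosh C}\Big)\le\frac{(\cosh C)(2n+1)}{e^{n^{c_1}}}.
\]
Since $c_1>0$ is fixed, $e^{n^{c_1}}\ge n^{A+1}$ for all sufficiently large $n$, so the right-hand side is $O(n^{-A})$, which is exactly the asserted bound.

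\textbf{Main obstacle.} There is essentially none: the only point needing (a trivial amount of) care is the reduction showing $M$ is finite and measurable, after which the estimate is immediate. Note the $(2+\ep)$-moment hypothesis is not even used — finiteness of the variances suffices, because $e^{n^{c_1}}$ dominates every polynomial in $n$.
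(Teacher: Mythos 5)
Your proof is correct and takes essentially the same approach as the paper: bound $|\cos(jz)|,|\sin(jz)|$ uniformly on the strip $|\Im z|\le C/n$ by a constant, dominate $|P(z)|$ by a constant times the sum of $|\xi_j|,|\eta_j|$, and observe that the superpolynomial threshold $e^{n^{c_1}}$ makes the resulting tail bound trivial. The paper runs the final probabilistic step via Chebyshev plus a union bound on the individual $\xi_j$'s rather than your single Markov estimate on $S$, but this is an immaterial variation of the same idea.
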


\begin{proof}
	For every $1\le j\le n$, we have $|e^{ijz}| = e^{-j\Im (z)}\le e^{C}$. And so,
	\begin{equation}
	\max _{|\Im(z)|\le C/n, 1\le j\le n} \{ |\cos jz|, |\sin jz|\}\le e^{C}.\label{trigbound}
	\end{equation}	
	Let $B$ be the event on which  $|\xi_j|\le n^{A/2 +1 }$ for all $0\le j\le n$.
	Notice that on the complement $B^{c}$ of $B$,  $\log M= o\left (n^{c_1}\right ) $ for any constant $c_1>0$. By Chebyshev's inequality
	(exploiting  the fact that $\E |\xi_i|^2 =1$)  and the union bound, we have
	$$\P(B^{c})\le  \frac{n}{n^{A +2} }  = o(n^{-A}), $$ completing the proof. \end{proof}

Finally Condition {\bf C2} \eqref{cond-delocal} follows from the following lemma

\begin{lemma}
	\label{log-comp-11}
	For any constant $C$, there exists a constant $C'>0$ such that for every $z$ with $|\Im(z)|\le C/n$, 
	\begin{equation}
	\frac{|c_j||\cos(jz)|}{\sqrt{S}}\le C'n^{-1/2}, \qquad \text{for all } 0\le j\le n,\label{log-compa1-P}
	\end{equation}
	and 
	\begin{equation}
	\frac{|d_j||\sin(jz)|}{\sqrt{S}}\le C'n^{-1/2}, \qquad \text{for all }  0\le j\le n, \label{log-compa2-P}
	\end{equation} where $S:= \sum_{j=0}^n  |c_j|^2 |\cos  jz| ^2 + \sum_{j=1}^n  |d_j|^2 |\sin  jz| ^2$.
\end{lemma}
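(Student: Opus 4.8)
The plan is to control the two numerators trivially and to extract the lower bound $S\ge c_0 n$ (for a constant $c_0>0$ depending only on $C$, $\tau_1$, and the constant $c$ of Condition {\bf C3}) from the part of $S$ carried by the privileged block $AP_0$. First I would note that, under the normalization $\max_{0\le j\le n}\{|c_j|,|d_j|\}=1$ fixed at the beginning of the section, we have $|c_j|\le1$ and $|d_j|\le1$, so the numerators of \eqref{log-compa1-P} and \eqref{log-compa2-P} are at most $|\cos(jz)|$ and $|\sin(jz)|$, and both of these are bounded by $e^{C}$ for all $0\le j\le n$ whenever $|\Im(z)|\le C/n$, exactly as in \eqref{trigbound}. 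Hence, once $S\ge c_0 n$ is established, both displays follow with $C':=e^{C}/\sqrt{c_0}$.

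To get $S\ge c_0 n$, write $z=x+iy$ with $|y|\le C/n$. Discarding all but the indices $j\in AP_0$ in the definition of $S$, using $|c_j|\ge\tau_1$ there (Condition {\bf C3} together with the normalization), and using the identity $|\cos(jz)|^{2}=\cos^{2}(jx)\cosh^{2}(jy)+\sin^{2}(jx)\sinh^{2}(jy)\ge\cos^{2}(jx)$, we obtain $S\ge\tau_1^{2}\sum_{j\in AP_0}\cos^{2}(jx)$. So it suffices to show $\sum_{j\in AP_0}\cos^{2}(jx)=\Omega(n)$, uniformly in $x\in\R$.

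This is exactly the role of Lemma \ref{J}. Applying that lemma with $a:=x$ produces a subset $J_x\subseteq AP_0$ with $|J_x|\ge\beta'n$ such that $2jx\bmod2\pi$ stays at distance at least $\beta'$ from $\pi$ for every $j\in J_x$; therefore $\cos(2jx)\ge-\cos\beta'$ and hence $\cos^{2}(jx)=\tfrac12\bigl(1+\cos(2jx)\bigr)\ge\tfrac12(1-\cos\beta')=\sin^{2}(\beta'/2)\ge(\beta'/\pi)^{2}$ for all $j\in J_x$ (using $\beta'\le1$, which holds since $\beta'n\le|J_x|\le n$). Summing over $J_x$ gives $\sum_{j\in AP_0}\cos^{2}(jx)\ge|J_x|(\beta'/\pi)^{2}\ge(\beta')^{3}\pi^{-2}n$, whence $S\ge\tau_1^{2}(\beta')^{3}\pi^{-2}n$ and we may take $c_0:=\tau_1^{2}(\beta')^{3}\pi^{-2}$. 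This finishes \eqref{log-compa1-P} and \eqref{log-compa2-P}.

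The one genuinely non-routine step is the uniform lower bound $\sum_{j\in AP_0}\cos^{2}(jx)=\Omega(n)$. A frontal attack through the geometric-sum identity, which bounds $\bigl|\sum_{j\in AP_0}\cos(2jx)\bigr|$ by a constant multiple of $1/|\sin x|$, degenerates precisely when $x$ lies within $O(1/n)$ of $\pi\Z$ and would require a separate argument there. Lemma \ref{J} was isolated earlier exactly to bypass this small-$|\sin x|$ regime in one stroke; with it in hand the remainder is only bookkeeping of constants, and the same reasoning applies verbatim in the other models of the paper where Condition {\bf C2} \eqref{cond-delocal} has to be verified.
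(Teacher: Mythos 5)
Your proof is correct and follows the same overall strategy as the paper's: bound the numerators by $|\cos(jz)|,|\sin(jz)|\le e^{C}$ via \eqref{trigbound}, reduce to showing $S=\Omega(n)$, restrict to $AP_0$ where $|c_j|\ge\tau_1$, and invoke Lemma~\ref{J} to produce a linear-sized index set where the cosine terms are bounded below. Where you diverge is the technical core of the lower bound. The paper writes $z=a+ib$, factors $2|\cos(jz)|=e^{jb}|w^{j}+1|$ with $w=e^{-2b+2ia}$, and runs a two-case analysis on $|e^{-2jb}-1|$ (which also implicitly requires $jb\ge0$, i.e.\ a WLOG reduction to $b\ge0$). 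You instead use the exact identity $|\cos(jz)|^{2}=\cos^{2}(jx)\cosh^{2}(jy)+\sin^{2}(jx)\sinh^{2}(jy)\ge\cos^{2}(jx)$, which drops the imaginary part in one stroke and reduces the whole question to the real-variable estimate $\sum_{j\in AP_0}\cos^{2}(jx)=\Omega(n)$, handled cleanly by $\cos^{2}(jx)=\tfrac12(1+\cos(2jx))$ together with Lemma~\ref{J}. Your version is shorter, avoids the case split, and is arguably the more natural route; the constants you track ($c_0=\tau_1^{2}(\beta')^{3}\pi^{-2}$, $C'=e^{C}/\sqrt{c_0}$) are explicit and correct.
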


\begin{proof}
	
	Write $z =: a + ib$. Without loss of generality, assume that $b\ge 0$. By \eqref{trigbound}, $|\cos(jz)|\le C$ and $|\sin(jz)|\le C$ for all $0\le j\le n$, so it suffices to show  $S:= \Omega(n)$. To achieve this bound on $S$, it suffices to show that $\mathcal I_{0}$ contains a subset $ J$ of size $\Theta (n)$ 
	such that 
	\begin{equation}
	|\cos(jz)|\ge c^{\ast} \quad\mbox{for all $j\in J$, for some positive  constant $c^{\ast}$}.\label{large_c}
	\end{equation}

	Since $b\ge 0$ and $j\ge 0$, we have $$2|\cos(jz)| = e^{jb}|e^{-2jb+2ija}+1| \ge |w^{j} + 1|$$ where $w := e^{-2b + 2ia}$. By Condition {\bf C3}  and Lemma \ref{J}, we can find  a subset  $J$ of $\mathcal I_{0}$ of size $\Theta (n)$  such that $$\min_{k\in \Z} \{|2aj - (2k+1)\pi| \}\ge c$$ for some constant $c>0$ and all $j\in J$. We can assume, without loss of generality, that  $c \le 1/10$ and 
	this guarantees  $|\cos (2aj) +1 | \ge c^2/4$.

	Consider $j \in J$,  if $1 - e^{-2jb}\ge c^{2}/10$ then by the triangle inequality, 
	$$|w^{j} + 1| = |e^{-2jb}e^{2iaj}+1|\ge 1 -|e^{-2jb}e^{2iaj}| = |e^{-2jb}-1|\ge c^{2}/10.$$ 
	In the opposite case, $e^{-2jb}\ge 1-c^2/10 > .99$. 
	Keeping in mind that $c \le 1/10$, we have 	
	\begin{equation}
	|w^{j} + 1| \ge e^{-2jb}|e^{2iaj}+1| - |e^{-2jb}-1|\ge .99 c^2/4 -c^{2}/10 \ge c^{2}/10.
	\end{equation} Thus, we achieved  \eqref{large_c} with $c^{\ast} = c^2/10$. 
\end{proof} 

Finally, using Conditions {\bf C3, C4} and \eqref{large_c}, it is a routine to prove that the repulsion Condition {\bf C2} \eqref{cond-repulsion} holds. That completes the proof.

\section{Proof of Theorem \ref{comparison} and Corollary \ref{maincor}}\label{proof-comparison}
As before, by rescaling the coefficients, we can assume that $\max_{0\le j\le n} \{ |c_j|, |d_j|\} = 1$.
Before going to the proofs, let us state a version of the Kac-Rice formula for Gaussian processes. Note that a Gaussian process $P(t), t\in (a_0, b_0)$ is a random variable $P:\Omega\times (a_0, b_0) \to \R$ with $\Omega$ being a probability space such that for each $\omega\in \Omega$, $P(\omega, \cdot)$ is a continuous function on $(a_0, b_0)$ and for each $k\in \N$, $t_1, \dots, t_k\in (a_0, b_0)$, $(P(\cdot, t_1), \dots, P(\cdot, t_k))$ is a Gaussian random vector.
\begin{proposition}\cite[Theorem 2.5]{Far} \label{KacRice}
Let $P(t), t\in (a_0, b_0)$ be a real, differentiable Gaussian process. Let $\mathcal P (t)= \Var (P(t))$, $\mathcal Q(t) = \Var (P'(t))$, $\mathcal R(t) = \Cov (P(t), P'(t))$, $\rho(t)=\frac{\mathcal R(t)}{\sqrt{\mathcal P(t)\mathcal Q(t)}}$, $m(t) = \E P(t)$, and $\eta(t) = \frac{m'(t)-\rho(t)m(t)\sqrt{\CQ(t)/\CP(t)}}{\sqrt{\CQ(t)(1-\rho^{2}(t))}}$. Assume that $m'(t)$ is continuous and the joint normal distribution for $P(t)$ and $P'(t)$ has non-singular covariance matrix for each $t$, then for any interval $[a, b]\subset (a_0, b_0)$, we have
$$\E N_{P}(a, b) = \int_{a}^{b}\sqrt{\frac{\CQ(t)(1-\rho^{2}(t))}{\CP}}\phi\left (\frac{m(t)}{\sqrt{\CP(t)}}\right )\big (2\phi(\eta(t))+\eta(t)\left (2\Phi(\eta(t))-1\right )\big )dt$$
where $\phi(t)$ and $\Phi(t)$ are the standard normal density and distribution functions, respectively.
\end{proposition}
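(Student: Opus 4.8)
The plan is to follow the classical derivation of the Kac--Rice formula, in the form given in \cite{Far}, adapted to allow a non-constant mean $m(t)$. The starting point is the deterministic \emph{Kac counting formula}: if $f\in C^{1}[a,b]$, $f(a)f(b)\ne 0$, and no zero of $f$ is a critical point, then
\[
N_{f}(a,b)=\lim_{\eps\to 0}\frac{1}{2\eps}\int_{a}^{b}\textbf{1}_{\{|f(t)|<\eps\}}\,|f'(t)|\,dt ,
\]
which follows from the area (Banach indicatrix) formula $\int_{a}^{b}\textbf{1}_{\{|f(t)|<\eps\}}|f'(t)|\,dt=\int_{-\eps}^{\eps}\#\{t\in(a,b):f(t)=u\}\,du$ together with the continuity of $u\mapsto\#\{t:f(t)=u\}$ at $u=0$ under those hypotheses. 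To apply this pathwise to $P$, I would first check that the excluded events are negligible: $\P(P(a)=0)=\P(P(b)=0)=0$ since $P(a),P(b)$ are non-degenerate Gaussians, and $\P(\exists\,t\in[a,b]:P(t)=P'(t)=0)=0$ by Bulinskaya's lemma --- this is exactly where the non-singularity of the covariance matrix of $(P(t),P'(t))$ enters.

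Next I would take expectations, use Tonelli's theorem to move $\E$ inside the $t$-integral, and compute the integrand. Let $p_{t}$ be the joint density of $(P(t),P'(t))$, which exists and is continuous because the covariance is non-singular. Then
\[
\frac{1}{2\eps}\,\E\!\big[\textbf{1}_{\{|P(t)|<\eps\}}\,|P'(t)|\big]
=\frac{1}{2\eps}\int_{-\eps}^{\eps}\!\int_{\R}|y|\,p_{t}(x,y)\,dy\,dx
\;\longrightarrow\;\int_{\R}|y|\,p_{t}(0,y)\,dy \quad (\eps\to 0),
\]
and a dominated-convergence argument --- using the continuity of $t\mapsto(\mathcal P(t),\mathcal Q(t),\mathcal R(t),m(t),m'(t))$ on the compact interval $[a,b]$ to produce an $\eps$-uniform dominating function --- lets me pass the limit through the outer integral, giving $\E N_{P}(a,b)=\int_{a}^{b}\!\big(\int_{\R}|y|\,p_{t}(0,y)\,dy\big)\,dt$.

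The remaining step is the explicit Gaussian computation. I factor $p_{t}(0,y)=p_{P(t)}(0)\,p_{P'(t)\mid P(t)=0}(y)$, with $p_{P(t)}(0)=\tfrac{1}{\sqrt{\mathcal P(t)}}\,\phi\!\big(\tfrac{m(t)}{\sqrt{\mathcal P(t)}}\big)$; the Gaussian conditioning formulas give that $P'(t)$ given $P(t)=0$ is normal with variance $\mathcal Q(t)-\mathcal R(t)^{2}/\mathcal P(t)=\mathcal Q(t)(1-\rho(t)^{2})$ and mean $m'(t)-\tfrac{\mathcal R(t)}{\mathcal P(t)}\,m(t)=m'(t)-\rho(t)m(t)\sqrt{\mathcal Q(t)/\mathcal P(t)}$. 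Writing this variable as $\sqrt{\mathcal Q(t)(1-\rho(t)^{2})}\,(Z+\eta(t))$ with $Z$ standard normal and $\eta(t)$ as in the statement, I apply the elementary identity $\E|Z+\eta|=2\phi(\eta)+\eta(2\Phi(\eta)-1)$ --- obtained by splitting $\int_{\R}|z+\eta|\,\phi(z)\,dz$ at $z=-\eta$ and using $\int_{a}^{\infty}z\,\phi(z)\,dz=\phi(a)$ --- to get
\[
\int_{\R}|y|\,p_{t}(0,y)\,dy=\sqrt{\frac{\mathcal Q(t)(1-\rho(t)^{2})}{\mathcal P(t)}}\;\phi\!\left(\frac{m(t)}{\sqrt{\mathcal P(t)}}\right)\Big(2\phi(\eta(t))+\eta(t)\big(2\Phi(\eta(t))-1\big)\Big),
\]
which is exactly the integrand in the asserted formula.

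I expect the main obstacle to be the analytic justification of the two interchanges of limits --- the passage from the pathwise counting identity to its expectation, and the $\eps\to0$ limit under the $t$-integral --- rather than the algebra. The continuity and non-degeneracy hypotheses are used precisely to supply the dominating functions and to guarantee joint continuity of $p_{t}$ near $\{(0,y):y\in\R\}$; with those in hand, the rest is routine bookkeeping with Gaussian densities.
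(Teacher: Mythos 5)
The paper does not prove this proposition; it is quoted verbatim as \cite[Theorem 2.5]{Far} and used as a black box, so there is no in-paper argument to compare against. Your derivation is the standard route to the Kac--Rice formula: the Banach-indicatrix form of the Kac counting formula, Bulinskaya's lemma (this is indeed where the non-singularity of the covariance of $(P(t),P'(t))$ is used), Tonelli, and then Gaussian conditioning. The Gaussian bookkeeping is correct: the conditional law of $P'(t)$ given $P(t)=0$ has variance $\CQ(t)(1-\rho^2(t))$ and mean $m'(t)-\rho(t)m(t)\sqrt{\CQ(t)/\CP(t)}$, and the identity $\E|Z+\eta|=2\phi(\eta)+\eta\left(2\Phi(\eta)-1\right)$ reproduces the stated integrand exactly. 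The only places left at sketch level are the two limit interchanges (passing $\E$ through the $\eps\to 0$ limit, and the $\eps\to 0$ limit through the $t$-integral); you correctly identify these as the technical crux, and the continuity-plus-nondegeneracy hypotheses on $[a,b]$ do supply the dominating functions needed for the usual Fatou/dominated-convergence argument (with the identity $\tfrac{1}{2\eps}\int_a^b\textbf{1}_{\{|P|<\eps\}}|P'|\,dt=\tfrac{1}{2\eps}\int_{-\eps}^{\eps}N_P^{u}(a,b)\,du$ handling the pathwise side).
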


\begin{proof}[Proof of Theorem \ref{comparison}]
By triangle inequality, we can assume that $\tilde \xi_j$ and $\tilde \eta_j$ are Gaussian random variables. Let $c$ be the constant in Theorem \ref{real} with $\alpha_1 = 1/2, k=1, l=0$. As in Remark \ref{rmkconstants}, we can set  $c=\frac{\ep}{2\cdot 10^{9}}$. Let $\alpha = c/7$. It suffices to show that for every interval $(a_n, b_n)$ of size at most $1/n$, we have
\begin{equation}
\left |\E N_{P_n} (a_n, b_n) - \E N_{\tilde P_n} (a_n, b_n) \right | = O(n^{-\alpha/2}).\label{dd1}
\end{equation}

If $b_n-a_n\ge 1/n$, we simply divide the interval $(a,b)$ into $\lfloor (b_n-a_n)n \rfloor + 1$ intervals of size at most $1/n$ each and then apply \eqref{dd1} to each interval and then sum up the bounds. 

Let $\ell := (b_n-a_n)/2$. Let $G$ be a smooth function on $\R$ with support in \newline $\left [\frac{a_n+b_n}{2}-\ell-n^{-1-\alpha},\frac{a_n+b_n}{2}+ \ell+n^{-1-\alpha}\right] $ such that $0\le G\le 1$, $G = 1$ on $\left [\frac{a_n+b_n}{2}-\ell, \frac{a_n+b_n}{2}+\ell\right ]$, and $\norm{G^{(a)}} _\infty\le Cn^{6\alpha+a}$ for all $0\le a \le 6$. 

By the definition of $G$, we have  
\begin{equation}
\E N_{{P_n}}{(a_n, b_n)}\le \E \sum G(\zeta_i) \le \E N_{{P_n}}{(a_n-n^{-1-\alpha}, b_n+n^{-1-\alpha})}\nonumber
\end{equation}
where $\zeta_i$ are the real roots of $P_n$. 
Similarly, 
\begin{equation}
\E N_{{\tilde P_n}}{(a_n, b_n)}\le \E \sum G(\tilde \zeta_i) \le \E N_{{\tilde P_n}}{(a_n-n^{-1-\alpha}, b_n+n^{-1-\alpha})}\nonumber.
\end{equation}
Applying Theorem \ref{real} (with $k = 1, l=0$) to the function $G/n^{6\alpha}$, we get
\begin{eqnarray}
\E \sum G(\zeta_i)&=&  \E \sum G(\tilde \zeta_i)+  O\left (n^{-c+6\alpha}\right )= \E \sum G(\tilde \zeta_i)+  O\left (n^{-\alpha}\right ) \nonumber.
\end{eqnarray}

Since $\alpha =c/7$, we obtain 
\begin{eqnarray}
\E N_{{P_n}}{(a_n, b_n)}&\le& \E N_{{\tilde P_n}}{(a_n-n^{-1-\alpha}, b_n+n^{-1-\alpha})}+ O(n^{-\alpha} ) \le \E N_{\tilde P_n}{(a_n, b_n)}+ 2\mathcal I_{\tilde{P}_n} + O(n^{-\alpha} ) \nonumber,
\end{eqnarray}
where 
$\mathcal I_{\tilde{P}_n} := \sup_{x\in \R}\E N_{\tilde P_n} (x - n^{-1-\alpha}, x)$. We will show later that $\mathcal I_{\tilde P_n} = O(n^{-\alpha/2})$, which gives the upper bound $\E N_{{P_n}}{(a_n, b_n)}\le \E N_{\tilde P_n}{(a_n, b_n)}+ O (n^{-\alpha/2}$).

Let us quickly address the lower bound  $\E N_{{P_n}}{(a_n, b_n)}\ge \E N_{\tilde P_n}{(a_n, b_n)}-O(n^{-\alpha/2})$.
If $\ell > n^{-1-\alpha}$, we can argue as for the upper bound. In the case 
$\ell \le n^{-1-\alpha}$,  the desired bound follows from the observation that  $\E N_{{P_n}}{(a_n, b_n)}\ge 0\ge \mathcal I_{\tilde P_n} - O(n^{-\alpha/2})\ge \E N_{\tilde P_n}{(a_n, b_n)}- O(n^{-\alpha/2})$. The upper and lower bounds together give \eqref{dd1}.

To prove the stated bound on $\mathcal I_{\tilde{P}_n}$, we use Proposition \ref{KacRice}, which asserts that for  every $x\in \R$, 
\begin{eqnarray}\label{nh1}
\E N_{\tilde P_n}[x - n^{-\alpha-1},x] &\le& \int_{x - n^{-\alpha-1}}^{x}\sqrt{\frac{\mathcal S}{\mathcal P^{2}}}dt +\int_{x - n^{-\alpha-1}}^{x}\frac{|m'|\mathcal P + |m|\mathcal R}{\mathcal P^{3/2}} dt,
\end{eqnarray}
where

\begin{itemize}
	\item  $m(t) := \E\tilde{P}_n(t)$
	\item $ \mathcal P(t) : =\Var (\tilde P_n)=\sum_{k=0}^{n} c_k^{2}\cos^{2}(kt) + d_k^{2}\sin^{2}(kt)$
	\item $\mathcal Q(t) :=\Var(\tilde P_n')=\sum_{k=0}^{n} k^{2}c_k^{2}\sin^{2}(kt) + k^{2}d_k^{2}\cos^{2}(kt)$ 
	 \item $\mathcal R(t)  := \textbf{Cov}(\tilde P_n, \tilde P_n')=\sum_{k=0}^{n} k\cos(kt)\sin(kt) (-c_k^{2}+ d_k^{2})$
	 \item  $\mathcal S(t) = \mathcal P(t) \mathcal Q (t) - \mathcal R^{2}(t) $. 
	 \end{itemize}  

Observe that the covariance matrix of $(P_n(t), P'_n(t))$ is non-singular if and only if $\CS(t)\neq 0$. Since the deterministic function $\CS(t)$ only has finitely many zeroes in $ [x - n^{-\alpha-1},x] + (-\ep^*, \ep^*)$ (where we add $(-\ep^*, \ep^*)$ only to make the interval bigger to apply Proposition \ref{KacRice}, $\ep^*$ can be any positive number), we can decompose this  interval  into subintervals whose interiors do not contain any zero of $\CS$, and use linearity of expectation if necessary. This way, we can assume that  the joint distribution of $\tilde P_n$ and $\tilde P_n'$ is non-singular, as required  in Proposition \ref{KacRice}.

From \eqref{trigbound} and \eqref{large_c}, there is a constant $K >0$ such that  for every $t\in \R$,
\begin{equation}
\mathcal P\ge \frac{n}{K}, \quad \mathcal Q\le K n^{3},\mbox{ and }
\mathcal R \le Kn^{2}\le K n\mathcal P.\nonumber
\end{equation}

From here, we obtain (for all $t$) that  $\frac{\mathcal S}{\mathcal P^{2}}\le \frac{\mathcal Q}{\mathcal P}\le Kn^{2}$.

Moreover, from Condition {\bf C4}, we have $|m(t)|\le K n^{\tau_0}$ and $|m'(t)|\le K n^{1/2 + \tau_0}$ (notice that 
$m(t)=0$ if all atom random variables have zero mean; the upper bounds here come from 
the bound on the expections). It follows that 
\begin{eqnarray}
\frac{|m'|\mathcal P + |m|\mathcal R}{\mathcal P^{3/2}} \le K n^{1/2+\tau_0}.\nonumber
\end{eqnarray}

Using the above estimates, we conclude that  the integrand on the right-hand side of \eqref{nh1} is bounded (in absolute value) by $O(n^{1/2+\tau_0})$.  Since the length of the interval in the  integration is $n^{-\alpha-1}$, the integral is of order $O(n^{\tau_0-\alpha-1/2}) = O(n^{-\alpha/2})$, as  $\tau_0-1/2 = \frac{\ep}{10^{11}}\le \alpha/2$.
\end{proof}

\begin{proof}[Proof of Corollary \ref{maincor}] As promised in Remark \ref{rmk1}, we will prove the desired statement for $P_n$ as in \eqref{newP}. Applying Theorem \ref{comparison} with 
$$\tilde P_n(x) :=u_n\sqrt{\sum_{i=0}^{n}c_i ^{2}} + \sum _{j=0}^{N_0} u_j n^{1/2-\alpha} \cos(jx) + \sum_{j=1}^{N_0} v_j n^{1/2-\alpha} \sin(jx) + \sum_{j=0}^{n}c_j \tilde \xi_j\cos(jx) + \sum_{j=1}^{n} c_j\tilde \eta_j\sin(jx)$$
where $\tilde \xi_j$ and $\tilde \eta_j$ are iid standard Gaussian, it suffices to prove that the desired estimate holds for $\tilde P_n$. Applying Proposition \ref{KacRice} to $\tilde P_n$, we obtain
$$\E N_{\tilde P_n}(a_n, b_n) = \int_{a_n}^{b_n}\sqrt{\frac{\sum_{i=0}^{n}c_i ^{2}i^{2}}{\sum_{i=0}^{n}c_i ^{2}}} \phi\left (\frac{m(x)}{\sqrt{\sum_{i=0}^{n}c_i ^{2}}}\right )\big [2\phi(q(x)) + q(x)\left (2\Phi(q(x))-1\right )\big ]dx$$
where $$m(x) := u_n\sqrt{\sum_{i=0}^{n}c_i ^{2}} + \sum _{j=0}^{N_0} u_j n^{1/2-\alpha} \cos(jx) + \sum_{j=1}^{N_0} v_j n^{1/2-\alpha} \sin(jx)$$ and $q(x) := \frac{m'(x)}{\sqrt{\sum_{i=0}^{n}c_i ^{2}i^{2}}}$.

In our setting,  ${\sum_{i=0}^{n}c_i ^{2}} = \Theta(n)$, ${\sum_{i=0}^{n}c_i ^{2}i^{2}} = \Theta(n^{3})$, and so $\frac{m(x)}{\sqrt{\sum_{i=0}^{n}c_i ^{2}}} = u_n+O(n^{-\alpha})$ and $q(x) = O(n^{-1})$.
Therefore, by the boundedness of the functions $\Phi$, $\phi$ and $\phi'$, we get
$$\phi\left (\frac{m(x)}{\sqrt{\sum_{i=0}^{n}c_i ^{2}}}\right ) = \phi(u_n) + O(n^{-\alpha}), \mbox{ and } 2\phi(q(x))+q(x)\left (2\Phi(q(x))-1\right )=2\phi(0)+O(n^{-1}).$$
 
It follows that 
\begin{eqnarray}
\E N_{\tilde P_n}(a_n, b_n) &=& 2\sqrt{\frac{\sum_{i=0}^{n}c_i ^{2}i^{2}}{\sum_{i=0}^{n}c_i ^{2}}} (b_n-a_n) \phi\left (u_n\right )\phi(0) + O\left (n^{-\alpha}\sqrt{\frac{\sum_{i=0}^{n}c_i ^{2}i^{2}}{\sum_{i=0}^{n}c_i ^{2}}} (b_n-a_n)\right )\nonumber\\
 &=& 2\sqrt{\frac{\sum_{i=0}^{n}c_i ^{2}i^{2}}{\sum_{i=0}^{n}c_i ^{2}}} (b_n-a_n) \phi\left (u_n\right )\phi(0) + O\left (n^{-\alpha} (b_n-a_n)n\right )\nonumber.
\end{eqnarray} 
Plugging in $\phi(x)=\frac{1}{\sqrt{2\pi}} e^{-x^{2}/2}$, we obtain 
$$\E N_{P_n}(a_n, b_n) = \frac{b_n-a_n}{\pi} \sqrt{\frac{\sum_{j=0}^{n} c_j^{2}j^{2}}{\sum_{j=0}^{n} c_j^{2}}}\exp\left (-\frac{u_n^{2}}{2}\right ) + O\left (n^{-c}((b_n-a_n)n + 1) \right ) $$
where the positive constant $c$ and the implicit constant depend only on $\alpha, N_0, K, \tau_1$, $\ep$, 
completing the proof.
\end{proof}

\section{Proof of Theorem \ref{kacreal} and Corollary \ref{kacmean}}\label{kacproof}

\begin{proof}[Proofs of Theorem \ref{kacreal}]
Let us first consider the case $0<\theta_n<\frac{1}{K}$ for some sufficiently large constant $K>0$. Let $\delta_n =\theta_n+1/n$.

We apply Theorem \ref{greal} to the random function 
$F_n(z) := P_n(z\theta_n/10)$
and the domain $D_n := \{z: 1-2\theta_n\le |z\theta_n/10|\le 1-\theta_n+1/n\}$.

For this model, one can choose $\alpha_1=1/2$ and $C_1=1$. The main task is to show that for any positive constants $A, c_1$, there exists a constant $C$ for which Conditions {\bf C2} \eqref{cond-poly}-{\bf C2} \eqref{cond-repulsion} hold with parameters $(k+l, C_1, \alpha_1, A, c_1, C)$. Conditions {\bf C2} \eqref{cond-delocal} and {\bf C2} \eqref{cond-repulsion} can be checked by a  simple algebraic manipulation, which we leave as an exercise. 
To verify  Condition {\bf C2} \eqref{cond-bddn}, notice that  for any $M>2$,  if we condition on the event  $\Omega'$ on which $|\xi_i|\le M\left (1+\delta_n/2\right )^{i}$ for all $i$,  then  for all $z\in D_n + B(0, 2)$, 
\begin{equation}
|F_n(z)| = O(M)\sum_{i=0}^{n} \left (1+\delta_n/2\right )^{i}(1-\delta_n+2/n)^{i} = O(M\delta_n^{-1}).\label{interm1}
\end{equation}

Thus, for every $M>2$, we have
\begin{equation}
\P\left (|F_n(z)| = O(M\delta_n^{-1})\right )= 1-O\left (\sum _{i=0}^{n} \frac{1}{M\left (1+\delta_n/2\right )^{i}}\right )= 1- O\left (\frac{1}{M\delta_n}\right ).\label{bound_kac}
\end{equation}

Setting $M = \delta_n^{-A-1}$, we obtain Condition {\bf C2} \eqref{cond-bddn}.

To prove Condition {\bf C2} \eqref{cond-smallball}, we show that for any constants $A$ and $c_1>0$, there exists a constant $B>0$ such that the following holds. For every $z_0$ with $1-2\theta_n\le |z_0|\le 1-\theta_n+1/n$, there exists $z= z_0e^{i\theta}$ where $\theta\in [-\delta_n/100, \delta_n/100]$ such that for every $1\le M\le n\delta_n$, 
\begin{equation}
\P\left (|P_n(z)|\le e^{-\delta_n^{-c_1}}e^{-BM}\right )\le \frac{B\delta_n^{A}}{M^{A}}.\label{smallball_kac}
\end{equation}

Setting $M = 1$, we obtain Condition {\bf C2} \eqref{cond-smallball}.

By writing $z_ 0 = re ^{i\theta_0}$, the bound \eqref{smallball_kac} follows from a more general anti-concentration bound: there exists $\theta\in I := [\theta_0 - \delta_n/100, \theta_0 + \delta_n/100]$ such that 
\begin{equation}
\sup _{Z\in \C}\P\left (|P_n(re^{i\theta})-Z|\le e^{-\delta_n^{-c_1}}e^{-BM}\right )\le \frac{B\delta_n^{A}}{M^{A}}.\nonumber
\end{equation}
 
Since the probability of being confined in a complex ball is bounded from above by the probability of its real part being confined in the corresponding interval on the real line, it suffices to show that
 \begin{equation}
\sup _{Z\in \R}\P\left (\left |\sum_{j=0}^{M\delta_n^{-1}/2} \xi_j r^{j} \cos{j\theta}-Z\right |\le e^{-\delta_n^{-c_1}}e^{-BM}\right )\le \frac{B\delta_n^{A}}{M^{A}}.\nonumber
\end{equation}
 
This is, in turn, a direct application of Lemma \ref{lmanti_concentration} with $N := M\delta_n^{-1}/2$ and $\bar e := e^{-2M}\le r^{j}$ for all $0\le j\le M\delta_n^{-1}/2$.

Finally, to prove Condition {\bf C2} \eqref{cond-poly}, from \eqref{bound_kac}, \eqref{smallball_kac}, and Jensen's inequality, we get for every $1\le M\le n\delta_n$
$$\P(N\ge \delta_n^{-c_1} + BM) = O\left (\frac{\delta_n^{A}}{M^{A}}\right )$$
where $N = N_{F_n}B(w, 2)$, $w\in D_n$. 

Let $A = k+l+2$, $c_1=1$ and $M = 1, 2, 2^{2}, \dots, 2^{m}$ where $m$ is the largest number such that $2^{m}\le n\delta_n$. Combining the above inequality with the fact that $N\le n$ a.s., we get
$$\E N^{k+l+2}\textbf{1}_{N\ge \delta_n^{-1}} \le C\sum_{i=1}^{m} \left (\delta_n^{-1} + B2^{i+1}\right )^{k+l+2}\frac{\delta_n^{A}}{2^{iA}} + C n^{k+l+2} \frac{\delta_n^{A}}{2^{mA}} \le C\delta_n^{A-k-l-2} = O(1).$$
 This proves Condition {\bf C2} \eqref{cond-poly} and completes the proof for $\theta_n\le 1/K$. For $\theta_n\ge 1/K$, note that Jensen's inequality implies that
$$N_{P_n}B(0, 1-1/K) = O_K(1)\log\frac{\max_{w\in B(0, 1-1/2K) }|P_n(w)|}{\max_{w\in B(1-1/K, 1/3K)} |P_n(w)|}.$$

Thus, using the bounds \eqref{interm1}, \eqref{bound_kac}, \eqref{smallball_kac} for $\theta_n = 1-1/K$, we get for every $1\le M\le n/K$,
$$\P(N_{P_n}B(0, 1-1/2K) \ge BM) = O\left (\frac{1}{M^{A}}\right ).$$
And so, $\E N_{P_n}B(0, 1-1/2K) = O(1)$. The same holds for $\tilde P_n$ and therefore the desired result follows.
\end{proof}

\begin{proof}[Proof of Corollary \ref{kacmean}]
Without loss of generality, we can assume that $\tilde \xi_0, \dots, \tilde \xi_n$ are standard Gaussian random variables. As in Remark \ref{Qrmk}, it suffices to restrict to the roots in the interval $[-1, 1]$. Divide this interval into $I_0 = \{x: |x|\le 1-1/C\}$ and $I_1 = [-1, 1]\setminus I_0$ and denote by $N(0)$ and $N(1)$ the number of real roots of $P_n$ in these sets, respectively. We have seen in the proof of Theorem \ref{kacreal} that $\E N(0) = O(1)$, and so is $\tilde N(0)$ which is the corresponding term for $\tilde P_n$.

To get $\E N(1) - \E \tilde N(1) = O(1)$, we decompose the interval $I_1$ into dyadic intervals $\pm [1-1/C, 1-1/2C), \pm[1-1/2C, 1-1/4C), \dots, \pm [1-2/n, 1-1/n)$, and finally $\pm [1-1/n, 1]$. In each of these intervals, say $[x, y)$, we show that $\E N_{P_n}[x, y) - \E N_{\tilde P_n} [x, y) = O((1-y+1/n)^{c})$ for some positive constant $c$. This can be routinely done by approximating the indicator function on the interval $[x, y)$ by a smooth function and applying Theorem \ref{kacreal}. We omit the details as it is similar to the proof of Theorem \ref{comparison}.

\end{proof}

\section{Proof of Theorems \ref{uni_flat} and Corollary \ref{mean_flat}}\label{proof_flat}
\begin{proof}[Proof of Theorem \ref{uni_flat}]

Notice that by  the Borel-Cantelli lemma, with probability 1, there are only a finite number of $i$ such that $|\xi_i|\ge 2^{i}$. Thus with probability 1, the radius of convergence of the series $P$ is infinity and so $P$ is an entire function.

A natural idea is to apply Theorem \ref{gcomplex} with $n=\infty$ to the function $F_{n}(z) := P(z)$, with $\delta_n := |z_0|^{-1}$ and $D_n := \{z_0\}$. (We will skip the redundant subscript $n$ in the rest of the proof.)
However, since $\Var P(z) = e^{|z|^{2}}$,  $|P(z)|$ is likely to be of order $\Theta (e^{|z|^{2}/2})$ in which case  Condition {\bf C2} \eqref{cond-bddn} fails.  The idea here is to find  a proper scaling, which, at the same time, preserves the   analyticity of $F$. We set 
\begin{equation}
F(z) := \frac{P(z)}{e^{|z_0|^{2}/2} e^{(z-z_0)\bar{z_0}}}.\label{res11}
\end{equation}

A routine calculation shows that $\Var F(z) = \Theta(1)$.

 Furthermore,  $F$ is analytic and has 
 the same roots as $P$. For this model, let $\alpha_1=1/2$ and $C_1=2$. The main task is to show that for any positive constants $A, c_1$, there exists a constant $C$ for which Conditions {\bf C2} \eqref{cond-poly}-{\bf C2} \eqref{cond-delocal} hold with parameters $(k, C_1, \alpha_1, A, c_1, C)$. We can, without loss of generality,  assume that $|z_0|$ is sufficiently large because by  Jensen's inequality, one can show that  the expected number of roots of both $P$ and $\tilde P$ in $B(0,K)$, for any constant $K$, is $O_K(1)$.   
 
 Condition {\bf C2} \eqref{cond-bddn} is a direct consequence of the following lemma.
\begin{lemma} For any constant $A>0$, there is a constant $K>0$ such that for any $M \ge 2$, 
\begin{equation}
\P\left (\max _{z\in B(z_0, 2)}|F(z)| \ge K M^{A}\delta^{-A-2}\right )\le \frac{K \delta^{A}}{M^{A}}. \label{bdd1}
\end{equation}

\end{lemma}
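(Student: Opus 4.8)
The plan is to control $\max_{z\in B(z_0,2)}|F(z)|$ in $L^2$ by an absolute constant and then to read off \eqref{bdd1} from Markov's inequality, absorbing the polynomial factors $M^{A}$ and $\delta^{-A-2}$ into $K$ via the fact (noted just above the lemma) that we may assume $|z_0|=\delta^{-1}$ to be large. First I would record that $F$ is almost surely entire: by Borel--Cantelli, $|\xi_j|\le 2^{j}$ for all but finitely many $j$ with probability $1$, so $\limsup_j|\xi_j/\sqrt{j!}|^{1/j}=0$, $P$ has infinite radius of convergence a.s., and since $F$, defined in \eqref{res11}, is $P$ divided by a nowhere-vanishing entire function, $F$ is a.s.\ entire; on this event $|F|^{2}$ is subharmonic.

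By the sub-mean value property of $|F|^{2}$ applied on the unit disk $B(z,1)\subseteq B(z_0,3)$, valid for every $z\in B(z_0,2)$, we get $|F(z)|^{2}\le \frac1\pi\int_{B(z,1)}|F(w)|^{2}\,dw\le \frac1\pi\int_{B(z_0,3)}|F(w)|^{2}\,dw$, hence
$$\max_{z\in B(z_0,2)}|F(z)|^{2}\le \frac1\pi\int_{B(z_0,3)}|F(w)|^{2}\,dw .$$
Writing $F(w)=\sum_{j\ge0}\xi_j\psi_j(w)$ with $\psi_j(w):=w^{j}\big/\bigl(\sqrt{j!}\,e^{|z_0|^{2}/2}e^{(w-z_0)\bar z_0}\bigr)$, independence and unit variance give $\Var F(w)=\sum_j|\psi_j(w)|^{2}$, and expanding $|w|^{2}=|z_0|^{2}+2\Re\bigl((w-z_0)\bar z_0\bigr)+|w-z_0|^{2}$ shows this equals $e^{|w-z_0|^{2}}$. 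Since Condition {\bf C1} forces $\E\xi_j=0$ for $j\ge N_0$ and $|\E\xi_j|\le\tau$ for $j<N_0$, Cauchy--Schwarz gives $|\E F(w)|\le\tau\sqrt{N_0}\,(\Var F(w))^{1/2}$, so $\E|F(w)|^{2}=\Var F(w)+|\E F(w)|^{2}\le (1+\tau^{2}N_0)e^{|w-z_0|^{2}}\le (1+\tau^{2}N_0)e^{9}$ for $w\in B(z_0,3)$. By Fubini,
$$\E\Bigl[\max_{z\in B(z_0,2)}|F(z)|^{2}\Bigr]\le \frac1\pi\int_{B(z_0,3)}\E|F(w)|^{2}\,dw\le 9(1+\tau^{2}N_0)e^{9}=:C_0 ,$$
a constant depending only on the constants in Condition {\bf C1}.

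Markov's inequality then gives $\P\bigl(\max_{z\in B(z_0,2)}|F(z)|\ge t\bigr)\le C_0/t^{2}$ for all $t>0$. Taking $t=KM^{A}\delta^{-A-2}$ bounds the left side by $C_0\delta^{2A+4}/(K^{2}M^{2A})=(C_0/K^{3})(\delta^{A+4}/M^{A})(K\delta^{A}/M^{A})$; assuming $|z_0|$ large enough that $\delta\le1$, we have $\delta^{A+4}/M^{A}\le1$ for every $M\ge2$, so this is at most $(C_0/K^{3})\,K\delta^{A}/M^{A}\le K\delta^{A}/M^{A}$ once $K\ge \max\{1,C_0^{1/3}\}$, which proves \eqref{bdd1}. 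The only step that is not pure bookkeeping is the appeal to subharmonicity to replace the supremum over $B(z_0,2)$ by an area average over $B(z_0,3)$: this is precisely what keeps the whole argument at the level of second moments, avoiding both a union bound over a circle and any need for higher-moment tail control, which is unavailable under the bare $(2+\ep)$-moment hypothesis.
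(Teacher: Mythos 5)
Your proof is correct, and it takes a genuinely different route from the paper's. The paper argues via a ``good event'': it sets $L=|z_0|+1$, introduces the event $\Omega'$ on which $|\xi_i|\le M^AL^A\bigl(1+(L+2M)^{-2}\bigr)^i$ for all $i$, estimates $\P(\Omega'^c)$ by Chebyshev plus a union bound over $i$, and then on $\Omega'$ bounds the series deterministically, splitting at $i=5x$ with $x=\lfloor|z|^2-1\rfloor$ and using Stirling's formula to show the tail satisfies $S_2\le\frac12 S$. You instead note that $|F|^2$ is a.s.\ subharmonic, use the sub-mean value inequality to replace the sup over $B(z_0,2)$ by $\frac1\pi\int_{B(z_0,3)}|F|^2$, compute $\E|F(w)|^2=\Var F(w)+|\E F(w)|^2$ in closed form via the identity $\Var F(w)=e^{|w-z_0|^2}$ and Cauchy--Schwarz on the finitely many biased coefficients, and then apply Tonelli and Markov. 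Your computation is right at each step: in particular the algebra $|w|^2-|z_0|^2-2\Re((w-z_0)\bar z_0)=|w-z_0|^2$ is exact, the constant $C_0=9(1+\tau^2N_0)e^9$ is correct (the $\pi$'s cancel), and the final bookkeeping with $K\ge\max\{1,C_0^{1/3}\}$ closes the argument because $\delta\le1$ and $M\ge2$. Comparing the two: the second-moment argument is shorter, avoids both Stirling and the union bound over coefficients, and yields a $\delta$-uniform tail estimate $\P(\sup_{B(z_0,2)}|F|\ge t)\le C_0/t^2$ that is in fact stronger than the stated form and produces a constant $K$ independent of $A$; the paper's approach, on the other hand, gives a pointwise deterministic bound on a high-probability event, which is a reusable device for the other verifications in the section. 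Both are valid proofs of the lemma.
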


\begin{proof}
Let $L = |z_0|+1=\Theta(\delta^{-1})$. Let $\Omega'$ be the event that  $|\xi_i|\le M^{A}L^{A}\left (1+\frac{1}{(L+2M)^{2}}\right )^{i}$ for all $i\ge 0$. Consider its complement $\Omega'^{c}$, 
\begin{equation} \label{probbound} \P\left (\Omega'^{c}\right )= O \left ( \sum_{i=0}^{\infty} \frac{1}{M^{2A}L^{2A}\left (1+(L+2M)^{-2}\right )^{2i}}\right ) = O\left (\frac{\delta^{A}}{M^{A}}\right ).
\end{equation}

On the other hand, once $\Omega'$ holds, then  for every $z\in B(z_0, 2)$, 
\begin{equation}
|P(z)|\le \sum_{i =0}^{\infty}\frac{|\xi_i||z|^{i}}{\sqrt{i!}}\le M^{A}L^{A}\sum_{i =0}^{\infty}\frac{(|z|+|z|^{-1})^{i}}{\sqrt{i!}} = M^{A}L^{A}S(w).\nonumber
\end{equation}
where $w = |z|+|z|^{-1}$ and $S(w) := \sum_{i =0}^{\infty}\frac{w^{i}}{\sqrt{i!}}$. Let $x := x(w) = \lfloor w^{2}-1\rfloor$. We split into the sum of  $S_1 := \sum_{i =0}^{5x-1}\frac{w^{i}}{\sqrt{i!}}$ and $S_2 := \sum_{i=5x}^{\infty}\frac{w^{i}}{\sqrt{i!}}$. Since the terms $\frac{w^{i}}{\sqrt{i!}}$ are increasing with $i$ running  from 0 to $x$ and then decreasing with $i$ running from $x$ to $\infty$, we have $S_1\le 5x\frac{w^{x}}{\sqrt{x!}}$. Moreover, 
\begin{equation}
|S_2|\le \frac{w^{5x}}{\sqrt{(5x)!}}\sum_{i =0}^{\infty}\frac{w^{i}\sqrt{(5x)!}}{\sqrt{(i+5x)!}}\le \frac{w^{5x}}{\sqrt{(5x)!}}S.\nonumber
\end{equation}

By Stirling's formula (and the fact that $x$ is sufficiently large)
\begin{equation}
\frac{w^{5x}}{\sqrt{(5x)!}}\le \sqrt{\frac{(x+2)^{5x}e^{5x}}{(5x)^{5x+1/2}}}\le \frac{1}{2}.\nonumber
\end{equation}

Hence, $S_2\le \frac{1}{2}S$, which implies 
 $$S\le 2S_1\le 10x\frac{w^{x}}{\sqrt{x!}}\le 100w^{2}e^{w^{2}/2}=O(L^{2} e^{|z|^{2}/2}).$$ 
  Thus, on $\Omega'$,
$$|P(z)|=O(M^{A}L^{A+2}e^{|z|^{2}/2}).$$

By the definition of $F$, 
$$|F(z)|= O \left (\frac{M^{A}L^{A+2}e^{|z|^{2}/2}}{e^{|z_0|^{2}/2}e^{\Re((z-z_0)\bar{z_0})}}\right ) = O  (M^{A} L^{A+2} )$$ which, together with \eqref{probbound}, yield the desired claim. 
 \end{proof}

Write $z_0 = re^{i\theta_0}$. To verify Condition {\bf C2} \eqref{cond-smallball}, the idea is to apply Lemma \ref{lmanti_concentration} to the entire function 
$$P(z_0e^{i\theta}) = \sum_{j=0}^{\infty} \frac{r^{j}}{\sqrt{j!}} \xi_je^{ij(\theta+\theta_0)}.$$
  Note that when $|\theta|\le  .01 r^{-1}$, $z_0 e^{i\theta}\in B(z_0, 1/100)$. 
Let $x_0=\lfloor|z_0|^{2}-1\rfloor$. For any $M\ge r$, we apply Lemma \ref{lmanti_concentration} to the set $\CE = \{x_0, x_0+1, \dots, x_0+M\}$, the random variables $(\xi_j)_{j\in \CE}$, the coefficients $e_j = \frac{r^{j}}{\sqrt{j!}} $ and obtain that for any positive constant $A\ge 3$, for the interval $I=[-M^{-A}, M^{-A}]\subset [-.01 r^{-1}, .01 r^{-1}]$, there exists $\theta\in I$ such that
$$\sup_{Z\in \C}\P\left (\left |\sum_{j\in \CE} e_j \xi_j\cos(j\theta+j\theta_0)-Z\right |\le e_{x_0+M}M^{-16A^{2}}\right )=O\left (M^{-A/2}\right )$$
where we use the fact that $e_{x_0}\ge e_{x_0+1}\ge\dots \ge e_{x_0+M}$.

This together with the assumption that $\Re(\xi_0), \Im(\xi_0), \Re(\xi_1), \Im(\xi_1), \dots $ are independent imply that
$$\sup_{Z\in \C}\P\left (\left |\sum_{j\in \CE} e_j \xi_j\exp(ij(\theta+\theta_0))-Z\right |\le e_{x_0+M}M^{-16A^{2}}\right )=O\left (M^{-A/2}\right )$$
because the distance between two complex numbers is at least the distance between their real components.

Conditioning on the random variables outside $\CE$, we obtain some $\theta\in I$ such that with probability at least $1-O\left (M^{-A/2}\right )$,
$$|P(z_0e^{i\theta})|\ge e_{x_0+M}M^{-16A^{2}},$$
which implies
$$|F(z_0e^{i\theta})|\ge \frac{e_{x_0+M}M^{-16A^{2}}}{\exp(r^{2}/2)\left |\exp(r^{2}(e^{i\theta}-1))\right |}=\frac{r^{x_0+M}M^{-16A^{2}}}{\sqrt{(x_0+M)!}\exp(r^{2}/2)\left |\exp(r^{2}(e^{i\theta}-1)) \right |}.$$

For $\theta\in I$, $|r^{2}(e^{i\theta}-1)|=O(r^{2}M^{-A}) = O(1)$. 
Thus, by Stirling's formula,
$$|F(z_0e^{i\theta})|=\Omega\left ( \frac{1}{r}\frac{r^{M}M^{-16A^{2}}}{\sqrt{(x_0+1)\dots(x_0+M)}}\right )=\Omega\left ( \frac{M^{-16A^{2}}}{r}\left (\frac{r^{2}}{r^2+M}\right )^{M/2}\right )
%=\Omega\left ( \frac{M^{-16A^{2}}}{r}\exp\left (-\frac{M^{2}}%{2r^{2}}\right )\right)
.$$

In other words, we have proved that for every constant $A\ge 3$, for every $M\ge r=|z_0|$, there exists $z\in B(z_0, 1/100)$ for which
\begin{equation}
\P\left (|F(z)|=O_A\left ( \frac{M^{-16A^{2}}}{r}\left (\frac{r^{2}}{r^2+M}\right )^{M/2}\right ) \right )=O_A\left (M^{-A/2}\right ).\label{smb1}
\end{equation}
Setting $M=\lceil r\rceil$, we obtain Condition {\bf C2} \eqref{cond-smallball} (note that $r  =\delta^{-1}$).

Combining \eqref{bdd1} and \eqref{smb1} and Jensen's inequality, we get that there exists a constant $K$ depending only on $A$ such that for any $M\ge r$,
$$\P\left (N_{F}(B(z_0, 1)) \ge M^{2}\right )\le  \frac{K}{M^{A}}.$$

Thus, 
$$\E N^{k+2}_{F}(B(z_0, 1))\textbf{1}_{N_{F}(B(z_0, 1))\ge r^{2}}\le \sum_{M=r}^{\infty} \E N^{k+2}_{F}(B(z_0, 1))\textbf{1}_{M^{2}\le N_{F}(B(z_0, 1))\le (M+1)^{2}}.$$
As the right-hand side is at most $O(1) \sum _{M=r}^{\infty} \frac{K(M+1)^{2k+4}}{M^{A}}=O(1)$ by setting $A=2k+6$, Condition {\bf C2} \eqref{cond-poly} follows.

Finally for Condition {\bf C2} \eqref{cond-delocal}, note that $|z|^i/\sqrt{i!}$ is maximized  at $i = \lfloor|z|^{2}-1\rfloor$. By Stirling's formula, at this $i$, $|z|^i/\sqrt{i!}=O\left ( \frac{\sqrt{\sum_{j} |z|^{2j}/j!}}{|z|^{1/2}}\right )$.
\end{proof}

\begin{proof}[Proof of Corollary \ref{mean_flat}]
As before, we simply approximate the indicator function $\textbf{1}_{B}$ above and below by smooth test functions $f$ and $g$ whose derivatives up to order $6$ are bounded by $O(r^{6a})$ for a sufficiently small constant $a$ and $\int_{\C} (f-g)dm = O( r^{-a})$. Applying Theorem \ref{uni_flat} to the function $f$, we obtain
$$\E N_{P}(B)\le \E \sum_{\zeta: P(\zeta) = 0} f(\zeta) = \E \sum_{\tilde \zeta: \tilde P(\tilde\zeta) = 0} f(\tilde \zeta) + O(r^{-c+6a}) = \E N_{\tilde P}(B) + O(r^{-a} + r^{-c+6a})$$
where $c$ is the constant in Theorem \ref{uni_flat}. By choosing $a = c/12$, we get $\E N_{P}(B) = \E N_{\tilde P}(B) + O(r^{c/12})$. And similarly, applying Theorem \ref{uni_flat} to the function $g$, we get the corresponding lower bound. This completes the proof.
\end{proof}

\section{Proof of Theorem \ref{uni_elliptic} and Corollary \ref{mean_elliptic}}\label{proof_elliptic}

\begin{proof}[Proof of Theorem \ref{uni_elliptic}]
We have $\Var P_n(z) = (|z|^{2}+1)^{n}$. As in the proof of Theorem \ref{uni_flat}, we will apply the framework in Section \ref{framework} to the function
$$F_{n}(z) = \frac{P_n(z/\sqrt n)}{(|x_0|^{2}+1)^{n/2}\exp\left (\frac{n(z/\sqrt{n}-x_0)\bar{x_0}}{(|x_0|^{2}+1)}\right) },$$
$\delta_n = n^{-1}$ and $D_n = \{\sqrt n x_0\}$. We have $\Var F(z) = \Theta(1)$. Note that the denominator is chosen so that $\Var F(z) = \Theta(1)$, $F$ is analytic, and $F(z) = 0$ if and only if $P(z/\sqrt{n})=0$. We will first show that Theorem \ref{gcomplex} holds, and then we show that the conclusion of Theorem \ref{greal} also holds. For Theorem \ref{gcomplex}, it suffices to show that there exist positive constants $C_1, \alpha_1$ such that for any positive constants $A, c_1$, there exists a constant $C$ for which Conditions {\bf C2} \eqref{cond-poly}-{\bf C2} \eqref{cond-delocal} hold with parameters $C_1, \alpha_1, A, c_1, C$. For this model, one can choose $\alpha_1=\ep/4$ and $C_1=1$. Condition {\bf C2} \eqref{cond-bddn} follows from the following. For any constants $A, c_1>0$, we have
\begin{equation}
\P\left (\max _{z\in B(\sqrt n x_0, 2)}|F(z)| \ge C e^{n^{c_1}}\sqrt{n}\right )\le \frac{Cn}{e^{n^{c_1}}} \label{bdd2}
\end{equation}
for some constant $C$ depending only on $A$ and $c_1$.

Indeed, let $\Omega'$ be the event on which $|\xi_i|\le e^{n^{c_1}}$ for all $i\ge 0$. The probability of its complement is bounded from above by
$$\P\left (\Omega'^{c}\right )\le \frac{Cn}{e^{n^{c_1}}}.$$ 

On $\Omega'$, for every $z\in B(x_0, 2/\sqrt n )$, we have
\begin{equation}
|P(z)|\le \sum_{i =0}^{n} \sqrt{n\choose i}|\xi_i||z|^{i} \le e^{n^{c_1}}\sqrt{n}\sqrt{\sum_{i =0}^{n}{n\choose i} |z|^{2i}} = e^{n^{c_1}}\sqrt{n} \sqrt{\Var P(z)}.\label{ell1}
\end{equation}
Thus, 
$$|F(z)|\le C e^{n^{c_1}}\sqrt{n}.$$
 
For Condition {\bf C2} \eqref{cond-smallball}, note that the sequence $\sqrt{n\choose i} |x_0|^{i}$ increases from $i=1$ to $i_0 = \lfloor 1+\frac{(n-1)x_0^{2}}{1+x_0^{2}}\rfloor$ and then decreases. For $n^{-1/2+\ep}\le |x_0|\le 1$, we have $\frac{n^{2\ep}}{4}\le i_0\le \frac{n+1}{2}$. Condition {\bf C2} \eqref{cond-smallball} follows by showing that for any constants $A, c_1>0$, there exists a constant $C$ and an angle $\theta\in [-1/(100\sqrt{n}), 1/(100\sqrt{n})]$ such that
\begin{equation}
\P\left (|F(\sqrt{n}x_0e^{i\theta})|\le Ce^{-n^{c_1}}\right )\le Cn^{-A}.\label{smallball_elliptic}
\end{equation}
We apply Lemma \ref{lmanti_concentration} to the set $\CE = \{i_0, i_0+1, \dots, i_0+m\}$ where $m=\frac{n^{c_1/2}}{\log n}$, the random variables $(\xi_j)_{j\in \CE}$, the coefficients $e_j = \sqrt{n\choose j}r^{j}$ where $r  =|x_0|$, and the interval $I=[-m^{-A'}, m^{-A'}]$ where $A'=5A/c_1$. We have
\begin{eqnarray}
1\le \frac{e_j}{e_{j+1}}\le \frac{\sqrt{{j+1}}}{r\sqrt {n-j}}\le n^{1/2},\nonumber
\end{eqnarray} 
for all $j\in \CE$, which implies
\begin{eqnarray}
e_{i_0+m}\ge e_{i_0} n^{-m/2}.\nonumber
\end{eqnarray}

Moreover, we have since $e_{i_0}$ is the largest term, $\Var P(x_0)\le n e_{i_0}^{2}$, and so,
$$e_{i_0+m}\ge \frac{\sqrt{\Var P(x_0)}}{\sqrt{n} n^{m/2}} = \frac{\sqrt{\Var P(x_0)}}{\sqrt n e^{n^{c_1/2}}}.$$
Hence, there exists $\theta\in I$ such that for all $Z\in \C$, 
$$\P\left (\left|\sum_{j\in \CE} e_j \xi_j \cos(j\theta)-Z\right | \le \sqrt{\Var P(x_0)}e^{-n^{c_1/2}}m^{-16A'^{2}}/\sqrt n\right ) =O\left (m^{-A'/2}\right )=O\left (n^{-A}\right ).$$

By conditioning on the random variables not in $\CE$, we obtain
\begin{equation}
\P\left (|P_n(x_0e^{i\theta})|\le \sqrt{\Var P(x_0)}e^{-n^{c_1/2}}m^{-16A'^{2}}/\sqrt n\right )  =O\left (n^{-A}\right ).\nonumber
\end{equation}
Since $e^{-n^{c_1/2}}m^{-16A'^{2}}/\sqrt n=\Omega\left (e^{-n^{c_1}}\right )$, we obtain
\begin{equation}
\P\left (|P_n(x_0e^{i\theta})|\le \sqrt{\Var P(x_0)}e^{-n^{c_1}}\right )= O\left (n^{-A}\right ).\label{ell2}
\end{equation}
That implies \eqref{smallball_elliptic} and therefore, Condition {\bf C2} \eqref{cond-smallball} follows.

Combining \eqref{bdd2} and \eqref{smallball_elliptic} and Jensen's inequality, we get that
$$\P\left (N_{F}(B(\sqrt n x_0, 1)) \ge n^{c_1}\right )\le Cn^{-A}.$$
From this and the fact that $N_{F}(B(\sqrt n x_0, 1))$ is always at most $n$, Condition {\bf C2} \eqref{cond-poly} follows.

For Condition {\bf C2} \eqref{cond-delocal}, as we have seen above, $E_i:=\sqrt{n\choose i}|x_0|^i$ is largest when $i_0= \lfloor 1+\frac{(n-1)x_0^{2}}{1+x_0^{2}}\rfloor\in [\frac{n^{2\ep}}{4}, \frac{n+1}{2}]$. It suffices to show that the $E_{i_0}=O( n^{-\ep/4})\sqrt{\sum_{i} E_{i}^{2}}$ which can be deduced from showing that the consecutive terms $(E_{i})_{i=i_0-n^{\ep/2}}^{i_0+n^{\ep/2}}$ are of the same order, i.e. $E_{i}/E_{j} = \Theta(1)$. We have for $i$ in the above window, 
$$\frac{E_{i+1}^{2}}{E_{i}^{2}} = \frac{|x_0|(n-i+1)}{i+1} = \Theta\left (\frac{n-i+1}{n-i_0+1}\frac{i_0+1}{i+1}\right )=\Theta\left (1+\frac{1}{n^{\ep}}\right ).$$
Thus for all $i, j$ in the above window, 
$$\frac{E_{i}}{E_{j}} = \Theta\left (1+\frac{1}{n^{\ep}}\right )^{n^{\ep/2}} = \Theta(1)$$
as needed. So Theorem \ref{gcomplex} holds for $F_n$. It's left to show that the conclusion of Theorem \ref{greal} also holds. 

Unfortunately, Condition {\bf C2} \eqref{cond-repulsion} doesn't hold for $F_n$. Note that this condition is used in the proof of Theorem \ref{greal} only to show that \eqref{repulsion_gau} which says that for any $x\in [n^{-1/2+\ep}, 1+n^{-1/2}]$, we have for a sufficiently small constant $c$, 
\begin{equation}
\P(N_{\tilde{F_n}}{B(\sqrt n x,2 n^{-c})}\ge 2)\le C n^{-16c/10}\label{repulsion_gau2}
\end{equation}
where $\tilde F_n$ is the corresponding function with standard Gaussian coefficients.

 To prove \eqref{repulsion_gau2}, we can instead use the fact that
\begin{eqnarray}
&&\P(N_{\tilde{F}}{B(\sqrt n x,2 n^{-c})}\ge 2)\le \P(N_{\tilde{F}}{B(\sqrt n x,2 n^{-c})\cap \C_{+}}\ge 1) +\nonumber\\
&& \quad\quad\quad\quad\quad\quad\quad\quad\quad\quad\quad\quad\quad \P(N_{\tilde{F}}{[\sqrt n x - 2 n^{-c}, \sqrt n x - 2 n^{-c}]}\ge 2)\nonumber\\
&\le& \iint_{B(x, 2 n^{-c-1/2})\cap \C_{+}} \rho^{(0, 1)}(z)dz + \int_{x - 2 n^{-c-1/2}}^{x + 2 n^{-c-1/2}}\int_{x - 2 n^{-c-1/2}}^{x + 2 n^{-c-1/2}} \rho^{(2, 0)}(s, t)dsdt\nonumber
 \end{eqnarray}
 where $\rho^{(0, 1)}$ and $\rho^{(2, 0)}$ are the $(0, 1)$- and $(2, 0)$-correlation functions of $\tilde P_n$ respectively. By \cite[Proposition 13.3]{TVpoly}, these functions are bounded for all $z\in B(x, 2 n^{-c-1/2})\cap \C_{+}$ and $s, t \in [x - 2 n^{-c-1/2}, x + 2 n^{-c-1/2}]$ as follows 
 $$\rho^{(0, 1)}(x, y)= O(n^{3/2})(x-y) = O(n^{1-c})$$
 and 
 $$\rho^{(2, 0)}(z) = O(n).$$
 Thus, 
\begin{eqnarray}
 \P(N_{\tilde{F}}{B(\sqrt n x,2 n^{-c})}\ge 2)=O(n^{-2c})\nonumber
 \end{eqnarray}
giving the desired estimate.
\end{proof}

\begin{proof}[Proof of Corollary \ref{mean_elliptic}]
As mentioned in remark \ref{remark_elliptic}, it suffices to show that 
$$\E N_{P_n}[0, 1]=\frac{1}{4}\sqrt{n} + O(n^{1/2-c}).$$ 

We partition the interval $[0, 1]$ into 2 intervals $I_1:=[0, n^{-1/2+\ep}]$ and $I_2:=[n^{-1/2+\ep}, 1]$. On the interval $I_2$ where Theorem \ref{uni_elliptic} applies, we further partition it into equal intervals $J_i$ of length $n^{-1/2}$. On each of these small intervals $J_{i}$, we routinely approximate its indicator function above and below by smooth test functions and apply Theorem \ref{uni_elliptic} to these functions to obtain
$$\E N_{P_n}(J_i) - \E N_{\tilde P_n} (J_{i}) = O(n^{-c}).$$

Thus, 
$$\E N_{P_n}(I_2) - \E N_{\tilde P_n} (I_2) = O(n^{1/2-c}).$$
It remains to show that the interval $I_1$ is insignificant. Note that $N_{P_n}(I_1)\le N_{P_n}B(x, 3x)$ where $x = n^{-1/2+\ep}$. By Jensen's inequality, 
$$N_{P_n}B(x, 3x) \le C\log \frac{M}{|P_n(x)|}$$
where $M = \max_{|z|\le 4 x} |P_n(z)|$. By \eqref{ell1}, on the event $\Omega'$,
$$M\le e^{n^{c_1}}\sqrt{n}\sqrt{\sum_{i =0}^{n}{n\choose i} |4x|^{2i}} = e^{n^{\ep}}\sqrt{n} (16x^{2}+1)^{n/2}\le \sqrt{n}e^{n^{3\ep}}.$$
Thus, $\P\left (\log M\ge n^{3\ep}\right )\le \frac{n}{e^{n^{\ep}}}$. Moreover, by \eqref{ell2}, we have $ \P\left (|P_n(x)|\le e^{-n^{\ep}}\right )\le n^{-A}$. Combining these bounds, we get 
$$\P\left (N _{P_n}B(x, 3x)\ge Cn^{3\ep}\right )\le Cn^{-2}.$$
Hence, 
$$\E N_{P_n}B(x, 3x) \le Cn^{3\ep} + n. n^{-2} \le (C+1)n^{3\ep}.$$
This completes the proof.
\end{proof}

\section{Proof of Theorem \ref{kacseries_uni} and Corollary \ref{kacseries_cor}}\label{kacseries_proof}
\begin{proof}[Proof of Theorem \ref{kacseries_uni}]
The reader may notice that this proof is quite similar to the proof of Theorem \ref{kacreal}. We nonetheless present it here for the reader's convenience. 

Let us first consider the case $0<\delta<\frac{1}{K}$ for some sufficiently large constant $K>0$.

We apply Theorem \ref{greal} to the random function 
$F(z) := P(z\delta/10)$
and the domain $D := \{z: 1-2\delta\le |z\delta/10|\le 1-\delta\}$.

For this random series, we set $\alpha_1=\min\{1/4, \gamma/2\}$ and $C_1=1$. The main task is to show that for any positive constants $A, c_1$, there exists a constant $C$ for which Conditions {\bf C2} \eqref{cond-poly}-{\bf C2} \eqref{cond-delocal} hold with parameters $(k+l, C_1, \alpha_1, A, c_1, C)$. 

We use the following crucial property of regularly varying coefficients.
\begin{lemma}\cite[Theorem 5, page 423]{feller1966introduction}\label{lmmregular_varying}
If $c_k^{2} = \frac{k^{\gamma-1}L(k)}{\Gamma(\gamma)}$ where $L(k)$ is a slowly varying function then
$$\lim_{a\downarrow 0} \sum_{k=0}^{\infty} c_k^{2} (1-at)^{2k} (2at)^{\gamma}/L\left (\frac{1}{a}\right )=1$$
uniformly as long as $t$ stays in a compact subset of $(0, \infty)$. 
\end{lemma}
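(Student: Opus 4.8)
To prove Lemma \ref{lmmregular_varying}, the plan is to recognize the statement as the power-series (Abelian) form of Karamata's theorem, combined with an elementary change of variables and the uniform convergence theorem for slowly varying functions. Set $g(x):=\sum_{k\ge0}c_k^2x^k$; since $c_k^2=k^{\gamma-1}L(k)/\Gamma(\gamma)$ grows only polynomially, $g$ is analytic on $\{|x|<1\}$. The first and main step is the one-variable asymptotic
\begin{equation}
g(x)\ \sim\ (1-x)^{-\gamma}\,L\!\left(\tfrac1{1-x}\right),\qquad x\uparrow1. \label{eqKaramata}
\end{equation}
I would prove \eqref{eqKaramata} by substituting $x=e^{-\eps}$ with $\eps\downarrow0$ and writing
$$\frac{\eps^\gamma g(e^{-\eps})}{L(1/\eps)}\ =\ \frac1{\Gamma(\gamma)}\sum_{k\ge1}(\eps k)^{\gamma-1}e^{-\eps k}\,\frac{L(k)}{L(1/\eps)}\,\eps\ +\ o(1),$$
the $o(1)$ absorbing the single term $c_0^2$. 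The sum is a Riemann sum for $\frac1{\Gamma(\gamma)}\int_0^\infty u^{\gamma-1}e^{-u}\,du=1$: for $u=\eps k$ in a fixed compact subset of $(0,\infty)$ one has $L(k)/L(1/\eps)=L(u/\eps)/L(1/\eps)\to1$ by slow variation, and Potter's bounds (or the Karamata representation theorem) provide an $\eps$-independent integrable majorant of the shape $u^{\gamma-1}e^{-u}\max(u^{\delta},u^{-\delta})$ for any $0<\delta<\gamma$, so dominated convergence applies; the isolated small-$k$ terms tend to $0$ because $\eps^\gamma L(k)/L(1/\eps)=\eps^{\gamma-o(1)}\to0$. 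Since $1-e^{-\eps}\sim\eps$ and $L(1/(1-e^{-\eps}))/L(1/\eps)\to1$, \eqref{eqKaramata} follows. (Equivalently, \eqref{eqKaramata} is exactly the cited Feller result in power-series form and may simply be quoted.)

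Next I would deduce the lemma by the substitution $x=(1-at)^2$, for which $1-x=2at\bigl(1-\tfrac{at}2\bigr)$ as $a\downarrow0$. Fix a compact interval $[t_1,t_2]\subset(0,\infty)$; then $1-x$ is squeezed between $c_1a$ and $c_2a$ with $c_i=c_i(t_1,t_2)>0$, so $x\uparrow1$ uniformly in $t\in[t_1,t_2]$. I would write
$$\frac{g((1-at)^2)\,(2at)^\gamma}{L(1/a)}\ =\ \frac{g((1-at)^2)}{(1-x)^{-\gamma}L(1/(1-x))}\cdot\Bigl(1-\tfrac{at}2\Bigr)^{-\gamma}\cdot\frac{L(1/(1-x))}{L(1/a)},$$
and check that each of the three factors tends to $1$ uniformly on $[t_1,t_2]$: the first by \eqref{eqKaramata} together with the uniform passage $x\uparrow1$; the second trivially, since $at\to0$ uniformly; the third by writing $\tfrac1{1-x}=\tfrac1a\,\mu_a(t)$ with $\mu_a(t)=\bigl(2t(1-\tfrac{at}2)\bigr)^{-1}\to\tfrac1{2t}$ uniformly, whose values remain in a fixed compact subset of $(0,\infty)$, so that the uniform convergence theorem for slowly varying functions gives $L(\tfrac1a\mu_a(t))/L(\tfrac1a)\to1$ uniformly. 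Multiplying the three factors yields the assertion.

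I expect the main obstacle to be the dominated-convergence step underlying \eqref{eqKaramata}: the Riemann-sum integrand carries the singular weight $u^{\gamma-1}$ at $u=0$, while $L(u/\eps)/L(1/\eps)$ is only controlled by bare slow variation for $u$ bounded away from $0$ and $\infty$, so one genuinely needs Potter's bounds (or the integral representation of slowly varying functions) to manufacture a legitimate $\eps$-uniform majorant. Everything past this point---the change of variables, the three-factor split, and the uniformity bookkeeping---is routine; and if preferred, the entire first step may be replaced by citing \eqref{eqKaramata} in power-series form from the same reference, leaving only the elementary computation of the second paragraph.
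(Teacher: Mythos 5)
Your proposal is correct, but it is worth noting that the paper does not actually prove Lemma \ref{lmmregular_varying}: it is quoted verbatim (up to the substitution $s=(1-at)^2$) from Feller, so there is no in-paper argument to compare against. What you have done is (i) reprove the cited Abelian theorem $g(s)=\sum_k c_k^2 s^k\sim (1-s)^{-\gamma}L\left(\tfrac{1}{1-s}\right)$ as $s\uparrow 1$ via the exponential substitution, a Riemann-sum approximation to $\Gamma(\gamma)^{-1}\int_0^\infty u^{\gamma-1}e^{-u}\,du$, and Potter's bounds for domination, and (ii) supply the change of variables $s=(1-at)^2$ together with the uniform convergence theorem for slowly varying functions to get the stated form with uniformity in $t$ on compacts. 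Both steps are sound: the dominated-convergence step is legitimate for any $0<\delta<\gamma$ in Potter's bounds (the finitely many indices below the Potter threshold are correctly disposed of by the $\eps^{\gamma-o(1)}\to 0$ observation), and the three-factor decomposition in your second paragraph correctly localizes the uniformity to the UCT applied to $\lambda\mapsto L(\lambda/a)/L(1/a)$ with $\lambda$ ranging over a fixed compact subset of $(0,\infty)$. The only caveat is cosmetic: as you yourself note, step (i) can simply be replaced by the citation, in which case your second paragraph is exactly the (elementary but not entirely trivial) bridge from Feller's statement to the form the paper uses, including the uniformity claim that the bare citation does not literally contain.
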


Moreover, for any positive constant $c'>0$, there exists a constant  $C>0$ (depending on the function $L$ such that $\frac{1}{Ct^{c'}}\le L(t)\le Ct^{c'}$ for all $t>0$. This simple observation can be proven using, for example, the Karamata's representation theorem (\cite[Proposition 1.3.8, page 26]{bingham1989regular}. 

To verify Condition {\bf C2} \eqref{cond-delocal}, we use Lemma \ref{lmmregular_varying} to get for every $w\in B(0, 1-\delta/2)$, 
$$\sum_{k=0}^{\infty} c_k^{2} |w|^{2k} = \Omega\left (\delta^{-\gamma} L(\delta^{-1})\right )=\Omega\left (\delta^{-\gamma+c'}\right )$$
while 
$$c_k^{2}|w|^{2k}\le Ck^{\gamma-1+c'} (1-\delta)^{2k}=O\left (\delta^{-\gamma+1-2c'}+1\right ).$$
Letting $c'$ sufficiently small, we obtain Condition {\bf C2} \eqref{cond-delocal}.

Condition {\bf C2} \eqref{cond-repulsion} follows immediately from Lemma \ref{lmmregular_varying}.

To verify  Condition {\bf C2} \eqref{cond-bddn}, notice that  for any $M>2$,  if we condition on the event  $\Omega'$ on which $|\xi_i|\le M\left (1+\delta/2\right )^{i}$ for all $i$,  then  for all $z\in D + B(0, 3)$, by Lemma \ref{lmmregular_varying},
\begin{equation}
|F(z)| = O(M)\sum_{i=0}^{\infty} (1+|c_i|^{2})\left (1+\delta/2\right )^{i}(1-\delta)^{i} = O(M \delta^{-\gamma-1}).\label{interm11}
\end{equation}

Thus, for every $M>2$, we have
\begin{equation}
\P\left (|F(z)| = O(M\delta^{-\gamma-1})\right )= 1-O\left (\sum _{i=0}^{n} \frac{1}{M\left (1+\delta/2\right )^{i}}\right )= 1- O\left (\frac{1}{M\delta}\right ).\label{bound_kacseries}
\end{equation}

Setting $M = \delta^{-A-1}$, we obtain Condition {\bf C2} \eqref{cond-bddn}.

To prove Condition {\bf C2} \eqref{cond-smallball}, we show that for any constants $A$ and $c_1>0$, there exists a constant $B>0$ such that the following holds. For every $z_0$ with $1-2\delta\le |z_0|\le 1-\delta$, there exists $z= z_0e^{i\theta}$ where $\theta\in [-\delta, \delta]$ such that for every $M\ge 1$, 
\begin{equation}
\P\left (|P(z)|\le e^{-\delta^{-c_1}}e^{-BM}\right )\le \frac{B\delta^{A}}{M^{A}}.\label{smallball_kacseries}
\end{equation}

Setting $M = 1$, we obtain Condition {\bf C2} \eqref{cond-smallball}.

By writing $z_ 0 = re ^{i\theta_0}$, the bound \eqref{smallball_kac} follows from a more general anti-concentration bound: there exists $\theta\in I := [\theta_0 - \delta, \theta_0 + \delta]$ such that 
\begin{equation}
\sup _{Z\in \C}\P\left (|P(re^{i\theta})-Z|\le e^{-\delta^{-c_1}}e^{-BM}\right )\le \frac{B\delta^{A}}{M^{A}}.\nonumber
\end{equation}
 
Since the probability of being confined in a complex ball is bounded from above by the probability of its real part being confined in the corresponding interval on the real line, it suffices to show that
 \begin{equation}
\sup _{Z\in \R}\P\left (\left |\sum_{j=0}^{M\delta^{-1}/2} c_j\xi_j r^{j} \cos{j\theta}-Z\right |\le e^{-\delta^{-c_1}}e^{-BM}\right )\le \frac{B\delta^{A}}{M^{A}}.\nonumber
\end{equation}
 
This is a direct application of Lemma \ref{lmanti_concentration}.

Finally, to prove Condition {\bf C2} \eqref{cond-poly}, from \eqref{bound_kac}, \eqref{smallball_kac}, and Jensen's inequality, we get for every $1\le M\le n\delta$
$$\P(N\ge \delta^{-c_1} + BM) = O\left (\frac{\delta^{A}}{M^{A}}\right )$$
where $N = N_{F}B(w, 2)$, $w\in D$. 

Setting $c_1=1$ and $M = 1, 2, 2^{2}, \dots$, we get
$$\E N^{k+2}\textbf{1}_{N\ge \delta^{-1}} \le C\sum_{i=1}^{\infty} \left (\delta^{-1} + B2^{i+1}\right )^{k+2}\frac{\delta^{A}}{2^{iA}} \le C\delta^{A-k-2}.$$
 This proves Condition {\bf C2} \eqref{cond-poly} and completes the proof for $\delta\le 1/K$. For $\delta\ge 1/K$, note that the Jensen's inequality implies that
$$N_{P}B(0, 1-1/K) = O_K(1)\log\frac{\max_{w\in B(0, 1-1/2K) }|P(w)|}{\max_{w\in B(1-1/K, 1/3K)} |P(w)|}.$$

Thus, using the bounds \eqref{interm1}, \eqref{bound_kac}, \eqref{smallball_kac} for $\theta = 1-1/K$, and apply we get for every $1\le M$,
$$\P(N_{P}B(0, 1-1/2K) \ge BM) = O\left (\frac{C'}{M^{A}}\right ).$$
And so, $\E N_{P}B(0, 1-1/2K) = O(1)$. The same holds for $\tilde P$ and therefore desired result follows.
 \end{proof}

\begin{proof}[Proof of Corollary \ref{kacseries_cor}]
To prove the first part of Corollary \ref{kacseries_cor}, we decompose the interval $[0, r]$ into dyadic intervals $[0, 1/2],  [1-1/2, 1-1/4), \dots$, and finally $\pm [1-\delta, r]$. In each of these interval, say $[x, y)$, we show that $\E N_{P}[x, y) - \E N_{\tilde P} [x, y) = O((1-y)^{c})$ for some positive constant $c$. This can be routinely done by approximating the indicator function on the interval $[x, y)$ by a smooth function and apply Theorem \ref{kacreal}. We omit the detail as it is similar to the proof of Theorem \ref{comparison}.

Thanks to the first part, to prove the second part of Corollary \ref{kacseries_cor}, it suffices to prove the corresponding statement for $\tilde P$ whose coefficients are Gaussian. We adapt a strategy in \cite{flasche2020expected}. For any interval $[a, b]\subset \R$, by the Kac-Rice formula (Proposition \ref{KacRice}), we have
$$\E N_{\tilde P}[a, b] = \frac{1}{\pi}\int_{a}^{b}\sqrt{f(x)}dx$$
where 
$$f(x) = \frac{\left (\sum_{k=0}^{\infty} c_k^{2}x^{2k}\right )\left (\sum_{k=0}^{\infty} c_k^{2}k^{2}x^{2k-2}\right )-\left (\sum_{k=0}^{\infty} c_k^{2}kx^{2k-1}\right )^{2}}{\left (\sum_{k=0}^{\infty} c_k^{2}x^{2k}\right )^{2}}.$$

Lemma \ref{lmmregular_varying} suggests that we make the transformation
$$f_n(t) := f(1-2^{-n}t).$$
Applying Lemma \ref{lmmregular_varying} to $a = 2^{-n}$ and $t\in [1, 2]$, we obtain that uniformly on $x=1-at\in [1-2^{1-n}, 1-2^{n}]$, as $n\to \infty$
$$\sum_{k=0}^{\infty} c_k^{2}x^{2k} \sim 2^{-\gamma} (1-x)^{-\gamma}L\left (2^{n}\right ), \sum_{k=0}^{\infty} c_k^{2}kx^{2k-1} \sim x^{-1}2^{-\gamma-1} (1-x)^{-\gamma-1}L\left (2^{n}\right )\frac{\Gamma(\gamma+1)}{\Gamma(\gamma)}$$
and
$$ \sum_{k=0}^{\infty} c_k^{2}k^{2}x^{2k-2} \sim x^{-2}2^{-\gamma-2} (1-x)^{-\gamma-2}L\left (2^{n}\right )\frac{\Gamma(\gamma+2)}{\Gamma(\gamma)}$$
where $p_n\sim q_n$ means $\lim_{n\to\infty} \frac{p_n}{q_n}=1$. 

Since $\Gamma(\gamma+2) = (\gamma+1)\Gamma(\gamma+1) = \gamma(\gamma+1)\Gamma(\gamma)$, we obtain that uniformly on $t\in [1, 2]$, 
$$f_n(t)\sim \gamma (2^{-n}t)^{-2}/4.$$ 
We have
$$\E N_{\tilde P}[1-2^{1-n}, 1-2^{-n}] = \frac{1}{\pi}\int_{1}^{2}2^{-n}\sqrt{f_n(t)}dt.$$
By uniform convergence, we obtain
$$ \E N_{\tilde P}[1-2^{1-n}, 1-2^{-n}] \sim \frac{\sqrt\gamma\ln 2}{2\pi}.$$

Taking the Ces\'aro summation, we obtain
$$\frac{1}{n}\E N_{\tilde P}[0, 1-2^{-n}] = \frac{1}{n}\sum_{k=1}^{n}\E N_{\tilde P}[1-2^{1-k}, 1-2^{-k}] \sim \frac{\sqrt\gamma\ln 2}{2\pi}.$$

For each $r\in (0, 1)$, sandwiching $\E N_{\tilde P}[0, r]$ between $\E N_{\tilde P}[0, 1-2^{1-n}]$ and $\E N_{\tilde P}[0, 1-2^{-n}]$ (i.e., $n-1 = \lfloor -\log_{2}(1-r)\rfloor$), we get
$$\frac{1}{-\log (1-r)}\E N_{\tilde P}[0, r]\sim \frac{\sqrt\gamma}{2\pi}.$$
as desired.
\end{proof}

\emph{Acknowledgements.} The authors would like to thank Asaf Ferber and Yuval Peres for helpful remarks that led to some simplifications of our proofs. We thank the anonymous referees for their helpful suggestions.

\bibliographystyle{plain}
\bibliography{polyref}
\section{Appendix}
\subsection{Proof of Lemma \ref{fourier}}\label{fourier_proof}
This proof is taken from \cite{TVpoly}. We will only prove the first part of the Lemma relating to Theorem \ref{gcomplex} as the second part is similar. By translation, we can assume without loss of generality that $z_1 = \dots = z_k=0$. 
Suppose that we have (\ref{gcomplexb}) for $G$ in the form \eqref{h2}. Let $r_0=1/100$. Then, for every function $G$ supported in $\prod _{j=1}^{k}B(0, r_0)$ with $\norm{\triangledown^aG}_\infty\le 1$ for all $0\le a\le 2k+4$, we view it as a smooth function on the torus $(\mathbb{R}/(2.2 r_0)\mathbf{Z})^{2k}$. Expanding $G$ by Fourier series yields
\begin{equation}
G(w) = \sum_{b, c\in \mathbb{Z}^{k}}g_{b, c}e^{2\pi\sqrt{-1}(b\text{Re}(w) + c\text{Im}(w)/(2.2r_0)},\label{tue1}
\end{equation}
for $w\in (\mathbb{R}/(2.2 r_0)\mathbf{Z})^{2k}$, where 
\[g_{b, c} = \frac{1}{(2.2r_0)^{2k}}\int_{B(0, r_0)^{k}}e^{-2\pi\sqrt{-1}(b\text{Re}(w) + c\text{Im}(w))/(2.2r_0)}G(w)\text{d}w,
\]
and the convergence is point-wise (by, for example, \cite[Theorem 8.32]{folland}).

By integration by parts (or \cite[Theorem 8.22e]{folland}), we have 
\[|g_{b, c}|\le C(1+|b|+|c|)^{-2k-4},
\]
where $C = C_k$.

Let $\eta:\mathbb{R}\to\mathbb{R}$ be a smooth function supported on $[-1.1 r_0, 1.1r_0]$ that equals 1 on $[-r_0, r_0]$ and $||\eta||_\infty \le 1$, and let
\[\psi_{b,c, i} = e^{2\pi\sqrt{-1}(b_{i}\text{Re}(w_{i}) + c_{i}\text{Im}(w_{i})/(2.2r_0)}\eta(\text{Re}(w_{i}))\eta(\text{Im}(w_{i})), 
\]
and
\[G_{b, c}(w) = g_{b, c}\prod_{i=1}^{k}\psi_{b, c, i}(w_i).
\]
Since $G$ is supported on $[-r_0, r_0]^{2k}$, multiplying both sides of \eqref{tue1} by $\prod_{i=1}^{k}\eta(\text{Re}(w_{i}))\eta(\text{Im}(w_{i}))$, we have
\[G(w) = \sum_{b, c\in \mathbf{Z}^{k}}G_{b, c}(w),
\]
pointwise.
We have that $\psi_{b, c,i}$ is supported on $B(0, 2.2r_0)$ and $|{\triangledown^aG_{b, c}}|\le C(1+|b|+|c|)^{3}|g_{b, c}|, \forall 0\le a\le 3$. We thus have for all $m\ge 1$
\begin{eqnarray}
&&\left |\E \sum_{i_1, \dots, i_k} G_m(\zeta_{i_1}, \dots, \zeta_{i_k}) -\E \sum_{i_1, \dots, i_k} G_m(\tilde \zeta_{i_1}, \dots, \tilde \zeta_{i_k})\right | 
\le C\delta_n^c \sum_{b, c\in \mathbf{Z}^{k}}(1+|b|+|c|)^{3}|g_{b, c}|\nonumber\\
&&\quad \le C\delta_n^c \sum_{b, c\in \mathbf{Z}^{k}}(1+|b|+|c|)^{-2k-1} = C\delta_n^c \sum_{m=0}^{\infty} \sum_{b, c\in \mathbf{Z}^{k}, |b|+|c| = m }(1+m)^{-2k-1}\nonumber\\
& &\quad \le C\delta_n^{c} \sum_{m=0}^{\infty} (1+m)^{-2k-1} m^{2k-1}\le C \delta_n^{c} \sum_{m=1}^{\infty} m^{-2} \le C\delta_n^{c}\nonumber
\end{eqnarray}
where $G_m = \sum_{|b|+|c|\le m}G_{b, c}$ supported in $B(0, 2r_0)^{k}$ and we recall that the constant $C$ may change from one equation to another. Using Condition {\bf C2} \eqref{cond-poly} and the fact that $G_m\to G$ point-wise and $|G_m|=O(1)$, by dominated convergence theorem, we get 
$$\lim _{m\to \infty}\E \sum_{i_1, \dots, i_k} G_m(\zeta_{i_1}, \dots, \zeta_{i_k}) = \E \sum_{i_1, \dots, i_k} G(\zeta_{i_1}, \dots, \zeta_{i_k}).$$ 
And hence the above inequalities hold for $G$ in place of $G_m$, completing the proof.
\subsection{Proof of Lemma \ref{2norm}}\label{2norm_proof} We follow ideas from \cite{DOV}; the constant $6$ in the conclusion is adhoc but we make no attempt to optimize it.

From  Jensen's inequality
for the number of roots (see the beginning of Section  \ref{app1_proof_1}), we have 
\[N_{F_n}({B(w, 1)})\le \log \frac{5}{2} (\log M - \log|F_n(w)|) < 2   (\log M - \log|F_n(w)|)
\]
where $N_{F_n}({B(w, 1)})$ is the number of zeros of $F_n$ in $B(w, 1)$ and $M = \max_{|w - z| = 2} |F_n(z)|$.

From this and the assumption of Lemma \ref{2norm}, we conclude that
\begin{equation}
N_{F_n}({B(w, 1)})\le 2 \delta_n^{-c_2}.\label{noncluster}
\end{equation}

By the pigeonhole principle, there exists a radius $1\ge r\ge 1/2$ for which $F_n$ has no zeros in the annulus $B(w, r +\eta)\setminus B(w, r-\eta)$ where $\eta =.1\delta_n ^{c_2}$. We can also assume, without loss of generality, that there is no root on the boundary of each disk.

 Let $\zeta_1, \dots, \zeta_m$ be the zeros of $F_n$ in the disk $B(w, r - \eta)$. By \eqref{noncluster},   $m \le 2 \delta_n^{-c_2}$. Define  
 $$f(z) := \frac{F_n(z)}{(z-\zeta_1)\dots(z-\zeta_m)}. $$  Since $f$ is an entire function which does not have zeros in the (closed)  disk $B(w, r +\eta)$, $\log|f|$ is harmonic on this disk. For every $z$ with $|z-w| = r + \eta$, the distance from 
 $z$ to any $\zeta_i$ is at least $\eta$, so  
 $$|f(z)|\le |F_n(z)|\eta^{-m}\le \exp(\delta_n^{-c_2})\eta^{-m}.$$

It follows that for any $z$ where  $|z-w|= r + \eta$
 \begin{equation}\label{upperf} \log|f(z)|\le \delta_n^{-c_2} + m \log \eta^{-1 } \le 21 \delta_n^{-2c_2}, \end{equation}   since 
 $$\delta_n^{-c_2} \le \delta_n^{-2c_2}, m \le 2 \delta_n^{-c_2}, \eta^{-1} = 10 \delta_n^{-c_2} \le 
 e ^{10 \delta_n^{-c_2}}. $$
 Because of the harmonicity of $\log|f|$, its maximum 
 is achieved on the boundary, and so the same bound holds for all $z\in B(w, r+\eta)$.

On the other hand, from the lower bound on $|F(w)|$ in the lemma 
and the fact that  $|\zeta_i-w|\le 1$, 
 \begin{equation} \label{lowerf}  \log |f(w)|\ge \log|F_n(w)|\ge -\delta_n^{-c_2}. \end{equation}

Now, we make a critical use of Harnack's inequality \cite[Chapter 11]{Ru}, which asserts that if a function  $G$ is harmonic on the open disk $B(w, R)$  and is  nonnegative continuous  on its closure, for some $w\in \C$ and $R>0$, then  for every $z\in B(w, r)$ with $r<R$, 
$$G(z) \le \frac{R+r}{R-r} G(w). $$

We apply Harnak's inequality  to $G(z):= 21 \delta_n^{-2c_2} - \log|f|$ which is 
 nonnegative harmonic on $B(w, R)$ with 
$R:= r+ \eta$.  By this inequality, we conclude that for all  $z\in B(w, r)$
 \begin{equation} \label{Harnak1}  21 \delta_n^{-2c_2} - \log|f(z)|\le \frac{2r +\eta }{\eta} (21 \delta_n^{-2c_2} - \log|f(w)|). \end{equation}

As $\eta =.1 \delta_n^{c_2}$ and $r < 1$, $\frac{2r+ \eta}{\eta} \le 3 \eta^{-1} = 30 \delta _n^{-c_2}$. 
By \eqref{lowerf}, the right-hand side is at most 
$$30 \delta_n^{-c_2} \times 22 \delta_n^{-2c_2} = 660 \delta _n^{-3c_2}. $$ It follows that 
$$\log |f(z)|\ge 21 \delta_n^{-2c_2} -660\delta_n^{-3c_2} \ge -660 \delta_n^{-3c_2}.$$

Together with \eqref{upperf}, we have 
\begin{equation} \label{absolutef}  |\log |f(z)|| \le 660 \delta_n^{-3c_2}\quad\forall z\in B(w, r). \end{equation}

By the triangle inequality and the definition of $f$, 
\begin{equation} \label{Harnak2} 
\norm{\log |F_n(z)|}_{L^2(B( w, r))} \le \norm{\log |f(z)|}_{L^2(B( w, r))} + \sum_{i = 1}^{m}\norm{\log |z-\zeta_i|}_{L^2(B( w, r))}. 
\end{equation}

Notice that  each of the $m$ terms in the sum above is at most $\int_{B(0, 2r-\eta)} |\log |z||^{2}dz$, as 
$|\zeta_i| \le r -\eta$ for all $i$. As $r < 1$, we can further upper bound it by 
$\int_{B(0, 2)} |\log |z||^{2}dz$, which is $O(1)$ (in fact, one can easily show 
$\int_{B(0, 2)} |\log |z||^{2}dz < 30$, with room to spare). Since $m \le 2 \delta_n^{-c_2} $, the right-hand side of 
\eqref{Harnak2} is at most 
$$ 660 \delta_n^{-3 c_2} + 60 \delta_n^{-c_2} \le 720 \delta_n^{-3c_2} . $$ Thus, we have 
$$\norm{\log |F_n(z)|}_{L^2(B( w, r))} \le  720 \delta_n^{-3c_2} $$ which implies  the claim of the lemma as
 $r \ge 1/2$.

\subsection{Proof of Lemma \ref{logcomp}} \label{logcomp_proof} To prove Lemma \ref{logcomp}, we will follow the proofs in \cite{DOV} and \cite{TVpoly}. We first prove the following.
\begin{lemma}\label{log-com33}
Under the assumptions of Lemma \ref{logcomp}, there exist constants $\alpha_2>0$ and $C'>0$ such that for any ${z_1}, \dots, {z_k}\in D_n + B(0, 1/10)$ and for any function $L:\mathbb{C}^k\to \mathbb{C}$ with continuous derivatives up to order $3$ and $\norm{\triangledown^a L}_\infty \le \delta_n^{-\alpha_2}$ for all $0\le a\le 3$, we have 
\[\left|\E L\left(\frac{F_n(z_1)}{\sqrt{V(z_1)}}, \dots, \frac{F_n(z_k)}{\sqrt{V(z_k)}}\right)-\E L\left(\frac{\tilde F_n(z_1)}{\sqrt{V(z_1)}}, \dots, \frac{\tilde F_n(z_k)}{\sqrt{V(z_k)}}\right) \right|\le C'\delta_n^{\alpha_2},
 \]
 where $V(z_j)  := \sum _{i=N_0}^{n}|\phi_i(z_j)|^{2}$ and  $N_0$ is the constant in Condition {\bf C1}.
\end{lemma}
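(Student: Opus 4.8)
The plan is to prove Lemma~\ref{log-com33} by a Lindeberg-type swapping argument, replacing the variables $\xi_i$ by $\tilde\xi_i$ one at a time (for $i \ge N_0$, since the first $N_0$ variables contribute a negligible deterministic-looking perturbation by Condition {\bf C1}\eqref{cond-matching}). Write $W_j := \sqrt{V(z_j)}$ and consider the normalized vector $\big(F_n(z_1)/W_1, \dots, F_n(z_k)/W_k\big) \in \C^k$, whose $i$-th ``summand'' is $\big(\xi_i \phi_i(z_1)/W_1, \dots, \xi_i\phi_i(z_k)/W_k\big)$. The key quantitative input is Condition {\bf C2}\eqref{cond-delocal}: for $z_j \in D_n + B(0,1)$ it gives $|\phi_i(z_j)|/\sqrt{\sum_l |\phi_l(z_j)|^2} \le C\delta_n^{\alpha_1}$, and since $V(z_j)$ differs from $\sum_l |\phi_l(z_j)|^2$ only by the finitely many $l < N_0$ terms, one checks (using that at least one $\phi_l$ is bounded below, or rather that the delocalization forces $\sum_{l \ge N_0}|\phi_l(z_j)|^2$ to be comparable to the full sum once $n$ is large) that $|\phi_i(z_j)|/W_j = O(\delta_n^{\alpha_1})$ as well. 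So each summand is uniformly small, of size $O(\delta_n^{\alpha_1})$ times $|\xi_i|$.

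The core step is the standard telescoping: for each $i$ from $N_0$ to $n$, let $S_i$ be the partial vector with $\xi_1,\dots,\xi_i$ replaced by their tilde-counterparts in the first $i-1$ slots and the original $\xi$'s afterward, isolate the $i$-th summand as $u_i := (\xi_i\phi_i(z_1)/W_1,\dots)$ versus $\tilde u_i := (\tilde\xi_i\phi_i(z_1)/W_1,\dots)$, and Taylor-expand $L$ to third order around the ``common part'' $R_i$ (the sum of all other summands). The zeroth, first, and second order terms cancel in expectation because $\xi_i$ and $\tilde\xi_i$ have matching moments up to order two (Condition {\bf C1}\eqref{cond-matching}, which matches $\E \Re(\xi_i)^a \Im(\xi_i)^b$ for $a+b \le 2$), using independence of $\xi_i$ from $R_i$. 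The third-order remainder is bounded by $\|\triangledown^3 L\|_\infty \le \delta_n^{-\alpha_2}$ times $\E(|u_i| + |\tilde u_i|)^3$, and $\E|u_i|^3 = O\big((\delta_n^{\alpha_1})^3 \cdot \E|\xi_i|^3\big)$ — here one must be slightly careful since we only assume bounded $(2+\ep)$ moments, not third moments, so one instead bounds the remainder by $\|\triangledown^3 L\|_\infty (|u_i|+|\tilde u_i|)^2 \cdot \min(|u_i|+|\tilde u_i|, 1)$-type expression, or simply truncates to extract $\E|\xi_i|^{2+\ep} = O(\tau)$ and gains a factor $\delta_n^{\alpha_1 \ep}$ from $|\phi_i(z_j)/W_j|^{\ep}$. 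This is the one genuinely delicate bookkeeping point. Summing over the $O(n)$ values of $i$ gives a total error $O\big(n \cdot \delta_n^{-\alpha_2} \cdot \delta_n^{2\alpha_1} \cdot \delta_n^{\alpha_1\ep} \cdot \max_{i,j}|\phi_i(z_j)/W_j|^{-\text{something}}\big)$; the point is that one more application of delocalization converts $n \cdot \max_i |\phi_i(z_j)/W_j|^2 \le n \cdot (C\delta_n^{\alpha_1})^2$, but actually $\sum_i |\phi_i(z_j)/W_j|^2 = 1 + o(1)$, so $\sum_i |\phi_i(z_j)/W_j|^{2+\ep} \le \max_i |\phi_i(z_j)/W_j|^{\ep} \cdot \sum_i |\phi_i(z_j)/W_j|^2 = O(\delta_n^{\alpha_1\ep})$. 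Hence the total error is $O(\delta_n^{-\alpha_2 + \alpha_1\ep})$, and choosing $\alpha_2 := \alpha_1\ep/2$ (consistent with Remark~\ref{alpha0}'s $\alpha_0 = 3\alpha_1\ep/10^3$ after the further reduction to Lemma~\ref{logcomp}) closes the estimate.

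The remaining work, reducing Lemma~\ref{logcomp} itself to Lemma~\ref{log-com33}, is the passage from the smooth function $L$ applied to $F_n/\sqrt V$ to the function $K$ applied to $\log|F_n|$. This is done by writing $\log|F_n(z_j)| = \log|F_n(z_j)/W_j| + \log W_j$, absorbing the deterministic shift $\log W_j$ into a translated test function, and then approximating $x \mapsto \log|x|$ on $\C$ by a smooth bounded function away from the origin — but the pole at $0$ is handled upstream (in the proof of Theorem~\ref{gcomplex}) rather than here, so in Lemma~\ref{logcomp} one can work on the good event and simply note that $K$ composed with $\log|\cdot|$ on an annulus $e^{-\delta_n^{-\bar c_1}} \le |x| \le e^{\delta_n^{-\bar c_1}}$ has derivatives bounded by $\delta_n^{-O(\bar c_1)}$; setting $L := K \circ (\log|\cdot| + \log W)$ and applying Lemma~\ref{log-com33} with $\alpha_2$ chosen to dominate this loss gives the result.

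\textbf{Main obstacle.} The step I expect to be hardest is the clean verification that $|\phi_i(z_j)|/\sqrt{V(z_j)} = O(\delta_n^{\alpha_1})$ (i.e.\ that removing the first $N_0$ terms from the variance does not destroy delocalization) together with the careful truncation argument that extracts the $(2+\ep)$-moment bound in the third-order Lindeberg remainder — one cannot naively use a third moment, so the Taylor remainder must be split into a region where the increment is $\le 1$ (treated with the $(2+\ep)$ power) and a region where it is large (treated via the second moment and the small-ball/boundedness events), and one must confirm the exponents still combine to give a positive power of $\delta_n$. Everything else is routine multivariate Taylor expansion and summation.
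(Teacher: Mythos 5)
Your proposal follows the same Lindeberg swapping argument as the paper: Taylor-expand to third order around the common part, cancel the zeroth-through-second order terms via moment matching from Condition C1, bound the remainder by interpolating between a second- and third-order estimate to extract the $(2+\ep)$-moment (the paper writes this as $\min(X^2,X^3)\le X^{2+\ep}$ applied to $|\xi_{i_0}| a_{i_0}$), and close by noting $\sum_{i_0} a_{i_0}^{2+\ep}\le\max_{i_0}a_{i_0}^{\ep}\sum_{i_0}a_{i_0}^{2}=O(\delta_n^{\alpha_1\ep})$; your observation that delocalization with $V$ in place of the full sum still holds (since $V$ is at least half the full sum once $\delta_n$ is small) is also what the paper uses implicitly.

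The one place where your wording is imprecise: the indices $i<N_0$ do not become ``deterministic'' after the mean is absorbed into the test function — they still need to be swapped, and the paper does this explicitly with a first-order (mean-value) Taylor bound $I_{i_0}=O(\delta_n^{-\alpha_2}a_{i_0})$, using $|\E\xi_{i_0}|=O(1)$ and $\E|\xi_{i_0}|=O(1)$; since there are only $O(1)$ such indices and $a_{i_0}=O(\delta_n^{\alpha_1})$, this contribution is $O(\delta_n^{\alpha_1-\alpha_2})$, which is dominated by the main term. This is a routine fix and does not affect the correctness of your approach.
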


\begin{remark}
Following the proof, one can set $\alpha_2 = \frac{\alpha_1 \ep }{4}$.
\end{remark}
\begin{proof} [Proof of Lemma \ref{log-com33}] To prove this Lemma, we first observe that by replacing  $L$ by 
$$L'(z_1, \dots, z_k):= L \left (z_1 + \frac{\E F_n(z_1)}{\sqrt{V(z_1)}}, \dots, z_k + \frac{\E F_n(z_k)}{\sqrt{V(z_k)}}\right ),$$
if necessary, we can assume that $\E \tilde \xi_i = 0$ for all $i$ and $\E \xi_i =0$ for all 
$i > N_0$. (See Condition {\bf C1}.)

We use the Lindeberg swapping argument. Let $G_{i_0} = \sum_{i=1}^{i_0}\tilde{\xi}_i \phi_i(z) + \sum_{i=i_0+1}^n \xi_i \phi_i(z)$. The purpose is to swap the random variables one by one. Under these notations, $G_0 = F_n$ and $G_{n} = \tilde{F}_n$. Put
 \[I_{i_0} := \left|\E L\left(\frac{G_{i_0}(z_1)}{\sqrt{V(z_1)}},\dots, \frac{G_{i_0}(z_k)}{\sqrt{V(z_k)}}\right)-\E L\left(\frac{G_{i_0+1}(z_1)}{\sqrt{V(z_1)}},\dots, \frac{G_{i_0+1}(z_k)}{\sqrt{V(z_k)}}\right)\right|.
 \]

 Then
\[I: = \left|\E L\left (\frac{F_n(z_1)}{\sqrt{V(z_1)}}, \dots, \frac{F_n(z_k)}{\sqrt{V(z_k)}}\right )-\E L\left (\frac{\tilde F_n(z_1)}{\sqrt{V(z_1)}}, \dots, \frac{\tilde F_n(z_k)}{\sqrt{V(z_k)}}\right ) \right |\le \sum_{i_0=0}^{n} I_{i_0}. \]

 Fix $i_0\in [N_0, n]$ and let $Y_j := \frac{G_{i_0}(z_j)}{\sqrt{V(z_j)}} - \frac{\xi_{i_0}\phi_{i_0}(z_j)}{\sqrt{V(z_j)}}$ for $1\le j\le n$. 
 Then, $\frac{G_{i_0+1}(z_j)}{\sqrt{V(z_j)}} = Y_j + \frac{\tilde{\xi}_{i_0}\phi_{i_0}(z_j)}{\sqrt{V(z_j)}}$.
 Condition on $\xi_i$ for $i<i_0$ and $\tilde{\xi}_i$ for $i>i_0$.  The  $Y_j$'s become constants; the only randomness left comes from $\xi_{i_0}, \tilde \xi_{i_0}$. Define  $$\hat L = \hat L_{i_0}(w_1,\dots, w_k) := L(Y_1+w_1,\dots, Y_k+w_k). $$ By the definition of $\hat L$ and the assumption of the lemma, 
$\norm{\triangledown^a \hat L}_\infty \le C\delta_n^{-\alpha_2}$ for all $0\le a\le 3$.

 We are going to  estimate
$$d_{i_0} := \bigg| \E _{\xi_{i_0}, \tilde{\xi}_{i_0}}\hat L\left(\frac{\xi_{i_0}\phi_{i_0}(z_1)}{\sqrt{V(z_1)}},\dots, \frac{\xi_{i_0}\phi_{i_0}(z_k)}{\sqrt{V(z_k)}}\right)- \E _{\xi_{i_0}, \tilde{\xi}_{i_0}}\hat L\left(\frac{\tilde{\xi}_{i_0}\phi_{i_0}(z_1)}{\sqrt{V(z_1)}},\dots, \frac{\tilde{\xi}_{i_0}\phi_{i_0}(z_k)}{\sqrt{V(z_k)}}\right)\bigg|.$$

Let $a_{i, i_0} := \frac{\phi_{i_0}(z_i)}{\sqrt{V(z_i)}}$ and $a_{i_0} := (\sum_{i=1}^k |a_{i, i_0}|^2)^{1/2}$. Taylor expanding $\hat L$ around $(0,\dots, 0)$, we obtain
\begin{equation}
\hat L\left(a_{1, i_0}\xi_{i_0},\dots, a_{k, i_0}\xi_{i_0}\right) = \hat L(0) + \hat L_1 + \text{err}_1,\label{jul1}
\end{equation}
where \[\hat L_1 = \frac{\text{d}\hat L\left(a_{1, i_0}\xi_{i_0}t,\dots, a_{k, i_0}\xi_{i_0}t\right)}{\text{d}t}\bigg|_{t=0} = \sum_{i=1}^k \frac{\partial \hat L(0)}{\partial \Re(w_i)}\Re(a_{i, i_0}\xi_{i_0})+\sum_{i=1}^k \frac{\partial \hat L(0)}{\partial \Im(w_i)}\Im(a_{i, i_0}\xi_{i_0}).\]

(To avoid confusion, we use  $\partial $ to denote a partial derivative of functions of multi-variables and 
${\text {d}}$ to denote  a derivative of function of a single variable.) 

From the bounds on the derivatives of $\hat L$, we have 
\begin{eqnarray}
|\text{err}_1| &\le& \sup_{t\in [0, 1]} \left| \frac{1}{2}\frac{\text{d}^2 \hat L \left(a_{1, i_0}\xi_{i_0}t,\dots, a_{k, i_0}\xi_{i_0}t\right)}{\text{d}t^2}\right| \nonumber\\
&=& O\left (\delta_n^{-\alpha_2}|\xi_{i_0}|^2 k \sum_{i=1}^k|a_{i, i_0}|^2\right)= O\left (\delta_n^{-\alpha_2}|\xi_{i_0}|^2a_{i_0}^2\right )\label{err1}.
\end{eqnarray}
Similarly,
\begin{equation}
\hat L\left(a_{1, i_0}\xi_{i_0},\dots, a_{k, i_0}\xi_{i_0}\right) = \hat L(0) + \hat L_1 +\frac{1}{2}\hat L_2+ \text{err}_2,\label{jul2}
\end{equation}
where $\hat L_2 = \frac{\text{d}^2 \hat L(a_{1, i_0\xi_{i_0}}t,\dots, a_{k, i_0}\xi_{i_0}t)}{\text{d}t^2}\bigg|_{t=0}$ and 
\begin{eqnarray}
|\text{err}_2| &\le& \sup_{t\in [0, 1]} \left| \frac{1}{6}\frac{\text{d}^3 \hat L \left(a_{1, i_0}\xi_{i_0}t,\dots, a_{k, i_0}\xi_{i_0}t\right)}{\text{d}t^3}\right| \nonumber\\
&=& O\left (\delta_n^{-\alpha_2}|\xi_{i_0}|^3 \left (\sum_{i=1}^k|a_{i, i_0}|\right )^3\right)=  O\left (\delta_n^{-\alpha_2}|\xi_{i_0}|^3a_{i_0}^3\right ).\label{mm}
\end{eqnarray}

Note that as in \eqref{err1}, $\hat L_2 = O\left (\delta_n^{-\alpha_2}|\xi_{i_0}|^2a_{i_0}^2\right )$. Thus, 
\begin{eqnarray}
 \text{err}_2  &=& \text{err}_1-\frac{\hat L_2}{2} = O\left (\delta_n^{-\alpha_2}|\xi_{i_0}|^2a_{i_0}^2\right ).\label{err2}
\end{eqnarray}

Using \eqref{mm} and \eqref{err2},  we obtain 
\begin{equation} \label{err2-1} |\text{err}_2| =O\left (\delta_n^{- \alpha_2}\right )\min \{|\xi_{i_0}|^{2}a_{i_0}^{2},  |\xi_{i_0}|^{3}a_{i_0}^{3}\} =O\left (\delta_n^{- \alpha_2}|\xi_{i_0}|^{2+\ep}a_{i_0}^{2+\ep}\right). \end{equation}

The expression \eqref{jul2} also holds for $\tilde{\xi}$ in place of $\xi$; we denote the error term here by  $ \widetilde { \text{err}_2 }$. By the same reasoning, we can show that $\widetilde { \text{err}_2 }$ satisfies 
\eqref{err2-1}.

Take the expectation (with respect to $\xi_{i_0}$) of the right-hand side of \eqref{jul2} and subtract from it the expectation of  the corresponding formula (with respect to $ \tilde \xi_{i_0}$). By Condition {\bf C1},  $\xi_{i_0}$ and $\tilde \xi_{i_0} $ have matching first and second moments, and so the expectations of $\hat L_j$ ($j=1,2$) from the two formulae cancel each other out. Furthermore, $\hat L(0)$ is the same in both formulae. Thus, the only thing remaining 
 after the subtraction  are the error terms. Therefore, 
\[d_{i_0} \le  \left|\E_{\xi_{i_0}} \text{err}_2\right|  +   \left|\E _{ \tilde{\xi}_{i_0}} \widetilde { \text{err}_2 } \right|  =O(1) \tilde C\delta_n^{- \alpha_2}a_{i_0}^{2+\ep}\left(\E |\xi_{i_0}|^{2+\ep}+\E |\tilde\xi_{i_0}|^{2+\ep}\right)=O\left (\delta_n^{- \alpha_2}a_{i_0}^{2+\ep}\right ).
\]

Taking expectation with respect to the the other  variables (which we have conditioned on so far), we obtain $I_{i_0} =O\left (\delta_n^{- \alpha_2}a_{i_0}^{2+\ep}\right )$ for all $N_0\le i_0\le n$.

Now we treat the first few indices  $0\le i_0<N_0$, where $\xi_{i_0}$ may have non-zero mean.  Instead of using  \eqref{jul1} and \eqref{jul2}, we use the mean value theorem to get the rough bound
\begin{equation}
\hat L\left(a_{1, i_0}\xi_{i_0},\dots, a_{k, i_0}\xi_{i_0}\right) = \hat L(0) + O(k \norm{\triangledown \hat L}_{\infty}|\xi_{i_0}|\sum_{i = 1}^{k} |a_{i, i_0}|),\label{tempp}
\end{equation}
which by the same arguments as above gives $I_{i_0}=O\left (\delta_n^{- \alpha_2}a_{i_0}\right )$.

Since we assume $\E \tilde \xi_i=0$ for all $1 \le i \le n$, Condition {\bf C1} implies that
$|\E \xi_{i_0} | = O(1)$. But as $\Var \xi_{i_0} =1$, it follows that 
 $\E|\xi_{i_0}|= O(1)$. 
 
 As $k$ is constant and $\norm{\triangledown \hat L}_{\infty}\le \delta_{n}^{-\alpha_2}$, we have,   from \eqref{tempp}, that 
\begin{eqnarray}
d_{i_0} &=& O (k ||\triangledown \hat L||_{\infty}\sum_{i = 1}^{k} |a_{i, i_0}|)\left (\E|\xi_{i_0}|+ \E|\tilde \xi_{i_0}|\right ) \nonumber\\
&=& O(\delta_{n}^{-\alpha_2}\sum_{i = 1}^{k} |a_{i, i_0}|)=O(\delta_{n}^{-\alpha_2}\sum_{i = 1}^{k} |a_{i, i_0}|^{2})^{1/2} = O(\delta_{n}^{-\alpha_2}a_{i_0})\nonumber.
\end{eqnarray}

Notice that  by Condition {\bf C2} \eqref{cond-delocal}, $a_{i_0}=O(\sqrt k \delta_n^{\alpha_1}) = O(\delta_n^{\alpha_1})$ for all $i$. Furthermore, by the definition $\sum_{i=N_0}^{n} a_{i_0}^{2} = k=O(1)$. 
Thus, we have 
 $$I=O\left (\delta_n^{- \alpha_2}\sum_{i_0 = 0}^{n} a_{i_0}^{2+\ep} + \delta_n^{- \alpha_2}\sum_{i_0 = 0}^{N_0} a_{i_0} \right ) = O(\delta_n^{\alpha_1\ep -\alpha_2})=O(\delta_n^{\alpha_2}),$$
where in the last step we used the fact that $\alpha_2$ was set much smaller than $\alpha_1$. 
\end{proof}

\begin{proof} [Proof of Lemma \ref{logcomp}]
Let $\alpha_2$ be the constant in Lemma \ref{log-com33} and set $\alpha_0 := \frac{\alpha_2}{10}$. Let 

 $$\bar K(w_1,\dots, w_k) := K(w_1 + \frac{1}{2}\log |V(z_1)|,\dots, w_k+\frac{1}{2}\log |V(z_k)|)$$
 where we recall that 
$ V(z_j)  := \sum _{i=N_0}^{n}|\phi_i(z_j)|^{2}$.
\noindent We  have $\norm{\triangledown ^a \bar K}_\infty \le \delta_n^{-\alpha_0}$ for all $0\le \alpha\le 3$;  we aim to show

\begin{equation} \label{L1} \left |\E \bar K \left (\log\frac{|F_n(z_1)|}{\sqrt{V(z_1)}}, \dots, \log\frac{|F_n(z_k)|}{\sqrt{V(z_k)}}\right )-\E \bar K \left (\log\frac{|\tilde F_n(z_1)|}{\sqrt{V(z_1)}}, \dots, \log\frac{|\tilde F_n(z_k)|}{\sqrt{V(z_k)}}\right ) \right |= O\left (\delta_n^{\alpha_0}\right ).
 \end{equation} 

For  $M  := \log \left(\delta_n^{-3\alpha_0}\right)$, define   $$ \Omega_1 := \{(w_1,\dots, w_k)\in \mathbb{R}^k: \min_{i=1,\dots, k} w_i<-M\} $$ and $$ \Omega_2  := \{(w_1,\dots, w_k)\in \mathbb{R}^k: \min_{i=1,\dots, k} w_i>-M-1\}.$$  

By considering the real and imaginary parts of $\bar K$ separately, we can assume that $\bar K:\mathbb{R}^k\to \mathbb{R}$ .

Let $\psi:\mathbb{R}^k\to [0, 1]$ be a smooth function  supported in $\Omega_2$ such that 
 $\psi=1$ on the complement of $\Omega_1$ and $\norm{\triangledown^a\psi}_\infty=O(1)$ for all $0\le a\le 3$.
 As $M \ge 1$, it is easy to see that such a function exists. In particular, one can define 
  $\psi(x_1,\dots, x_k) = \rho(x_1)\dots\rho(x_k)$ where $\rho$ is a smooth function satisfying the corresponding properties on $\R$.

Let $\phi := 1 - \psi$, $K_1 := \bar K \phi$, and $K_2 := \bar K\psi$. Then by the definition  $\bar K = K_1 + K_2$. Furthermore,  both $K_1, K_2$ are smooth functions with $\text{supp } K_1 \subset\bar \Omega_1, \text{supp } K_2\subset\bar \Omega_2$ and $\norm{\triangledown^aK_i}_\infty= O\left (\delta_n^{- \alpha_0}\right )$ for $i=1, 2$ and $0\le a\le 3$.

We now show that the contribution from $K_1$ towards the right-hand side of \eqref{L1}  is negligible. Notice that
$$
\norm{K_1}_{\infty}\le \norm{\bar K}_\infty \le C' \delta_n^{-\alpha_0}. $$ 
This leads to setting $H_1(w_1,\dots,w_k) = C'\delta_n^{-\alpha_0}\phi(\log |w_1|,\dots, \log |w_k|)$. The function $H_1$ is a smooth function on $\R^{k}$ with the following properties 

\begin{itemize}
	\item $\ab{K_1(\log |w_1|,\dots,\log |w_k|)}\le H_1(w_1,\dots, w_k)$,
	
	\item  $\text{supp} (H_1)\subset \{(w_1,\dots, w_k)\in \mathbb{R}^k: \min_{i=1,\dots, k}|w_i|\le e^{-M}\}$,
	
	\item   $\norm{\triangledown^aH_1}_\infty=O\left (\delta_n^{-10 \alpha_0}\right )=O\left ( \delta^{-\alpha_2}\right )$ for all $0\le a\le 3$.
	
	\end{itemize}

\begin{remark} \label{derivative-R}  To verify the last property, notice that the support of $H_1$ is $\{(x, y): |x|\le e^{-M} \mbox{ or }  |y|\le e^{-M}\}$. Moreover, $H_1$ is a constant $C'\delta_n^{-\alpha_0}$ in the set $\{(x, y): |x|\le e^{-M-1} \mbox{ or }  |y|\le e^{-M-1}\}$ (because $\phi=1$ on the complement of $\Omega_2$). So we only need to consider  the derivatives of $H_1$ in the set $\{(x, y): |x|\le b \mbox{ or }  |y|\le e^{-M}\}\cap\{|x|\ge e^{-M-1}, |y|\ge e^{-M-1}\}$. On that set,  $x^{-1}$ and $ y^{-1}$ are bounded from above by $e^{M+1}$, which is significantly smaller 
than the bound. (We define $\alpha_0$ and $M$ with foresight so the claimed bound holds, with room to spare.) 
\end{remark}

Applying Lemma \ref{log-com33}, we obtain 
\begin{eqnarray}
\E \ab{K_1\left(\log \frac{|F_n(z_1)|}{\sqrt{V(z_1)}},\dots, \log \frac{|F_n(z_k)|}{\sqrt{V(z_k)}}\right)}&\le&\E {H_1\left( \frac{|F_n(z_1)|}{\sqrt{V(z_1)}},\dots, \frac{|F_n(z_k)|}{\sqrt{V(z_k)}}\right)}\nonumber\\
&\le& \E {H_1\left( \frac{|\tilde F_n(z_1)|}{\sqrt{V(z_1)}},\dots, \frac{|\tilde F_n(z_k)|}{\sqrt{V(z_k)}}\right)}+ C' \delta_n^{ \alpha_0}.\nonumber
\end{eqnarray}

Since $H_1(w_1, \dots, w_k) = 0$ if $(\log \ab{w_1},\dots, \log \ab{w_k})\notin \Omega_1$ 
and since the variables $\tilde \xi_i$ are Gaussian, we have
\begin{eqnarray}
\E {H_1\left( \frac{|\tilde F_n(z_1)|}{\sqrt{V(z_1)}},\dots, \frac{|\tilde F_n(z_k)|}{\sqrt{V(z_k)}}\right)}&\le& 
C' \delta_n^{-\alpha_0}\P \left(\exists i\in\{1,\dots, k\}:\frac{|\tilde F_n(z_i)|}{\sqrt{V(z_i)}}\le e^{-M} = \delta_n^{3\alpha_0}\right)\nonumber\\
&\le& C' \delta_n^{-\alpha_0}k\delta_n^{3\alpha_0}= O( \delta_n^{\alpha_0}) \nonumber.
\end{eqnarray}
Thus, $\E \ab{K_1\left(\log \frac{|F_n(z_1)|}{\sqrt{V(z_1)}},\dots, \log \frac{|F_n(z_k)|}{\sqrt{V(z_k)}}\right)}\le C' \delta_n^{\alpha_0}$. The same bound holds with $F_n$ replaced by $\tilde F_n$. To conclude the proof, we need to show that 
\[\ab{\E {K_2\left(\log \frac{|F_n(z_1)|}{\sqrt{V(z_1)}},\dots, \log \frac{|F_n(z_k)|}{\sqrt{V(z_k)}}\right)} - \E {K_2\left(\log \frac{|F_n(z_1)|}{\sqrt{V(z_1)}},\dots, \log \frac{|F_n(z_k)|}{\sqrt{V(z_k)}}\right)}}= O( \delta_n^{\alpha_0}).
\]
Define  $H_2(w_1,\dots,w_k) := K_2(\log |w_1|,\dots, \log |w_2|)$. Since $\text{supp }K_2\subset   \bar{\Omega}_2$, $$\text{supp }H_2\subset \{(w_1,\dots, w_k): \log |w_i|\ge -M-1, \forall i\} = \{(w_1,\dots,w_k): |w_i|\ge C'\delta_n^{3\alpha_0}, \forall i\}. $$

 Thus, $H_2$ is well-defined and smooth on $\mathbb{R}^k$. Furthermore, by the definition of $H_2$, it is not hard to check that  $\norm{\triangledown^aH_2}_\infty=O\left (\delta_n^{-10\alpha_0}\right )$ for all $0\le a\le 3$; see 
 Remark \ref{derivative-R}.

  Finally, by applying  Lemma \ref{log-com33}, we obtain
\begin{eqnarray}
&&\ab{\E {K_2\left(\log \frac{|F_n(z_1)|}{\sqrt{V(z_1)}},\dots, \log \frac{|F_n(z_k)|}{\sqrt{V(z_k)}}\right)} - \E {K_2\left(\log \frac{|F_n(z_1)|}{\sqrt{V(z_1)}},\dots, \log \frac{|F_n(z_k)|}{\sqrt{V(z_k)}}\right)}}\nonumber\\
&=&\ab{\E {H_2\left( \frac{|F_n(z_1)|}{\sqrt{V(z_1)}},\dots, \frac{|F_n(z_k)|}{\sqrt{V(z_k)}}\right)} - \E {H_2\left( \frac{|F_n(z_1)|}{\sqrt{V(z_1)}},\dots, \frac{|F_n(z_k)|}{\sqrt{V(z_k)}}\right)}}=O\left (\delta_n^{\alpha_0}\right) .\nonumber
\end{eqnarray}
This completes the proof.
\end{proof}

\subsection{Proof of Lemma \ref{halasz-inequality}}\label{proof_Halasz}
 By rescaling, we can assume that $a = n^{l}$. Thus, we need to estimate 
$\sup_Z  \P\left (\left |\sum_{j=1}^{n}a_j\ep_j- Z\right |\le 1 \right )$.

By Ess\'een's inequality \cite{esseen1966kolmogorov} (see also \cite[Lemma 7.17]{taovubook}), there is an absolute constant $c$ such that for any real number $Z$,
\begin{equation} \label{esseen1} 
\P\left (\left |\sum_{j=1}^{n}a_j\ep_j- Z\right |\le 1 \right ) \le c\int_{-1/2}^{1/2} |\phi(t)|dt 
\end{equation}
where 
$$\phi(t) = \E\exp\left (i2\pi t\sum_{j=1}^{n}a_j\ep_j\right ) = \prod _{j = 1}^{n}\E\exp\left (i2\pi t a_j\ep_j\right ) = \prod_{j=1}^{n} \cos(2\pi a_jt).$$

For every $x\in \R$, let $\norm{x}_{\R/\Z}:= \min\{|x-N|: N\in\Z\}$ be the distance from $x$ to the set of integers.
In the following lemma, we gather a few simple (and well known) facts concerning $\sin $ and $\cos$, whose proof is 
left as an exercise. 

\begin{lemma} \label{integernorm} We have 
	
	\begin{itemize}
		\item  $\sin \theta\ge 2 \theta/\pi$ for all $\theta\in [0, \pi/2]$;
		
\item $| \cos  x| \le 1-2\norm{x/\pi}_{\R/\Z}^{2} \le  \exp\left (-2\norm{x/\pi}^{2}_{\R/\Z}\right )$ for all $x\in \R$;

\item  $\cos (2 x)\ge 1 - 2\pi^{2}\norm{x/\pi}_{\R/\Z}^{2}$ for all $x\in \R$;

\item There is a constant $c>0$ such that for all
$T \ge 1$,  $\max \{ |\int_0^1 \sin Tx dx |,  |\int_0^1 \cos Tx dx | \} \le c/T $, \end{itemize} 
	
	\end{lemma}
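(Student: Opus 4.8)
The plan is to verify the four assertions in turn, deriving the two middle ones from the first. All of them are elementary, so the write-up will be short.

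First I would establish item one (Jordan's inequality): on $[0,\pi]$ the function $\sin$ is concave, since $(\sin)''=-\sin\le 0$ there, so on $[0,\pi/2]$ its graph lies above the chord joining $(0,0)$ to $(\pi/2,1)$, namely the line $\theta\mapsto(2/\pi)\theta$; hence $\sin\theta\ge 2\theta/\pi$ for $\theta\in[0,\pi/2]$.

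For items two and three I would first reduce a general $x\in\R$ to a fundamental domain: let $m$ be an integer nearest to $x/\pi$ and set $\delta:=x-m\pi$, so that $|\delta|\le\pi/2$ and $\norm{x/\pi}_{\R/\Z}=|\delta|/\pi$. Then $|\cos x|=\cos\delta$ (using $|\delta|\le\pi/2$) and $\cos(2x)=\cos(2\delta)$ (periodicity). For item two, the half-angle identity $\cos\delta=1-2\sin^2(\delta/2)$ turns the claimed bound $\cos\delta\le 1-2(\delta/\pi)^2$ into $\sin(\delta/2)\ge\frac{2}{\pi}\cdot\frac{\delta}{2}$ with $\delta/2\in[0,\pi/4]$, which is exactly item one; the final exponential bound then follows from $1-t\le e^{-t}$ applied with $t=2\norm{x/\pi}_{\R/\Z}^2\ge 0$. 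For item three, $\cos(2\delta)=1-2\sin^2\delta$ together with the standard inequality $|\sin\delta|\le|\delta|$ gives $\cos(2\delta)\ge 1-2\delta^2=1-2\pi^2(\delta/\pi)^2$; translating back to $x$ finishes it.

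Finally, for item four I would simply compute $\int_0^1\sin(Tx)\,dx=(1-\cos T)/T$ and $\int_0^1\cos(Tx)\,dx=(\sin T)/T$, so both integrals are bounded in absolute value by $2/T$, and $c=2$ works for every $T\ge 1$ (indeed for all $T>0$). I do not expect any genuine obstacle here; the only point requiring a little care is the nearest-integer reduction used in items two and three, where one must check that $|\cos x|$ and $\cos(2x)$ coincide with $\cos\delta$ and $\cos(2\delta)$ for the reduced angle $\delta$ with $|\delta|\le\pi/2$.
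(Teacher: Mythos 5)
Your proof is correct and complete. The paper states this lemma with ``whose proof is left as an exercise,'' so there is no authors' argument to compare against; your derivation (Jordan's inequality via concavity of $\sin$ on $[0,\pi]$, the nearest-integer reduction to $\delta=x-m\pi$ with $\norm{x/\pi}_{\R/\Z}=|\delta|/\pi$ together with the half-angle identities, and the explicit antiderivatives for the last item) is the standard one, the only cosmetic point being that $\delta$ may be negative, which is harmless since $\cos\delta$, $\cos 2\delta$, and $\sin^2(\delta/2)$ are all even in $\delta$.
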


By \eqref{esseen1} and  Fubini's Theorem,
\begin{equation} \label{E2} 
\P\left (\left |\sum_{j=1}^{n}a_j\ep_j- Z\right |\le 1 \right ) \le c\int_{-1/2}^{1/2} \exp\left (-2\sum_{j=1}^{n}\norm{2 a_j t}^{2}_{\R/\Z}\right )dt = 2c\int_{0}^{\infty} |A_x| e^{-2x} dx ,
\end{equation} 
where $A_x := \{t\in [-1/2, 1/2]: \sum_{j=1}^{n} \norm{2a_j t}^{2}_{\R/\Z}\le x\}$ and $|A_x|$ denotes the Lebesgue measure of $A_x$. 
We break the last integral in \eqref{E2} into two parts, $\int_{0}^{n/4\pi^{2}} |A_x| e^{-2x} dx $ and 
$\int_{n/4\pi^{2}}^{\infty} |A_x| e^{-2x} dx $. Since $|A_x| \le 1$ for all $x$, 

$$ \int_{n/4\pi^{2}}^{\infty} |A_x| e^{-2x} dx = e^{-\Omega (n)} = o( n^{-l}) $$ for any fixed $l$. Thus, this part is negligible and  it remains to show 
\begin{equation} \label{E3}  \int_{0}^{n/4\pi^{2}} |A_x| e^{-2x} dx = O(n^{-l}). \end{equation}

Let us now bound the measure of the set $A_{n/4\pi^{2}}$. 
By Lemma \ref{integernorm},  
$$A_{n/4\pi^{2}}\subset A:=\{t\in [-1/2, 1/2]: \sum_{j=1}^{n}\cos(4\pi a_j t)\ge n/2\}. $$ 
To bound $|A|$,  we first notice that 
\begin{eqnarray}
\int_{-1/2}^{1/2} \left (\sum_{j=1}^{n}\cos(4\pi a_j t)\right )^{2l} dt &\le& \int_{-1/2}^{1/2} \left (\sum_{j=1}^{n}\left (e^{i4\pi a_j t} + e^{-i4\pi a_j t}\right )\right )^{2l} dt\nonumber\\
&=& \sum_{s_1, \dots, s_{2l} = \pm 1} \sum_{j_1, \dots, j_{2l}\le n} \int_{-1/2}^{1/2} e^{i4\pi t\sum_{h=1}^{2l}s_h a_{j_h}}dt \nonumber.
\end{eqnarray}
Recall the hypothesis of the lemma that for any 
two different multi-sets $\{i_1, \dots, i_{l'}\}$ and $\{j_1, \dots, j_{l''}\}$ where $l'+ l''\le 2l$, it holds that $|a_{i_1}+\dots + a_{i_{l'}} - a_{j_1}-\dots - a_{j_{l''}}|\ge a=n^{l}$. Thus, for each $s_1, \dots, s_{2l}=\pm 1$ and $j_1, \dots, j_{2l}\le n$, consider the multi-sets $S_1=\{j_h: s_h=1\}$ and $S_2 = \{j_h: s_h = -1\}$. If $S_1 \neq S_2$ then $|\sum_{h=1}^{2l}s_h a_{j_h}|\ge n^{l}$. In this case, the corresponding term in the above double sum is of the form $\int_{-1/2}^{1/2} e^{itT}dt$ for some $|T|\ge 2 n^{l}$. By Lemma \ref{integernorm}, we have
$$\int_{-1/2}^{1/2} e^{i4\pi t\sum_{h=1}^{2l}s_h a_{j_h}}dt= O(n^{-l}), \quad \text{if $S_1\neq S_2$}.$$
If $S_1 = S_2$, then $|a_{i_1}+\dots + a_{i_{l'}} - a_{j_1}-\dots - a_{j_{l''}}| = 0$ and the corresponding integral is $1$.  The number of terms in the double sum with $S_1 = S_2$ is at most $2^{2l} n^{l}=O(n^{l})$ while the total number of terms is at most $2^{2l}n^{2l}=O(n^{2l})$. Putting these cases together, we obtain  
$$\int_{-1/2}^{1/2} \left (\sum_{j=1}^{n}\cos(4\pi a_j t)\right )^{2l} dt = O\left (n^{l}+ n^{2l}n^{-l}\right )=O(n^{l}).$$

Hence, $ |A|=O\left(n^{-l}\right )$ by Markov's inequality. This implies $|A_{n/4\pi^{2}}|= O(n^{-l})$, which, in turn, yields \eqref{E3}, completing the proof.

\subsection{Proof of the second Jensen's inequality \eqref{jensenbound}}\label{proof_jensen}
By setting $g(w) = f\left (R(w+z)\right )$ and prove the corresponding inequality for $g$, it suffices to assume that $z = 0$ and $R=1$. Let $a_1, \dots, a_N$ be the zeros of $f$ in $\bar B(0, r)$. For each $a$ inside the unit disk $D$, consider the map 
$$T_a(w) = \frac{w-a}{\bar a w - 1}.$$

For $|a|\le r$ and $|w|\le r$, one can show by algebraic manipulation that 
$$|T_a(w)|\le \frac{2r}{1+r^{2}}<1.$$

Moreover, for all $|a|<1$ and $|w|=1$, we have
$$|T_a(w)| = |\bar w|\left |\frac{w-a}{\bar a w - 1}\right | = \left |\frac{1-a \bar w}{\bar a w - 1}\right |=1.$$

Let $h(w) = \frac{f(w)}{\prod_{k=1}^{N} T_{a_k}(w)}$. Then $h$ is an analytic function on $D$. By maximum principle, we have for every $w_0\in rD$,
\begin{eqnarray}
\frac{|f(w_0)|(1+r^{2})^{N}}{(2r)^{N}}\le \max_{w\in rD} |h(w)| \le \max_{w\in D} |h(w)| = \max_{w\in \partial D} |h(w)|=\max_{w\in \partial D} |f(w)|= M\nonumber.
\end{eqnarray}

Thus, $N\le \frac{\log \frac{M}{|f(w_0)|}}{\log\frac{1+r^{2}}{2r}}$ for all $w_0\in rD$, completing the proof.

\end{document}